\numberwithin{equation}{section}
\newtheorem{theorem}{Theorem}[section]
\newtheorem*{theorem*}{Theorem}
\newtheorem{lemma}[theorem]{Lemma}
\newtheorem{proposition}[theorem]{Proposition}
\newtheorem*{proposition*}{Proposition}
\newtheorem{corollary}[theorem]{Corollary}
\newtheorem*{corollary*}{Corollary}
\newtheorem*{fact*}{Fact}
\newtheorem{prop}[theorem]{Proposition}
\newtheorem{claim}[theorem]{Claim}
\newtheorem*{claim*}{Claim}
\newtheorem{fact}[theorem]{Fact}
\newtheorem{cor}[theorem]{Corollary}
\theoremstyle{definition}
\newtheorem*{definition*}{Definition}
\newtheorem{definition}[theorem]{Definition}
\newtheorem{question}[theorem]{Question}
\newtheorem*{question*}{Question}
\newtheorem{example}[theorem]{Example}
\theoremstyle{remark}
\newtheorem{remark}[theorem]{Remark}
\newcommand{\heq}{\operatorname{heq}}
\newcommand{\Aut}{\mathsf{Aut}}
\newcommand{\Av}{\mathsf{Av}}
\newcommand{\RCF}{\mathsf{RCF}}
\newcommand{\Th}{\mathsf{Th}}
\newcommand{\tp}{\ensuremath{\mathsf{tp}}}
\newcommand{\id}{\ensuremath{\mathsf{id}}}
\newcommand{\Cb}{\ensuremath{\mathsf{Cb}}}
\newcommand{\acl}{\ensuremath{\mathsf{acl}}}
\newcommand{\dcl}{\ensuremath{\mathsf{dcl}}}
\newcommand{\cU}{\ensuremath{\mathbb{M}}}
\newcommand{\cL}{\ensuremath{\mathcal{L}}}
\newcommand{\abar}{\bar{a}}
\newcommand{\Ind}[1]
{#1\setbox0=\hbox{$#1x$}\kern\wd0\hbox to 0pt{\hss$#1\mid$\hss} \lower.9\ht0\hbox to 0pt{\hss$#1\smile$\hss}\kern\wd0}
\newcommand{\indep}{\mathop{\mathpalette\Ind{}}}
\newcommand{\notind}[1]
{#1\setbox0=\hbox{$#1x$}\kern\wd0
\hbox to 0pt{\mathchardef\nn=12854\hss$#1\nn$\kern1.4\wd0\hss}
\hbox to 0pt{\hss$#1\mid$\hss}\lower.9\ht0 \hbox to 0pt{\hss$#1\smile$\hss}\kern\wd0}
\newcommand{\Sh}{\operatorname{Sh}}
\newcommand{\eq}{\operatorname{eq}}
\newcommand{\St}{\operatorname{St}}
\newcommand{\Stab}{\operatorname{Stab}}
\newcommand{\RCVF}{\operatorname{RCVF}}
\newcommand{\NIP}{\ensuremath{\mathsf{NIP}}}
\newcommand{\ACVF}{\operatorname{ACVF}}
\newcommand{\quot}{\operatorname{quot}}
\title{Externally definable fsg groups in NIP theories}
\author{Artem Chernikov}
\address{Department of Mathematics, 1101 Kirwan Hall, University of Maryland College Park, MD 20742-4015, USA}
\email{artem@umd.edu}
\date{\today}
\begin{document}

\maketitle 

\begin{abstract}
We show that every fsg group  externally definable in an NIP structure $M$ is definably isomorphic to a group interpretable in $M$. Our proof relies on  honest definitions and a group chunk result reconstructing a hyper-definable group from its multiplication given generically  with respect to a translation invariant  definable Keisler measure on it. 
We obtain related results on externally (type-)definable sets and groups, including a proof of a conjecture of Eleftheriou on fsg groups in real closed valued fields, and a description of externally definable, definably amenable subgroups of definable groups.
\end{abstract}

\tableofcontents

\section{Introduction}

The study of stable groups is at the core of geometric stability
theory and its applications, starting with the work of Poizat on the
theory of generics and the work on group and field reconstruction
theorems by Zilber, Hrushovski and others (see e.g.~\cite{poizat2001stable, pillay1996geometric, wagner1997stable}). Analogously, groups
definable in $o$-minimal structures were studied extensively and
close connections to the theory of real Lie groups were discovered, culminating  in a resolution of Pillay's conjecture
for compact $o$-minimal groups. Its proof \cite{NIP1} and subsequent work \cite{NIP2, NIP3} has brought to light
the importance of a general theory of groups definable in $\NIP$
structures \cite{simon2015guide}  and the study of invariant measures on definable subsets of the group (see e.g.~\cite{chernikov2018model} for a short survey). A deep analysis of definable groups in algebraically closed valued fields was carried out in \cite{hrushovski2019valued}. Methods  of
topological dynamics were introduced into the picture starting with Newelski \cite{N1}, and connections with \emph{tame} dynamical systems as studied by Glasner, Megrelishvili and others (see e.g.~\cite{glasner2007structure, Gla18}) have emerged, starting with \cite{chernikov2018definably, ibarlucia2016dynamical}. This led to the development of the theory of generics in definably amenable NIP
groups \cite{chernikov2018definably, stonestrom2023f} and a proof of Newelski's Ellis group conjecture connecting the model theoretic connected component $G/G^{00}$ with the ideal group of the associated dynamical system \cite{chernikov2018definably} for definably amenable groups. Further recent work on NIP groups involves: further work around (generic, local) compact domination \cite{simon2017vc, conant2021pseudofinite};  study of the structure of convolution semigroups of Keisler measures \cite{chernikov2022definable, chernikov2023definable2, chernikov2024definable}; a proof of the revised Newelski's conjecture beyond the definably amenable case (over countable models)  \cite{chernikov2024definable}; finiteness of the archimedean rank of local connected component of an NIP group \cite{hrushovski2024approximate}.

A particularly nice class of definably amenable groups arising in the work on Pillay's $o$-minimal groups conjecture is that of groups with \emph{finitely satisfiable generics}, or \emph{fsg} groups, introduced in \cite{NIP1}. It contains stable groups, groups with a stably dominated generic type in algebraically closed valued fields \cite{hrushovski2019valued}, and corresponds precisely to (definably) compact groups in $o$-minimal structures \cite{NIP1} and in the $p$-adics \cite{johnson2023note}. In the NIP context (see Section \ref{sec: type-def fsg groups}), these are precisely the groups that admit a (unique) translation-invariant measure $\mu$ on their definable subsets which is moreover \emph{generically stable} --- an abstract analogue of Haar measure in the definable category. The theory of stable groups largely generalizes to fsg groups in NIP theories \cite{NIP1, NIP2, NIP3, zbMATH06111354, chernikov2018definably} and partially beyond NIP \cite{chernikov2024definable}.

A set  $X \subseteq M^n$ is \emph{externally definable} if $X = Y \cap M^n$ for $Y \subseteq N^n$ a definable (with parameters) set in some elementary extension $N \succeq M$. A characteristic property of stable theories is that for every model, every externally definable set is already definable (this can hold for a particular model $M$ of an unstable theory, e.g.~for the fields $\mathbb{R}$ or $\mathbb{Q}_p$). An important theorem of Shelah \cite{shelah2014strongly} (generalizing Baisalov and Poizat \cite{baisalov1998paires} in the $o$-minimal case) shows that the expansion of an NIP structure $M$ by all externally definable sets $M^{\Sh}$ eliminates quantifiers and remains NIP. This suggests that externally definable sets in NIP structures are well behaved and play a key role in the study of NIP theories, further explained by the existence of \emph{honest definitions} introduced in \cite{chernikov2013externally, chernikov2015externally} (see Section \ref{sec: Sh exp, honest def}). 
Here we are interested in externally definable groups, i.e.~groups with the underlying set and the graph of their operations externally definable, mostly in the NIP context.  Some issues around external definability in groups were considered in e.g.~
\cite{NIP2, zbMATH06578125, wreo1695, zbMATH07479576, arXiv:2504.05566, baro2021open, baro2024ellis, arXiv:2208.00954}. 

Especially relevant for us here is \cite{chernikov2013externally} --- where it was demonstrated that many properties and invariants of a definable group such as definable amenability, fsg or $G^{00}$, are preserved  when passing to Shelah's expansion (see Section \ref{sec: props of grps preserved in Sh exp}). In this paper we begin studying externally definable  groups in NIP structures, aiming to describe them in terms of the internally (type-, or even hyper-)definable ones, and first focus on the fsg case. We summarize the main results of the paper about fsg groups.

\begin{theorem*}[Theorem \ref{thm: main for fsg subgroups}]
	Assume $T$ is NIP, $M \models T$ and $G$ is an $M$-definable group. Assume that $H$ is an externally definable subgroup of $G(M)$ which is fsg (in $\Th(M^{\Sh})$). Then $H$ is already an  $M$-definable subgroup of $G$ in $T$. (A similar result holds for type-definable groups, see Theorem \ref{thm: main for fsg subgroups} for the exact statement.)
\end{theorem*}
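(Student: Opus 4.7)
The plan is to realize $H$ as the $L$-stabilizer of a translation-invariant Keisler measure on $G$ and then upgrade the resulting type-definable subgroup to an $L(M)$-definable one.

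First, by Shelah's theorem $\Th(M^{\Sh})$ is NIP, so the fsg subgroup $H$ carries a unique bi-translation-invariant Keisler measure $\mu$ on its $L^{\Sh}(M)$-definable subsets, and this $\mu$ is generically stable and finitely satisfiable in $M$. Using that $G$ is already $L(M)$-definable, I would extend $\mu$ to a measure $\tilde{\mu}$ on $G$ in $L^{\Sh}(M)$ concentrated on $H$ via $\tilde{\mu}(X):=\mu(X\cap H)$, and then restrict parameters to obtain $\nu := \tilde{\mu}\restr L(M)$, a Keisler measure on $G$ in $L(M)$ that is automatically $H$-bi-invariant. Honest definitions (see Section~\ref{sec: Sh exp, honest def}) should ensure that $\nu$ remains definable and finitely satisfiable in $M$ when viewed in the reduct $L$.

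The core step is to show $H = \Stab_L(\nu) := \{g \in G(M) : g_*\nu = \nu\}$. The inclusion $H \subseteq \Stab_L(\nu)$ is immediate from $H$-invariance of $\tilde{\mu}$. For the reverse, given $g \in G(M) \setminus H$, I plan to use honest definitions of $H$ together with the $\mu$-generic structure provided by fsg to construct an $L(M)$-definable set $D \subseteq G$ that separates $H$ from the coset $gH$ modulo $\mu$-null sets, so that $\mu(D \cap H) \neq \mu(g^{-1}D \cap H)$ witnesses $g_*\nu \neq \nu$. This separation step is the main obstacle: disjoint externally definable cosets are not in general $L(M)$-separable directly, so honest definitions must be combined with measure-theoretic translation invariance to yield an internal separation witnessed at the level of $\mu$.

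Finally, once $H = \Stab_L(\nu)$ is established as a type-definable subgroup of $G$ over $M$ in $L$, an NIP compactness argument combining the external definability of $H$ with generic stability of $\nu$ upgrades type-definability to full $L(M)$-definability, completing the proof. The type-definable-group version stated in the theorem is handled in parallel, with this final upgrading step producing an $L(M)$-type-definable group rather than a definable one; this is where the subgroup case crucially bypasses the more general hyper-definable group chunk reconstruction mentioned in the abstract.
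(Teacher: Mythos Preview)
Your overall strategy --- restrict the $H$-invariant generically stable measure $\mu'$ from $L'$ to $L$, obtain $\nu$, and identify $H$ with $\Stab_G(\nu)$ --- is exactly the paper's approach. The forward inclusion $H\subseteq\Stab_G(\nu)$ and the generic stability of $\nu$ in $T$ (your Proposition~\ref{prop: properties of reducts of measures} analogue) are also fine.

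The gap is precisely where you flag it: the reverse inclusion $\Stab_G(\nu)\subseteq H$. Your proposed route --- for $g\notin H$, build an $L(M)$-definable set $D$ separating $H$ from $gH$ up to $\mu$-null sets via honest definitions --- is not carried out, and it is not clear it can be made to work directly. The paper bypasses this entirely with a single key lemma (Lemma~\ref{lem: unique extension of def meas to Sh} / Theorem~\ref{thm: corresp for gs measures}): a measure generically stable in $T$ has a \emph{unique} extension to a global Keisler measure in $T'$. Granting this, the reverse inclusion is a one-liner: if $g\cdot\nu=\nu$, then $g\cdot\mu'$ is another $L'$-extension of $\nu$, hence $g\cdot\mu'=\mu'$ by uniqueness, so $g\in\Stab_G(\mu')=H$. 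The honest-definitions work you allude to is what goes into proving this unique-extension lemma, not into an ad hoc coset-separation argument.

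A second, smaller issue: you work entirely over $M$. The paper carries out the stabilizer computation in the monster $\widetilde{M}'$ of $T'$ (so the stabilizer is genuinely type-definable over a small model $M_1$), obtains $H'(M')=\Stab_G(\nu)(M')$, and only then descends to $M$ by sandwiching $L'(M)$-formulas defining $H'$ between $L(M_1)$-formulas from the type defining $\Stab_G(\nu)$ and pulling parameters back to $M$ via $M^{\Sh}\prec^{L'}\widetilde{M}'$. Your final ``NIP compactness argument'' gestures at this, but the passage through the monster is what makes the compactness work; you should make this explicit.
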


\begin{theorem*}[Theorem \ref{thm: type-def and ext def fsg implies def}]
Let $T$ be  NIP, $G$ type-definable over $M_0 \models T$, and $M \succ M_0$ is $|M_0|^+$-saturated. Assume that $G$ is fsg in $T$, and also that $G(M)$ is definable in $M^{\Sh}$. Then $G(M)$ is already definable in $M$.
\end{theorem*}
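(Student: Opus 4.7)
The plan is to combine honest definitions with the fsg Keisler measure $\mu$ on $G$ in order to descend the external parameter defining $G(M)$ to a parameter in $M$, and then to use translation invariance of $\mu$ to identify the resulting internal set with $G(M)$.

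First I would extract the measure and an honest definition. Since $G$ is fsg in $T$, let $\mu$ be the unique left-$G$-invariant Keisler measure on $G$; by general fsg theory in NIP it is generically stable, definable over $M_0$, and finitely satisfiable in $M_0$. Let $\pi(x)$ be the partial type over $M_0$ with $\pi = G$, and fix $\phi(x,y)$ and $c$ in an elementary extension $N \succ M$ witnessing $\phi(M,c) = G(M)$. Applying the honest definition theorem for the NIP formula $\phi(x,c)$ over $M$ produces, in an $|M|^+$-saturated extension $M^* \succ M$, a formula $\psi(x,z)$ and a tuple $d \in M^*$ with $\psi(M^*,d) \cap M = G(M)$. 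Using that $\psi(M^*,d) \cap M$ is a subgroup of $G(M)$ and invoking the paper's group chunk reconstruction from generic multiplication with respect to $\mu$, I would replace $\psi$ by a variant (still called $\psi$) so that $\psi(x,d) \vdash \pi(x)$ and $\psi(x,d)$ is closed under the type-definable multiplication and inversion of $G$ in $M^*$; in other words, $\psi(x,d)$ presents a type-definable subgroup of $G(M^*)$ of full $\mu$-measure.

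Second, I would use $\mu$ and the $|M_0|^+$-saturation of $M$ to move $d$ into $M$. Consider the partial type $\Sigma(z)$ over $M_0$ expressing: (i) $\psi(x,z) \vdash \pi(x)$ (using the formulas of $\pi$), (ii) $\psi(x,z)$ is closed under the group operations (using the type-definable graphs of multiplication and inversion), and (iii) $\mu(\psi(x,z)) \ge 1 - \tfrac{1}{n}$ for every $n$. All parameters in $\Sigma$ lie in $M_0$ thanks to the $M_0$-type-definability of $G$ and its operations and the $M_0$-definability of $\mu$. The tuple $d \in M^*$ realizes $\Sigma$, and $|\Sigma| \le |M_0|$, so by $|M_0|^+$-saturation of $M$ there is $d_0 \in M$ realizing $\Sigma$.

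Third, in the monster the set $H := \{x : \psi(x, d_0)\}$ is then a type-definable subgroup of $G$ with $\mu(H) = 1$. Translation invariance of $\mu$ forces every coset $gH$ to have $\mu$-measure $1$; disjointness of cosets together with $\mu(G) = 1$ forces $H = G$. Intersecting with $M$ gives $\psi(M, d_0) = G(M)$, exhibiting $G(M)$ as $M$-definable in $T$.

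The main obstacle is the first step: turning the raw honest definition $\psi(x,d)$, which only controls the trace $\psi(M^*,d) \cap M = G(M)$, into a formula whose realization in $M^*$ is itself a subgroup of $G(M^*)$ presented by an $M_0$-describable set of conditions. This is precisely where the group chunk theorem advertised in the abstract is needed: the multiplication of $G$ is defined generically with respect to $\mu$ on $\psi(x,d)$, and the chunk theorem converts this generic data into a genuine type-definable subgroup structure. Once this conversion is available, Steps 2 and 3 are standard applications of definability of $\mu$, saturation, and the elementary fact that a subgroup of full mass in an invariant probability group must be the whole group.
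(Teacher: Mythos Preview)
Your Step 1 is where the argument breaks, and the group chunk theorem does not repair it. What you need there is a \emph{single formula} $\psi(x,z)$ and a parameter $d$ such that $\psi(x,d)\vdash\pi(x)$, $\psi(\cU,d)$ is a subgroup of $G(\cU)$, and $\mu(\psi(x,d))=1$. But observe that any such data already forces $G$ to be definable over $d$ by your own coset argument in Step 3: a definable subgroup of $G$ of full $\mu$-measure must equal $G$. So Step 1 is not a preliminary reduction; it is the entire theorem, and you have pushed the whole difficulty into it. The group chunk theorem does something different: from generic multiplication on a type-definable partial type it reconstructs a \emph{hyper}definable group $Y/E$ together with a type-definable embedding. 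It does not shrink an honest definition down to a formula whose solution set in the monster lies inside $G$ and is closed under multiplication. (A side issue: for type-definable fsg groups the measure is only guaranteed to be generically stable over $|M_0|^+$-saturated models, not over $M_0$ itself; cf.\ Proposition~\ref{prop: fsg iff inv gen stab meas} and Fact~\ref{fac: fsg groups basic props}(2), the latter being stated only for definable $G$.)

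The paper's proof avoids honest definitions and the group chunk entirely. It works on the $T'$-side: letting $H(x)\in L'(M)$ define $G(M)$ externally, one always has $H(M')\le G(M')$. Take the global $G(M')$-invariant measure $\bar\mu\in\mathfrak{M}_G^L(M')$ generically stable over $M$, and use Lemma~\ref{lem: unique extension of def meas to Sh} to obtain its \emph{unique} extension $\bar\mu'\in\mathfrak{M}_x^{L'}(M')$. The explicit sup/inf description in that lemma gives $\bar\mu'(H(x))=\inf\{\mu(U):U\in L(M),\ H(M)\subseteq U(M)\}$; by $|M_0|^+$-saturation of $M$, any such $U$ contains some $\psi_\alpha$ from the partial type defining $G$, hence $\mu(U)=1$, so $\bar\mu'(H)=1$. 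Uniqueness of the extension also forces $\bar\mu'$ to be $G(M')$-invariant, so if $H(M')\lneq G(M')$ a translate $g\cdot H$ disjoint from $H$ would still have $\bar\mu'$-measure $1$, a contradiction. Thus $H(M')=G(M')$, and compactness in the pair makes $G$ a finite subconjunction of the $\psi_\alpha$, hence $L(M_0)$-definable. The point is that the argument lives entirely with the measure $\bar\mu'$ and never needs to produce an internal definable subgroup by hand.
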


\begin{theorem*}[Theorem \ref{thm: ext def fsg groups is to definable}]
		Let $T$ be NIP and $M \models T$. Then a group $G$ definable in $M^{\Sh}$ is fsg (in the sense of $T' := \Th(M^{\Sh})$) if and only if it is definably (in $M^{\Sh}$) isomorphic to a group definable in $M^{\eq}$ and fsg in $T$. And if $G$ was only type-definable in $T'$, it is type-definably isomorphic to a group hyper-definable in $T$ over some larger model (see Remark \ref{rem: type-def fsg in MSh recognize as hyper-definable}).
\end{theorem*}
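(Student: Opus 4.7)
The $(\Leftarrow)$ direction is the easy one: a group $H$ definable in $M^{\eq}$ and fsg in $T$ remains fsg when reinterpreted as a group definable in $M^{\Sh}$ with respect to $T'$ (by the preservation results recalled in Section~\ref{sec: props of grps preserved in Sh exp}), and being fsg is clearly preserved by definable isomorphism within a fixed theory. The real content is in $(\Rightarrow)$. Assume $G$ is definable in $M^{\Sh}$ and fsg in $T'$. Since $T'$ is NIP by Shelah's theorem, $G$ admits a unique translation-invariant Keisler measure $\mu$ in $T'$; it is generically stable and finitely satisfiable in $M$.

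The plan has three steps: (i) distill from $(G,\cdot,\mu)$ a generic multiplication chunk whose data lives in $M^{\eq}$ for the original theory $T$, using honest definitions; (ii) apply the group chunk theorem advertised in the abstract to reconstruct a hyper-definable group $H$ in $T$; (iii) when $G$ is in fact definable in $M^{\Sh}$, upgrade $H$ from hyper-definable to $M^{\eq}$-definable via Theorem~\ref{thm: type-def and ext def fsg implies def}. For step (i), let $a,b$ realize $\mu \otimes \mu$ in a saturated extension $\monster$ of $M^{\Sh}$, and consider $R(x,y,z) := (x \cdot y = z)$, an $M^{\Sh}$-definable ternary relation on $G$. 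Applying honest definitions to the formula defining $R$ (and to $G$ itself) produces an $M$-definable relation $R'$ that agrees with $R$ whenever $(x,y)$ is $\mu \otimes \mu$-generic; finite satisfiability of $\mu$ in $M$ together with the uniqueness of $\mu$ then promote $R'$ to an $M^{\eq}$-definable partial operation on the $\cL$-type over $M$ of $\mu$-generics. Together with the corresponding restrictions of associativity, cancellation, and generic $\mu$-invariance, transferred analogously from $T'$ to $T$, this provides the input data required by the group chunk theorem.

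Step (ii) then yields a hyper-definable group $H$ in $T$ and, via the translation action of $G$ on a fixed $\mu$-generic, a hyper-definable (in $T'$) isomorphism $G \to H(\monster)$; this already proves the type-definable part of the statement, with the ``larger model'' accounting for the parameters of the honest definitions and the group-chunk construction. For the definable version, step (iii) applies: the isomorphism realizes $H(M)$ as definable in $M^{\Sh}$, while $H$ is type-definable and fsg in $T$, so Theorem~\ref{thm: type-def and ext def fsg implies def} gives that $H$ is already $M^{\eq}$-definable in $T$. The main obstacle is step (i): one must coordinate honest definitions, generic stability of $\mu$, and finite satisfiability in $M$ finely enough that the approximation $R'$ genuinely becomes an $M^{\eq}$-definable partial function on a $T$-type-definable generic set, and that each of the group axioms transfers to generics in the precise form demanded by the group chunk theorem. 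Executing this coordination carefully should also clarify exactly why passing to a slightly larger base model is necessary (and sufficient) in the type-definable case.
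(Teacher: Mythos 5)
Your overall architecture matches the paper's: extract the invariant generically stable measure $\mu$ for $G$ in $T'$, restrict to $T$, use honest definitions to replace the external graph of multiplication by an internal formula that implies it on $\mu^{\otimes 2}$-generics, feed the resulting data into the hyperdefinable group chunk theorem to get a group $H$ hyper-definable in $T$ together with a $T'$-type-definable isomorphism $G\to H$, and this indeed gives the type-definable/hyper-definable part of the statement. Two remarks on step (i): honest definitions produce parameters in an elementary extension, not in $M$, and to make all of the chunk data ($\pi_\mu$, the partial operation, and the implications $\pi_{\bar\mu}\vdash\pi_{\bar\mu'}$) type-definable over a single base one has to iterate the honest-definition construction $\omega$ times and work over the resulting model $M_\omega\succ M$; the definable parameters are only pulled back down to $M$ at the very end by elementarity of the pair $(N',M')\succ(N,M)$. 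Your sketch is compatible with this but glosses over it.

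The genuine gap is step (iii). The group produced by the chunk theorem is \emph{hyper-definable}, i.e.\ a quotient $Y/F$ by a type-definable (not definable) equivalence relation, so Theorem \ref{thm: type-def and ext def fsg implies def} does not apply to it: that theorem is about a group type-definable in the real sorts over some $M_0$, requires the ambient model to be $|M_0|^{+}$-saturated over the base of type-definability, and concludes definability of the set of $M$-points in $M$. None of these hypotheses is available here: the reconstructed group lives in a hyperimaginary sort, its base of (type-)definability is $M_\omega\succ M$ (so the saturation requirement points the wrong way), and the main theorem is supposed to hold for an \emph{arbitrary} model $M\models T$, not a saturated one. What is actually needed to convert the hyper-definable group into an $M^{\eq}$-definable one is the separate compactness argument of Proposition \ref{prop: elim hyperdef group}: because the hyper-definable group in $T$ is type-definably isomorphic (in $T'$) to a group \emph{definable} in $T'$, finitely many formulas from the defining partial types already define the equivalence relation, the multiplication and the isomorphism on the relevant set, yielding a group definable in $T^{\eq}$ and a definable isomorphism; one then transfers the parameters from $M_\omega$ to $M$ by elementarity. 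Without this (or an equivalent elimination step), your appeal to Theorem \ref{thm: type-def and ext def fsg implies def} does not close the argument.
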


These results also hold for an arbitrary reduct of $M^{\Sh}$ expanding $M$ (Remarks \ref{rem: no fsg subgroups applies to reducts} and \ref{rem: ext def fsg iso def applies to reducts}). Our proof of Theorem \ref{thm: ext def fsg groups is to definable}  relies on honest definitions and the following measure-theoretic group chunk theorem generalizing \cite[Section 3.4]{hrushovski2019valued} from partial definable types to partial \emph{type-definable} types. See also \cite{TaoBlog} for a probabilistic interpretation of the classical group chunk and group configuration theorems in algebraically closed fields. We note that in some related special cases, e.g.~in the study of definable groups internal to the predicate in dense pairs of $o$-minimal or related ``geometric'' structures, one can use a dimension-theoretic or a topological group chunk \cite{van1990weil, eleftheriou2021pillay, peterzil2022definable}. But in a general fsg NIP group we have neither dimension nor topology at our disposal, so we work directly with the canonical generically stable translation-invariant measure.

\begin{theorem*}[Corollary \ref{cor: measure theoretic group chunk}]
Let $T$ be NIP, $G$ an $M$-type-definable group, and $\mu \in \mathfrak{M}_{G}(\cU)$ an $M$-definable left-invariant measure (such $M$ and $\mu$ exist  whenever $G$ is a definably amenable NIP group, Fact \ref{fac: def am def measure}). Then $G$ can be recovered, up to an $M$-type-definable isomorphism of $M$-hyperdefinable groups in $\cU^{\heq}$, from the $M$-type-definable partial type $\pi_{\mu} = \{\varphi(x) \in L(\cU) : \mu(\varphi(x)) = 1 \}$ and the generically given group operation on $\pi_{\mu}$.
\end{theorem*}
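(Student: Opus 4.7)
The plan is to adapt the classical Weil--Hrushovski group chunk construction to the hyperdefinable setting, with the generically stable invariant measure $\mu$ supplying the notion of generic independence where classically one would use a generic type. The reconstructed group is an $M$-hyperdefinable quotient of generic pairs in $\pi_{\mu} \times \pi_{\mu}$, together with a canonical $M$-type-definable isomorphism with $G$.

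First, I would lay out the probabilistic infrastructure. Since $T$ is NIP and $\mu$ is $M$-definable and left-invariant, $\mu$ is generically stable (as recorded in Section \ref{sec: type-def fsg groups}), so the product measures $\mu^{\otimes n}$ on $G^n$ are well defined, admit Fubini, and are symmetric under coordinate permutations. I then introduce a \emph{$\mu$-generic pair} $(a,b)$ to mean $a \models \pi_{\mu}$ and $b$ realizes the measure-one formulas over $Ma$ (the unique $M$-invariant lift). Left invariance combined with Fubini gives that if $(a,b)$ is generic then the joint distribution is preserved by each of $(a,b) \mapsto (a, ab)$, $(a,b)\mapsto(ab,b)$, and $(a,b)\mapsto(b,a)$; in particular $ab \models \pi_{\mu}$ for every such pair, which is exactly the meaning of the ``generically given group operation on $\pi_{\mu}$''.

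Next, I would form the quotient. On the $M$-type-definable set $X \subseteq \pi_{\mu}^{2}$ of generic pairs, define $(a_1,b_1) \sim (a_2,b_2)$ to hold iff for some (equivalently, by Fubini, every) $c \in \pi_{\mu}$ generic over $M a_1 b_1 a_2 b_2$, there exists $d \in \pi_{\mu}$ with $a_1 \cdot c = a_2 \cdot d$ and $b_1 \cdot c = b_2 \cdot d$, all products read via the generic multiplication on $\pi_{\mu}$. The equivalence of ``some $c$'' and ``every $c$'' is the key Fubini step leveraging generic stability; reflexivity is trivial, symmetry comes from the involution $c \leftrightarrow d$, and transitivity is proved by concatenating generic witnesses. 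Both $X$ and $\sim$ are $M$-type-definable, so $H := X/\!\sim$ is an $M$-hyperdefinable set in $\cU^{\heq}$.

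Finally, I would endow $H$ with a group structure and produce the isomorphism. Multiplication on $H$ is computed by moving representatives into a mutually generic configuration via the invariance of $\mu$, whence well-definedness follows from vanishing of ambiguities modulo $\sim$; the identity is $[(c,c)]$ for any $c\in\pi_\mu$, and inversion is $[(a,b)]^{-1} = [(b,a)]$. All group axioms pull back to identities in $G$ applied to suitably generic witnesses whose existence is guaranteed by Fubini. The map $\iota:G\to H$ sending $g$ to $[(c, g\cdot c)]$ for $c \in \pi_{\mu}$ generic over $Mg$ is $M$-type-definable, well-defined by $\sim$, a homomorphism, and bijective. The main obstacle, compared with the definable-type case of \cite[Section 3.4]{hrushovski2019valued}, is that the generic multiplication is no longer a definable function but only a projective limit of definable approximations agreeing with the true product on a $\mu\otimes\mu$-measure-one set. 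Each equation above must accordingly be read as a conjunction of such approximations, and one must verify---combining Fubini for generically stable measures with the honest definition machinery of Section \ref{sec: Sh exp, honest def}---that these approximations collapse to exact agreement after quotienting by $\sim$.
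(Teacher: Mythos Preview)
Your central claim---that $\mu$ is generically stable---is incorrect, and most of your construction depends on it. The hypothesis is only that $\mu$ is $M$-\emph{definable} and left-$G$-invariant; nothing in Section~\ref{sec: type-def fsg groups} upgrades this to generic stability (indeed, Fact~\ref{fac: def am def measure}, which the statement itself cites, produces only a definable invariant measure, and a definably amenable NIP group need not be fsg). Without generic stability, $\mu\otimes\mu$ need not be symmetric, so $\pi_{\mu}$ need not $\otimes$-commute with itself (Lemma~\ref{lem: props of pi mu}(1) only asserts commutation under generic stability). This breaks your ``some $c$ iff every $c$'' Fubini step, your inversion $[(a,b)]^{-1}=[(b,a)]$ (since $(a,b)\models\pi_{\mu}^{\otimes 2}|_{M}$ does not yield $(b,a)\models\pi_{\mu}^{\otimes 2}|_{M}$), and the symmetry of generic pairs you use throughout. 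Two smaller points: there is no role for honest definitions here---the corollary is entirely internal to $T$; and the group multiplication on $G$ is already a genuine $M$-definable partial function (Remark~\ref{rem: type-def group formula}), not merely a projective limit of approximations, so the last paragraph addresses a non-obstacle.

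The paper's route avoids all of this by never requiring $\pi_{\mu}$ to commute with itself. It first proves a general hyperdefinable group chunk theorem (Theorem~\ref{thm: hyperdef group chunk gives group}) for any $C$-type-definable partial type $\pi$ equipped with functions $F,H,K$ as in Definition~\ref{defn: group chunk}; the reconstructed group is the group of invertible $\pi$-\emph{germs} of the left translations $F_a$, and one shows every element is of the form $[F_a]\circ[F_b]^{-1}$ by an argument that only uses the one-sided product $\pi^{\otimes 2}$. Then one simply checks that $\pi_{\mu}$ with the ambient group operation satisfies the group chunk axioms (Remark~\ref{rem: group chunk from a group}, Remark~\ref{rem: pi mu type-def for mu def}, Lemma~\ref{lem: props of pi mu}(3)), and invokes the uniqueness statement (Corollary~\ref{cor: unique group from group chunk}) to identify the output with $G$. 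If you want to salvage your pair-quotient construction, you would need to reformulate $\sim$ and the group operations so that they are asymmetric in the two coordinates and verifiable using only $b\models\pi_{\mu}|_{Ma}$, never the reverse.
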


For a definable measure $\mu$, the partial type $\pi_{\mu}$ need not be definable, but only \emph{type-definable}, hence the equality of germs on $\pi_{\mu}$ is only a type-definable equivalence relation. Our proof of Theorem \ref{thm: ext def fsg groups is to definable} then proceeds as follows. Let $L' := L^{\Sh}$, $T' := \Th_{L'}(M^{\Sh})$ and $\widetilde{M}' \succ^{L'} M^{\Sh}$ be a saturated model (then $M' := \widetilde{M}'\restriction_{L} \succ^{L} M$ is saturated) and  $G'$ an $L'(M)$-definable fsg group, witnessed by a global $G'(M')$-invariant measure $\mu' \in \mathfrak{M}_{G'}^{L'}(M')$ generically stable over $M$ (see Section \ref{sec: type-def fsg groups}). Then $\mu := \mu' \restriction_{L}$ remains generically stable in $T$, and $\mu'$ is the unique measure extending $\mu$ via:
\begin{theorem*}[Theorem \ref{thm: corresp for gs measures}]
	Let $T$ be NIP, $M \models T$, $T' := \Th_{L'}\left(M^{\Sh} \right)$ and $\widetilde{M}'$ a monster model for $T'$. Then the map $\mu' \in \mathfrak{M}_x^{L'}(M') \mapsto \mu := \mu'\restriction_{L}  \in \mathfrak{M}_x^{L}(M')$ defines a bijection between global generically stable measures in $T'$ and in $T$; and 
	$\mu'$ is the unique measure in $\mathfrak{M}_x^{L'}(M')$ extending $\mu$.
	\end{theorem*}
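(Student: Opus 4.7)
The strategy is to establish two claims: \textbf{(A)} the restriction map $\mu' \mapsto \mu' \restriction_L$ sends globally generically stable $L'$-measures to globally generically stable $L$-measures; and \textbf{(B)} every global generically stable $L$-measure $\mu \in \mathfrak{M}^{L}_x(M')$ extends to a unique $L'$-measure $\mu' \in \mathfrak{M}^{L'}_x(M')$, which is automatically generically stable in $T'$. Together these give the bijection and the uniqueness of extension in a single stroke.

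For \textbf{(A)}, I would use that in NIP theories generic stability of a measure is equivalent to fim (finite approximability): for every $\varphi'(x, y) \in L'$ and every $\varepsilon > 0$ there exist realized tuples $a_1, \dots, a_n$ such that the empirical average $\frac{1}{n}\sum_i \delta_{a_i}$ approximates $\mu'(\varphi'(x, b))$ to within $\varepsilon$, uniformly in $b$. Restricting attention to $L$-formulas $\varphi(x, y) \in L \subseteq L'$ gives the same fim approximation for $\mu := \mu' \restriction_L$, so $\mu$ is fim in $T$ and hence generically stable.

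For \textbf{(B)}, existence: fix a small $M_0 \prec M'$ over which $\mu$ is fim, and let $(a_i)_{i<\omega}$ be a Morley sequence for $\mu$ over $M_0$ in $T$. The key claim is that $(a_i)$ remains $L'$-indiscernible over $M_0$: under the Shelah-expansion setup every $L'(M_0)$-formula evaluated on tuples from $M'$ is equivalent to an $L$-formula with parameters from a larger $L$-model, and the $L$-indiscernibility of $(a_i)$ persists in the presence of such external parameters. Since $T' = \Th_{L'}(M^{\Sh})$ is NIP, the standard fim/VC argument yields uniform convergence of the empirical averages $\frac{1}{n}\sum_{i<n}\delta_{a_i}$ on every $L'$-formula family, producing a measure $\mu' \in \mathfrak{M}^{L'}_x(M')$ which is fim (hence generically stable) in $T'$ and restricts to $\mu$.

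For \textbf{(B)}, uniqueness: let $\nu' \in \mathfrak{M}^{L'}_x(M')$ be any extension of $\mu$, and let $\psi'(x, a')$ be an $L'$-formula. For each $\varepsilon > 0$ I would invoke honest definitions in $T'$ to produce $L$-formulas $\theta^{\pm}(x, c^{\pm})$ with $c^{\pm}$ in a larger $L$-model such that $\theta^{-}(\widetilde{M}', c^{-}) \subseteq \psi'(\widetilde{M}', a') \subseteq \theta^{+}(\widetilde{M}', c^{+})$. Generic stability of $\mu$ provides a unique generically stable extension $\mu^*$ of $\mu$ to that larger $L$-model, and the fim property of $\mu$ together with a sufficiently careful choice of honest definitions allows one to arrange $\mu^*(\theta^{+} \setminus \theta^{-}) < \varepsilon$. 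Since $\nu'$ agrees with $\mu$ on $L$-formulas one then has $\nu'(\psi'(x, a')) \in [\mu^*(\theta^{-}), \mu^*(\theta^{+})]$, and sending $\varepsilon \to 0$ pins down $\nu'(\psi'(x, a'))$ independently of $\nu'$, forcing $\nu' = \mu'$. The main obstacle is precisely this measure-theoretic refinement of honest definitions: producing $L$-approximations $\theta^{\pm}$ whose $\mu^*$-symmetric difference is arbitrarily small, which will combine existence of honest definitions in NIP with uniqueness of generically stable extensions in $T$ and the fim approximations by the Morley sequence constructed in the existence half.
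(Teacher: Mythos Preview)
Your part \textbf{(A)} is correct and matches the paper's argument exactly (fam transfers to reducts).

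Your part \textbf{(B)} has a genuine gap in the existence half and is incomplete in the uniqueness half.

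For existence/generic stability of $\mu'$, you appeal to a ``Morley sequence $(a_i)$ for $\mu$ over $M_0$ in $T$'' and claim it remains $L'$-indiscernible. This step fails for two reasons. First, for a generically stable \emph{measure} (not a type) there is no Morley sequence that is an indiscernible sequence of realized elements: fim approximants $\bar a \models \theta_n(\bar x)$ give good averages but are not indiscernible in general (indiscernibility would force them to realize a single type). Second, even granting $L$-indiscernibility over $M_0$, your passage to $L'$-indiscernibility requires $L$-indiscernibility over the \emph{external} parameters in $N$, which does not follow from anything you have said; a sequence indiscernible over $M_0$ need not be indiscernible over $M_0 \cup N$.

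For uniqueness, you are heading in the right direction but you have not identified the key tool. The ``measure-theoretic refinement of honest definitions'' you flag as the main obstacle is exactly what the paper's Lemma~\ref{lem: unique extension of def meas to Sh} provides, and the mechanism is the \emph{strengthened} form of honest definitions (the ``moreover'' clause of Fact~\ref{fac: honest defs}): no global $L$-type invariant over $M_1$ is consistent with $\varphi(x,c)\setminus\theta(x,d)$. Since $\bar\mu$ is definable over $M_1$, every type in $S(\bar\mu)$ is $M_1$-invariant (Fact~\ref{fac: types in sup of inv meas are inv NIP}), so any extension $\bar\mu'$ must assign measure $0$ to the difference, giving $\bar\mu'(\psi(x,b)) = \bar\mu(\theta(x,b,c))$ \emph{exactly}. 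No $\varepsilon$-approximation or extension of $\mu$ to a ``larger $L$-model'' is needed: the honest-definition parameters already live in $M'$.

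The paper then finishes generic stability of $\bar\mu'$ by a short coheir trick that bypasses your Morley-sequence argument entirely: take any global coheir $\bar\mu''$ of $\mu':=\bar\mu'|_{M_1}$ in $T'$; then $\bar\mu''\restriction_L$ is a global coheir of $\mu:=\bar\mu|_{M_1}$ in $T$, hence equals $\bar\mu$ by uniqueness of invariant extensions of a generically stable measure; hence $\bar\mu''=\bar\mu'$ by the uniqueness just proved; so $\bar\mu'$ is finitely satisfiable in $M_1$ and therefore generically stable in $T'$. This is both simpler and avoids the indiscernibility issue.
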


\noindent Using honest definitions we can then  generically  recover the group operation of $G'$ on $\pi_{\mu}$ definably in $M' \models T$ (working over a larger model $M_{\omega} \succ^{L'} M$). Applying Corollary \ref{cor: measure theoretic group chunk}, we find a group $G^*$ hyper-definable in $T$ over $M_{\omega}$ and $M_{\omega}$-type-definably (in $T'$) isomorphic to $G'$. By a compactness argument (Section \ref{sec: eliminable hyper-def group}) we can then replace $G^*$ by a group $G_0$ definable in $T^{\eq}$. In the case of Theorem \ref{thm: main for fsg subgroups} we are already given a group $G$ definable in $M$ from the start, so we can recognize $H$ directly in $T$ as the stabilizer $\Stab_{G}(\mu)$, where again $\mu = \mu'\restriction_{L}$ for $\mu' \in \mathfrak{M}_{H}^{L'}(M')$ a translation-invariant generically stable measure witnessing that $H$ was fsg in $T'$ (see Section \ref{sec: Externally definable fsg subgroups of definable groups are definable}).

As an application of Theorem \ref{thm: ext def fsg groups is to definable}, we prove a conjecture of Pantelis Eleftheriou:
\begin{corollary*}[Corollaries \ref{cor: fsg subgroups in RCVF} and \ref{cor: ext def fsg groups in RCVF}]
Let $\RCVF$ be the theory of real closed valued fields, $M' \models \RCVF$ arbitrary. 
\begin{enumerate}
	\item If $G$ is definable in the reduct $M$ of $M'$ to $\RCF$, and $H \leq G$ is definable in $M'$ and is fsg, then $H$ is already definable in $M$.
	\item If $G$ is definable in $M'$ and is fsg, then $G$ is definably (in $M'$) isomorphic to a group definable in $M$.
\end{enumerate}
\end{corollary*}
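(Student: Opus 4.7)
The plan is to deduce both parts of the corollary directly from the paper's main fsg theorems, via the single structural observation that $\RCVF$ is, up to definitional equivalence, a reduct of $M^{\Sh}$ expanding $M$, where $M$ denotes the $\RCF$-reduct of $M'$. Granting this, part~(1) will follow from the reduct version of Theorem~\ref{thm: main for fsg subgroups} indicated in Remark~\ref{rem: no fsg subgroups applies to reducts}, and part~(2) from the reduct version of Theorem~\ref{thm: ext def fsg groups is to definable} indicated in Remark~\ref{rem: ext def fsg iso def applies to reducts}.

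The first step is to verify the reduct claim. Let $\mathcal{O} \subseteq M'$ be the valuation ring. As a convex subring of the $o$-minimal ordered field $M$, it is externally definable in $M$: any convex subset of an $o$-minimal structure is determined by a Dedekind cut realized by a single element in some elementary extension, and hence lies in $M^{\Sh}$ by definition. By the classical Cherlin--Dickmann quantifier elimination for $\RCVF$ in the language $L_{\RCF} \cup \{\mathcal{O}\}$, every $M'$-definable set is quantifier-free definable in $M$ using the predicate $\mathcal{O}$, and is therefore externally definable in $M$. This identifies $M'$, up to definitional equivalence, with the reduct of $M^{\Sh}$ obtained by adjoining the single predicate $\mathcal{O}$ to $L_{\RCF}$, so that $\Th(M')$ becomes a reduct of $\Th(M^{\Sh})$ expanding $\Th(M)$.

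With this in hand, for part~(1) the subgroup $H \leq G$ is definable in $M'$ and fsg in $\Th(M')$; by the above, $H$ is externally definable in $M$ and fsg in a reduct of $\Th(M^{\Sh})$ expanding $\Th(M)$. Then Remark~\ref{rem: no fsg subgroups applies to reducts} delivers the conclusion of Theorem~\ref{thm: main for fsg subgroups}, namely that $H$ is already $M$-definable. For part~(2), $G$ is definable and fsg in $M'$, which is a reduct of $M^{\Sh}$ expanding $M$, so Remark~\ref{rem: ext def fsg iso def applies to reducts} yields a group $G_0$ definable in $M^{\eq}$ and fsg in $\RCF$, together with an isomorphism $G \to G_0$ definable in the reduct $M'$.

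The essential new ingredient beyond the main theorems is purely classical (the external definability of convex subrings in an $o$-minimal field, combined with $\RCVF$ quantifier elimination); the only step that requires any care is confirming that the cited reduct remarks apply to an arbitrary, not necessarily saturated, $M' \models \RCVF$, which is precisely what those remarks are formulated to guarantee. I expect this bookkeeping with saturation and the passage from $M^{\Sh}$-data to $M'$-data in the reduct version of Theorem~\ref{thm: ext def fsg groups is to definable} to be the only genuine verification needed; everything else is a direct citation.
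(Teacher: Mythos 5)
Your treatment of part (1) is essentially the paper's proof: the valuation ring is convex, hence externally definable in the $o$-minimal reduct $M$, so $M'$ is a reduct of $M^{\Sh}$ expanding $M$, and Remark \ref{rem: no fsg subgroups applies to reducts} (the reduct version of Theorem \ref{thm: main for fsg subgroups}) applies. One small caveat: the appeal to Cherlin--Dickmann quantifier elimination is both imprecise (the classical QE for $\RCVF$ is in the language with the binary divisibility predicate, not with the unary predicate $\mathcal{O}$ alone, so ``quantifier-free definable using $\mathcal{O}$'' is not quite right) and unnecessary --- since the basic relations of $M'$ are externally definable in $M$ and the $M^{\Sh}$-definable sets are closed under first-order operations (indeed, by Fact \ref{fac: Sh exp qe} they are exactly the externally definable sets), every $M'$-definable set is definable in $M^{\Sh}$ without invoking any QE for $\RCVF$. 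This is how the paper argues, and it is the cleaner route.

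For part (2) there is a genuine gap. The reduct version of Theorem \ref{thm: ext def fsg groups is to definable} (Remark \ref{rem: ext def fsg iso def applies to reducts}) only produces a group $G_0$ definable in $M^{\eq}$, together with an $M'$-definable isomorphism $G \to G_0$; but the statement you must prove asserts that $G$ is $M'$-definably isomorphic to a group definable in $M$ itself. Your argument stops at $M^{\eq}$ and never addresses this discrepancy. The paper closes it by first proving Corollary \ref{cor: ext def fsg groups in RCVF}(1) for $o$-minimal theories, which combines the main theorem with the nontrivial result of Eleftheriou--Peterzil--Pillay \cite{eleftheriou2014interpretable} that every group definable in $M^{\eq}$, for $M$ $o$-minimal, is definably isomorphic to a group definable in $M$; part (2) then follows from this and the reduct remark. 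Without citing (or reproving) that elimination-of-imaginaries-for-groups input, your argument only establishes the weaker conclusion with $M^{\eq}$ in place of $M$, so the proposal as written does not prove the stated corollary.
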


Going beyond fsg, we consider externally type-definable, definably amenable subgroups of definable groups. Working in a monster model of $M^{\Sh}$, we can describe them as unions of  directed systems of subgroups (uniformly) type-definable in $T$, and use it to get a description entirely in $M$:
\begin{theorem*}[Proposition \ref{prop: ext def am descr in M'} and Theorem \ref{thm: ext def abelian final approx in M}]
	Let $T$ be NIP,  $M \models T$, $\widetilde{M}' \succ M^{\Sh}$ a monster model for $T'$, $G$ a definable group in $M$ and $H(M') \leq G(M')$ a subgroup which is $L'(M)$-type-definable and definably amenable in $T' = \Th(M^{\Sh})$. 
	 Then there exist an $L(M)$-type-definable set $\St_G(x,\bar{y})$ and $L'(M)$-type-definable set $Y \subseteq (M')^{\bar{y}}$, with $|\bar{y}| = |T|$, so that:
	\begin{enumerate}
		\item for every $\bar{a} \in Y(M')$, $\St_G(M'; \bar{a})$  is a type-definable in $M' \models T$ subgroup of $H(M')$ of the form $\Stab_{G}(\mu_{\bar{a}}) \leq H(M')$ for a measure $\mu_{\bar{a}} \in \mathfrak{M}^{L}_{G}(M')$ generically stable over $\bar{a}$ in $T$;
		\item $H(M') = \bigcup_{\bar{a} \in Y} \St_{G}(M', \bar{a})$;
		\item the family $\{\St_{G}(M', \bar{a}) : \bar{a} \in Y\}$ is directed.
	\end{enumerate}

	Working entirely in $M$ and assuming that $H(M) \leq G(M)$ is $L'(M)$-definable and definably amenable  in $T'$, there is an $M$-type-definable in $T$ set $\Gamma(x, \bar{y})$ and a strict $L'(M)$-pro-definable set $Z \subseteq \cU^{\bar{y}}$ so that $H(M) = \bigcup_{\bar{a} \in Z} \Gamma(M, \bar{a})$ and the family $\{\Gamma(M, \bar{a}) : \bar{a} \in Z\}$ is (uniformly) directed.
\end{theorem*}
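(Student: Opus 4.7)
The plan centers on expressing $H(M')$ as a directed union of $T$-type-definable subgroups, using left-invariant measures from definable amenability in $T'$ together with honest definitions in the Shelah expansion. Since $H$ is definably amenable in $T'$, there is an $L'(M)$-definable, left-$H$-invariant Keisler measure $\mu' \in \mathfrak{M}^{L'}_H(\widetilde{M}')$, which may be taken $f$-generic by averaging. For each finite conjunction $\varphi(x)$ of $L'(M)$-formulas implied by the type-definition of $H$, honest definability produces an $L(M\bar{a})$-formula $\widetilde{\varphi}(x, \bar{a})$ with $H(M') \subseteq \widetilde{\varphi}(M', \bar{a}) \subseteq \varphi(M')$ and $\widetilde{\varphi}(M, \bar{a}) = \varphi(M)$. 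I parameterize measures by such honest-definition tuples $\bar{a}$: set $\mu_{\bar{a}} \in \mathfrak{M}^L_G(M')$ to be the $L$-restriction of a localization of $\mu'$ to an $L$-definable set built from $\widetilde{\varphi}$. By Theorem \ref{thm: corresp for gs measures} applied to the concentrated restriction, $\mu_{\bar{a}}$ is generically stable over $\bar{a}$ in $T$.

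Define $\St_G(x, \bar{y})$ as the $L(M)$-type-definable schema expressing that $x$ fixes $\mu_{\bar{y}}$ under left translation --- a partial type in $x$ uniformly in $\bar{y}$, since stabilizers of generically stable measures are type-definable uniformly in their defining parameters --- and take $Y$ to be the $L'(M)$-type-definable locus of $\bar{a}$ encoding valid honest definitions of formulas in the type-definition of $H$. For $\bar{a} \in Y$, the containment $H(M') \subseteq \St_G(M'; \bar{a})$ holds because each $h \in H(M')$ satisfies $h \cdot \mu' = \mu'$ in $T'$, which descends under $L$-restriction to $h \cdot \mu_{\bar{a}} = \mu_{\bar{a}}$ in $T$. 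The reverse containment $\St_G(M'; \bar{a}) \subseteq H(M')$ is the main technical point: by arranging the support of $\mu_{\bar{a}}$ via a two-sided honest-definition argument --- combining the upper approximation $\widetilde{\varphi}$ with a measure-theoretic lower-concentration via a convolution-type symmetrization of $\mu'$ --- one forces $\Stab_G(\mu_{\bar{a}})$ to lie inside $H(M')$. Directedness of the family then follows by closing $Y$ under finite conjunctions of the underlying $L'(M)$-formulas, and the union over $Y$ recovers $H(M')$ by compactness.

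For the entirely-in-$M$ description, the strengthening to $L'(M)$-\emph{definability} of $H(M)$ (rather than mere type-definability) allows the $L'(M)$-type-definable parameter set $Y$ to be replaced by a strict $L'(M)$-pro-definable $Z \subseteq \cU^{\bar{y}}$. Strictness --- surjectivity of the projections in the defining inverse system --- follows from saturation of $M'$ together with a compactness-lifting argument giving a coherent choice of honest-definition parameters as one refines the single $L'(M)$-formula defining $H(M)$. The main expected obstacle throughout is the two-sided approximation ensuring $\Stab_G(\mu_{\bar{a}}) \subseteq H(M')$: upper honest definitions alone yield stabilizers that may extend beyond $H$, and closing this gap requires the convolution-based symmetrization of $\mu'$ together with the characterization of $f$-generic measures via their bounded-index stabilizers.
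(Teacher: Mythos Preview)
Your proposal has a fundamental structural error. You aim to show that $H(M') \subseteq \St_G(M';\bar a)$ \emph{and} $\St_G(M';\bar a) \subseteq H(M')$ for each $\bar a \in Y$, i.e.\ that $H(M') = \Stab_G(\mu_{\bar a})$ for a single generically stable $L$-measure $\mu_{\bar a}$. But if that held, $H(M')$ would be type-definable in $T$ over a small set (Fact~\ref{fac: stab of def meas type def}), contradicting Example~\ref{ex: ext def ab wo type-def subgrps}: the convex hull of $\mathbb Z$ in a saturated model of Presburger is an externally definable abelian subgroup containing no nontrivial $T$-type-definable subgroup. The conclusion of the theorem is that each $\St_G(M';\bar a)$ is a subgroup \emph{of} $H(M')$, typically strictly smaller, and only their directed \emph{union} recovers $H(M')$. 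Your argument that ``$h\cdot\mu' = \mu'$ descends to $h\cdot\mu_{\bar a} = \mu_{\bar a}$'' would give $H(M') \subseteq \Stab_G(\mu_{\bar a})$, which is the wrong inclusion for item~(1).

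The paper's route is different in kind. It first applies Theorem~\ref{thm: approx def am by stab gs} \emph{inside $T'$} to write $H(M')$ as a directed union $\bigcup_{\bar a' \in X'_H} \Stab_H(\mu_{\bar a'})$ over \emph{many distinct} generically stable measures $\mu_{\bar a'}$ in $T'$ (this is where the symmetrization Fact~\ref{fac: symmetrization measure} enters, to manufacture generically stable measures from invariant ones). The crucial step is then that for each such $\mu_{\bar a'}$, its stabilizer in $G$ computed in $T'$ coincides with the stabilizer in $G$ of the $L$-reduct $\mu_{\bar a'}\!\restriction_L$ computed in $T$; this uses the uniqueness of extension of generically stable measures from $T$ to $T'$ (Theorem~\ref{thm: corresp for gs measures}), exactly as in the proof of Theorem~\ref{thm: main for fsg subgroups}. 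Honest definitions are not applied to the defining formulas of $H$ directly; they enter only through this measure-correspondence.

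For the second part, your sketch is too thin: the passage to a presentation entirely in $M$ requires \emph{uniform} directedness of the family (Proposition~\ref{prop: getting unif directed}), plus several compactness arguments (Claims~\ref{cla: uniform directedness in H}--\ref{cla: putting things together}) to compare approximate stabilizers in $T$ and $T'$ and to replace parameters $\bar a \in X_G$ (which need not meet $M$) by arbitrary finite tuples in $H(M)$. The resulting sets $\Gamma(M,\bar a)$ are intersections of approximate stabilizers and are \emph{not} subgroups --- again reflecting that a single-measure approach cannot work.
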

Here the sets $\Gamma(M,\bar{a})$ are given by the intersections of approximate stabilizers of a (possibly infinite) sequence of measures supported on finite subsets of $H(M)$ (we cannot expect $\Gamma(M, \bar{a})$ to be subgroups of $H(M)$, see Example \ref{ex: ext def ab wo type-def subgrps}). Our proof of this theorem proceeds as follows. First we generalize a result of 
Hils, Hrushovski and Simon \cite{hils2021definable} from definable abelian to type-definable, definably amenable NIP groups:
\begin{theorem*}[Theorem \ref{thm: approx def am by stab gs}]
	($T$ NIP) If $G$ is type-definable over $M$ and  definably amenable, then there are type-definable over $M$ sets $\St_G \subseteq G(\cU) \times \cU^{\bar{y}}, X_{G} \subseteq \cU^{\bar{y}}$ with $|\bar{y}| = |T|$ so that $G(\cU) = \bigcup_{\bar{a} \in X_{G}} \St_{G}(\cU, \bar{a})$, $\St_G(\cU; \bar{a})$ is the stabilizer of a global generically stable measure $\mu_{\bar{a}}$ for every $\bar{a} \in X_{G}$, and the family $\{\St_{G}(\cU, \bar{a}) : \bar{a} \in X_{G}\}$ is directed.\end{theorem*}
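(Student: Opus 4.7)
The plan is to adapt the strategy of Hils--Hrushovski--Simon \cite{hils2021definable}, which handles the definable abelian case, to the weaker hypotheses of type-definability and definable amenability. I will work in a monster model $\cU \succ M$ and use the global left-invariant measure $\nu_{0} \in \mathfrak{M}_G(\cU)$ definable over $M$ provided by Fact~\ref{fac: def am def measure}. The aim is to produce, for each finite tuple $\bar{g} \in G(\cU)^{<\omega}$, a global generically stable measure $\mu_{\bar{g}}$ whose left stabilizer contains $\bar{g}$, uniformly in a parameter coding $\mu_{\bar{g}}$, and then assemble these into a type-definable family.

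For the uniform encoding, a generically stable measure $\mu$ over a small $M$ is determined by its Morley sequence (a definable, finitely satisfiable indiscernible sequence over $M$), so it is coded by a parameter $\bar{a}$ of length at most $|T|$. The space $X_G$ of such codes is $M$-type-definable by spelling out the axioms (probability measure supported on $G$, definability scheme, symmetry of Morley products, concentration on $G(\cU)$). Membership in the stabilizer is then expressed by the $M$-type-definable partial type
\[ \St_G(x; \bar{y}) := \bigl\{ \mu_{\bar{y}}(\varphi(x \cdot z)) = \mu_{\bar{y}}(\varphi(z)) : \varphi(z) \in L(M) \bigr\}, \]
once the definability scheme for $\mu_{\bar{y}}$ has been internalized, giving the required type-definable $\St_G$.

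The main step is to show that for every $\bar{g} \in G(\cU)^{<\omega}$ there is $\bar{a} \in X_G$ with $\bar{g} \subseteq \Stab(\mu_{\bar{a}})$, uniformly in $\bar{g}$. The idea is to project to the bounded compact quotient $G/G^{00}$ (available for type-definable definably amenable NIP groups), let $H \leq G/G^{00}$ be the closed subgroup topologically generated by the image of $\bar{g}$, and let $\lambda$ be Haar measure on $H$. Define $\mu_{\bar{g}}$ as an averaging of $\nu_{0}$ against $\lambda$, morally $\mu_{\bar{g}}(\varphi) = \int_H \nu_{0}(\varphi(h \cdot x))\, d\lambda(h)$, made precise through the projection $G \to G/G^{00}$ in $\cU^{\heq}$. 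By construction $\mu_{\bar{g}}$ is left-$\bar{g}$-invariant, and generic stability follows from the NIP characterization as simultaneously definable and finitely satisfiable over a small set, using compactness of $H$, $f$-genericity of $\nu_0$, and the fact that $\mu_{\bar g}$ is supported on boundedly many $G^{00}$-cosets. Directedness of $\{\St_G(\cU, \bar a) : \bar a \in X_G\}$ follows by applying the same construction to common generating sets of pairs of stabilizers, using that $|\bar{y}| = |T|$ suffices to accommodate small enlargements.

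The main obstacle will be the construction of $\mu_{\bar{g}}$ in the non-abelian, only type-definable setting. In the HHS proof of the abelian case, commutativity of convolution and normality of stabilizers make the averaging transparent, and definability of $G$ lets one manipulate Morley products formula-by-formula; in the present setting convolution is non-commutative and the passage from $G/G^{00}$ back to $G$ requires care with hyperimaginaries. In particular, the convolution of an invariant measure with a generically stable measure is not automatically generically stable, so generic stability of $\mu_{\bar{g}}$ must be extracted directly from compactness of $H \leq G/G^{00}$ together with the definability scheme of $\nu_{0}$ over $M$, rather than from abstract convolution identities.
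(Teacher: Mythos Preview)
Your construction of $\mu_{\bar g}$ does not work as written. You start from the global left-$G(\cU)$-invariant measure $\nu_0$ given by Fact~\ref{fac: def am def measure}, and then define $\mu_{\bar g}(\varphi) = \int_H \nu_0(\varphi(h\cdot x))\,d\lambda(h)$. But $\nu_0$ is already invariant under every left translation by an element of $G(\cU)$, so $\nu_0(\varphi(h\cdot x)) = \nu_0(\varphi(x))$ for every lift of $h$, and the integral returns $\nu_0$ itself. Your averaging is vacuous, and you are left with the original definable-but-not-generically-stable measure. The subsequent justification (``supported on boundedly many $G^{00}$-cosets'') is also off: the support of a $G$-invariant measure meets every $G^{00}$-coset, and in any case concentration on boundedly many cosets does not imply generic stability (consider any non-generically-stable measure supported on $G^{00}$ in, say, $(\mathbb R,+,<)$). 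So the central step --- producing a \emph{generically stable} measure whose stabilizer contains a prescribed finite set --- is missing.

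The paper's route is different and avoids $G/G^{00}$ entirely. The key input is Simon's symmetrization (Fact~\ref{fac: symmetrization measure}): given any $M_1$-invariant measure $\mu$, there is a generically stable $\mu'$ with $\mu'|_{M_1}=\mu|_{M_1}$ that is preserved by every $M_1$-definable $f$ preserving $\mu$. Applied to $\nu_0$ over a small $M_1 \supseteq M$, this yields a generically stable $\mu_{M_1}$ still invariant under left translation by each $g\in G(M_1)$, i.e.\ $G(M_1)\subseteq\Stab_G(\mu_{M_1})$. Covering and directedness then come not from combining generating sets, but from a localization argument: if $\bar a_i \in X_G$ all lie in a sufficiently saturated $M_1$, then each $\mu_{\bar a_i}$ is fam over $M_1$, hence (Lemma~\ref{lem: generically stable localizing}) supported on $\Stab_G(\mu_{M_1})$, so $\Stab_G(\mu_{\bar a_i})\leq\Stab_G(\mu_{M_1})$ by Remark~\ref{rem: stab contained in a subgroup}. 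Finally, the encoding of $X_G$ is via fam approximations (Section~\ref{sec: Generically stable measures as a hyper-definable set}), not Morley sequences --- generically stable measures need not be types, so ``indiscernible sequence'' does not parametrize them.
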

\noindent We stress that these type-definable stabilizers of generically stable measures are \emph{not fsg} in general, i.e.~the measures $\mu_{\bar{a}}$ need not be supported on their stabilizers $\St_{G}(\cU, \bar{a})$ (see Example \ref{ex: no fsg subroups in R+}). In order to carry out some compactness arguments in its proof and later, we observe  that the set of global generically stable measures in an NIP theory can be identified with a  hyper-definable set  of  length $|T|$ tuples (similar to how the set of global generically stable types can be viewed as a pro-definable set in $\cU^{\eq}$, an important point in  Hrushovski-Loeser \cite{hrushovski2016non} and subsequent work):   

\begin{proposition*}[Section \ref{sec: Generically stable measures as a hyper-definable set}]
	($T$ NIP) We can identify the space of global generically stable measures supported on a type-definable set $Y$ with a hyper-definable set $\widetilde{\mathcal{M}}_Y$ (of length $|T|$ tuples), so that the basic operations (e.g.~$\otimes$ and definable push-forwards) and invariants (e.g.~stabilizers when $Y$ is a type-definable group) are uniformly type-definable under this identification.
\end{proposition*}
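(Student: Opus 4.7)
The plan is to encode each global generically stable measure $\mu$ supported on $Y$ by a coherent family of finite empirical approximations $\bar{a} = (\bar{a}_{\varphi,n})$ indexed by pairs $(\varphi,n)$ with $\varphi(x;y) \in L$ and $n \in \mathbb{N}$. In an NIP theory, every generically stable measure is finitely approximated (the fim property); combined with the VC-theorem applied to the NIP family $\{\varphi(x;y)\}$, this yields a sample size $N=N(\varphi,n)$ depending only on $\varphi$ and $n$ (not on $\mu$) so that there exists $\bar{a}_{\varphi,n} \in Y(\cU)^{N}$ satisfying $|\mu(\varphi(x,b)) - \mathrm{Av}_{\bar{a}_{\varphi,n}}(\varphi(x,b))| \le 1/n$ for every $b \in \cU^{|y|}$. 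The concatenated tuple $\bar{a}$ has length $|T|$.

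Next I would define the partial type $X(\bar{z})$ cutting out the \emph{coherent} configurations: for each $\varphi$ and all $n, m$,
$$\forall b\ \left| \mathrm{Av}_{\bar{z}_{\varphi,n}}(\varphi(x,b)) - \mathrm{Av}_{\bar{z}_{\varphi,m}}(\varphi(x,b)) \right| \le \tfrac{1}{n} + \tfrac{1}{m},$$
together with $\bar{z}_{\varphi,n}$ lying coordinatewise in $Y$. Since ``$\mathrm{Av}_{\bar{z}_{\varphi,n}}(\varphi(x,b)) = k/N$'' is a finite Boolean combination of $\varphi$-instances (hence a single $L$-formula), each coherence condition is first-order, so $X$ is type-definable. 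Any $\bar{a} \in X$ defines a finitely additive Keisler measure $\mu_{\bar{a}}(\varphi(x,b)) := \lim_n \mathrm{Av}_{\bar{a}_{\varphi,n}}(\varphi(x,b))$; the uniform Cauchy condition in $b$ yields definability and finite approximation, whence generic stability. The type-definable equivalence relation
$$E(\bar{z}, \bar{z}'):\ \bigwedge_{\varphi,n,m} \forall b\ \left| \mathrm{Av}_{\bar{z}_{\varphi,n}}(\varphi(x,b)) - \mathrm{Av}_{\bar{z}'_{\varphi,m}}(\varphi(x,b)) \right| \le \tfrac{1}{n} + \tfrac{1}{m}$$
identifies tuples yielding the same measure, and conversely every global generically stable $\mu$ on $Y$ is of the form $\mu_{\bar{a}}$. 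Hence $\widetilde{\mathcal{M}}_Y := X/E$ is hyper-definable and bijects with the desired space.

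For the operations and invariants: tensor products are encoded by (re-indexed) Cartesian products of the approximating samples; definable pushforwards by applying the map coordinate-wise to each $\bar{a}_{\varphi,n}$; and for a type-definable group $G \supseteq Y$ with $g \in G(\cU)$, the stabilizer condition $g \cdot \mu_{\bar{a}} = \mu_{\bar{a}}$ is captured by the partial type
$$\bigwedge_{\varphi,n,m} \forall b\ \left| \mathrm{Av}_{\bar{a}_{\varphi,n}}(\varphi(g{\cdot}x,b)) - \mathrm{Av}_{\bar{a}_{\varphi,m}}(\varphi(x,b)) \right| \le \tfrac{1}{n} + \tfrac{1}{m},$$
uniformly type-definable in $(g, \bar{a})$.

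The main obstacle is extracting uniform sample-size bounds $N(\varphi,n)$ that work across \emph{all} generically stable measures simultaneously, which requires the VC-theorem for NIP families; one also has to verify that the approximating tuples can actually be chosen from $Y(\cU)$ when $Y$ is only type-definable rather than definable, but this follows because any Morley sequence for a measure supported on $Y$ realizes the partial type defining $Y$. Once these two points are secured, the hyper-definable structure and the uniform type-definability of $\otimes$, pushforward, and stabilizers fall out of the explicit first-order description of sample averages.
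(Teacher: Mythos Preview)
Your outline is essentially the same construction as the paper's, but there is one genuine gap in your definition of $X$. The coherence condition you impose --- that for each fixed $\varphi$ the averages $\mathrm{Av}_{\bar{z}_{\varphi,n}}(\varphi(x,b))$ form a Cauchy net uniformly in $b$ --- guarantees only that the limit $\mu_{\bar{a}}(\varphi(x,b))$ exists for each $\varphi$ and $b$ separately. It does \emph{not} force these limits to be compatible across different formulas, i.e.\ to satisfy finite additivity. Nothing in your $X$ prevents, say, $\mu_{\bar{a}}(\varphi_1(x,b_1)) = \mu_{\bar{a}}(\varphi_2(x,b_2)) = 1$ for disjoint instances, since the coordinates $\bar{a}_{\varphi_1,n}$, $\bar{a}_{\varphi_2,n}$, $\bar{a}_{\varphi_1\vee\varphi_2,n}$ are completely uncoupled. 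The paper fixes this by explicitly adding, for every pair $\varphi_1,\varphi_2$, a type-definable clause saying that whenever $\varphi_1(x,b_1)\wedge\varphi_2(x,b_2)$ is inconsistent, the average of $\bar{a}_{\varphi_1\vee\varphi_2,1/k}$ on the disjunction is within $3/k$ of the sum of the separate averages. Only after intersecting with this condition does $\mu_{\bar{a}}$ become a Keisler measure.

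Two smaller points. First, ``apply $f$ coordinate-wise to each $\bar{a}_{\varphi,n}$'' is not quite how the pushforward is encoded: the paper instead compares the $\bar{a}'$-coordinate indexed by $\psi(x;z):=\varphi(f(x);z)$ against the $\bar{a}''$-coordinate indexed by $\varphi(y;z)$, which avoids any re-indexing of the sample tuples themselves. Second, your justification for choosing the approximating tuples inside $Y(\cU)$ via ``Morley sequences realize $Y$'' is not the right mechanism; the approximations come from the fam property over a model, and one then needs a separate localization argument (the paper's Lemma~\ref{lem: generically stable localizing}) to see that representatives can be taken in $Y$. The conclusion you want is correct, but the reason given is not.
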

\noindent Then, given an externally (type-)definable group $H$ we apply Theorem \ref{thm: approx def am by stab gs} to write it as a union of a directed system of type-definable in $T'$ stabilizers of generically stable measures in $T'$. But by earlier discussion for fsg groups (Theorem \ref{thm: corresp for gs measures} and (the proof of)  Theorem \ref{thm: main for fsg subgroups}) such stabilizers are already type-definable in $T$. Some additional compactness and uniformization arguments allow to conclude.

We give a brief overview of the paper. In Section \ref{sec: Preliminaries} we review the basic properties of Shelah's expansion and honest definitions (Section \ref{sec: Sh exp, honest def}); Keisler measures in (type-)definable groups  (Sections \ref{sec: Keisler measures in NIP theories} and \ref{sec: Keisler measures in definable groups and their stabilizers}); type-definable fsg groups (Section \ref{sec: type-def fsg groups}); and properties of definable groups preserved in Shelah expansion (Section \ref{sec: props of grps preserved in Sh exp}). In Section \ref{sec: Externally definable fsg subgroups of definable groups} we consider externally definable fsg subgroups of definable groups. In Section \ref{sec: Transfer of properties of measures between T and T sh} we study which properties of measures transfer between $\Th(M)$ and $\Th(M^{\Sh})$, showing a 1-to-1 correspondence between   global generically stable measures in $T$ and in $\Th(M^{\Sh})$ (Lemma \ref{lem: unique extension of def meas to Sh}, Theorem \ref{thm: corresp for gs measures}). We  also give an example of a global $M$-definable type $p'$  in $\Th_{L'}(M^{\Sh})$ so that $p'\restriction_{L}$ divides over $M$ in $T$  (Example \ref{ex: def in Msh forking in M}). In Section \ref{sec: Externally definable fsg subgroups of definable groups are definable} we use this to prove Theorem  \ref{thm: main for fsg subgroups} and Corollary \ref{cor: fsg subgroups in RCVF}. In Section \ref{sec: Type-definable and externally definable fsg groups are definable} we prove Theorem \ref{thm: type-def and ext def fsg implies def}, and a related statement for definability of $\bigvee$-definable groups that are externally definable and fsg as such  (Proposition \ref{prop: ext def V def fsg implies definable}). In Section \ref{sec: Generically stable measures as a hyper-definable set} we consider a presentation for the set of global generically stable measures in an NIP theory as a hyper-definable  set.  In Section \ref{sec: abelian ext def subgroups} we give a description of externally definable, definably amenable subgroups of definable groups.
First we give a new argument that definable amenability of an $M$-definable group is preserved in $M^{\Sh}$ (Corollary \ref{cor: def am is preserved in Msh new proof}). Then we prove Proposition \ref{prop: ext def am descr in M'} and Theorem \ref{thm: ext def abelian final approx in M}, discussing some related issues on the way. We also note that every externally definable abelian subgroup is contained in a definable abelian subgroup (Proposition \ref{prop: ext def ab in def ab}). 
 In Section \ref{sec: Hyperdefinable group chunk for partial type-definable types} we establish a hyperdefinable group chunk theorem for partial type-definable types, generalizing \cite[Section 3.4]{hrushovski2019valued}. Section \ref{sec: Basics on hyperdefinability} develops type-definable partial types in $\cU^{\heq}$, and in Section \ref{sec: Hyperdefinable group chunk proof} we show the reconstruction of a hyper-definable group from a group chunk given on a type-definable partial type (Theorem \ref{thm: hyperdef group chunk gives group}), as well as uniqueness of the reconstructed group up to a type-definable isomorphism (Theorem \ref{thm: equiv of cats groups vs chunks}, Corollary \ref{cor: unique group from group chunk}). In Section \ref{sec: Externally definable fsg groups} we describe externally definable fsg groups. In Section \ref{sec: Type-definable filters associated to definable measures} we consider partial type-definable types $\pi_{\mu}$ induced by definable measures $\mu$, and deduce a ``measure theoretic'' group chunk (Corollary \ref{cor: measure theoretic group chunk}) from the results of the previous section. In Section \ref{sec: eliminable hyper-def group} we show that if a hyperdefinable group is type-definably isomorphic to a definable group in some expansion of the theory, it is already definable in $\cU^{\eq}$. In Section \ref{sec: Externally definable fsg groups are isomorphic to definable ones} we prove Theorem \ref{thm: ext def fsg groups is to definable}. We also give an example of an externally definable fsg  group that is not definable (Example \ref{ex: ext def fsg not def}) and prove Corollary \ref{cor: ext def fsg groups in RCVF}. Finally, in Section \ref{sec: discussion} we discuss some further related results, with an eye for future applications. In Section \ref{sec: acl in MSh} we describe the algebraic closure in $\Th(M^{\Sh})$ when $T$ has disintegrated algebraic closure. In Section \ref{sec: hyperdef sets from measures} we discuss uniform type-definability of certain sets arising from generically stable measures.
In Section \ref{sec: ext type-def sets} we give an internal description of externally type-definable and $\bigvee$-definable sets in NIP theories as intersections (respectively, unions) of (uniformly) directed type-definable (respectively, $\bigvee$-definable) families. Using this,  we give a (soft) description of externally definable subgroups of definable groups, as unions of directed families of their $\bigvee$-definable  subgroups (Proposition \ref{prop: ext def subgroups approx V-def}).
 
\section{Preliminaries}\label{sec: Preliminaries}

\subsection{Notation}
If $f$ is a (partial) function, we write $\Gamma_f$ to denote its graph. All first-order theories will be complete, unless stated otherwise. If $T'$ is an $L'$-theory and $T \subseteq T'$ is a reduct of $T'$ in a language $L \subseteq L'$, given a set $A \subseteq \cU' \models T'$ (hence $\cU' \restriction_{L} \models T$) we let $S^L_x(A)$ denote the space of types over $A$ in $T$ in a (possibly infinite) tuple of variables $x$. Given a formula $\varphi(x)$, $\varphi^0(x) := \neg \varphi(x), \varphi^1(x) := \varphi(x)$.

\subsection{Shelah expansion, honest definitions and a nice monster model}\label{sec: Sh exp, honest def}

\begin{definition}\label{def: Shelah exp context}
	We fix an NIP $L$-theory $T$, $M \models T$, and consider its \emph{Shelah expansion} $M^{\Sh}$, i.e.~an expansion by all externally definable subsets (of all sorts of $M$), and choose a well-behaved monster model for it. We let $N$ be an $|M|^{+}$-saturated elementary extension of $M$, let $(N,M)$ be an elementary pair in the language $L_P := L \cup \{P(x)\}$, and let $(N',M') \succ^{L_P} (N,M)$ be an $|N|^+$-saturated elementary extension of $(N,M)$.  Note that every externally definable subset of $M$ is of the form $\varphi(M,d)$ for some $\varphi(x,y) \in L$ and $d$ in $N$.
We let $M^{\Sh}$ be the expansion of $M$ in the language $L'=L^{\Sh}$ where we add a new relation symbol $R_{\varphi(x,d)}$ for every $\varphi(x,y) \in L$ and $d$ in $N$ (in particular, every element of $M$ gets named by a constant symbol; for $\varphi(x) \in L$, we identify $\varphi(x)$ and $R_{\varphi(x)}(x)$). We let $T' := \Th\left(M^{\Sh} \right)$. Note that then we can expand $M'$ to an $|N|^+$-saturated $L^{\Sh}$-elementary extension $\widetilde{M}'$ of $M^{\Sh}$ by interpreting $R^{\widetilde{M}'}_{\varphi(x,d)}(x)$ as $\varphi(M',d)$ (evaluated in $(N',M')$). We view $\widetilde{M}'$ as a monster model for $T'$. Note that in particular $M' \indep^{u}_{M} N$ in $T$ (i.e.~$\tp^L(M'/MN)$ is finitely satisfiable in $M$), so $M' \cap N = M$.
If we need a bigger monster model of $T'$, we choose an $|N'|^+$-saturated $(N'', M'') \succ^{L_P} (N',M')$ and expand $M''$ to an $L'$-structure $\widetilde{M}''$ via $R^{\widetilde{M}''}_{\varphi(x,d)}(x)$ as $\varphi(M'',d)$  (evaluated in $(N'',M'')$). Then $\widetilde{M}'' \succ^{L'} \widetilde{M}'$ is $|\widetilde{M}'|^+$-saturated. 

We will say that $M_1$ with $M \preceq^{L} M_1 \preceq^{L} M'$ is \emph{coherent} if there exists $N_1$ with $N \preceq^L N_1 \preceq^{L} N'$ so that $(N,M) \preceq^{L_P} (N_1, M_1) \preceq^{L_P} (N',M')$. In particular $M$ itself is a coherent model. Note that then automatically $M^{\Sh} \preceq^{L'} M_1 \preceq^{L'} \widetilde{M}'$ (with the $L'$-structure on $M_1$ induced from $\widetilde{M}'$), and that every small subset of $M'$ is contained in a small coherent model (by L\"owenheim--Skolem in the pair $(N',M')$).
\end{definition}

 The following is due to Shelah \cite{shelah2014strongly}, and follows from the existence of honest definitions (see Fact \ref{fac: honest defs} and \cite[Corollary 1.10]{chernikov2013externally}):
\begin{fact}\label{fac: Sh exp qe}
The theory $\Th_{L'} \left(M^{\Sh} \right)$	admits quantifier elimination.
\end{fact}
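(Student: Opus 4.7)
The proof, due to Shelah, is by now standard and proceeds via honest definitions. Since $T' = \Th(M^{\Sh})$ is complete, a universal closure of an $L'$-biconditional is a theorem of $T'$ iff it holds in $M^{\Sh}$; so it suffices to show, inside $M^{\Sh}$, that every subset of $M^n$ definable in $M^{\Sh}$ (with parameters in $M$) is already externally $L$-definable in $M$. Indeed, by construction the atomic $L'$-predicates over $M$ are exactly the externally $L$-definable sets, and any Boolean combination of such predicates $R_{\varphi_i(\bar{x}, d_i)}$ collapses into a single one $R_{\varphi(\bar{x}, \bar{d})}$ by taking the corresponding Boolean combination of the $L$-formulas $\varphi_i$ and bundling the parameters $d_i \in N$ into one tuple $\bar{d}\in N$. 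Thus ``quantifier-free $L'$-definable over $M$'' coincides with ``externally $L$-definable in $M$''.

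The argument is then by induction on the complexity of $L'$-formulas: I show that the collection $\mathcal{E}$ of externally $L$-definable subsets of the powers $M^n$ is closed under Boolean combinations (trivial) and under projections along a coordinate, whence every $L'$-definable subset of $M^n$ lies in $\mathcal{E}$ and is therefore quantifier-free $L'$-definable in $M^{\Sh}$. Closure under projection is the content of the following lemma: for every $\varphi(y, \bar{x}, z) \in L$ and every $d\in N^{|z|}$, the set
\[
\pi_y(\varphi(M, d)) \;:=\; \{\bar{a} \in M^n : \exists\, e \in M,\ N \models \varphi(e, \bar{a}, d)\}
\]
is externally $L$-definable in $M$. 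Completeness of $T'$ then transfers the equivalences obtained in $M^{\Sh}$ to all models of $T'$, yielding QE.

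To prove the projection lemma I would invoke honest definitions (Fact~\ref{fac: honest defs} below), applied inside the elementary pair $(N', M')$, which is NIP as a consequence of $T$ being NIP (Chernikov--Simon). An honest definition for $\varphi(y, \bar{x}, d)$ over $M$ supplies $\theta(y, \bar{x}, w) \in L$ and a parameter $e$ in a coherent extension $M_2$ so that $\theta$ and $\varphi$ agree on $M^{1+n}$ and $\theta$ implies $\varphi$ in $M_2$. The $L_P$-formula $\exists y\,(P(y) \wedge \theta(y, \bar{x}, e))$ then cuts out on $(N')^n$ a set whose trace on $M^n$ is exactly $\pi_y(\varphi(M, d))$, and a further application of honest definitions in the NIP pair produces an $L$-formula $\chi(\bar{x}, w')$ together with parameters $d' \in N$ (after using saturation of $N$ over $M$) with $\chi(N, d') \cap M^n = \pi_y(\varphi(M, d))$. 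The main technical obstacle is exactly this second step: a naive existential quantification over $\theta$ produces witnesses in $M_2$ rather than $M$, and it is the NIP hypothesis, through honest definitions for NIP pairs, that lets one rule out spurious witnesses from $M_2 \setminus M$ and turn the condition ``$\theta(\cdot, \bar{a}, e) \cap M \ne \emptyset$'' into an external $L$-condition on $\bar{a}$. Without NIP the projection of an externally definable set need not remain externally definable, so this is the heart of Shelah's theorem.
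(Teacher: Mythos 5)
Your overall route is the same as the one the paper relies on: the paper does not reprove this fact but cites Shelah and notes that it follows from honest definitions, which is exactly the reduction you describe (quantifier-free $L'$-definable over $M$ coincides with externally $L$-definable, Boolean combinations are trivial, and the entire content is that the projection of an externally definable set is externally definable, which is where NIP and Fact \ref{fac: honest defs} enter). Your reduction and the first application of honest definitions are correct.

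The one place your write-up goes astray is the claimed ``further application of honest definitions'': it is neither needed nor licensed by Fact \ref{fac: honest defs} as stated, since that fact takes as input the trace $\varphi(M,c)$ of an $L$-formula, whereas you would be feeding it an $L_P$-definable subset of $M^n$ (knowing that traces of such sets are externally definable is essentially the theorem being proved, or the heavier Chernikov--Simon results on NIP pairs). Fortunately the single honest definition already finishes the job: with $\theta(y,\bar x,z)\in L$ and $e\in M'$ (your $M_2$) satisfying $\varphi(M^{1+n},d)=\theta(M^{1+n},e)$ and $\theta(y,\bar x,e)\land P(y)\land P(\bar x)\vdash \varphi(y,\bar x,d)$ in $(N',M')$, one checks directly that for $\bar a\in M^n$ we have $M'\models\exists y\,\theta(y,\bar a,e)$ if and only if $\bar a\in\pi_y(\varphi(M,d))$: any witness $e'\in M'$ yields $\models\varphi(e',\bar a,d)$ by the implication, hence $(N',M')\models\exists y\,(P(y)\land\varphi(y,\bar a,d))$, and elementarity of $(N,M)\preceq^{L_P}(N',M')$ (the parameters $\bar a$, $d$ lie in $(N,M)$) produces a witness in $M$; the converse is immediate from the trace equality. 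So $\pi_y(\varphi(M,d))$ is the trace on $M^n$ of the $L(M')$-definable set $\exists y\,\theta(y,\bar x,e)$ with $M\prec M'$, which is exactly external definability; if you insist on parameters in the fixed $N$, realize $\tp_L(e/M)$ in the $|M|^+$-saturated $N$ --- that is saturation, not honest definitions. Relatedly, there are no ``spurious witnesses in $M_2\setminus M$'' to rule out: witnesses in $M'$ for $\theta$ are automatically converted into witnesses in $M$ for $\varphi$ by the implication together with pair elementarity, so the second step you flag as the main obstacle is already handled by the first application.
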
 

\begin{remark}\label{rem: Shelah QE formulas}
	As externally definable sets are closed under Boolean combinations, it follows that every  formula $\varphi(x) \in L'$ is equivalent in $T'$ to $R_{\psi(x,d)}$ for some $\psi(x,z) \in L$ and $d \in N$.
\end{remark}

The following is a variant of stating the existence of \emph{honest definitions} over models in NIP theories:
\begin{fact}\cite[Corollary 1.3]{chernikov2013externally}\label{fac: honest defs}
	Let $T$ be an $L$-theory, $(N,M)$ be an $L_P$-structure with $M \prec N \models T$, $M$ named by $P$, and $(N',M') \succ (N,M)$ be $|M|^+$-saturated. Assume $\varphi(x,y) \in L$ is NIP, and $c \in N^y$. Here $x = (x_1, \ldots, x_n )$ and $y$ are arbitrary tuples of variables, and we write $P(x)$ for $\bigwedge_{i \in [n]} P_i(x_i)$.  Then there is $\theta(x,z) \in L$ and $d \in (M')^{z}$ such that $\varphi(M,c) = \theta(M,d)$ and $\theta(x,d) \land P(x) \vdash \varphi(x,c)$ (in $(N',M')$).  
	Moreover, no type $p(x) \in S^{L}_{x}(M')$ invariant over $M$ (in the sense of $T$) is consistent with $P(x)\land \varphi(x,c) \land \neg \theta(x,d)$ (again, in $\Th_{L_{P}}(N',M')$).

\

 
\tikzset{
pattern size/.store in=\mcSize, 
pattern size = 5pt,
pattern thickness/.store in=\mcThickness, 
pattern thickness = 0.3pt,
pattern radius/.store in=\mcRadius, 
pattern radius = 1pt}
\makeatletter
\pgfutil@ifundefined{pgf@pattern@name@_svhd73bv8}{
\pgfdeclarepatternformonly[\mcThickness,\mcSize]{_svhd73bv8}
{\pgfqpoint{0pt}{-\mcThickness}}
{\pgfpoint{\mcSize}{\mcSize}}
{\pgfpoint{\mcSize}{\mcSize}}
{
\pgfsetcolor{\tikz@pattern@color}
\pgfsetlinewidth{\mcThickness}
\pgfpathmoveto{\pgfqpoint{0pt}{\mcSize}}
\pgfpathlineto{\pgfpoint{\mcSize+\mcThickness}{-\mcThickness}}
\pgfusepath{stroke}
}}
\makeatother

 
\tikzset{
pattern size/.store in=\mcSize, 
pattern size = 5pt,
pattern thickness/.store in=\mcThickness, 
pattern thickness = 0.3pt,
pattern radius/.store in=\mcRadius, 
pattern radius = 1pt}
\makeatletter
\pgfutil@ifundefined{pgf@pattern@name@_m8oiaqtov}{
\pgfdeclarepatternformonly[\mcThickness,\mcSize]{_m8oiaqtov}
{\pgfqpoint{0pt}{0pt}}
{\pgfpoint{\mcSize+\mcThickness}{\mcSize+\mcThickness}}
{\pgfpoint{\mcSize}{\mcSize}}
{
\pgfsetcolor{\tikz@pattern@color}
\pgfsetlinewidth{\mcThickness}
\pgfpathmoveto{\pgfqpoint{0pt}{0pt}}
\pgfpathlineto{\pgfpoint{\mcSize+\mcThickness}{\mcSize+\mcThickness}}
\pgfusepath{stroke}
}}
\makeatother
\tikzset{every picture/.style={line width=0.75pt}} 

\begin{tikzpicture}[x=0.75pt,y=0.75pt,yscale=-1,xscale=1]

\draw   (99,260.89) -- (306.22,260.89) -- (306.22,444.2) -- (99,444.2) -- cycle ;
\draw    (204.2,445) -- (204.2,260.89) ;
\draw    (307.02,349.36) -- (99,349.36) ;
\draw [color={rgb, 255:red, 155; green, 155; blue, 155 }  ,draw opacity=1 ]   (99,233.8) -- (99,260.89) ;
\draw [color={rgb, 255:red, 155; green, 155; blue, 155 }  ,draw opacity=1 ]   (306.22,233.8) -- (99,233.8) ;
\draw [color={rgb, 255:red, 155; green, 155; blue, 155 }  ,draw opacity=1 ]   (306.22,260.89) -- (306.22,233.8) ;
\draw [color={rgb, 255:red, 155; green, 155; blue, 155 }  ,draw opacity=1 ]   (204.2,233.8) -- (204.2,260.89) ;
\draw [fill={rgb, 255:red, 155; green, 155; blue, 155 }  ,fill opacity=0.18 ]   (114.67,348.88) .. controls (112.28,399.36) and (167.54,429.64) .. (204.2,425.39) ;
\draw [fill={rgb, 255:red, 155; green, 155; blue, 155 }  ,fill opacity=0.18 ]   (114.68,349.48) -- (204.2,349.15) ;
\draw [fill={rgb, 255:red, 155; green, 155; blue, 155 }  ,fill opacity=0.18 ]   (204.2,425.39) -- (204.2,349.15) ;
\draw  [draw opacity=0][fill={rgb, 255:red, 155; green, 155; blue, 155 }  ,fill opacity=0.18 ] (204.2,425.39) -- (114.67,348.88) -- (203.92,348.56) -- cycle ;

\draw [pattern=_svhd73bv8,pattern size=10.5pt,pattern thickness=0.75pt,pattern radius=0pt, pattern color={rgb, 255:red, 155; green, 155; blue, 155}]   (114.68,348.68) .. controls (112.29,399.16) and (167.54,429.64) .. (204.2,425.39) .. controls (240.86,421.14) and (247.64,410.28) .. (260.79,394.26) .. controls (273.94,378.24) and (275.14,359.19) .. (273.54,349.1) .. controls (271.95,339) and (264.31,322.93) .. (249.63,308.18) .. controls (234.95,293.44) and (215.1,285.6) .. (205,283.74) .. controls (194.9,281.88) and (184.15,279.44) .. (163.03,283.74) .. controls (141.91,288.04) and (130.66,298.97) .. (124.24,307.65) .. controls (117.81,316.33) and (115.07,324.34) .. (114.68,348.68) -- cycle ;
\draw [pattern=_m8oiaqtov,pattern size=10.5pt,pattern thickness=0.75pt,pattern radius=0pt, pattern color={rgb, 255:red, 155; green, 155; blue, 155}]   (114.68,348.68) .. controls (112.29,399.16) and (167.54,429.64) .. (204.2,425.39) .. controls (240.86,421.14) and (223.6,407.01) .. (229.44,392.13) .. controls (235.29,377.26) and (241.66,374.6) .. (250.69,369.82) .. controls (259.73,365.04) and (288.42,381.51) .. (286.82,371.41) .. controls (285.23,361.32) and (290.34,311.77) .. (275.67,297.03) .. controls (260.99,282.28) and (214.03,309.51) .. (203.94,307.65) .. controls (193.84,305.79) and (195.44,298.62) .. (176.84,300.74) .. controls (158.24,302.87) and (154.52,308.18) .. (147.62,312.96) .. controls (140.71,317.75) and (131.68,321.47) .. (114.68,348.68) -- cycle ;
\draw    (267.17,408.6) -- (253.27,396.63) ;
\draw [shift={(251.76,395.32)}, rotate = 40.76] [color={rgb, 255:red, 0; green, 0; blue, 0 }  ][line width=0.75]    (10.93,-3.29) .. controls (6.95,-1.4) and (3.31,-0.3) .. (0,0) .. controls (3.31,0.3) and (6.95,1.4) .. (10.93,3.29)   ;
\draw    (117.86,300.21) -- (130.44,333.41) ;
\draw [shift={(131.15,335.28)}, rotate = 249.25] [color={rgb, 255:red, 0; green, 0; blue, 0 }  ][line width=0.75]    (10.93,-3.29) .. controls (6.95,-1.4) and (3.31,-0.3) .. (0,0) .. controls (3.31,0.3) and (6.95,1.4) .. (10.93,3.29)   ;
\draw    (141.05,422.14) -- (154.24,403.58) ;
\draw [shift={(155.4,401.95)}, rotate = 125.39] [color={rgb, 255:red, 0; green, 0; blue, 0 }  ][line width=0.75]    (10.93,-3.29) .. controls (6.95,-1.4) and (3.31,-0.3) .. (0,0) .. controls (3.31,0.3) and (6.95,1.4) .. (10.93,3.29)   ;

\draw (178.84,350.72) node [anchor=north west][inner sep=0.75pt]    {$M$};
\draw (285.35,350.92) node [anchor=north west][inner sep=0.75pt]    {$N$};
\draw (176.55,260.46) node [anchor=north west][inner sep=0.75pt]    {$M'$};
\draw (282.06,260.66) node [anchor=north west][inner sep=0.75pt]    {$N'$};
\draw (171.25,234.77) node [anchor=north west][inner sep=0.75pt]  [color={rgb, 255:red, 155; green, 155; blue, 155 }  ,opacity=1 ]  {$M''$};
\draw (273.96,234.97) node [anchor=north west][inner sep=0.75pt]  [color={rgb, 255:red, 155; green, 155; blue, 155 }  ,opacity=1 ]  {$N''$};
\draw (260.41,402.79) node [anchor=north west][inner sep=0.75pt]  [font=\footnotesize]  {$\varphi ( x,c)$};
\draw (100.7,279.91) node [anchor=north west][inner sep=0.75pt]  [font=\scriptsize]  {$\theta ( x,d)$};
\draw (101.47,425.72) node [anchor=north west][inner sep=0.75pt]  [font=\scriptsize]  {$\varphi ( M,c) =\theta ( M,d)$};

\end{tikzpicture}

\end{fact}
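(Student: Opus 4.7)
My plan is to prove the existence of honest definitions via NIP-based per-type inner approximations assembled by compactness on the Stone space, and to deduce the moreover clause from the construction using $M$-invariance.

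\textbf{Step 1: compactness reduction.} By the $|M|^+$-saturation of $(N',M')$, it suffices to produce, for each complete type $p \in S^L_x(Mc)$ with $\varphi(x,c) \in p$, an $L$-formula $\theta_p(x,z_p)$ and $d_p \in (M')^{z_p}$ such that $\theta_p(x,d_p)$ is consistent with $p$ (via an extension of $p$ to $M'c$) and $\theta_p(M',d_p) \subseteq \varphi(N',c)$. Then by compactness of the closed set $X := \{p \in S^L_x(Mc) : \varphi(x,c) \in p\}$ and the resulting open cover, a finite subcover $\theta_{p_1}, \ldots, \theta_{p_k}$ yields $\theta(x,z) := \bigvee_i \theta_{p_i}(x,z_i)$ and $d := (d_{p_i})_i \in (M')^z$, giving $\varphi(M,c) \subseteq \theta(M,d)$ (every $a \in \varphi(M,c)$ has $\tp^L(a/Mc) \in X$, is covered by some $\theta_{p_i}$, and hence satisfies $\theta(a,d)$) and $\theta(M',d) \subseteq \varphi(N',c)$ (by union), giving the honest definition.

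\textbf{Step 2: per-type NIP approximation.} For fixed $p$, extend to a global $M$-invariant $L$-type $\tilde p$, which by NIP of $\varphi$ is Borel-definable over $M$, with bounded alternation of $\varphi(x,c_i)$ along $M$-indiscernible sequences $(c_i)$ for $x \models \tilde p$. Using this bounded alternation together with $|M|^+$-saturation of $(N',M')$ to realize approximating parameters in $M'$, I construct $\theta_p(x,d_p) \in L(M')$ as a Boolean combination of $\varphi$-instances with parameters in $M'$ chosen to track the Borel definition of $\tilde p$ near $\tp^L(c/M)$, of complexity bounded by the NIP alternation rank of $\varphi$, whose $M'$-solution set is contained in $\varphi(N',c)$ and which is consistent with $\tilde p$ (hence with $p$). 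This per-type inner-approximation is the heart of the proof and where NIP is essential.

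\textbf{Step 3: moreover clause and obstacle.} Let $q \in S^L_x(M')$ be $M$-invariant and suppose (for contradiction) it is consistent in $(N',M')$ with $P(x) \wedge \varphi(x,c) \wedge \neg\theta(x,d)$. Realize in an $L_P$-extension $(N'',M'')$: some $a \in M''$ has $\tp^L(a/M') = q$, $\models \varphi(a,c)$, and $\models \neg\theta(a,d)$. Then $q^+ := \tp^L(a/M'c)$ contains $\varphi(x,c)$, and by $M$-invariance of $q$ the extension $q^+$ is determined via $\tp^L(c/M)$; in particular $q^+|_{Mc} \in X$ is covered by some $\theta_{p_i}(x,d_{p_i})$, and the invariance of $q$ together with $d_{p_i} \in M'$ transports the cover into $q^+$ itself, yielding $\models \theta_{p_i}(a,d_{p_i})$ and hence $\models \theta(a,d)$, contradicting $\models \neg\theta(a,d)$. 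The principal technical obstacle is Step 2 --- producing the per-type inner approximation by a single $L(M')$-formula inside $\varphi(x,c)$ via bounded alternation and pair saturation; this is precisely where NIP enters essentially, since without NIP no single formula of bounded complexity would suffice, and the compactness assembly of Step 1 would fail.
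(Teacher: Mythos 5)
Your Step 1 already breaks, and since Steps 2 and 3 lean on it the proposal has a genuine gap. For the compactness assembly you would need, for each $p \in S^{L}_{x}(Mc)$ containing $\varphi(x,c)$, a formula $\theta_p(x,d_p)$ with $d_p \in (M')^{z_p}$ such that $p(x) \wedge P(x) \vdash \theta_p(x,d_p)$ and $\theta_p(x,d_p)\wedge P(x) \vdash \varphi(x,c)$; then by compactness inside $p$ one extracts $\psi_p(x) \in p$ with $\psi_p \wedge P \vdash \theta_p$, the clopen sets $[\psi_p]$ genuinely cover the compact set $X$, and every $a \in \varphi(M,c)$ actually satisfies the resulting disjunction. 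What your Step 2 delivers is only that $\theta_p(x,d_p)$ is \emph{consistent} with (an extension of) $p$: consistency is not inherited by a particular realization, so an element $a \in \varphi(M,c)$ realizing $p$ need not satisfy $\theta_p(a,d_p)$, and the inclusion $\varphi(M,c)\subseteq\theta(M,d)$ does not follow. Moreover the sets $\{q \in X : q \cup \{\theta_{p_i}(x,d_{p_i})\} \text{ consistent}\}$ are projections of closed sets of $S^L_x(M'c)$, hence closed rather than open in $S^L_x(Mc)$, so there is no finite-subcover argument available for them. Step 2 itself, which you acknowledge is the heart, is not an argument: Borel definability of an $M$-invariant extension $\tilde p$ produces Borel subsets of $S_y(M)$, not $L(M')$-formulas, and no construction is indicated that yields both $\theta_p(M',d_p)\subseteq\varphi(N',c)$ and the implication (rather than mere consistency) required above. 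Note also that the corrected per-type statement is essentially an honest definition for the type-definable set of realizations of $p$ in $M'$, i.e.\ the full difficulty localized at $p$, so it cannot be dispatched by a soft approximation.

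For comparison, the proof in the cited source (Chernikov--Simon, Proposition 1.1 and Corollary 1.3; the present paper only quotes the result) runs quite differently: assuming no suitable $\theta$ exists, one constructs inside $M'=P(N')$, using saturation of the pair, a sequence $(a_i)_{i<n}$ in which each $a_i$ realizes the restriction to $Ma_{<i}$ of a fixed type \emph{finitely satisfiable in $M$}, with $\models\varphi(a_i,c)$ exactly for even $i$; finite satisfiability (not invariance or Borel definability) is what makes the sequence $\Delta$-indiscernible (this is exactly the mechanism of Fact \ref{fac: local coheirs} and Lemma \ref{lem: local honest def} in this paper), and NIP of $\varphi$ bounds the alternation, a contradiction. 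The ``moreover'' clause for $M$-invariant types is likewise not automatic for an arbitrary honest definition: it has to be built into the choice of $\theta$ by running the argument against invariant types. Your Step 3 instead rests on the claim that $M$-invariance of $q\in S^L_x(M')$ determines its extension $\tp^L(a/M'c)$ via $\tp^L(c/M)$ and ``transports the cover''; invariance over $M$ gives no control over extensions to the parameter $c\notin M'$, and in any case the covering property being transported was never established in Step 1.
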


\begin{lemma}
	For any structure $M$, $(M^{\eq})^{\Sh}$ is a (proper in general) reduct of $(M^{\Sh})^{\eq}$, i.e.~every sort of $(M^{\eq})^{\Sh}$ is also a sort of $(M^{\Sh})^{\eq}$, and every $\emptyset$-definable subset of any product of sorts in $(M^{\eq})^{\Sh}$ is also $\emptyset$-definable in $(M^{\Sh})^{\eq}$.
\end{lemma}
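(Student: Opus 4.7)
The plan splits naturally along the two assertions. The claim about sorts is immediate: every sort of $(M^{\eq})^{\Sh}$ is a sort of $M^{\eq}$, i.e.~a quotient $M^n/E$ for a $\emptyset$-$L$-definable equivalence relation $E$; since $L \subseteq L' = L^{\Sh}$, such $E$ is $\emptyset$-$L'$-definable, so $M^n/E$ is a sort of $(M^{\Sh})^{\eq}$.

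For $\emptyset$-definable subsets, the plan is to induct on the complexity of an $(L^{\eq})^{\Sh}$-formula $\chi(\bar{x})$ defining a subset of a product of sorts $\prod_j (M^{m_j}/E_j)$. The steps for Boolean combinations and existential quantifiers go through trivially, since every such quantifier ranges over a sort that is already available in $(M^{\Sh})^{\eq}$ by the first claim. So all the content is in the atomic case, which splits into (i) atomic $L^{\eq}$-formulas, and (ii) external predicates $R_{\varphi(\bar{x}, d)}$ with $\varphi(\bar{x}, \bar{y}) \in L^{\eq}$ and $d$ a tuple of imaginaries from some $N^{\eq} \succeq M^{\eq}$.

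Both atomic cases are handled by passing to home-sort representatives. Write $d = ([\bar{w}_1]_{F_1}, \ldots, [\bar{w}_r]_{F_r})$ with each $F_k$ $\emptyset$-$L$-definable and $\bar{w}_k$ in the home sort of $N$, and let $\bar{z}_j \in M^{m_j}$ be home-sort variables representing $\bar{x}_j \in M^{m_j}/E_j$. The standard fact that any $\emptyset$-$L^{\eq}$-definable relation on a product of home sorts is $\emptyset$-$L$-definable yields an $L$-formula $\psi(\bar{z}_1, \ldots, \bar{z}_n, \bar{w}_1, \ldots, \bar{w}_r)$ with
\[
\varphi\bigl([\bar{z}_1]_{E_1}, \ldots, [\bar{z}_n]_{E_n}, [\bar{w}_1]_{F_1}, \ldots, [\bar{w}_r]_{F_r}\bigr) \iff \psi(\bar{z}_1, \ldots, \bar{z}_n, \bar{w}_1, \ldots, \bar{w}_r)
\]
for all choices of home-sort inputs. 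Then $\psi(M, \bar{w}) \subseteq M^{\sum_j m_j}$ is externally $L$-definable in $M$, hence $\emptyset$-definable in $M^{\Sh}$, and pushing it forward along the product of quotient maps $M^{m_j} \to M^{m_j}/E_j$ (which are $\emptyset$-definable in $(M^{\Sh})^{\eq}$ since each $E_j$ is $L$-definable) gives a $\emptyset$-definition of our atomic set in $(M^{\Sh})^{\eq}$.

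The only real obstacle is bookkeeping at the atomic step: carefully replacing imaginary parameters and variables by home-sort representatives and invoking the standard $L^{\eq}$-to-$L$ reduction on home-sort tuples. Properness of the reduct in general is clear, since $(M^{\Sh})^{\eq}$ has sorts $M^n/E$ for genuinely externally definable equivalence relations $E$ that are not $L$-definable, and these need not be sorts of $(M^{\eq})^{\Sh}$.
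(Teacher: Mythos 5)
Your proposal is correct and follows essentially the same route as the paper: reduce to the atomic/externally definable case, replace imaginary parameters and variables by home-sort representatives in $N \succ M$ (using that every model of $\Th(M^{\eq})$ is of the form $N^{\eq}$ and that the quotient maps are onto), invoke the standard $L^{\eq}$-to-$L$ reduction on home-sort tuples to get an $L$-formula $\psi$ whose trace on $M$ is externally definable, and push forward along the $\emptyset$-definable quotient maps in $(M^{\Sh})^{\eq}$. The only difference is presentational: you make the induction over formula complexity explicit, while the paper leaves that routine step implicit and works directly with an arbitrary externally definable subset of a product of sorts.
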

\begin{proof}
It is clear that every sort of $(M^{\eq})^{\Sh}$ is also a sort of $(M^{\Sh})^{\eq}$.

Let $L$ be the language of $M$ and $L^{\Sh}$ the language of $M^{\Sh}$.
Let $N' \succ M^{\eq}$ be a saturated $L^{\eq}$-elementary extension, then $N' = N^{\eq}$ for some saturated $L$-elementary extension $N \succ M$.

Assume that $E_1, \ldots, E_n$ are some $L(\emptyset)$-definable equivalence relations in $T := \Th_L(M)$, let $S_{E_i}$  be the corresponding sorts in $T^{\eq}$, let $Y'$ be an $L^{\eq}(N^{\eq})$-definable subset of $\prod_{i=1}^n S_{E_i}^{N^{\eq}}$. Let $Y := Y' \cap \prod_{i=1}^n S_{E_i}^{M^{\eq}}$.

Say $Y'$ is defined in $N^{\eq}$ by $\varphi(y_1, \ldots, y_n, b_1, \ldots, b_m)$ for some 
$$\varphi(y_1, \ldots, y_n; y_{n+1}, \ldots, y_{n+m}) \in L^{\eq}$$
with the variable $y_i$ of sort $S_{E_i}$ and some $b_i \in S^{N^{\eq}}_{E_{n+i}}$. Let $f_{E_i}$ be the function symbol in $L^{\eq}$ for the quotient by $E_i$. 
By basic properties of $T^{\eq}$, there is some formula $\psi(x_1, \ldots, x_{n+m}) \in L$ with $x_i$ is the home sort so that 
\begin{gather*}
	T^{\eq} \vdash \varphi(f_{E_1}(x_1), \ldots, f_{E_{n+m}}(x_{n+m})) \equiv \psi(x_1, \ldots, x_{n+m}).
\end{gather*}
As $f_{E_i}$ are onto, there are some $a_{n+1}, \ldots, a_{n+m} \in N$  with $f_{E_i}(a_i) = b_i$. Let $X'$ be the subset defined in $N$ by $\psi(x_1, \ldots, x_n; a_{n+1}, \ldots, a_{n+m})$, then $X := X'(M)$ is definable in $M^{\Sh}$ and, using that each $f_{E_i}$ is onto in $M^{\eq}$ (and $M^{\eq}$ is closed under these functions), we get that $Y = f(X)$ for the $L^{\eq}(\emptyset)$-definable map
\begin{gather*}
	(x_1, \ldots, x_{n}) \mapsto (f_{E_1}(x_1), \ldots, f_{E_n}(x_n))
\end{gather*}
in $M^{\eq}$, hence $Y$ is definable (using a quantifier) in $(M^{\Sh})^{\eq}$.
\end{proof}
\begin{remark}
	In particular $((M^{\Sh})^{\eq})^{\Sh}$ is a reduct of $((M^{\Sh})^{\Sh})^{\eq}$, so, if $M$ is NIP, a reduct of $(M^{\Sh})^{\eq}$ already, so $(M^{\Sh})^{\eq}$ is closed under both taking $^{\Sh}$ and $^{\eq}$.
\end{remark}

\subsection{Keisler measures in NIP theories}\label{sec: Keisler measures in NIP theories}
For $r,s \in \mathbb{R}$ and $\varepsilon \in \mathbb{R}_{>0}$, we write $r \approx^{\varepsilon} s$ to denote $|r-s| \leq  \varepsilon$. We denote the collection of Keisler measures (in variable $x$ over $A$) as $\mathfrak{M}_{x}(A)$. Given $\mu \in \mathfrak{M}_{x}(A)$, we let $S(\mu)$ denote the support of $\mu$, i.e.~the (closed) set of all $p \in S_x(A)$ such that $\mu(\varphi(x))>0$ for every $\varphi(x) \in p$. Given a partial type $\pi(x)$ over $A$, we will consider the closed set 
\begin{gather*}
	\mathfrak{M}_\pi(A) := \left\{ \mu \in \mathfrak{M}_{x}(A) : p \in S(\mu) \Rightarrow p(x) \vdash \pi(x) \right\} \\
	= \left\{ \mu \in \mathfrak{M}_{x}(A) : \bigwedge_{\varphi(x) \in \pi} \mu(\varphi(x)) = 1 \right\}
\end{gather*}
of measures supported on $\pi(x)$. We will also write $S_{\pi}(A)$ for $\{p \in S_x(A) : p(x) \vdash \pi(x)\}$.

\begin{fact}\label{fac: types in sup of inv meas are inv NIP}(see e.g.~\cite[Lemma 2.10(3)]{chernikov2022definable}) 
	If $T$ is NIP and $\mu \in \mathfrak{M}_x(\cU)$ is $M$-invariant, then every type $p \in S(\mu)$ is also $M$-invariant.
\end{fact}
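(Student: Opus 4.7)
The plan is to argue by contradiction via the NIP alternation bound along an $M$-indiscernible sequence. Suppose for contradiction that some $p \in S(\mu)$ is not $M$-invariant; then there exist $\varphi(x,y) \in L$ and $b \equiv_M b'$ with $\varphi(x,b) \wedge \neg \varphi(x,b') \in p$. Since $p \in S(\mu)$, this formula must have positive measure, so $r := \mu\bigl(\varphi(x,b) \wedge \neg \varphi(x,b')\bigr) > 0$.

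The main step would then be to realise the pair $(b, b')$ as the initial $2$-type over $M$ of an $M$-indiscernible sequence of singletons $(b_i)_{i<\omega}$ such that $(b_i, b_{i+1}) \equiv_M (b, b')$ for every $i$. Because $M$ is a model and $T$ is NIP, I would arrange this by extending $\tp(b/M)$ to a global $M$-invariant type $p^\ast$ with $\tp(b'/Mb) \subseteq p^\ast|_{Mb}$ and then taking $(b_i)$ to be a Morley sequence of $p^\ast$ starting at $b_0 = b$; alternatively, one can iterate an $M$-automorphism sending $b$ to $b'$ and Ramsey-extract. By $M$-invariance of $\mu$, each $\chi_i(x) := \varphi(x, b_i) \wedge \neg \varphi(x, b_{i+1})$ then satisfies $\mu(\chi_i) = r$.

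The contradiction would come from NIP applied formula-by-formula: there is a constant $K = K(\varphi) < \omega$ such that, for any $M$-indiscernible sequence $(b_i)_{i<\omega}$ and any $x_0 \in \cU$, the sequence of truth values $\bigl(\varphi(x_0, b_i)\bigr)_i$ has at most $K$ alternations. In particular, for every $x_0 \in \cU$ at most $K$ of the events $\chi_i(x_0)$ hold, each corresponding to a descent $1\to 0$ of $\varphi(x_0,-)$ along the indiscernible sequence. Hence the pointwise bound $\sum_{i<n} \mathbf{1}_{\chi_i(\cU)}(x_0) \leq K$ integrates (via the decomposition of the finite sum into indicators of Boolean combinations of the $\chi_i$'s, on which $\mu$ is defined) to
\begin{equation*}
 n r \;=\; \sum_{i<n} \mu(\chi_i) \;\leq\; K
\end{equation*}
for every $n$, contradicting $r > 0$.

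The principal obstacle I expect is the construction of the $M$-indiscernible sequence of singletons with the prescribed initial $2$-type over $M$: the naive Ramsey extraction from the shift sequence $(\sigma^i(b))_i$ only yields some $2$-type from the Ramsey limit, which need not be $\tp(b,b'/M)$, so one has to argue more carefully using invariant-extension machinery over the model. Once such an indiscernible sequence is at hand, the alternation/integration step concluding the proof is routine.
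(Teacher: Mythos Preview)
The paper does not prove this statement; it is quoted as a fact with a citation. Your overall strategy and the final integration step are the standard ones and are correct. The gap is exactly where you locate it, but neither proposed fix closes it. Your option (a), realizing $(b,b')$ as the first two terms of a Morley sequence in a global $M$-invariant type $p^\ast$, requires $\tp(b'/Mb)$ to extend to a global $M$-invariant type, i.e.\ (in NIP, over a model) not to fork over $M$; this can fail even in stable theories --- in $\mathrm{ACF}$, with $b$ transcendental over $M$ and $b'=b^2$, one has $b\equiv_M b'$ while $b'\in\acl(Mb)\setminus\acl(M)$. Your option (b), Ramsey-extracting from the orbit $(\sigma^i(b))_{i<\omega}$, only returns a sequence realizing the EM-type of the orbit over $M$; there is no reason its $2$-type over $M$ equals $\tp(b,b'/M)$, so the equalities $\mu(\chi_i)=r$ need not survive.

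The robust repair is to interpolate a single point rather than force one indiscernible sequence through $(b,b')$. Take any global coheir $q$ of $\tp(b/M)$ and let $c\models q|_{Mbb'}$. Then both $(b,c)$ and $(b',c)$ begin Morley sequences in $q$ over $M$, hence each extends to an $M$-indiscernible sequence. Your alternation-and-integration argument applies verbatim to each, yielding $\mu\bigl(\varphi(x,b)\triangle\varphi(x,c)\bigr)=0$ and $\mu\bigl(\varphi(x,b')\triangle\varphi(x,c)\bigr)=0$; the triangle inequality for symmetric differences then gives $\mu\bigl(\varphi(x,b)\wedge\neg\varphi(x,b')\bigr)=0$, the desired contradiction. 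Abstractly: over a model the Lascar distance between $b$ and $b'$ is at most $2$, so one never needs more than this one intermediate point.
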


\begin{definition}\cite[Definition 2.2]{chernikov2013externally}
	Assume $M \prec \cU$. We say that $\nu \in \mathfrak{M}_x(\cU)$ is an \emph{heir} of $\mu \in \mathfrak{M}_x(M)$ if for any finitely many formulas $\varphi_i(x,a) \in L(\cU)$ (where $a$ is an arbitrary finite tuple in $\cU$) and $r_i \in [0,1)$, $i <n$, if $\bigwedge_{i < n} \nu(\varphi_i(x,a) > r_i)$ holds, then there exists some $b$ in $M$ so that $\bigwedge_{i < n} \mu(\varphi_i(x,b) > r_i)$ holds. 
\end{definition}
\begin{remark}\label{rem: unique def ext of definable measure}
	If the measure $\mu \in \mathfrak{M}_x(\cU)$ is \emph{definable} over $M$ (i.e.~$\mu$ is $M$-invariant, and for every $\varphi(x,y) \in L$ the function $F^{\varphi(x;y)}_{\mu, M}: S_{y}(M) \to [0,1]$, $q(y) \mapsto \mu(\varphi(x,b))$ for some/any $b \models q$, is continuous), then $\mu$ is the unique heir of $\mu|_{M}$ (see \cite[Remark 2.7]{NIP1}; they use a weaker notion of heir of a measure there, but uniqueness of heirs in their sense implies in particular the uniqueness of heirs in our sense).
\end{remark}

\begin{fact}\cite[Theorem 2.5]{chernikov2013externally}\label{fac: inv heir of a measure NIP}
	Assume $T$ is NIP and $M \models T$. Then every $\mu \in \mathfrak{M}_x(M)$ has a global extension $\nu \in \mathfrak{M}_x(\cU)$ which is both invariant over $M$ and an heir of $\mu$.
\end{fact}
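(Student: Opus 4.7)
The plan is to construct $\nu$ as an ultrafilter-limit of $\mu$ along realizations of $\cU$-tuples by $M$-tuples, a standard approach analogous to constructing coheirs of types. Concretely, fix a sufficiently saturated ultrapower $M^{*} = M^{I}/\Ucal$ of $M$ together with an $L$-elementary embedding $f \colon \cU \hookrightarrow M^{*}$ over $M$, writing $f(\bar a) = [\bar a^{*}]$ with $\bar a^{*} \in (M^{\bar y})^{I}$, and set
\[
\nu(\varphi(x,\bar a)) := \lim_{j\to\Ucal}\mu(\varphi(x,\bar a^{*}(j))).
\]
Using a single joint embedding makes $\nu$ consistent across multi-parameter formulas, and since ultrafilter limits of finitely additive probability measures are again finitely additive probability measures, $\nu \in \mathfrak{M}_{x}(\cU)$.

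Two of the three desired properties should then fall out immediately. Extension of $\mu$ follows from the diagonal embedding of $M$ into $M^{*}$, which gives $\nu(\varphi(x,\bar b)) = \mu(\varphi(x,\bar b))$ for $\bar b \in M$. The heir property follows from ultrafilter finite intersections: if $\nu(\varphi_{i}(x,\bar a)) > r_{i}$ for $i<n$ with a common parameter tuple $\bar a$, then each $E_{i} := \{j \in I : \mu(\varphi_{i}(x,\bar a^{*}(j))) > r_{i}\}$ lies in $\Ucal$, hence $\bigcap_{i<n} E_{i} \in \Ucal$ is nonempty, and any $j$ in this intersection yields a single $\bar b := \bar a^{*}(j) \in M$ witnessing $\mu(\varphi_{i}(x,\bar b)) > r_{i}$ for all $i$ simultaneously.

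The delicate part --- and the main obstacle --- is $M$-invariance of $\nu$ under $L$-automorphisms of $\cU$ fixing $M$. The issue is that the sets $X_{r,\varphi} := \{\bar b \in M : \mu(\varphi(x,\bar b)) > r\}$ determining the ultrafilter-limit are only $L^{\Sh}$-definable on $M$, not $L$-definable, so ordinary $L$-\L o\'s does not propagate $L$-equivalence of representatives $\bar a^{*} \equiv^{L}_{M} \bar a'^{*}$ in $M^{*}$ through to $\Ucal$-equality of $\{j : \bar a^{*}(j) \in X_{r,\varphi}\}$ and $\{j : \bar a'^{*}(j) \in X_{r,\varphi}\}$. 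This is where NIP enters crucially. My plan is to refine the construction by using Shelah's theorem (Fact~\ref{fac: Sh exp qe}), together with the $L^{\Sh}$-structure on $\cU$ induced from the pair $(N',M')$ as in Definition~\ref{def: Shelah exp context}, to arrange $\Ucal$ and $f$ so that $f$ is $L^{\Sh}$-elementary (possible since $L^{\Sh}$ is NIP and ultrapowers of $M^{\Sh}$ by $\aleph_{1}$-incomplete ultrafilters are $L^{\Sh}$-saturated), and then to project the resulting $L^{\Sh}$-invariant heir onto the $L$-invariant part of the heir-simplex via honest definitions (Fact~\ref{fac: honest defs}), which approximate each $X_{r,\varphi}$ from above by an $L(M')$-definable set with the same trace on $M$. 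I expect the technical heart of the argument to lie in formalizing this projection --- equivalently, showing that the honest-definition refinement of the $L^{\Sh}$-heir can be chosen to factor through the $L$-type of $\bar a$ over $M$ --- which is essentially the content of the original argument in \cite{chernikov2013externally}.
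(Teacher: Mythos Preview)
The paper does not prove this statement; it is quoted as a Fact from \cite[Theorem~2.5]{chernikov2013externally} without argument, so there is no proof in the paper to compare against.

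On the proposal itself: your ultralimit construction correctly produces a global extension of $\mu$ satisfying the heir condition, and you correctly identify $M$-invariance as the obstacle. But the proposed fix has a real gap. Upgrading $f$ to an $L^{\Sh}$-elementary embedding only ensures that $\nu(\varphi(x,\bar a))$ depends on $\tp^{L^{\Sh}}(\bar a/M)$; since $L^{\Sh}$-types strictly refine $L$-types over $M$ (externally definable sets separate $L$-equivalent tuples), this does not yield $L$-invariance. Your ``projection onto the $L$-invariant part of the heir-simplex via honest definitions'' is not a defined operation: honest definitions approximate a single externally definable set by $L(M')$-definable sets, but they give no mechanism for averaging a measure across the $L^{\Sh}$-types inside one $L$-type over $M$, and no reason why such an average would remain an heir. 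You essentially concede this by deferring the step to \cite{chernikov2013externally}.

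The argument in \cite{chernikov2013externally} is organized the other way around: one first shows (using NIP, via approximation of $\mu(\varphi(x,b))$ by averages over finite sets or via the Borel-definability of invariant measures) that the set of global $M$-invariant extensions of $\mu$ is nonempty, and then finds an heir inside that closed set by a compactness argument. Building the heir first and then trying to force invariance, as you do, is the direction in which the argument does not close.
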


We summarize some facts about generically stable measures in NIP theories (see \cite[Theorem 3.2]{NIP3}, \cite[Section 7.5]{simon2015guide}, \cite{zbMATH07456275}):

\begin{fact}\label{fac: NIP gen stab meas equivs}
Let $T$ be NIP and assume that $\mu \in \mathfrak{M}_x(\cU)$ is invariant over a small model $M$.  Then the following are equivalent:
	\begin{enumerate}
		\item $\mu$ is both definable and finitely satisfiable over $M$ (i.e.~for every $\varphi(x) \in L(\cU)$, if $\mu(\varphi(x)) > 0$ then $\models \varphi(a)$ for some $a \in M^x$);
		\item $\mu$ is \emph{fim} over $M$, i.e.~$\mu$ is Borel-definable (over $M$) and for any $\varphi(x,y) \in \mathcal{L}$ there exists a sequence of formulas $(\theta_{n}(x_1,\ldots,x_n))_{1 \leq n < \omega}$ in $\mathcal{L}(M)$ such that: 
	\begin{enumerate}
		\item for any $\varepsilon > 0$, there exists some $n_{\varepsilon} \in \omega$ satisfying: for any $k \geq n_{\varepsilon}$, if $\cU \models \theta_{k}(\abar)$ then 
		\begin{equation*} 
		\sup_{b \in \cU^{y}} |\Av(\abar)(\varphi(x,b)) - \mu(\varphi(x,b))| < \varepsilon;
		\end{equation*} 
		\item $\lim_{n \to \infty} \mu^{\otimes n} \left( \theta_n \left(\bar{x} \right) \right) = 1$.
	\end{enumerate}

		\item \label{item: fam} $\mu$ is \emph{fam} over $M$, that is for any formula $\varphi(x,y) \in L$ and $\varepsilon \in \mathbb{R}_{>0}$ there exist some $n \in \omega$ and $a_0, \ldots, a_{n-1} \in M^x$ so that for every $b \in \cU^y$,
		\begin{gather*}
		 \left \lvert \Av(\abar)(\varphi(x,b))	- \mu \left( \varphi(x,b) \right) \right \rvert \leq \varepsilon.
		\end{gather*}
		\end{enumerate}
If either of these properties holds for some small model $M$, we will say that $\mu$ is \emph{generically stable}. And we say that $\mu$ is \emph{generically stable over $A$}, for $A$ a small set, if $\mu$ is generically stable and $A$-invariant.
\end{fact}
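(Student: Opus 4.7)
My plan is to establish the three equivalences via the circular implications $(3) \Rightarrow (1) \Rightarrow (2) \Rightarrow (3)$, where $(3)$ (fam) directly supplies $L(M)$-approximations, and the main new input needed to produce such approximations from $(1)$ is the Vapnik--Chervonenkis uniform convergence theorem for families of finite VC-dimension.

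The step $(3) \Rightarrow (1)$ is nearly immediate. Given $\varphi(x,y) \in L$ and $\varepsilon > 0$, fix $\bar{a} \in M^n$ with $\sup_{b} |\Av(\bar{a})(\varphi(x,b)) - \mu(\varphi(x,b))| \leq \varepsilon$. For finite satisfiability: if $\mu(\varphi(x,b)) > 0$, choose $\varepsilon$ smaller than this quantity, forcing $\Av(\bar{a})(\varphi(x,b)) > 0$, so some $a_i \in M$ realizes $\varphi(x,b)$. For definability: $b \mapsto \Av(\bar{a})(\varphi(x,b))$ is a rational linear combination of characteristic functions of $L(M)$-formulas and hence factors continuously through $S_y(M)$, so the uniformly $\varepsilon$-close function $b \mapsto \mu(\varphi(x,b))$ is a uniform limit of continuous functions on $S_y(M)$ and therefore continuous.

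The main work is in $(1) \Rightarrow (2)$. Borel-definability is part of definability. Fix $\varphi(x,y) \in L$ and $\varepsilon > 0$. By NIP, the fiber family $\{\varphi(x,b) : b \in \cU^y\}$ has finite VC-dimension, so the Vapnik--Chervonenkis inequality applied to the Borel-definable measure $\mu$ gives $N = N(\varepsilon,\varphi)$ such that for every $n \geq N$,
\[
\mu^{\otimes n}\bigl(\{\bar{x} : \sup_{b} |\Av(\bar{x})(\varphi(x,b)) - \mu(\varphi(x,b))| < \varepsilon\}\bigr) \geq 1 - \varepsilon.
\]
Crucially, this set is cut out by an $L(M)$-formula $\theta_n(\bar{x})$: since $\Av(\bar{x})(\varphi(x,b))$ only takes the values $0, \tfrac{1}{n}, \ldots, 1$, the condition is equivalent to $\bigwedge_{k=0}^{n} \forall b\, \bigl(\text{exactly $k$ of $\varphi(x_i,b)$ hold} \rightarrow \mu(\varphi(x,b)) \in (k/n - \varepsilon, k/n + \varepsilon)\bigr)$, and the last conjunct is an $L(M)$-formula in $b$ by definability of $\mu$. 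Letting $\varepsilon_n \to 0$ and taking $\theta_n$ to be the resulting formula yields the sequence witnessing fim.

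Finally, for $(2) \Rightarrow (3)$, pick $n$ large enough that $\mu^{\otimes n}(\theta_n) > 0$. A routine induction shows $\mu^{\otimes n}$ is itself finitely satisfiable in $M$: if $\mu \otimes \nu(\psi(x,y)) > 0$, then by definability the set $\{a : \nu(\psi(a,y)) > 0\}$ is $L(M)$-definable with positive $\mu$-measure, hence has a realization in $M$ by finite satisfiability of $\mu$, and then $\nu(\psi(a,y)) > 0$ yields a realization of $\psi(a,y)$ in $M$ by finite satisfiability of $\nu$. Thus the $L(M)$-formula $\theta_n$ is realized by some $\bar{a} \in M^{xn}$, witnessing fam for $\varphi$ and $\varepsilon$. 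The main obstacle is clearly $(1) \Rightarrow (2)$: producing approximating formulas with $\mu^{\otimes n}$-measure tending to $1$ genuinely requires the VC concentration bound, with definability used only to keep the approximating set within $L(M)$.
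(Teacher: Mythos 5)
First, a point of reference: the paper does not prove this statement at all --- it is quoted as a Fact with citations (Hrushovski--Pillay--Simon, Simon's book \S 7.5, and Conant--Gannon--Hanson), so your proposal can only be compared with those standard proofs. Your overall architecture does mirror theirs ((3)$\Rightarrow$(1) by uniform limits of continuous functions on $S_y(M)$, which is fine; (1)$\Rightarrow$(2) via a VC-type concentration for $\mu^{\otimes n}$ with definability used to produce the formulas $\theta_n$ over $M$; (2)$\Rightarrow$(3) by realizing $\theta_n$). The problem is that the central step is asserted rather than proved, and the justification you give for it is not merely incomplete but would prove a false statement. The bound $\mu^{\otimes n}\bigl(\{\bar{x} : \sup_b |\Av(\bar{x})(\varphi(x,b))-\mu(\varphi(x,b))|<\varepsilon\}\bigr)\to 1$ is not an instance of ``the VC inequality applied to the Borel-definable measure $\mu$'': $\mu^{\otimes n}$ is a Morley product of a finitely additive Keisler measure, not an i.i.d.\ power of a countably additive one, and your argument for (1)$\Rightarrow$(2) never uses finite satisfiability, whereas the concentration genuinely requires it. Concretely, in DLO take $p$ the global $\emptyset$-definable type at $0^{+}$ (so $x>c \in p$ iff $c\le 0$) and $\mu=\delta_p$: this $\mu$ is $M$-invariant and definable (hence Borel-definable) and $T$ is NIP, yet for $\varphi(x,y):= x<y$, a Morley sequence $a_1>\dots>a_n$ of $p$ and any $b$ with $0<b<a_n$ give $\Av(\bar a)(\varphi(x,b))=0$ while $\mu(\varphi(x,b))=1$, so the ``good'' set has $\mu^{\otimes n}$-measure $0$ for every $n$. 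What the classical VC theorem yields for free (after identifying the pushforward of $\mu^{\otimes n}|_M$ with the i.i.d.\ power of the regular Borel extension of $\mu|_M$ on $S_x(M)^n$) is uniformity over parameters $b\in M^y$ only; upgrading to all $b\in\cU^y$ is exactly where dfs is used in the cited proofs (e.g.\ via self-commutation of dfs measures and the VC theorem for finitely additive measures), and this, the actual content of the theorem, is missing. A secondary slip in the same step: ``$\mu(\varphi(x,b))\in(k/n-\varepsilon,k/n+\varepsilon)$'' is not an $L(M)$-formula; by definability it is only an open ($\bigvee$-definable over $M$) condition on $\tp(b/M)$, so your displayed condition gives a type-definable over $M$ set (in its closed form), and an actual formula $\theta_n$ has to be extracted by a compactness sandwich between the closed ``$\le\varepsilon$'' set and the open ``$<2\varepsilon$'' set.

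Your (2)$\Rightarrow$(3) is also circular as written: the induction showing $\mu^{\otimes n}$ is finitely satisfiable in $M$ invokes finite satisfiability of $\mu$ (and of the other factor) in $M$, which is part of (1) and not available at that point of the cycle; moreover $\{a : \nu(\psi(a,y))>0\}$ is only $\bigvee$-definable over $M$, not $L(M)$-definable, so even granting (1) the step needs a regularity argument. Fortunately none of this is needed: $\theta_n$ is a consistent $L(M)$-formula and $M\prec\cU$, so by Tarski--Vaught it is realized by some $\bar a$ in $M$, and fim condition (a) then gives fam immediately. So the endpoints of your cycle are fine, but the heart of the equivalence, (1)$\Rightarrow$(2), is precisely what remains unproved.
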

\begin{remark}\label{rem: gs over a set but not fs}
If $\mu \in \mathfrak{M}_x(\cU)$ is generically stable over $A$, then it is definable over $A$ and finitely satisfiable (fim, fam) over any small model $M \supseteq A$ (but not necessarily finitely satisfiable in $A$ itself).
\end{remark}

We also have the following:
\begin{fact}\label{fac: basic props gen stab meas}
\cite[Proposition 3.3]{NIP3} Let $T$ be NIP and assume that $\mu \in \mathfrak{M}_x(\cU)$ is generically stable over $M$. Then  $\mu$ is the unique $M$-invariant extension of $\mu|_M$ (in particular its  unique global coheir; and its unique global heir).
\end{fact}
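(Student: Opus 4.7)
Plan.  Suppose $\nu\in\mathfrak M_x(\cU)$ is another global $M$-invariant extension of $\mu|_M$.  My plan is to show $\nu=\mu$, from which the ``in particular'' clauses about coheirs and heirs will follow: the heir case is immediate from Remark \ref{rem: unique def ext of definable measure} applied to the definable measure $\mu$, while every global coheir of $\mu|_M$ is automatically $M$-invariant (if $c\equiv_M c'$, then $\varphi(x,c)\triangle\varphi(x,c')$ has no realization in $M$, hence measure $0$ by finite satisfiability), so the coheir case reduces to the main uniqueness statement.

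It suffices to fix $\varphi(x,y)\in L$ and $b\in\cU^y$ and show $\mu(\varphi(x,b))=\nu(\varphi(x,b))$.  The main tool is the \emph{fim} characterization of generic stability (condition (2) of Fact \ref{fac: NIP gen stab meas equivs}) applied to $\varphi$: for each $\varepsilon>0$, for $n$ sufficiently large there is $\theta_n(x_1,\ldots,x_n)\in L(M)$ with $\mu^{\otimes n}(\theta_n)\ge 1-\varepsilon$ such that for every $\bar c\models\theta_n$ and every $b'\in\cU^y$,
\begin{equation*}
  \bigl|\operatorname{Av}(\bar c)(\varphi(x,b')) - \mu(\varphi(x,b'))\bigr|\le\varepsilon.
\end{equation*}

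The key step is to upgrade this approximation from $\mu$ to $\nu$.  Since in NIP every global $M$-invariant measure is Borel-definable, the tensor power $\nu^{\otimes n}$ is well-defined.  Because $\theta_n\in L(M)$ and $\mu|_M=\nu|_M$, the products $\mu^{\otimes n}$ and $\nu^{\otimes n}$ agree on $L(M)$-formulas in $n$ variables, so $\nu^{\otimes n}(\theta_n)\ge 1-\varepsilon$ as well.  Integrating the $[0,1]$-valued random variable $\operatorname{Av}(\bar x)(\varphi(x,b))$ against $\nu^{\otimes n}$ and using linearity gives $\int\operatorname{Av}(\bar x)(\varphi(x,b))\,d\nu^{\otimes n}=\frac{1}{n}\sum_i\nu^{\otimes n}(\varphi(x_i,b))=\nu(\varphi(x,b))$.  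Splitting the integral according to whether $\theta_n(\bar x)$ holds and invoking the uniform fim bound on the event $\theta_n$, this integral lies within $2\varepsilon$ of $\mu(\varphi(x,b))$, so $|\mu(\varphi(x,b))-\nu(\varphi(x,b))|\le 2\varepsilon$; letting $\varepsilon\to 0$ yields equality.

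The main technical point is the existence of $\nu^{\otimes n}$ matching $\mu^{\otimes n}$ on $L(M)$-formulas, which rests on Borel-definability of arbitrary $M$-invariant measures in NIP.  If one wishes to avoid invoking this fact, a variant argument uses only the mixed product $\mu^{\otimes(n-1)}\otimes\nu$ (always defined since $\mu$ is Borel-definable by generic stability), bounding the $\nu$-marginal by a similar splitting; the cost is a more careful tracking of the error as $n$ varies so that the constant in front of $|\mu-\nu|$ stays bounded.
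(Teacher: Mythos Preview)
The paper does not give its own proof here; the statement is quoted as a fact from \cite[Proposition~3.3]{NIP3}. So the question is whether your argument stands on its own.

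The key step ``because $\theta_n\in L(M)$ and $\mu|_M=\nu|_M$, the products $\mu^{\otimes n}$ and $\nu^{\otimes n}$ agree on $L(M)$-formulas'' is not justified as written. Already for $n=2$, computing $(\nu\otimes\nu)(\theta)$ with $\theta\in L(M)$ means integrating $q\mapsto\nu(\theta(x_1,a_2))$ (for $a_2\models q$ in $\cU$) against the regular Borel extension of $\nu|_M$ on $S_{x_2}(M)$; the integrand involves \emph{global} values of $\nu$ on formulas with parameters outside $M$, which is precisely what you are trying to determine. The equality $\nu^{\otimes n}|_M=\mu^{\otimes n}|_M$ is in fact true, but proving it requires a further nontrivial consequence of generic stability that you never invoke: $\mu$ commutes with every $M$-invariant measure. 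Granting $\nu\otimes\mu=\mu\otimes\nu$, one obtains
\[
(\nu\otimes\nu)|_M=(\nu\otimes\mu)|_M=(\mu\otimes\nu)|_M=(\mu\otimes\mu)|_M,
\]
the first and third equalities holding because for $\theta\in L(M)$ the value $(\lambda_1\otimes\lambda_2)(\theta)$ depends only on $\lambda_1$ globally and on $\lambda_2|_M$; one then inducts on $n$. Without this commutativity input your step is a gap, not a triviality.

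Your variant using $\mu^{\otimes(n-1)}\otimes\nu$ does match $\mu^{\otimes n}$ on $L(M)$-formulas for exactly this reason. But then only the last marginal is $\nu$, so the average integral equals $\tfrac{1}{n}\bigl((n-1)\,\mu(\varphi(x,b))+\nu(\varphi(x,b))\bigr)$, and your splitting gives only $\tfrac{1}{n}\lvert\mu(\varphi(x,b))-\nu(\varphi(x,b))\rvert\le 2\varepsilon$, i.e.\ $\lvert\mu-\nu\rvert\le 2n\varepsilon$. Since $n=n(\varepsilon)$ is supplied by fim with no a priori bound in terms of $\varepsilon$, this does not tend to $0$; the ``more careful tracking'' you gesture at is not cosmetic but the entire remaining difficulty, and you have not indicated how to carry it out.
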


\begin{definition}\label{def: definable pushforward}
Let $f:\cU^{x} \to \cU^{y}$ be a partial $L(\cU)$-definable map and $\mu \in \mathfrak{M}_{x}(\cU)$.
\begin{enumerate}
	\item We say that \emph{$f$ is defined on $\mu$} if $\mu_x(\exists y \Gamma_{f}(x,y))=1$ (equivalently, if $p(x) \vdash \exists y \Gamma_{f}(x,y)$ for every $p \in S(\mu)$).
	\item If $f$ is defined on $\mu$,  we define the \emph{push-forward measure} $f_{*}(\mu)$ in $\mathfrak{M}_{y}(\cU)$, where for any formula $\varphi(y) \in \mathcal{L}_{y}(\mathcal{U}), f_*(\mu)(\varphi(y)) = \mu_x(\exists u (\Gamma_f(x,u) \land \varphi(u)))$. 

\end{enumerate}
\end{definition}

\begin{fact}\cite[Proposition 3.26]{chernikov2024definable}
We have $S(f_{\ast}\mu) = \{f_{\ast}p : p \in S(\mu)\}$, and	 if $\mu \in \mathfrak{M}_{x}(\cU)$ is generically stable over $M$ and $f$ is an $M$-definable map, then $f_{\ast}(\mu)$ is also generically stable over $M$.
\end{fact}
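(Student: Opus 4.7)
\medskip

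\noindent\textbf{Proof plan.} The plan is to split the statement into two pieces. First I would verify the support identity $S(f_*\mu) = \{f_*p : p \in S(\mu)\}$ by a pure compactness argument, and then deduce generic stability of $f_*\mu$ by showing it is both definable and finitely satisfiable over $M$, applying the equivalence (1) $\Leftrightarrow$ generic stability in Fact \ref{fac: NIP gen stab meas equivs}. Throughout let $\psi(x,z) := \exists u \bigl( \Gamma_f(x,u) \land \varphi(u,z) \bigr) \in L(M)$; this is the key formula relating instances of $\varphi$ under $f_*$ to instances on the $x$-side.

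For the support identity, recall that for any Keisler measure $\nu$ and formula $\theta$, one has $\nu(\theta) > 0$ if and only if $[\theta] \cap S(\nu) \neq \emptyset$, and $\nu(\theta) = 1$ implies $\theta \in p$ for all $p \in S(\nu)$. Since $\mu(\exists y \Gamma_f(x,y)) = 1$, every $p \in S(\mu)$ contains $\exists y \Gamma_f(x,y)$, so $f_*p$ is well-defined. The inclusion $\{f_*p : p \in S(\mu)\} \subseteq S(f_*\mu)$ is immediate: if $\varphi(y) \in f_*p$ then $\psi(x) \in p \in S(\mu)$, so $\mu(\psi) > 0$, i.e.\ $f_*\mu(\varphi) > 0$. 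For the converse, fix $q \in S(f_*\mu)$ and consider the family $\mathcal{F} := \bigl\{ [\psi(x)] \cap S(\mu) : \varphi(y) \in q, \ \psi(x) = f_*^{-1}\varphi \bigr\}$ of closed subsets of $S(\mu)$. Each member is nonempty by the above observation, and the family is closed under finite intersections since $q$ is a complete type (using that $f_*^{-1}$ commutes with finite conjunctions on $\mu$-a.e.\ $x$). By compactness of $S(\mu)$, pick $p$ in the intersection; then $f_*p \supseteq q$, hence $f_*p = q$.

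For generic stability of $f_*\mu$, note first that $f_*\mu$ is $M$-invariant, since $\mu$ is $M$-invariant and $f$ is $M$-definable, so Fact \ref{fac: NIP gen stab meas equivs} applies and it suffices to establish definability and finite satisfiability over $M$. For definability, the map $q(z) \in S_z(M) \mapsto f_*\mu(\varphi(y,b))$ (any $b \models q$) coincides with $q \mapsto \mu(\psi(x,b))$, which is continuous because $\mu$ is definable over $M$ and $\psi(x,z) \in L(M)$. For finite satisfiability, suppose $f_*\mu(\varphi(y,b)) > 0$ for some $b \in \cU^z$. Then $\mu(\psi(x,b)) > 0$, so by finite satisfiability of $\mu$ over $M$ there exists $a \in M^x$ with $\models \psi(a,b)$, i.e.\ some $u \in \cU$ with $\Gamma_f(a,u) \land \varphi(u,b)$; but as $a \in M$ and $f$ is $M$-definable the unique such $u = f(a)$ lies in $\mathrm{dcl}(M) = M$, and it witnesses $\varphi(y,b)$ in $M$. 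Thus $f_*\mu$ is finitely satisfiable over $M$, completing the proof.

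\medskip

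\noindent\textbf{Main obstacle.} There is no substantial difficulty; the only technical point is to check that on the $\mu$-large set $\{\exists y \Gamma_f(x,y)\}$ the formula $\psi$ is an honest $L(M)$-formula and that the implicit witness $u = f(a)$ lies in $M$ when $a \in M$. Once this is noted, both halves become routine applications of Fact \ref{fac: NIP gen stab meas equivs} and of the basic topological properties of supports of Keisler measures.
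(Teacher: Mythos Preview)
The paper does not give its own proof of this fact; it is simply quoted as a citation to \cite[Proposition 3.26]{chernikov2024definable}. So there is nothing to compare against, and I can only assess your argument on its own merits.

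Your proof is correct. The support identity is handled cleanly: the forward inclusion is immediate, and for the reverse you correctly invoke compactness of $S(\mu)$ on the family $\{[\psi] \cap S(\mu) : \varphi \in q\}$, noting that on the $\mu$-full set $\{\exists y\,\Gamma_f(x,y)\}$ the pullback $\varphi \mapsto \psi$ commutes with finite conjunctions, which is exactly what gives the finite intersection property. For generic stability, reducing to definability plus finite satisfiability via Fact~\ref{fac: NIP gen stab meas equivs} is the natural route in the NIP setting of this paper, and both verifications are routine; you are right to flag that $f(a) \in \dcl(M) = M$ is the only point needing a word.

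One remark: your argument for preservation of generic stability uses the NIP equivalence (definable $+$ finitely satisfiable $\Leftrightarrow$ generically stable). The cited source proves the statement more generally (via fim, which makes sense without NIP), but since everything in this section is under a standing NIP hypothesis, your approach is entirely adequate here and arguably more transparent.
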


The following is a classical result on extending finitely additive measures to larger Boolean algebras:

\begin{fact}(\L o\'s-Marczewski \cite{Los1949})\label{fac: classic measure ext}
	Let $S$ be a set and $\mathcal{B}_0 \leq \mathcal{B}_1 \leq \mathcal{P}(S)$ be Boolean subalgebras, where $\mathcal{P}(S)$ is the power set. Let $\mu$ be a finitely additive probability measure on $\mathcal{B}_0$. Then there is a finitely additive probability measure $\nu$ on $\mathcal{B}_1$ extending $\mu$. Moreover, for any $X \in \mathcal{B}_1$ we can choose $\nu$ with $\nu(X) = r$ for any $r$ satisfying
	$$\sup \left\{ \mu(L) : L \in \mathcal{B}_0, L \subseteq X \right\} \leq r \leq \inf \left\{ \mu(U) : U \in \mathcal{B}_0, X \subseteq U \right\}$$
	(and clearly any $\nu$ extending $\mu$ has to satisfy this for every $X \in \mathcal{B}_1$).
\end{fact}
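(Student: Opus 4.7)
The plan is the classical two-step argument: Zorn's lemma reduces the problem to extending $\mu$ by a single new element $X \in \mathcal{B}_1$, and the one-element extension is handled by an explicit construction in which the sandwich bound on $r$ is exactly what enforces positivity.

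First, I would apply Zorn's lemma to the poset $P$ of pairs $(\mathcal{B}, \nu)$ with $\mathcal{B}_0 \leq \mathcal{B} \leq \mathcal{B}_1$ a Boolean subalgebra and $\nu$ a finitely additive probability measure on $\mathcal{B}$ extending $\mu$, ordered by extension. Chains admit upper bounds given by the union of their domains equipped with the coherent union of the measures, so $P$ has a maximal element $(\mathcal{B}^{*}, \nu^{*})$. If $\mathcal{B}^{*} \neq \mathcal{B}_1$, pick $X \in \mathcal{B}_1 \setminus \mathcal{B}^{*}$; a one-step extension of $\nu^{*}$ to $\langle \mathcal{B}^{*}, X \rangle$ contradicts maximality. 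Hence $\mathcal{B}^{*} = \mathcal{B}_1$ and the existence part reduces to the one-step case.

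For the one-step extension, given a finitely additive $\nu$ on $\mathcal{B}$ and $X \in \mathcal{B}_1 \setminus \mathcal{B}$, set
\[
\alpha = \sup\{\nu(L) : L \in \mathcal{B},\, L \subseteq X\}, \qquad \beta = \inf\{\nu(U) : U \in \mathcal{B},\, U \supseteq X\}.
\]
Whenever $L \subseteq X \subseteq U$, monotonicity of $\nu$ yields $\nu(L) \leq \nu(U)$, so $\alpha \leq \beta$, and any $r \in [\alpha, \beta]$ is eligible. Every element of $\langle \mathcal{B}, X \rangle$ admits a normal form $C = (A \cap X) \cup (B \setminus X)$ with $A, B \in \mathcal{B}$; declare $\nu'(X) = r$ and extend by finite additivity, so that $\nu'((A \cap X) \cup (B \setminus X)) = \nu'(A \cap X) + \nu'(B \setminus X)$ with $\nu'(A \cap X) = \nu(A) - \nu'(A \setminus X)$. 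Two normal forms represent the same element precisely when they differ by subsets disjoint from $X$ or from $X^{c}$, which makes $\nu'$ well-defined; non-negativity of $\nu'$ on every element of $\langle \mathcal{B}, X \rangle$ is exactly what the sandwich bound $\alpha \leq r \leq \beta$ encodes.

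The cleanest route bypassing the explicit bookkeeping is via the Hahn--Banach theorem for positive linear functionals. View $\mu$ as a positive linear functional on the subspace $V_0 \subseteq \ell^{\infty}(S)$ spanned by $\{\chi_{A} : A \in \mathcal{B}_0\}$. The condition $\alpha \leq r \leq \beta$ translates precisely into the statement that setting $\ell(\chi_X) = r$ on the enlarged subspace $V_0 + \mathbb{R}\chi_X$ keeps $\ell$ positive: for $f = \sum_i c_i \chi_{A_i} + c\, \chi_X \geq 0$ with $c > 0$ one recovers the lower bound $r \geq \alpha$ from inner approximations of $X$, while $c < 0$ gives the upper bound $r \leq \beta$ from outer approximations. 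Then the Hahn--Banach theorem with order unit $\chi_S$ extends $\ell$ to a positive norm-one functional on the subspace spanned by $\{\chi_{A} : A \in \mathcal{B}_1\}$, which restricts to the desired finitely additive probability measure on $\mathcal{B}_1$. In either route the main obstacle is identical: verifying that fixing $\nu(X) = r$ is consistent with positivity, and the sandwich inequalities for $\alpha, \beta$ appear in the statement precisely because they are the sharp obstructions one must neutralize.
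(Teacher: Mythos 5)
The paper does not prove this statement at all: it is quoted as a classical fact with a citation to \L o\'s--Marczewski, so there is no internal proof to compare against. Your argument is (a correct outline of) the standard proof of that classical result, and the Hahn--Banach version you give is sound: positivity of $\ell$ on $V_0+\mathbb{R}\chi_X$ is exactly the sandwich condition (for $f=g+c\chi_X\ge 0$ with $c>0$ one has $\{-g/c>0\}\subseteq X$, hence $\mu(-g/c)\le\alpha\le r$, and dually for $c<0$), and the positive extension theorem with order unit $\chi_S$ (or Hahn--Banach dominated by $f\mapsto\sup f$) then yields a normalized positive functional whose restriction to indicators of $\mathcal{B}_1$-sets is the desired measure. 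Two caveats on your first, ``explicit bookkeeping'' route. First, the formula $\nu'(A\cap X)=\nu(A)-\nu'(A\setminus X)$ is circular as written ($\nu'(A\setminus X)$ is not yet defined); the honest construction either defines the inner and outer extensions $\lambda_{\ast},\lambda^{\ast}$ via $\nu_{\ast},\nu^{\ast}$ on the two pieces $A\cap X$, $B\setminus X$ and takes a convex combination to hit $r$, or proceeds as in your functional-analytic route --- the well-definedness and positivity claims you assert there do need that extra work. Second, for the ``moreover'' clause the one-step adjunction of $X$ with value $r$ must be performed \emph{first} (the sandwich bounds in the statement are computed with respect to $\mathcal{B}_0$), and only then does one extend to $\mathcal{B}_1$ by Zorn; your Zorn-first narrative would fix $\nu(X)$ before you get to choose it, whereas your Hahn--Banach route gets this order right. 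With those small repairs the proof is complete.
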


We will also need the following fact on finding generically stable measures in NIP theories:
\begin{fact}\cite[Proposition 3.3]{zbMATH06025969}\label{fac: symmetrization measure}
	Let $T$ be NIP, $M$ a small model and $\mu \in \mathfrak{M}_x(\cU)$ a global $M$-invariant measure. Then there is $\mu' \in \mathfrak{M}_x(\cU)$ so that:
	\begin{enumerate}
		\item $\mu'|_{M} = \mu|_{M}$;
		\item $\mu'$ is generically stable over some $M' \succ M$;
		\item if $f: \cU^x \to \cU^x$ is an $M$-definable function such that $f_{\ast}(\mu) = \mu$, then also $f_{\ast}(\mu') = \mu'$.
	\end{enumerate}
\end{fact}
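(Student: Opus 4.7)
My plan is to construct $\mu'$ as an ultrafilter limit of empirical averages along a Morley-like sequence sampled from $\mu$; this makes $\mu'$ automatically \emph{fam} over the model $M'$ generated by the sequence, hence generically stable by Fact \ref{fac: NIP gen stab meas equivs}\ref{item: fam}, while agreement with $\mu$ on $L(M)$-formulas and preservation of all $M$-definable symmetries of $\mu$ will come out of the canonicity of the construction.

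Explicitly, I would work inside our monster $\cU$ (choosing it sufficiently saturated if necessary) and construct inductively a sequence $(a_i)_{i < \omega}$ in $\cU$ together with a chain of small models $M = M_0 \prec M_1 \prec \cdots$ with $a_{<i} \subset M_i$, as follows: at stage $i$, use Fact \ref{fac: inv heir of a measure NIP} applied to $\mu|_{M_i}$ to maintain a chosen global $M$-invariant measure extension, and use Fact \ref{fac: types in sup of inv meas are inv NIP} to pick $a_i \models p_i|_{M_i a_{<i}}$ for some $M$-invariant type $p_i$ in the support of that extension. Let $M' := M \cup \{a_i : i < \omega\}$, set
\begin{gather*}
\nu_n := \frac{1}{n} \sum_{i < n} \tp(a_i / \cU) \in \mathfrak{M}_x(\cU),
\end{gather*}
and define $\mu' := \lim_{\mathcal{D}} \nu_n$ along a non-principal ultrafilter $\mathcal{D}$ on $\omega$, the limit being taken pointwise on each $L(\cU)$-formula.

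To verify (2), I would invoke the NIP uniform-convergence theorem for Keisler measures (a VC-type result in the spirit of Hrushovski--Pillay--Simon): for every $\varphi(x, y) \in L$ and $\varepsilon > 0$, for all sufficiently large $n$,
\begin{gather*}
\sup_{b \in \cU^y} \left\lvert \frac{1}{n} \sum_{i < n} \mathbf{1}[\varphi(a_i, b)] - \mu'(\varphi(x, b)) \right\rvert < \varepsilon,
\end{gather*}
which is precisely the fam condition of Fact \ref{fac: NIP gen stab meas equivs}\ref{item: fam} witnessed by parameters in $M'$. For (1), for any $\varphi(x, b) \in L(M)$ the stage-$i$ choice ensures $\mathbb{P}[\models \varphi(a_i, b)] = \mu(\varphi(x, b))$, so the empirical averages recover $\mu|_M$ in the limit. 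For (3), the construction depends only on $\mu$ and $M$; since $f_\ast \mu = \mu$ with $f$ being $M$-definable, the parallel sequence $(f(a_i))$ is distributionally identical and produces the same limit, yielding $f_\ast \mu' = \mu'$.

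The main obstacle I expect is justifying the uniform convergence step underlying (2): the standard VC theorem produces uniform convergence for i.i.d.\ samples, but here the $(a_i)$ are Morley-sampled from invariant types in the \emph{support} of an invariant measure, so one needs the measure-theoretic NIP strengthening (essentially the argument in Hrushovski--Pillay--Simon). Once this uniform convergence is in hand, properties (1) and (3) follow by routine bookkeeping, and the statement reduces to a clean combination of Fact \ref{fac: inv heir of a measure NIP}, Fact \ref{fac: types in sup of inv meas are inv NIP}, and Fact \ref{fac: NIP gen stab meas equivs}.
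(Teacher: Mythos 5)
The paper does not prove this statement at all — it is quoted as a Fact with a citation to Hrushovski--Pillay--Simon — so your argument has to stand on its own, and as written it has a genuine gap at its core. The step that breaks is the construction of the sequence $(a_i)$: at each stage you pick a realization of \emph{some} $M$-invariant type $p_i$ in the support of (an extension of) $\mu$. Such a choice retains no quantitative information about $\mu$ beyond its support, so the empirical averages have no reason to recover $\mu|_M$, and item (1) can simply fail. Concretely, in DLO let $p,q$ be the global types at $+\infty$ and $-\infty$ and $\mu:=\frac12 p+\frac12 q$; your procedure permits choosing $p_i=p$ for every $i$, and then for $\varphi(x,b):=x>b$ with $b\in M$ every $\nu_n$ gives value $1$, so $\mu'(\varphi(x,b))=1\neq\frac12=\mu(\varphi(x,b))$. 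The sentence ``the stage-$i$ choice ensures $\mathbb{P}[\models\varphi(a_i,b)]=\mu(\varphi(x,b))$'' is where this is hidden: there is no randomness in your construction, $a_i$ is a fixed element, and $\varphi(a_i,b)$ is simply true or false. To make ``sampling from $\mu$'' meaningful one must pass to the Morley product $\mu^{(\omega)}|_{M}$, extend it to a regular Borel measure on the type space, and argue about almost every sequence (Keisler measures are only finitely additive, so one cannot literally sample points of $\cU$); and note that naively integrating the empirical averages against $\mu^{(\omega)}$ just returns $\mu$ itself, because each coordinate marginal of $\mu^{(\omega)}$ is $\mu$, so the almost-sure analysis cannot be sidestepped.

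The second problem is that the step you yourself flag as the main obstacle --- uniform convergence of the empirical averages over all $b\in\cU^y$ --- is not the classical VC theorem (your sequence is not an i.i.d.\ sample, and $\mu^{(\omega)}$ is not exchangeable unless $\mu$ is already generically stable), and you propose to import it as ``essentially the argument in Hrushovski--Pillay--Simon'', i.e.\ from the very source to which this Fact is attributed; that makes the argument circular rather than a proof. Finally, for (3) the claim that the construction ``depends only on $\mu$ and $M$'' is not true of what you wrote: it depends on the choices of the types $p_i$, the realizations $a_i$, and the ultrafilter $\mathcal{D}$, and even equality in distribution of $(a_i)$ and $(f(a_i))$ would not force the two ultrafilter limits to coincide. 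In a corrected, genuinely measure-theoretic version (working with $\mu^{(\omega)}|_{M}$ and an NIP law-of-large-numbers statement), properties (1) and (3) do indeed come out of the canonicity of the integrated limit --- but that is precisely the content of the cited proposition, not routine bookkeeping on top of the facts quoted in this paper.
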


\subsection{Keisler measures in type-definable groups and their stabilizers}\label{sec: Keisler measures in definable groups and their stabilizers}

\begin{definition}\label{def: definable conv} Suppose that $G$ is an $M$-type-definable group and $\mu \in \mathfrak{M}_{G}(\cU)$ is Borel-definable. Then for any measure $\nu \in \mathfrak{M}_{G}(\cU)$, the (definable) \emph{convolution of $\mu$ and $\nu$}, denoted $\mu * \nu$, is the unique measure in $\mathfrak{M}_{G}(\cU)$ such that for any formula $\varphi(x) \in \mathcal{L}(\cU)$, 
\begin{equation*} 
(\mu * \nu)(\varphi(x)) = ( \mu \otimes \nu)(\varphi(x \cdot y)). 
\end{equation*} 
We say that $\mu$ is \emph{idempotent} if $\mu * \mu = \mu$. 
\end{definition} 
\begin{remark}
	When $T$ is NIP, it suffices to assume that $\mu$ is invariant (under $\Aut(\mathbb{M}/M)$ for some small model $M$), as then $\mu$ is automatically Borel-definable (\cite{NIP3}). We refer to \cite[Section 3]{chernikov2022definable} for a detailed discussion of when convolution is well-defined, and to \cite{chernikov2022definable, chernikov2023definable2} for a study of the convolution semigroups on measures.
\end{remark} 

\begin{remark}\label{rem: type-def group formula}
		If $G$ is a type-definable (over $M$) group, without loss of generality defined by a partial type $G(x)$ closed under conjunctions, by compactness we may assume that $\cdot$ is an $M$-definable partial function defined, associative and with left and right cancellation on $\varphi_0(\cU)$, and $^{-1}$ is defined on $\varphi_0(\cU)$, for some $\varphi_{0}(x) \in G(x)$ (but $\varphi_0(\cU)$ is not necessarily closed under $\cdot$ or $^{-1}$). 
\end{remark}

\begin{definition}\label{def: stab of a measure}
For $G$ an $M$-type-definable group, $N \succeq M$, we have an action of $G(N)$ on $\mathfrak{M}_{G}(N)$ via $g \cdot \mu (\varphi(x)) := \mu_x(\varphi(g \cdot x))$ (where $\varphi(g \cdot x)$ is an abbreviation for $\exists u (\Gamma_{\cdot_{G}}(g,x,u) \land \varphi(u))$). For a measure $\mu \in \mathfrak{M}_{G}(\cU)$, we let 
$$\Stab(\mu) := \{g \in G(\cU) : g \cdot \mu = \mu \} $$ 
denote the left stabilizer of $\mu$. Similarly, we let $\Stab^{\leftrightarrow}(\mu) := \{g \in G(\cU) : g \cdot \mu = \mu \cdot g = \mu \}$ denote the two-sided stabilizer of $\mu$.
\end{definition}
\begin{fact}\label{fac: stab of def meas type def}(see \cite[Proposition 5.3]{chernikov2022definable})
	When $G = G(x)$ is an $M$-type-definable group and $\mu \in \mathfrak{M}_{G}(\cU)$ is a measure definable over $M \prec \cU$, then $\Stab_G(\mu)$ is an $M$-type-definable subgroup of $G(\cU)$. 
	
	More precisely, $\Stab_G(\mu) = G \cap \bigcap_{\varphi(x;y) \in  L,  \varepsilon \in \mathbb{Q}_{>0}} \Stab_{G,\varphi(x;y), \varepsilon}(\mu)$, where each $\Stab_{G,\varphi(x;y), \varepsilon}(\mu) \subseteq \varphi_{0}(\cU)$ (see Remark \ref{rem: type-def group formula})  is an $M$-definable set (not necessarily a subgroup or even a subset of $G$) so that:
	\begin{enumerate}
		\item  for any $b \in \cU^y$ and any $g  \in  \Stab_{G,\varphi(x;y), \varepsilon}(\mu)$, 
	$$\left \lvert  \mu(\varphi(x,b)) - \mu (\varphi(g \cdot x,b) )\right \rvert \leq \varepsilon;$$
	\item given $g$, if for all $b \in \cU^y$ we have $$\left \lvert  \mu(\varphi(x,b)) - \mu (\varphi(g \cdot x,b))\right \rvert \leq \varepsilon,$$
	 then $g \in \Stab_{G,\varphi(x;y), 2 \varepsilon}(\mu)$.
	\end{enumerate}
	And similarly for $\Stab^{\leftrightarrow}(\mu)$.
\end{fact}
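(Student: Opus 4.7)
Since the map $g \mapsto g \cdot \mu$ is a left action of $G(\cU)$ on $\mathfrak{M}_G(\cU)$, the stabilizer is automatically a subgroup of $G(\cU)$, so all of the content lies in establishing $M$-type-definability by exhibiting the $M$-definable approximations $\Stab_{G, \varphi, \varepsilon}(\mu)$ satisfying $(1)$ and $(2)$. The key input is $M$-definability of $\mu$: for every $L$-formula $\chi(x; z)$ and every closed rational interval $I \subseteq [0, 1]$, the set $\{c \in \cU^z : \mu(\chi(x; c)) \in I\}$ is $M$-definable. Applied to the $L$-formula $\psi_\varphi(x; z, y) := \exists u\,\bigl(\Gamma_\cdot(z, x, u) \wedge \varphi(u, y)\bigr)$, which captures $\varphi(g \cdot x, b)$ for $g \in \varphi_0(\cU)$, this yields $M$-definability in $(g, b)$ of conditions of the form $\mu(\varphi(g \cdot x, b)) \in I$.

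Given $\varphi(x; y) \in L$ and $\varepsilon \in \mathbb{Q}_{>0}$, I would fix a rational partition $0 = r_0 < r_1 < \ldots < r_n = 1$ of mesh $< \varepsilon/4$ and define $\Stab_{G, \varphi, \varepsilon}(\mu) \subseteq \varphi_0(\cU)$ to be the set of $g \in \varphi_0(\cU)$ for which, for every $b \in \cU^y$ and every $i \leq n$, both implications
\begin{gather*}
\mu(\varphi(x, b)) \geq r_i \longrightarrow \mu(\varphi(g \cdot x, b)) \geq r_i - \varepsilon/2, \\
\mu(\varphi(x, b)) \leq r_i \longrightarrow \mu(\varphi(g \cdot x, b)) \leq r_i + \varepsilon/2
\end{gather*}
hold. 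By the previous paragraph, each of these implications is an $M$-definable condition on $(g, b)$; taking a finite conjunction over $i$ and a universal quantifier over $b$ keeps the resulting set on $g$ $M$-definable. Verification of $(1)$ amounts to bracketing $\mu(\varphi(x, b))$ between consecutive $r_i, r_{i+1}$ and combining both implications to conclude $|\mu(\varphi(x, b)) - \mu(\varphi(g \cdot x, b))| \leq \varepsilon/2 + \varepsilon/4 < \varepsilon$. Verification of $(2)$ is immediate: $\Stab_{G, \varphi, 2\varepsilon}(\mu)$ uses mesh $< \varepsilon/2$ and slack $\varepsilon$, both of which are loose enough that the pointwise hypothesis $|\mu(\varphi(x, b)) - \mu(\varphi(g \cdot x, b))| \leq \varepsilon$ forces its defining implications.

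Finally, $\Stab_G(\mu) = G \cap \bigcap_{\varphi, \varepsilon} \Stab_{G, \varphi, \varepsilon}(\mu)$: the $\subseteq$ direction is clear, and the $\supseteq$ direction follows from $(1)$, which in the limit over $\varepsilon$ forces $\mu(\varphi(x, b)) = \mu(\varphi(g \cdot x, b))$ for all $\varphi \in L$ and $b \in \cU^y$, i.e., $g \cdot \mu = \mu$. The two-sided case $\Stab^{\leftrightarrow}(\mu)$ is handled identically by also conjoining the symmetric approximate conditions for $g \mapsto \mu \cdot g$. The only genuine subtlety is calibrating the two free parameters (mesh of the discretization and slack in the implications) simultaneously, so that $(1)$ yields an $\varepsilon$-bound from the $\varepsilon$-approximation while $(2)$ yields the $2\varepsilon$-approximation from an $\varepsilon$-bound; this is what dictates the choice of mesh $\varepsilon/4$ and slack $\varepsilon/2$ above, and constitutes the main (mild) obstacle.
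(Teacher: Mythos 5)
There is a genuine gap, and it sits exactly at the point you yourself identify as carrying all the content. Your ``key input'' --- that $M$-definability of $\mu$ makes the sets $\{c \in \cU^z : \mu(\chi(x;c)) \in I\}$ $M$-\emph{definable} for closed rational intervals $I$ --- is false. Definability of $\mu$ over $M$ means only that each map $F^{\chi}_{\mu,M}: S_z(M) \to [0,1]$ is \emph{continuous}; hence the preimage of a closed interval is a closed subset of $S_z(M)$, i.e.\ the condition $\mu(\chi(x;c)) \in I$ is only \emph{type-definable} over $M$ (it would be definable only if $F^{\chi}_{\mu,M}$ were locally constant, which fails already for, say, a smooth Lebesgue-like measure in DLO, where $\{(c,d): \mu(c<x<d)=1/2\}$ is type-definable but not definable). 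Consequently each of your implications $\mu(\varphi(x,b)) \geq r_i \rightarrow \mu(\varphi(g\cdot x,b)) \geq r_i - \varepsilon/2$ is of the form (closed $\rightarrow$ closed), i.e.\ the union of an open and a closed condition on $(g,b)$, and after conjoining over $i$ and universally quantifying $b$ over the monster the resulting set of $g$'s is in general not $M$-definable. Your $\varepsilon$/mesh bookkeeping for (1) and (2) is fine, but the sets you produce are not of the kind the statement requires, so the ``more precisely'' clause (which is the whole point; bare type-definability of $\Stab_G(\mu)$ is much softer) is not established.

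The missing idea is a separation argument in the Stone space. Since $\mu$ is definable over $M$, the map $(g,b) \mapsto \left\lvert \mu(\varphi(x,b)) - \mu(\varphi(g\cdot x,b)) \right\rvert$ factors through $S_{x'y}(M)$ as a continuous function (for $g$ ranging over $\varphi_0(\cU)$, using the formula $\psi_\varphi$ you wrote down). Because the restriction map $S_{x'y}(M)\to S_{x'}(M)$ is a closed continuous surjection of compact spaces, both
$A_\varepsilon := \{\tp(g/M) : \sup_{b} \lvert \mu(\varphi(x,b)) - \mu(\varphi(g\cdot x,b)) \rvert \leq \varepsilon/2\}$ and
$B_\varepsilon := \{\tp(g/M) : \sup_{b} \lvert \mu(\varphi(x,b)) - \mu(\varphi(g\cdot x,b)) \rvert \geq \varepsilon\}$
are closed subsets of $S_{x'}(M)$, and they are disjoint. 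As $S_{x'}(M)$ is a Stone space, they are separated by a clopen set, i.e.\ by an $L(M)$-formula; taking $\Stab_{G,\varphi,\varepsilon}(\mu)$ to be (the $\varphi_0$-part of) that formula's solution set gives exactly (1) with bound $\varepsilon$ and (2) with $2\varepsilon$, and then $\Stab_G(\mu) = G \cap \bigcap_{\varphi,\varepsilon}\Stab_{G,\varphi,\varepsilon}(\mu)$ as you argue in your last paragraph. So the repair is not a recalibration of mesh and slack but the insertion of this compactness/clopen-separation step, which is where $M$-definability (as opposed to mere type-definability) of the approximating sets actually comes from.
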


\begin{fact}\cite[Proposition 3.37]{chernikov2024definable}\label{fac: pushforward gen stab}
Let $G$ be an $M$-definable group, and $\mu \in \mathfrak{M}_{G}(\cU)$ an idempotent generically stable measure. Then the following are equivalent:
\begin{enumerate}
	\item $\mu \in \mathfrak{M}_{H}(\cU)$, where $H := \Stab_{G}(\mu)$;
	\item $\mu^{(2)} = f_{\ast}\left(\mu^{(2)} \right)$, where $f: \left( \cU^{x} \right)^2 \to \left( \cU^{x} \right)^2$ is the $M$-definable map $f(x_1,x_0) = (x_1 \cdot x_0, x_0)$ (where $\cdot$ is viewed as a globally defined function whose restriction to $G$ defines the group operation).
	\item $\mu \otimes p = f_{\ast}(\mu \otimes p)$ for every $p \in S(\mu)$.
\end{enumerate}
If any of these conditions hold, we say that $\mu$ is \emph{generically transitive}.
\end{fact}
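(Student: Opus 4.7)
\emph{The plan} is to prove the cyclic chain $(1) \Rightarrow (2) \Rightarrow (3) \Rightarrow (1)$, relying throughout on the Fubini-commutativity and Borel-definability of generically stable measures afforded by Fact~\ref{fac: NIP gen stab meas equivs}, and on the type-definability of $H = \Stab(\mu)$ from Fact~\ref{fac: stab of def meas type def}.

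\emph{For $(1) \Rightarrow (2)$}, assume $\mu$ is supported on $H$. Fix $\psi(y_1, y_0) \in L(\cU)$ and write, using Fubini,
\[
f_{*}(\mu^{(2)})(\psi) = \mu_{x_1}\mu_{x_0}\bigl(\psi(x_1 \cdot x_0, x_0)\bigr).
\]
For $\mu$-almost every $g = x_1$ we have $g \in \Stab(\mu)$, so $g \cdot \mu = \mu$; a change of variables using this invariance (together with commutativity of $\otimes$ for generically stable measures) identifies the displayed expression with $\mu^{(2)}(\psi)$.

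\emph{For $(2) \Rightarrow (3)$}, Borel-definability provides the disintegration $\mu^{(2)}(\psi) = \int_{S(\mu)} (\mu \otimes p)(\psi) \, d\mu(p)$ and similarly for $f_{*}(\mu^{(2)})$, so (2) reads $\int [(\mu \otimes p)(\psi) - f_{*}(\mu \otimes p)(\psi)]\,d\mu(p) = 0$ for every $\psi \in L(\cU)$. The integrand is a continuous function of $p \in S_{G}(\cU)$, so its vanishing locus is closed; having full $\mu$-measure, it contains $S(\mu)$, giving (3).

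\emph{For $(3) \Rightarrow (1)$}, fix $p \in S(\mu)$ and pick $b \models p$ in a larger monster $\cU^{*} \succ \cU$, with $\mu'$ the unique global extension of $\mu$ to $\cU^{*}$ (which exists by generic stability). Unpacking (3) gives $\mu'(\psi(x_1, b)) = \mu'(\psi(x_1 \cdot b, b))$ for every $\psi \in L(\cU)$; specializing to $\psi(y_1, y_0) = \chi(y_1)$ shows that $b$ lies in the right-stabilizer $R$ of $\mu'$. Since this holds for every $p \in S(\mu)$, the measure $\mu$ is supported on the type-definable subgroup $R$. It remains to see that $R \subseteq \Stab(\mu)$: for $a \in R$ the measure $a \cdot \mu$ is supported on $a R = R$ and remains right-invariant, and by the Haar-like uniqueness of the generically stable right-invariant measure on $R$ (a consequence of Fact~\ref{fac: basic props gen stab meas}), $a \cdot \mu = \mu$, so $a \in \Stab(\mu)$. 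Hence $p \vdash \Stab(x)$ for every $p \in S(\mu)$, which is (1).

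\emph{Main obstacle.} The delicate step is the final part of $(3) \Rightarrow (1)$, where the ``right-translation invariance'' of $\mu'$ under $b$ naturally produced by the push-forward identity must be converted into ``left-translation invariance,'' as $\Stab$ is defined one-sidedly. This passage requires the Haar-type uniqueness of a generically stable right-invariant measure on the type-definable subgroup $R$, which ultimately rests on the unique-invariant-extension property of generically stable measures.
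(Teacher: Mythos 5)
First, a framing remark: this statement is imported as a Fact from \cite{chernikov2024definable}, so the paper contains no proof to compare against; I can only assess your argument on its own terms, and as written it has genuine gaps at each of the three implications. In $(1)\Rightarrow(2)$ your ``change of variables'' is not valid: membership of the outer variable in $\Stab_G(\mu)$ gives \emph{left}-invariance, $\mu(\chi(g\cdot x))=\mu(\chi(x))$, and this neither acts on the correct side nor can be applied to a single occurrence of a repeated variable. Concretely, computing $f_{\ast}(\mu^{(2)})(\psi)$ with the outer integral in $x_1$, the fibre function is $a\mapsto \mu_{x_0}(\psi(a\cdot x_0,x_0))$, and for $\psi(y_1,y_0):=\chi(y_1)$ with $\mu(\chi)=1/2$ this is constantly $1/2$ on $\Stab_G(\mu)$, while the fibre function of $\mu^{(2)}$ is the $\{0,1\}$-valued $a\mapsto \mu_{x_0}(\psi(a,x_0))$; the two integrals agree but the pointwise identification you assert is false. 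What is actually needed is \emph{right}-invariance of $\mu$ under elements of its support, which under (1) follows from the fsg machinery (if $\mu\in\mathfrak{M}_H(\cU)$ with $H=\Stab_G(\mu)$, then $H$ is fsg and $\mu$ is its unique, hence bi-invariant, measure), i.e.\ exactly the ``Haar-type uniqueness'' you only invoke at the very end; without it, this step fails. In $(2)\Rightarrow(3)$ the inference ``the integral vanishes for every $\psi$, hence the continuous integrand vanishes on a set of full measure'' is simply wrong for sign-changing integrands; to repair it you must localize in the $x_0$-variable (apply (2) to $\psi(x_1,x_0)\wedge\theta(x_0)$, using that $f$ fixes the second coordinate, so that the integral vanishes over every clopen set), and only then conclude vanishing on the support and hence (3).

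The most serious problem is the first half of $(3)\Rightarrow(1)$. Unwinding (3) gives $\mu(\psi(x_1\cdot b,b))=\mu(\psi(x_1,b))$ only for realizations $b$ of $p$ restricted to small sets containing the parameters of $\psi$, equivalently right-invariance of the canonical extension $\mu'$ tested against formulas over $\cU$ only. Membership of $b\models p$ in the \emph{type-definable} right stabilizer (which is what ``$\mu$ is supported on the subgroup $R$'' requires, via the definable approximations of Fact~\ref{fac: stab of def meas type def} evaluated in the larger monster) demands the estimate against arbitrary parameters of $\cU^{\ast}$, i.e.\ against parameters over which $b$ is \emph{not} generic, and nothing in your sketch bridges this; neither definability of $\mu$ nor continuity closes it, since the relevant types $\tp(b,c)$ are not all attained by pairs with $b$ generic over $c$. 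This passage is where the real content of the cited proposition lies, and your outline does not supply the missing idea. On the positive side, the second half of your $(3)\Rightarrow(1)$ — converting right-invariance on a type-definable subgroup $R$ into left-stabilizer membership via uniqueness of the generically stable invariant measure on an fsg group — is a sound and genuinely useful idea (and is also what is needed to fix $(1)\Rightarrow(2)$). Finally, note that your argument never uses idempotence of $\mu$ anywhere; while idempotence is in fact a consequence of each of (1)--(3) for generically stable measures, its complete absence from your sketch is a symptom of the gaps above rather than evidence of a genuinely hypothesis-free proof.
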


We recall:
\begin{definition}
	A type-definable group $G$ is \emph{definably amenable} (\emph{definably extremely amenable}) if there is a left-$G(\cU)$-invariant measure $\mu \in \mathfrak{M}_{G}(\cU)$ (respectively, a left-$G(\cU)$-invariant type $p \in S_{G}(\cU)$).
\end{definition}

\begin{fact}\cite[Lemma 5.8]{NIP2}\label{fac: def am def measure}
	If $T$ is NIP and $G$ is $M$-type-definable and definably amenable, then there exists a left-$G(\cU)$-invariant measure $\mu \in \mathfrak{M}_{G}(\cU)$ which is definable over some small model $M' \succ M$.
\end{fact}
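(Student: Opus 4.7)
The strategy is to combine definable amenability with the symmetrization result (Fact~\ref{fac: symmetrization measure}) and to use saturation together with the NIP-specific rigidity of generically stable measures to pass from $G(M_0)$-invariance to full $G(\cU)$-invariance.

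Fix a small saturated model $M_0 \succ M$, so that every type over $M_0$ in finitely many variables is realized inside $M_0$. By definable amenability, choose some left $G(\cU)$-invariant $\nu_0 \in \mathfrak{M}_G(\cU)$; then $\nu_0\restriction_{M_0}$ is $G(M_0)$-invariant. By Fact~\ref{fac: inv heir of a measure NIP}, extend $\nu_0\restriction_{M_0}$ to some global $M_0$-invariant $\nu_1 \in \mathfrak{M}_G(\cU)$. A standard averaging argument further arranges that $\nu_1$ is left $G(M_0)$-invariant: $G(M_0)$ acts on the compact convex set of $M_0$-invariant global extensions of $\nu_0\restriction_{M_0}$ (preserving both the $M_0$-invariance and the $M_0$-restriction, since $\nu_0\restriction_{M_0}$ is itself $G(M_0)$-invariant), and a $G(M_0)$-fixed point is produced using that $\nu_0\restriction_{M_0}$ already supplies a $G(M_0)$-invariant mean on $M_0$-definable sets.

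Next, apply Fact~\ref{fac: symmetrization measure} to $\nu_1$ to obtain $\nu_2 \in \mathfrak{M}_G(\cU)$ generically stable over some $M' \succ M_0$ with $\nu_2\restriction_{M_0} = \nu_1\restriction_{M_0} = \nu_0\restriction_{M_0}$, such that any $M_0$-definable function preserving $\nu_1$ also preserves $\nu_2$; moreover the construction preserves $M_0$-invariance of the starting measure (via Morley-product/averaging in NIP), so $\nu_2$ is $M_0$-invariant. Applied to left-translations $L_g$ for $g \in G(M_0)$ (which are $M_0$-definable and preserve $\nu_1$ by the previous paragraph), item (3) of Fact~\ref{fac: symmetrization measure} yields left $G(M_0)$-invariance of $\nu_2$. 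Definability of $\nu_2$ over $M'$ (via generic stability) supplies the definability clause of the conclusion, with $M'$ playing the role of the desired small model.

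Finally, I upgrade $G(M_0)$-invariance of $\nu_2$ to $G(\cU)$-invariance. Fix $g \in G(\cU)$, $\varphi(x;y) \in L$ and $b \in \cU^y$; write $\psi(x, z, y) := \varphi(z^{-1} x, y) \in L(\emptyset)$, so that $\varphi(g^{-1}x, b) = \psi(x, g, b)$. By saturation of $M_0$, realize $\tp(g, b / M_0)$ by some $(g', b') \in M_0$ with $g' \in G(M_0)$. By $M_0$-invariance of $\nu_2$,
\[
\nu_2(\varphi(g^{-1} x, b)) = \nu_2(\varphi(g'^{-1} x, b')), \qquad \nu_2(\varphi(x, b)) = \nu_2(\varphi(x, b')),
\]
and by $G(M_0)$-invariance of $\nu_2$ applied to the $L(M_0)$-formula $\varphi(x, b')$, $\nu_2(\varphi(g'^{-1} x, b')) = \nu_2(\varphi(x, b'))$. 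Combining the three equalities gives $\nu_2(\varphi(g^{-1}x, b)) = \nu_2(\varphi(x, b))$, so $\nu_2$ is left $G(\cU)$-invariant. I expect the main technical point to be the production of $\nu_1$ with joint $M_0$-invariance and $G(M_0)$-invariance in the second step, together with the (implicit in the proof of Fact~\ref{fac: symmetrization measure}) preservation of $M_0$-invariance by the symmetrization; granted these, the saturation argument in the last step cleanly delivers the full $G(\cU)$-invariance.
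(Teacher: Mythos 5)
The statement you set out to prove is not proved in the paper at all: it is quoted from \cite[Lemma 5.8]{NIP2} as a black box. Your proposed argument, unfortunately, has a fatal gap at its final step. You posit a small model $M_0\succ M$ ``so that every type over $M_0$ in finitely many variables is realized inside $M_0$'' --- no infinite structure has this property (the type $\{x\neq a: a\in M_0\}$ is always omitted), and this is exactly what the last step needs: for arbitrary $g\in G(\cU)$, $b\in\cU^y$ you cannot realize $\tp(g,b/M_0)$ inside $M_0$, and if some $(g',b')\in M_0$ did realize it then necessarily $g'=g$, $b'=b$, so your computation only handles translations by elements already in $M_0$. The correct device for passing from invariance under $G(M_0)$ to invariance under $G(\cU)$ is the heir property (Fact \ref{fac: heir of a measure remains G-inv}), which requires the global measure to be an heir of a $G(M_0)$-invariant measure over $M_0$ and is stated for definable, not type-definable, $G$; automorphism-invariance over $M_0$ together with $G(M_0)$-invariance is genuinely weaker and does not suffice. (Two further unsupported points: Fact \ref{fac: symmetrization measure} yields generic stability over some larger $M'$, not invariance over $M_0$, so the $M_0$-invariance of $\nu_2$ you invoke is not given; and in the averaging step the quantity $(g\cdot\nu)(\varphi(x,b))$ depends on $\tp(g,b/M_0)$ rather than on $\tp(g/M_0)$, so the mean supplied by $\nu_0\restriction_{M_0}$ does not directly average the action on your convex set.)

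More fundamentally, no repair of this route can succeed, because your conclusion is too strong: the measure $\nu_2$ produced by Fact \ref{fac: symmetrization measure} is generically stable, so if it were left $G(\cU)$-invariant then $G$ would be fsg by Proposition \ref{prop: fsg iff inv gen stab meas}. Your argument would thus show that every definably amenable NIP group is fsg, which is false --- e.g.\ the additive group of an o-minimal ordered field is abelian, hence definably amenable, but not fsg, and indeed admits no $G(\cU)$-invariant generically stable measure (Example \ref{ex: no fsg subroups in R+}). The symmetrization of Fact \ref{fac: symmetrization measure} preserves only invariance under $M_0$-definable translations, i.e.\ under $G(M_0)$; this is precisely why the paper's Claim \ref{cla: ab of gs stabilizers} obtains only $G(M)\subseteq \Stab_G(\mu_M)$ rather than full translation invariance. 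A proof of Fact \ref{fac: def am def measure} must produce a definable left-invariant measure without forcing it to be generically stable, and for that one has to argue differently from what you propose (see \cite[Lemma 5.8]{NIP2}).
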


\begin{remark}
	If $G$ is definable and $H \leq G$ is type-definable by a partial type $\pi(x)$ and definably amenable, then $H$ is already type-definable by some $\pi_0 \subseteq \pi$ with $|\pi_0| \leq |T|$.
\end{remark}
\begin{proof}
	By Fact \ref{fac: def am def measure} there is a left $H(\cU)$-invariant measure $\mu \in \mathfrak{M}_{H}(\cU)$ definable over some small model $M \prec \cU$. But by definition, every definable measure is definable over some set of size $\leq |T|$, hence $\Stab_{G}(\mu) \leq G(\cU)$ is also type-definable by some $\pi_1(x)$ of size $\leq |T|$ by Fact \ref{fac: stab of def meas type def}. Clearly $H(\cU) \subseteq \Stab_{G}(\mu)$. Conversely, $\Stab_{G}(\mu) \leq  H$: otherwise there is some $g \in \Stab_{G}(\mu) \setminus H$, so $g \cdot H \cap H = \emptyset$. By compactness this implies $g \cdot \psi(x) \cap \psi(x) = \emptyset$ for some $\psi \in \pi$, and $\mu(\psi(x) = \mu( g \cdot \psi(x)) = 1$ (as $\mu \in \mathfrak{M}_{H}(\cU)$) --- a contradiction. So $\pi(\cU) = \pi_1(x)$, and by compactness this implies that $\pi(x)$ is equivalent to some $\pi_0 \subseteq \pi$ with $|\pi_0| \leq |T|$.
\end{proof}

\subsection{Type-definable fsg groups}\label{sec: type-def fsg groups}
We summarize some facts on (type-) definable fsg groups, see \cite[Remark 4.4]{NIP3}, and \cite[Remark 3.3]{chernikov2024definable} for the type-definable case.

\begin{definition}\cite[Definition 6.1]{NIP2}
	Let $G$ be a type-definable group. We say that $G$ is \emph{fsg} (\emph{finitely satisfiable generics}) if there  exist some $p \in S_G(\cU)$ and small $M \prec \cU$ such that $G$ is type-definable over $M$ and for every $g \in G(\cU)$, the type $g \cdot p$ is finitely satisfiable in $G(M)$ (we stress that finite satisfiability in $G(M)$ is a stronger requirement than finite satisfiability in $M$ when $G$ is only type-definable rather than definable). In this case we will say that $G$ is fsg \emph{over $M$}.
\end{definition}

\begin{proposition}\label{prop: fsg type-def arb models}
	If $G$ is type-definable over $M$ and fsg, then for any $N \succ M$ which is $|M|^{+}$-saturated, $G$ is fsg over $N$.
\end{proposition}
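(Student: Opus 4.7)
The plan is to transport an fsg witness into $N$ via an $M$-automorphism of $\cU$. Since $G$ is type-definable over $M$, every $\sigma \in \Aut(\cU/M)$ fixes $G(\cU)$ setwise, so if $(p_0, M_0)$ witnesses fsg of $G$ then so does $(\sigma(p_0), \sigma(M_0))$: for every $g \in G(\cU)$, writing $g = \sigma(g')$ with $g' = \sigma^{-1}(g) \in G(\cU)$, finite satisfiability of $g' \cdot p_0$ in $G(M_0)$ transports under $\sigma$ to finite satisfiability of $g \cdot \sigma(p_0)$ in $\sigma(G(M_0)) = G(\sigma(M_0))$. If moreover $\sigma(M_0) \subseteq N$, then $G(\sigma(M_0)) \subseteq G(N)$ and $\sigma(p_0)$ witnesses fsg over $N$.

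The first step is therefore to arrange a witness $(p_0, M_0)$ with $|M_0| \leq |M|$. The bare definition only supplies some small $M_0$, whose cardinality is a priori uncontrolled, so I invoke the structure theory of NIP fsg groups: there is a unique translation-invariant Keisler measure $\mu_G \in \mathfrak{M}_G(\cU)$ and it is generically stable, hence by Fact~\ref{fac: NIP gen stab meas equivs} finitely satisfied over some model of size at most $|T|$. Taking $p_0 \in S(\mu_G)$, translation-invariance of $\mu_G$ gives $g \cdot p_0 \in S(\mu_G)$ for every $g \in G(\cU)$, so each $g \cdot p_0$ has all its formulas realized in any such small model; a routine compactness argument, using that each $g \cdot p_0$ extends the partial type defining $G$, upgrades ``finitely satisfied in $M_0$'' to ``finitely satisfied in $G(M_0)$'', possibly after slightly enlarging $M_0$ but keeping $|M_0| \leq |T| \leq |M|$.

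Once $|M_0| \leq |M|$, I apply the $|M|^{+}$-saturation of $N$ over $M$: the type $\tp(M_0/M)$ has cardinality at most $|M|$, is therefore realized in $N$, and so some $\sigma \in \Aut(\cU/M)$ sends $M_0$ into $N$. Combining with the first paragraph, $\sigma(p_0)$ witnesses fsg of $G$ with base $\sigma(M_0) \subseteq N$, hence with base $N$ itself.

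The main obstacle I expect is precisely the small-witness reduction, since the definition of fsg is purely existential in the choice of $M_0$; without the NIP structure theory (uniqueness and generic stability of the invariant measure) there is no obvious way to bound $|M_0|$ in terms of $|M|$, and without such a bound the $|M|^{+}$-saturation of $N$ is not strong enough to transport the witness inside.
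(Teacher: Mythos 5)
Your transport step (moving a witness $(p_0,M_0)$ into $N$ by an automorphism over $M$) is fine, but the crux of your argument — the ``small-witness reduction'' — has a genuine gap, in two respects. First, you invoke the existence (and uniqueness) of a translation-invariant generically stable measure on a type-definable fsg group. In this paper that is exactly Proposition \ref{prop: fsg iff inv gen stab meas}, whose proof of (1)$\Rightarrow$(3)/(4) goes \emph{through} the proposition you are asked to prove (fsg over a saturated $N$ is the input to the standard construction of the invariant generically stable measure). So, as a proof inside this development, your argument is circular; it also imports NIP, whereas the statement and the paper's proof need no NIP and no measures. Second, the ``routine compactness argument'' upgrading ``finitely satisfiable in $M_0$'' to ``finitely satisfiable in $G(M_0)$'' is precisely the delicate point for type-definable groups (the definition in Section \ref{sec: type-def fsg groups} explicitly warns that finite satisfiability in $G(M)$ is strictly stronger than in $M$). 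Conjoining finitely many formulas from the partial type defining $G$ to a formula of $g\cdot p_0$ and realizing the conjunction in $M_0$ does not put the realization in $G(M_0)$. The correct upgrade uses fim: since $\mu^{\otimes n}$ is supported on $G^n$, one can find $\varepsilon$-approximating tuples for $\mu$ with all coordinates in $G(\cU)$ and then take a small model containing them (this is the content of Lemma \ref{lem: generically stable localizing} and of the step (3)$\Rightarrow$(1) in Proposition \ref{prop: fsg iff inv gen stab meas}) — a real argument, not a routine one, and again downstream of the measure machinery.

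For contrast, the paper's proof avoids any bound on $|M_0|$ and any global automorphism: using the theory of generics in fsg groups, for each generic $\varphi(x,b)$ there is a \emph{finite} tuple $\bar d\in G(M_0)^n$ such that every $b'\models\tp(b/M)$ satisfies $\bigvee_i\varphi(d_i,b')$; since this is a property of $\tp(\bar d/M)$ and $G$ is type-definable over $M$, realizing $\tp(\bar d/M)$ in $N$ (mere $|M|^+$-saturation, finitely many variables) produces $\bar d'\in G(N)^n$ witnessing that $\varphi(x,b)$ meets $G(N)$. Only finite configurations are transported, which is why no control of the original witness model and no NIP are needed. If you want to salvage your route, you must either prove the existence of the invariant generically stable measure for type-definable fsg groups independently of this proposition, or replace the measure by the finite-tuple covering argument — at which point you have essentially reconstructed the paper's proof.
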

\begin{proof}
	We adapt the proof from \cite[Remark 4.4]{NIP1} for definable groups. Let $N \succ M$ be any $|M|^{+}$-saturated model.

	By \cite[Lemma 6.2(ii)]{NIP2}, there is a global generic type $p$ of $G$, namely $p \in S_G(\cU)$ so that any $\varphi(x) \in p$ is generic, i.e.~there are $n_{\varphi} \in \omega$ and $g_1, \ldots, g_{n_{\varphi}} \in G(\cU)$ so that $G(\cU) \subseteq  \bigcup_{i \in [n_{\varphi}]} g_i \cdot (\varphi(\cU) \cap G(\cU))$. By \cite[Lemma 6.2(iii)]{NIP2}, there is a small $M_0 \prec \cU$ so that any generic $p \in S_G(\cU)$ is finitely satisfiable in $G(M_0)$, and for any generic $\varphi(x) \in L(\cU)$, the translates $g_1, \ldots, g_{n_{\varphi}}$ witnessing this can be chosen in $G(M_0)$.

	Now assume that $\psi(x) \in L(\cU)$ is generic, say $\psi(x) = \varphi(x,b)$ for some $\varphi(x,y) \in L, b \in \cU^y$, and  $G(\cU) \subseteq  \bigcup_{i \in [k]} g_i \cdot (\varphi(\cU,b) \cap G(\cU))$ for some $g_1, \ldots, g_{k} \in G(\cU)$. Let $q(y) := \tp(b/M)$. As $G$ is type-definable over $M$, $\varphi(x,b')$ is generic for any $b' \models q$ in $\cU$ (witnessed by any $g'_1, \ldots, g'_k$ so that $(b,g_1, \ldots, g_k) \equiv_M (b',g'_1, \ldots, g'_k)$).
	Then the partial type $q(y) \cup \{ \neg \varphi(d,y) : d \in G(M_0) \}$ is inconsistent, as every generic formula meets $G(M_0)$ by the choice of $M_0$. That is, there exists a finite tuple $\bar{d} = (d_1, \ldots, d_n) \in G(M_0)^n$ so that for every $b' \models q$ in $\cU$ we have $\models \bigvee_{i \in [n]} \varphi(d_i,b')$. As $N$ is $|M|^{+}$-saturated, there is a tuple $\bar{d}'$ in $N$ with $\bar{d}' \equiv_M \bar{d}$. In particular, $\bar{d}' \in G(N)^n$ and we still have that for every $b' \models q$ in $\cU$,  $\models \bigvee_{i \in [n]} \varphi(d'_i,b')$. This shows that $\psi(x)$ meets $G(N)$.
	
	As every $G(\cU)$-translate of a generic type is generic, this shows that $g \cdot p$ is finitely satisfiable in $G(N)$ for all $g \in G(\cU)$.
	\end{proof}

\begin{proposition}\label{prop: fsg iff inv gen stab meas}
Let $T$ be NIP and $G$ a group  type-definable over $M$. The following are equivalent:
\begin{enumerate}
	\item  $G$ is fsg;
	\item $G$ is fsg over any $|M|^{+}$-saturated $N \succ M$;
	\item there exists a generically stable left-$G(\cU)$-invariant  measure $\mu \in \mathfrak{M}_{G}(\cU)$;

	\item there exists a left-$G(\cU)$-invariant  measure $\mu \in \mathfrak{M}_{G}(\cU)$ generically stable over any $|M|^{+}$-saturated  $N \succ M$.
	\end{enumerate}
\end{proposition}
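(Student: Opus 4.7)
My plan is to establish $(1)\Leftrightarrow(2)$, $(4)\Rightarrow(3)$, $(3)\Rightarrow(1)$, and $(1)\Rightarrow(4)$. The first equivalence is immediate from Proposition~\ref{prop: fsg type-def arb models} (forward direction) together with the trivial reverse implication, and $(4)\Rightarrow(3)$ is trivial.

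For $(3)\Rightarrow(1)$, I will start with a generically stable left-$G(\cU)$-invariant $\mu$, say generically stable over some $M_1\supseteq M_0$ where $G$ is type-definable over $M_0$. Choosing a small $|M_1|^+$-saturated $N\supseteq M_1$, by Remark~\ref{rem: gs over a set but not fs} the measure $\mu$ is finitely satisfiable in $N$. For each $p\in S(\mu)$ and $\varphi(x)\in p$, the fact $\mu\in\mathfrak{M}_{G}(\cU)$ gives $\mu(\varphi(x)\wedge\bigwedge\Psi(x))=\mu(\varphi(x))>0$ for every finite $\Psi\subseteq G(x)$, since $\mu(\psi(x))=1$ for each $\psi\in G(x)$. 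Finite satisfiability then yields witnesses in $N$, and saturation of $N$ lets me pass to a single witness $a\in G(N)$ realizing the partial type $\{\varphi(x)\}\cup G(x)$, proving $p$ is finitely satisfiable in $G(N)$. Left-invariance of $\mu$ gives $g\cdot p\in S(g\cdot\mu)=S(\mu)$ for every $g\in G(\cU)$, so $g\cdot p$ is likewise finitely satisfiable in $G(N)$, which establishes fsg of $G$ over $N$.

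For $(1)\Rightarrow(4)$, I will first deduce that $G$ is definably amenable. In the definable case this is classical (\cite{NIP1, NIP2}); in the type-definable case the adaptation is the one indicated in \cite[Remark 3.3]{chernikov2024definable}. Invoking Fact~\ref{fac: def am def measure} I obtain a left-$G(\cU)$-invariant $\mu\in\mathfrak{M}_{G}(\cU)$ definable over some small $M_2\supseteq M_0$. Let $N\supseteq M_2$ be $|M_2|^+$-saturated; by the already established $(1)\Leftrightarrow(2)$, $G$ is fsg over $N$. The support $S(\mu)$ of a left-$G(\cU)$-invariant $\mu$ on an fsg group consists of generic types (standard in the definable case, and transferred to the type-definable setting via Proposition~\ref{prop: fsg type-def arb models}), and by the fsg condition over $N$ combined with \cite[Lemma 6.2]{NIP2}, every generic type is finitely satisfiable in $G(N)$. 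Hence for every $\varphi$ with $\mu(\varphi)>0$ some $p\in S(\mu)$ contains $\varphi$, giving a realization of $\varphi$ in $G(N)$, so $\mu$ is finitely satisfiable in $N$. Combined with definability of $\mu$ over $M_2\subseteq N$, Fact~\ref{fac: NIP gen stab meas equivs} yields that $\mu$ is generically stable; since $\mu$ is left-$G(\cU)$-invariant and so invariant over any small model, this in fact gives generic stability over any sufficiently saturated $N'\succ M_0$, establishing $(4)$.

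The main obstacle I anticipate is transferring the two classical ingredients --- ``fsg $\Rightarrow$ definable amenability'' and ``left-invariant measures on fsg groups are supported on generic types'' --- from the definable to the type-definable setting. Granted the tools already developed (in particular Proposition~\ref{prop: fsg type-def arb models} and the description of generic formulas in \cite[Lemma 6.2]{NIP2}), these adaptations should be essentially verbatim, reading the original arguments with $G$ replaced by the partial type $G(x)$ throughout.
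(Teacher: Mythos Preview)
Your argument for $(3)\Rightarrow(1)$ has a genuine gap at the saturation step. You want to realize the partial type $\{\varphi(x)\}\cup G(x)$ in $N$, knowing that every finite fragment is realized there. But $\varphi(x)$ has parameters in $\cU$, not in $N$, so $|M_1|^{+}$-saturation of $N$ (which only guarantees realizations of types over small subsets \emph{of $N$}) does not apply. Finite satisfiability of $\mu$ in $N$ gives, for each finite $\Psi\subseteq G(x)$, some $a_\Psi\in N$ with $\models\varphi(a_\Psi)\wedge\bigwedge\Psi(a_\Psi)$; however $\varphi(N)$ is merely an externally definable subset of $N$, and no compactness is available to produce a single $a\in G(N)\cap\varphi(\cU)$.

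The fix --- which is the paper's route, citing \cite[Proposition 3.26]{chernikov2024definable}, and is essentially the computation carried out later in Lemma~\ref{lem: generically stable localizing} --- uses fim rather than mere finite satisfiability. One takes $\theta_n(x_1,\ldots,x_n)\in L(N)$ with $\mu^{\otimes n}(\theta_n)>0$ such that any $\bar a\models\theta_n$ is a good approximation for $\mu$ on $\varphi(x;y)$. The partial type $\{\theta_n(\bar x)\}\cup\bigcup_i G(x_i)$ is now \emph{over $N$} (this is the crucial difference), is consistent since $\mu^{\otimes n}$ gives each finite piece positive measure, and hence is realized in $N$ by saturation. The resulting $\bar a\in G(N)^n$ then has a coordinate satisfying $\varphi$.

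Your $(1)\Rightarrow(4)$ is correct but more roundabout than the paper, which goes $(2)\Rightarrow(4)$ by directly constructing the measure $\mu_p$ from the fsg type $p$ (\cite[Proposition 6.2]{NIP1}, \cite[Remark 4.4]{NIP3}). Your detour through definable amenability works, but note that the proof of ``fsg $\Rightarrow$ definably amenable'' \emph{is} the construction of $\mu_p$, so you are implicitly doing the paper's argument and then adding steps. Also, the assertion that any left-invariant measure on an fsg group is supported on generic types deserves a sharper reference than Proposition~\ref{prop: fsg type-def arb models} (which only transfers the fsg witness between models); the relevant statement is rather that positive measure implies genericity, as in \cite{NIP2} for the type-definable case.
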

\begin{proof}
(1) implies (2) by Proposition \ref{prop: fsg type-def arb models}. 

(2) implies (4). Given $p \in S_G(\cU)$ such that $g \cdot p$ is finitely satisfiable in an arbitrary $N \succ M$ for all $g \in G(\cU)$, the standard construction of a left-invariant generically stable over $N$ measure $\mu_p$ in an fsg group goes through in the type-definable case (see \cite[Proposition 6.2]{NIP1} and \cite[Remark 4.4]{NIP3}).

(4) implies (3) is trivial.

(3) implies (1). If (3) holds for $\mu \in \mathfrak{M}_G(\cU)$ generically stable over some $|M|^+$-saturated $N \succ M$, then the proof of \cite[Proposition 3.26]{chernikov2024definable}) shows that $\mu$ is finitely satisfiable in $G(N)$, so any $p \in S(\mu)$ satisfies (1) with respect to $N$.
\end{proof}

In the case of definable fsg groups, we can omit the saturation requirements on the model.
\begin{fact}\cite[Proposition 3.4]{chernikov2014external}\label{fac: heir of a measure remains G-inv}
	If $G$ is an $M$-definable group, $\mu \in \mathfrak{M}_G(M)$ is $G(M)$-invariant and $\nu$ is a global heir of $\mu$, then $\nu \in \mathfrak{M}_{G}(\cU)$ is $G(\cU)$-invariant.
\end{fact}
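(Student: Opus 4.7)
The plan is to exploit the heir property of $\nu$ over $M$ to pull back any putative asymmetry of $\nu$ under left translation by some $g \in G(\cU)$ to a configuration at finite parameters in $M$, where the $G(M)$-invariance of $\mu$ produces a contradiction. Fix $g \in G(\cU)$ and a formula $\varphi(x,c) \in L(\cU)$ with $\varphi(x,y) \in L$ and $c \in \cU^{y}$; the goal is to show $\nu(\varphi(g \cdot x, c)) = \nu(\varphi(x,c))$. I will regard $y_1 \cdot x$ as an $L(M)$-definable total function on $\cU \times \cU^{x}$, obtained by extending the partial group operation arbitrarily (say $y_1 \cdot x := x$ when $\neg G(y_1)$), so that $\varphi(y_1 \cdot x, y_2)$ is a bona fide $L(M)$-formula in $(x; y_1, y_2)$.

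Suppose for contradiction that $\nu(\varphi(g \cdot x, c)) > \nu(\varphi(x,c))$; the reverse strict inequality is handled symmetrically by replacing $g$ with $g^{-1}$ and $\varphi(x,c)$ with $\varphi(g \cdot x, c)$. Choose rationals $r_1 < r_2$ with $\nu(\varphi(x,c)) < r_1 < r_2 < \nu(\varphi(g \cdot x, c))$. Then the tuple of parameters $a := (g, c) \in \cU$ simultaneously witnesses the strict inequalities
\begin{gather*}
\nu\bigl(\varphi(y_1 \cdot x, y_2)\bigr|_{(y_1, y_2) = a}\bigr) > r_2, \qquad
\nu\bigl(\neg\varphi(x, y_2)\bigr|_{y_2 = c}\bigr) > 1 - r_1, \\
\nu\bigl((x = x) \wedge G(y_1)\bigr|_{y_1 = g}\bigr) > \tfrac{1}{2}.
\end{gather*}
The third formula is a bookkeeping device: since $g \in G(\cU)$ its $\nu$-measure equals $1$, and under heir replacement it will force the replacement for $y_1$ to lie in $G$.

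Applying the definition of heir to $\nu$ over $M$ with parameter tuple $a = (g, c)$ and the three formulas above, I obtain $a' = (g', c') \in M$ with $\mu(\varphi(g' \cdot x, c')) > r_2$, $\mu(\varphi(x, c')) < r_1$, and $g' \in G(M)$. But $G(M)$-invariance of $\mu$ gives $\mu(\varphi(g' \cdot x, c')) = \mu(\varphi(x, c'))$, which contradicts $r_1 < r_2$. Hence no such $g$ and $\varphi(x, c)$ exist, and $\nu$ is $G(\cU)$-invariant.

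The argument presents essentially no obstacle; its only subtle point is making sure the heir's replacement for the global parameter $g$ actually lies in $G(M)$ rather than merely in $M$ in the group sort, which I handle by bundling a $y_1$-only condition $G(y_1)$ into the common-parameter-tuple formulation of the heir property. This is legitimate because the heir definition in Section~\ref{sec: Keisler measures in NIP theories} already permits a finite conjunction of formulas sharing the same parameter tuple $a$.
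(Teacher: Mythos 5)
Your proof is correct: the paper states this as a Fact quoted from Chernikov--Pillay--Simon without reproving it, and your heir-pullback argument --- applying the heir property simultaneously to $\varphi(y_1\cdot x,y_2)$, $\neg\varphi(x,y_2)$ and the $M$-definable condition $G(y_1)$ over the common parameter tuple $(g,c)$, then contradicting $G(M)$-invariance of $\mu$ --- is exactly the standard argument behind the cited result. The only cosmetic point is that your totalized operation $y_1\cdot x$ agrees with the official abbreviation $\exists u\,(\Gamma_{\cdot_G}(y_1,x,u)\wedge\varphi(u,y_2))$ up to sets of measure zero, since $\mu$ and its global heir $\nu$ both concentrate on the $M$-definable set $G$.
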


\begin{fact}\label{fac: fsg groups basic props}
	Assume $G$ is type-definable and fsg, witnessed by some left $G$-invariant generically stable measure $\mu \in \mathfrak{M}_{G}(\cU)$ (using Proposition \ref{prop: fsg iff inv gen stab meas}).
	\begin{enumerate}
		\item $\mu$ is the unique left $G$-invariant, as well as the unique right $G$-invariant measure in $\mathfrak{M}_{G}(\cU)$.
		\item If $T$ is NIP and $G$ is in fact a definable group over $M \prec \cU$, then $\mu$ is generically stable over any small model $M \prec \cU$ so that $G$ is $M$-definable.
	\end{enumerate}
\end{fact}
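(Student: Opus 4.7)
For part (1), my plan is to prove uniqueness of the left $G(\cU)$-invariant measure by combining Fubini for generically stable measures in NIP (e.g.~\cite[Proposition 3.6]{NIP3}, \cite[Section 7.5]{simon2015guide}) with two-sided invariance of $\mu$. Given another left $G(\cU)$-invariant $\nu \in \mathfrak{M}_{G}(\cU)$, Fubini for $\mu$ (generically stable) paired with $\nu$ allows computing $(\mu \ast \nu)(\varphi) = (\mu \otimes \nu)(\varphi(x \cdot y))$ by swapping the order of integration: as $\int_x \nu(x^{-1} \varphi) \, d\mu(x)$, which by left-invariance of $\nu$ equals $\nu(\varphi)$; the symmetric computation gives $\nu \ast \mu = \mu$. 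These two identities alone do not force $\mu = \nu$, so I need the extra ingredient that $\mu$ is also right $G(\cU)$-invariant, after which computing $(\mu \ast \nu)(\varphi)$ via the other order of integration yields $\int_y \mu(\varphi \cdot y^{-1}) \, d\nu(y) = \mu(\varphi)$ and we conclude.

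To establish right-invariance of $\mu$, my idea is: for each $g \in G(\cU)$, the right-translate $\mu \cdot g$ (defined by $(\mu \cdot g)(E) := \mu(E g^{-1})$) is still left $G(\cU)$-invariant --- a direct check using left-invariance of $\mu$ --- and remains generically stable as a pushforward of $\mu$ under a definable map. So it suffices to prove the \emph{restricted} uniqueness statement: any generically stable left $G(\cU)$-invariant measure in $\mathfrak{M}_G(\cU)$ equals $\mu$. I would prove this restricted uniqueness directly from fim/fam (Fact \ref{fac: NIP gen stab meas equivs}(ii),(iii)) with witnesses in $G(M)$ (available thanks to fsg): any two such measures are both uniformly approximated in the parameter $\varphi(x,b)$ by finite averages over the same tuples in $G(M)$, forcing them to coincide on every formula. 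With $\mu$ thus shown to be two-sided invariant, uniqueness for an arbitrary left $G(\cU)$-invariant $\nu$ follows as above. The right-invariant case of (1) is symmetric.

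For part (2), I would deduce generic stability of $\mu$ over $M$ from uniqueness by an automorphism-invariance argument. From Proposition \ref{prop: fsg iff inv gen stab meas}, $\mu$ is generically stable over some possibly larger small model. Since generic stability is an \emph{absolute} property of a measure (definable and finitely satisfiable over \emph{some} small model, Fact \ref{fac: NIP gen stab meas equivs}), it suffices to verify $M$-invariance. For any $\sigma \in \Aut(\cU/M)$, the $M$-definability of $G$ ensures $\sigma(G(\cU)) = G(\cU)$ and that $\sigma$ commutes with the $M$-definable group operation, so $\sigma_{\ast}\mu \in \mathfrak{M}_G(\cU)$ is again a left $G(\cU)$-invariant Keisler measure on $G$. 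By uniqueness in (1), $\sigma_{\ast}\mu = \mu$, so $\mu$ is $M$-invariant and hence generically stable over $M$.

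The main obstacle I foresee is the two-sided invariance step in (1): the Fubini computations using only left-invariance of the two measures yield $\mu \ast \nu = \nu$ and $\nu \ast \mu = \mu$, but this pair of identities does not immediately force $\mu = \nu$. Bridging this gap by reducing right-invariance of $\mu$ to a restricted uniqueness statement among generically stable measures, and then proving that restricted uniqueness via fim/fam, is the delicate point, and this is where the fsg hypothesis enters essentially through the availability of approximating tuples in $G(M)$ rather than just in $M$.
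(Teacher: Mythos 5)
Your reduction of uniqueness to two-sided invariance of $\mu$, followed by the fam-computation (with approximating tuples taken in $G(M)$, which is indeed where fsg enters) against an arbitrary left-invariant competitor, is the right skeleton; and your argument for (2) is correct and is a legitimate alternative to the paper's route (the paper instead takes an $M$-invariant heir of $\mu|_M$ via Facts \ref{fac: inv heir of a measure NIP} and \ref{fac: heir of a measure remains G-inv} and invokes uniqueness, while you use that $\sigma_{\ast}\mu$ is again left-invariant for $\sigma \in \Aut(\cU/M)$ and invoke uniqueness; both only need (1) plus the convention that ``generically stable over $M$'' means generically stable and $M$-invariant). Note also that the paper does not reprove (1) at all: it cites the known uniqueness theorems for (type-)definable fsg groups, so you are reproving a cited result.

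The genuine gap is in your proof of the ``restricted uniqueness'' statement, which carries the whole of (1). You reduce right-invariance of $\mu$ to: any two left-$G(\cU)$-invariant generically stable measures coincide (applied to $\mu$ and $\mu\cdot g$), and you propose to get this because ``both are uniformly approximated by finite averages over the same tuples in $G(M)$.'' Nothing forces the fam witnesses of two different measures to be the same: the tuples $\bar{a}^{\mu}_{\varphi,\varepsilon}$ in Fact \ref{fac: NIP gen stab meas equivs}(3) depend on the measure, and asserting a common approximating family is essentially equivalent to the equality you are trying to prove, so the step begs the question. Concretely, if $\bar{a} \in G(M)^n$ is an $\varepsilon$-approximation for $\mu$ on $\varphi(x\cdot y)$, then for $\lambda := \mu\cdot g$ one only gets $\lambda(\varphi(x\cdot b)) = \mu(\varphi(x\cdot g\cdot b)) \approx^{\varepsilon} \Av(\bar{a};\varphi(x\cdot g\cdot b))$, i.e.\ $\bar{a}$ approximates $\lambda$ at the shifted parameter $g\cdot b$, not at $b$; the finite average is not right-translation invariant, so the same tuple does not approximate both measures at the same instance. (Also, as a side remark, your auxiliary identity $\nu \ast \mu = \mu$ is not even well-formed for an arbitrary left-invariant $\nu$, since $\nu$ need not be Borel-definable; but you do not use it.) The standard way to close the gap is to first prove $\mu = \iota_{\ast}\mu$ for $\iota(x) = x^{-1}$: $\iota_{\ast}\mu$ is right-invariant and generically stable, two generically stable (hence definable) measures commute under $\otimes$, and evaluating $(\iota_{\ast}\mu)_x \otimes \mu_y$ at $\varphi(x\cdot y)$ in the two orders gives $\iota_{\ast}\mu(\varphi) = \mu(\varphi)$ (one order uses right-invariance of $\iota_{\ast}\mu$, the other left-invariance of $\mu$). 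This yields two-sided invariance of $\mu$, after which your fam computation does give $\mu = \nu$ for every left-invariant $\nu \in \mathfrak{M}_{G}(\cU)$; alternatively, simply cite the uniqueness results the paper itself quotes.
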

\begin{proof}
(1)  By \cite[Theorem 4.3]{NIP3} for definable groups.  See \cite[Proposition 3.33]{chernikov2024definable} for type-definable groups, and without assuming NIP.

	(2) By Fact \ref{fac: inv heir of a measure NIP}, let $\mu' \in \mathfrak{M}_{G}(\cU)$ be a global $M$-invariant heir of $\mu|_{M}$. As $G$ is $M$-definable and $\mu|_{M}$ is $G(M)$-invariant, it follows by Fact \ref{fac: heir of a measure remains G-inv} that $\mu'$ is $G(\cU)$-invariant. But then $\mu = \mu'$ by (1). So $\mu$ is generically stable, and invariant over $M$, hence generically stable over $M$.
\end{proof}

	Recall:
	\begin{definition}\cite{PiTa}
A global type $p \in S_x(\cU)$ is \emph{generically stable} if it is $A$-invariant for some small $A \subset \cU$, and for any ordinal $\alpha$ (or just for $\alpha = \omega + \omega$), $(a_i : i \in \alpha)$ a Morley sequence in $p$ over $A$ and formula $\varphi(x) \in \cL(\cU)$, the set $\{i \in \alpha : \models \varphi(a_i) \}$ is either finite or co-finite. 
\end{definition}
\begin{remark}\label{rem: gen stab one Morley seq}
	As $\tp(\bar{a}/A) = \tp(\bar{a}'/A)$ for any two Morley sequences in $p$ over $A$ of the same order type, to check generic stability of $p$ it is enough to find \emph{some}  Morley sequence in it of order type $\alpha := \omega + \omega$ so that the set $\{i \in \alpha : \models \varphi(a_i) \}$ is either finite or co-finite (follows by taking an automorphism of $\cU$ over $A$ sending one Morley sequence to the other and replacing $ \varphi(x)$ by its image under this automorphism).
\end{remark}
\begin{remark}
	We note that in any theory, $p \in S_x(\cU)$ is generically stable over $M$ if and only if it is fim over $M$, viewed as a $\{0,1\}$-valued Keisler measure (see \cite[Proposition 3.2]{conant2020remarks}).
\end{remark}

We will also consider the special case of an fsg group with a $\{0,1\}$-valued measure:
\begin{definition} \cite[Definition 2.1]{PiTa}\label{def: generically stable group}
	A type-definable group $G(x)$ is (connected) \emph{generically stable} if there is a generically stable $p \in S_{G}(\cU)$ which is left $G(\cU)$-invariant, i.e. $g \cdot p = p$ for all $g \in G(\cU)$.
		\end{definition}

\begin{remark}\label{rem: def iso preserves fsg}
	In any theory $T$, assume $G$ and $H$ are groups (type-)definable over $M$ and $f: G \to H$ is an $M$-definable surjective homomorphism. Then if $G$ is definably amenable (fsg, generically stable), then so is $H$.
\end{remark}
\begin{proof}
Assume $\mu \in \mathfrak{M}_{G}(\mathbb{M})$ is left-$G(\cU)$-invariant. Then the push-forward measure $f_{\ast}\mu \in \mathfrak{M}_{H}(\mathbb{M})$ is left-$H(\cU)$-invariant. Indeed, fix any $\varphi(y) \in \cL(\cU)$ and $h \in H(\cU)$, as $f$ is surjective take $g \in G(\cU)$ with $f(g) = h$. Then $f_{\ast}\mu \left( \varphi( h \cdot_H y) \right) = \mu ( \varphi(h \cdot f(x)))$ by definition of push-forward measure, $= \mu (\varphi(f(g) \cdot_H f(x) )) = \mu (\varphi(f(g \cdot_G x) ))$ as $f$ is a homomorphism, $= \mu (\varphi(f(x) ))$ by left-$G(\cU)$-invariance of $\mu$, $= f_{\ast}(\varphi(y))$ by definition of push-forward measure again.

And if $\mu$ is generically stable over $M$, then $f_{\ast} \mu$ is also generically stable over $M$ (e.g.~fim is preserved, see \cite[Proposition 3.26]{chernikov2024definable}).
\end{proof}

\subsection{Properties of definable groups preserved in Shelah expansion}\label{sec: props of grps preserved in Sh exp}

We summarize some known facts about externally definable groups, mostly from \cite{chernikov2013externally}.
\begin{fact}\label{fac: group props preserved in Sh exp}
	Let $T$ be an NIP $L$-theory, $M \models T$, $ G = G(\cU)$ is an $M$-definable group.
	Let $T' := \Th(M^{\Sh})$ in $L'$.
	\begin{enumerate}
	
		\item $G$ is fsg in $T$ if and only if $G$ is fsg in $T'$.
		\item $G$ is generically stable in $T$ if and only if $G$ is generically stable in $T'$.
		\item $G$ is definably (extremely) amenable in $T$ if and only if $G$ is definably (extremely) amenable in $T'$.
		\item $G^0/G^{00}/G^{\infty}$ calculated in $T$ is equal to $G^0/G^{00}/G^{\infty}$ calculated in $T'$ (that is, in its monster model $M'$).
		\item In particular, if $H \leq G(M)$ is an externally definable subgroup of finite index, then it is already (internally) definable.
	\end{enumerate}
\end{fact}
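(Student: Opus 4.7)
The plan is to prove each item by transferring the witnessing translation-invariant object between $T$ and $T'$; all five are essentially established in \cite[Sections 2--4]{chernikov2013externally}, and I outline the uniform strategy.

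Items (1)--(3) share a common approach. Suppose $\mu \in \mathfrak{M}_G^L(\cU)$ is a $G(\cU)$-invariant measure in $T$ (with $\mu$ finitely satisfiable in $G(M)$ for fsg, or a $\{0,1\}$-valued generically stable measure for generic stability and definable extreme amenability). By Fact \ref{fac: inv heir of a measure NIP} applied inside $T'$, extend $\mu|_M$ to a global $M$-invariant $L'$-heir $\mu' \in \mathfrak{M}_G^{L'}(\widetilde{M}')$, which is automatically $G(\widetilde{M}')$-invariant by Fact \ref{fac: heir of a measure remains G-inv} applied in $T'$. The additional properties transfer via honest definitions: given $\varphi'(x) \in L'(\widetilde{M}')$ with $\mu'(\varphi'(x)) > 0$, Fact \ref{fac: honest defs} produces an $L$-formula $\theta(x,d)$ (in a coherent model) with $\mu(\theta(x,d)) > 0$ and $\theta(x,d) \wedge P(x) \vdash \varphi'(x)$, so external finite satisfiability in $G(M)$ reduces to internal finite satisfiability, and similarly the fim approximations from Fact \ref{fac: NIP gen stab meas equivs} survive. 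Conversely, the $L$-restriction $\mu' \restriction_L$ of any witnessing measure in $T'$ is immediately a witnessing measure in $T$, since translation-invariance, finite satisfiability in $G(M)$, and generic stability all depend only on $L$-formulas.

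For (4), the inclusion $(\cdot)_{T'} \subseteq (\cdot)_T$ is immediate since $L$-definable/type-definable/invariant subgroups remain so in the expansion. For the reverse: any $L'$-type-definable bounded-index subgroup of $G(\widetilde{M}')$ can be replaced, via a uniform application of Fact \ref{fac: honest defs} to a family of generating formulas, with an $L$-type-definable bounded-index subgroup contained in it, giving $G^{00}_T \subseteq G^{00}_{T'}$; the $\bigwedge$-invariant case of $G^{\infty}$ is analogous. For $G^0$: any $L'$-definable finite-index subgroup $H$ contains $G^{00}_{T'} = G^{00}_T$ (by the first part), and since $H/G^{00}_T$ is finite with $G^{00}_T$ being $L$-type-definable, a compactness argument produces an $L$-definable finite-index subgroup $H_0 \subseteq H$; then $H$ is a finite union of $L$-definable cosets of $H_0$, hence $L$-definable. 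Item (5) follows by the same argument applied to an externally definable finite-index $H \leq G(M)$.

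The main subtlety throughout is the transfer of finite satisfiability (for fsg) and of the fim characterization (for generic stability) across the $L$-to-$L'$ correspondence --- this is precisely what honest definitions (Fact \ref{fac: honest defs}) are designed to handle, and it is where NIP is essential; the analogous statements fail outside the NIP setting.
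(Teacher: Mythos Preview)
Your outline matches the route taken in \cite{chernikov2013externally} (which the paper simply cites for the hard direction of each item), and your $T'\Rightarrow T$ argument by restriction is exactly what the paper does. However, there is a genuine gap in your $T\Rightarrow T'$ sketch for (1)--(3).

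You write: ``extend $\mu|_M$ to a global $M$-invariant $L'$-heir $\mu'$, which is automatically $G(\widetilde{M}')$-invariant by Fact~\ref{fac: heir of a measure remains G-inv}.'' But to apply Fact~\ref{fac: inv heir of a measure NIP} in $T'$ you must first extend the $L$-measure $\mu|_M$ to some $L'$-measure $\nu\in\mathfrak{M}_x^{L'}(M)$, and Fact~\ref{fac: heir of a measure remains G-inv} in $T'$ then needs $\nu$ to be $G(M)$-invariant \emph{as an $L'$-measure}. An arbitrary \L o\'s--Marczewski extension has no reason to be $G(M)$-invariant: the set of extensions is a compact convex $G(M)$-set, but $G(M)$ need not be amenable as an abstract group, so no fixed point is free. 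This is precisely where the real work sits, and you have skipped it.

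The repair depends on the item. For (1) and (2) the witnessing $\mu$ is generically stable over $M$ (Fact~\ref{fac: fsg groups basic props}(2)), hence definable over $M$, and then Lemma~\ref{lem: unique extension of def meas to Sh} gives a \emph{unique} $L'$-extension, so $G(M)$-invariance transfers automatically. For definable amenability in (3) one first passes to a definable $G$-invariant measure via Fact~\ref{fac: def am def measure} and argues the same way (this is the paper's Corollary~\ref{cor: def am is preserved in Msh new proof}); but for definable \emph{extreme} amenability no definable invariant type is known to exist (see the remark following that corollary), so the argument in \cite{chernikov2013externally} must proceed differently, and your sketch does not cover this case.

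Your sketch of (4) is in the right spirit but also elides the main point: honest definitions applied formula-by-formula to an $L'$-type-definable bounded-index subgroup yield an $L$-type-definable \emph{set} containing it, not a subgroup, and one must argue separately that the subgroup it generates still has bounded index.
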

\begin{proof}
(1) If $G$ is fsg in $T'$, there is some $p' \in S^{L'}_{G}(M')$ such that $g \cdot p'$ is finitely satisfiable in $M$ for all $g \in G(M')$ (e.g.~using Fact \ref{fac: fsg groups basic props}(2), taking any type in the support of such a measure). Let $p := p'\restriction_{L}$, then for all $g \in G(M')$ we have $g \cdot p = (g \cdot p')\restriction_{L}$ is finitely satisfiable in $M$. Alternatively, we can start with a generically stable over $M$ measure $\mu' \in \mathfrak{M}^{L'}_{G}(M')$ which is $G(M')$-invariant (using Fact \ref{fac: fsg groups basic props}(2)), then $\mu := \mu'\restriction_{L} \in \mathfrak{M}^{L}_{G}(M')$ is generically stable in $T$ by Proposition \ref{prop: properties of reducts of measures}(1), hence $G$ is fsg in $T$. The converse is by \cite[Theorem 3.19]{chernikov2014external} (using Fact \ref{fac: fsg groups basic props}(2), we can always find a type that is fsg in $G$ over $M$).

(2) Restricting the argument in (1) to types.

	(3) If $G$ is definably amenable (definably extremely amenable) in $T'$, there is a measure $\mu' \in \mathfrak{M}^{L'}_{G}(M')$ (a type $p' \in S^{L'}_{G}(M')$) which is left $G(M')$-invariant. But then $\mu := \mu'\restriction_{L} \in \mathfrak{M}^{L}_{G}(M')$ ($p := p'\restriction_{L} \in S^{L}_{G}(M')$) is still $G(M')$-invariant, witnessing that $G$ is definably amenable (definably extremely amenable) in $T$. The converse is by \cite[Theorem 3.17]{chernikov2014external}.
	
	(4),(5) By Theorem 4.5 and Corollaries 4.15 and 4.20 in \cite{chernikov2014external}.
\end{proof}

\section{Externally definable subgroups of definable groups}\label{sec: Externally definable fsg subgroups of definable groups}

\subsection{Transfer of properties of measures between $T$ and $\Th(M^{\Sh})$}
\label{sec: Transfer of properties of measures between T and T sh}

\begin{proposition}\label{prop: properties of reducts of measures}
	Assume $T \subseteq T'$ are complete $L \subseteq L'$-theories and $T'$ is NIP. Let $\mathbb{M}' \models T'$ be a monster model, then $\mathbb{M}' := \mathbb{M}' \restriction L$ is a monster model for $T$. Let $M' \prec \mathbb{M}'$ be a small model of $T'$, and $M := \mathbb{M} \restriction L$.
	\begin{enumerate}
		\item \label{item: gen stab of meas preserved in reduct} If $\mu'(x) \in \mathfrak{M}^{L'}_x\left(  \mathbb{M}' \right)$ is  generically stable  over $M$ in $T'$, then $\mu := \mu\restriction_{L}$ is generically stable over $M$ in $T$.
		\item  Assume $G$ is an $L(M)$-definable group and $\mu \in \mathfrak{M}^{L'}_{G}(\mathbb{M})$ is finitely satisfiable in $M$ and idempotent. Then $\mu$ is also idempotent.
		\item Assume $G$ is an $L(M)$-definable group and $\mu' \in \mathfrak{M}^{L'}_{G}(\mathbb{M}')$ is generically stable over $M$ and generically transitive. Then $\mu$ is also generically transitive (in $T$).
	\end{enumerate}
\end{proposition}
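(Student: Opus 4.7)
The plan is to handle all three parts by exploiting that $\mu$ and $\mu'$ assign the same values to every $L$-formula (since $\mu = \mu'\restriction_L$), combined with the fact that $T$ is NIP whenever $T'$ is (since NIP is preserved under reducts).

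For part (1), I would use the fam characterization of generic stability in Fact~\ref{fac: NIP gen stab meas equivs}(\ref{item: fam}). Given $\varphi(x,y) \in L$ and $\varepsilon > 0$, applying fam for $\mu'$ yields $a_0,\ldots,a_{n-1} \in M^x$ with $|\Av(\abar)(\varphi(x,b)) - \mu'(\varphi(x,b))| \leq \varepsilon$ for all $b \in (\mathbb{M}')^y$; since $\mu(\varphi(x,b)) = \mu'(\varphi(x,b))$ for $L$-formulas, the same tuple $\abar$ witnesses fam for $\mu$ in $T$. Invariance of $\mu$ over $M$ in $T$ follows immediately: if $b \equiv^L_M b'$ then $\Av(\abar)(\varphi(x,b)) = \Av(\abar)(\varphi(x,b'))$, so $|\mu(\varphi(x,b)) - \mu(\varphi(x,b'))| \leq 2\varepsilon$ for every $\varepsilon > 0$. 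Applying Fact~\ref{fac: NIP gen stab meas equivs} in the NIP reduct $T$ then gives that $\mu$ is generically stable over $M$.

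For part (2), I would first note that $\mu'$ finitely satisfiable in $M$ restricts to $\mu$ finitely satisfiable in $M$ (hence $M$-invariant in $T$), so under NIP both $\mu'$ and $\mu$ are Borel-definable and their convolutions are well-defined. Fix $\varphi(x) \in L(\mathbb{M}')$; since $G$ and its multiplication are $L(M)$-definable, $\varphi(x \cdot y)$ is an $L$-formula. The integrals defining $(\mu \otimes \mu)(\varphi(x \cdot y))$ and $(\mu' \otimes \mu')(\varphi(x \cdot y))$ then agree, because the integrand $b \mapsto \mu(\varphi(x \cdot b)) = \mu'(\varphi(x \cdot b))$ on $S^L_y$ is the factorization of the corresponding $\mu'$-integrand on $S^{L'}_y$ through the restriction map, and the measure induced by $\mu$ on $S^L_y$ is the pushforward of the measure induced by $\mu'$ on $S^{L'}_y$ under that map. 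Hence $(\mu * \mu)(\varphi) = (\mu' * \mu')(\varphi) = \mu'(\varphi) = \mu(\varphi)$, so $\mu$ is idempotent in $T$.

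For part (3), I would combine (1), (2), and the tensor-product characterization of generic transitivity in Fact~\ref{fac: pushforward gen stab}. By (1), $\mu$ is generically stable over $M$ in $T$. Generic transitivity of $\mu'$ implies $\mu'$ is supported on its stabilizer hence idempotent; since generic stability in NIP entails finite satisfiability in $M$, part (2) applies and yields $\mu$ idempotent in $T$. Now generic transitivity of $\mu'$ is equivalent to $(\mu')^{(2)} = f_*((\mu')^{(2)})$ for the $L(M)$-definable map $f(x_1,x_0) = (x_1 \cdot x_0, x_0)$; evaluated on $L$-formulas the tensor products agree just as in the proof of (2), so the identity transfers to $\mu^{(2)} = f_*(\mu^{(2)})$, and Fact~\ref{fac: pushforward gen stab} in NIP $T$ yields generic transitivity of $\mu$. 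The main point requiring care is the transfer of tensor-product identities between $T'$ and $T$ via the pushforward under the restriction map $S^{L'}_y \to S^L_y$; once this is verified, everything else is routine bookkeeping.
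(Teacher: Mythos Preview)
Your proposal is correct and follows essentially the same route as the paper. Part (1) is identical (fam transfers to reducts); part (2) is the paper's Claim~\ref{cla: restriction commutes with tensor}, proved exactly as you describe via the pushforward under the restriction map $S^{L'}_y \to S^L_y$ and change of variables; part (3) is the same transfer of the tensor identity, with the only difference that the paper verifies $\mu^{\otimes 2}(\varphi(x_1\cdot x_0,x_0)) = \mu^{\otimes 2}(\varphi(x_1,x_0))$ directly (which already forces idempotency by restricting to formulas in $x_1$), whereas you route idempotency through part (2) first---a harmless extra step.
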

\begin{proof}
	(1) Fam clearly transfers to reducts: by assumption  $\mu'$ satisfies Fact \ref{fac: NIP gen stab meas equivs}\eqref{item: fam} in $T'$; as for any $L$-formula $\varphi(x,y)$ and $b \in \cU^y$ we have $\mu'(\varphi(x,b)) = \mu(\varphi(x,b))$, it follows that $\mu$ also satisfies Fact \ref{fac: NIP gen stab meas equivs}\eqref{item: fam} in $T$ (which is also NIP).
	
	(2) First note that if $\mu'$ is finitely satisfiable in $M$ (i.e.~for every $\varphi(x) \in L'(M')$, if $\mu'(\varphi(x))>0$ then $\varphi(M) \neq \emptyset$), then $\mu$ is also finitely satisfiable in $M$, hence $\mu$ is $M$-invariant (in $T$), and so it is Borel-definable over $M$ in $T$ as $T$ is NIP. We have:
	
	\begin{claim}\label{cla: restriction commutes with tensor}
		$\left( \mu' \otimes \nu' \right) \restriction_{L} = \left( \mu' \restriction_{L}  \right) \otimes \left( \nu' \restriction_{L}  \right)$ for an arbitrary $\nu' \in S^{L'}_x(\cU')$.
	\end{claim}
	\begin{proof}
	Indeed, fix an arbitrary formula $\varphi(x,y) \in L(\cU)$, and let $M \preceq N \prec \cU'$ be a small  model so that $\varphi(x,y) \in L(N)$. 
Let $f: S^{L'}_y(N) \to S^{L}_{y}(N)$ be defined via $f(q) := q \restriction_{L}$, then $f$ is a continuous surjection. Let $\tilde{\nu}'$ be the unique regular Borel probability measure on $S^{L'}_y(N)$ extending $\nu'|_{N}$, and let $\tilde{\nu}$ be the unique regular Borel probability measure on $S^{L}_y(N)$ extending $\nu|_{N}$, where $\nu := \nu'\restriction_{L}$. Let $f_{\ast} \tilde{\nu}'$ be the pushforward measure of $\tilde{\nu}'$, we then have $f_{\ast} \tilde{\nu}' = \tilde{\nu}$ (as they clearly agree on the clopens). Note also that for any  $\left( F^{\varphi(x;y)}_{\mu, N} \circ f \right) (q) = F^{\varphi(x;y)}_{\mu', N}(q)$ by $M$-invariance of $\mu$. Then have 
	\begin{gather*}
		\left(\mu \otimes \nu \right) \left( \varphi(x,y) \right) =  \int_{S^L_y(N)} F^{\varphi(x;y)}_{\mu, N} d \tilde{\nu} = \int_{S^{L'}_y(N)} \left( F^{\varphi(x;y)}_{\mu, N} \circ f  \right) d \tilde{\nu}'\\
		 = \int_{S^{L'}_y(N)} F^{\varphi(x;y)}_{\mu', N}  d \tilde{\nu}' = \left(\mu' \otimes \nu' \right) \left( \varphi(x,y) \right),
	\end{gather*}
where the first and last equalities are by definition of $\otimes$, the second equality is by the change of variables formula for push-forward measures (and as usual $F_{\mu,N}^{\varphi(x;y)}: S_{y}(N) \to [0,1]$ is defined by $F_{\mu,N}^{\varphi(x;y)}(q) = \mu(\varphi(x;b))$ for some (equivalently, by invariance of $\mu$, any) $b \models q$ in $\cU$).
	\end{proof}

	It follows that $\mu \ast \mu = \left(\mu' \ast \mu' \right)\restriction_{L} = \mu' \restriction_{L} = \mu$.
	
	(3) Let $\varphi(x) \in L(\cU)$ be arbitrary, we want to show that $\mu^{\otimes 2}(\varphi(x_1 \cdot x_0, x_0)) = \mu^{\otimes 2} (\varphi(x))$.
	By the above and generic transitivity of $\mu'$ we have
	\begin{gather*}
		\mu^{\otimes 2}(\varphi(x_1 \cdot x_0, x_0)) = (\mu')^{\otimes 2}(\varphi(x_1 \cdot x_0, x_0)) \\
		= (\mu')^{\otimes 2}(\varphi(x_1, x_0)) = (\mu)^{\otimes 2}(\varphi(x_1, x_0)),
	\end{gather*}
 hence $\mu$ is generically transitive.
	\end{proof}
	
	\begin{remark}
		It was pointed out to me by Kyle Gannon that fim transfers to reducts in arbitrary theories.  Indeed, by \cite[Lemma 3.6]{arXiv:2308.01801}, in any theory $T$, if $\mu$ is a global measure definable over a small model $M$ (so $\mu$ is invariant over $M$, and for every $\varphi(x,y) \in L$, the map $F^{\varphi}_{\mu}: q \in S_y(M) \mapsto \mu(\varphi(x,b)) \in [0,1]$, for some/any $b \models q$, is continuous), then $\mu$ is fim if and only if $\lim_{n \to \infty} \int_{S_{\bar{x}}(M)} \chi^{\varphi}_{\mu,n} d(\mu^{\otimes n})' = 0$, where $\bar{x} = (x_1, \ldots, x_n)$, $\chi^{\varphi}_{\mu,n}: p \in S_{\bar{x}}(M) \mapsto \sup_{b \in \cU^y}|\Av(\bar{a};\varphi(x,b)) - F^{\varphi}_{\mu}(\tp(b/M))| \in [0,1]$ for some $\bar{a} \models p$ (equivalently any $\bar{a} \models p$; and the map $\chi^{\varphi}_{\mu,n}$ is continuous by the assumption on $F^{\varphi}_{\mu}$).  Now if  $\mu'\in \mathfrak{M}^{L'}_x\left(  \mathbb{M}' \right)$ is fim in $T'$, it is in particular fam, hence $\mu := \mu'\restriction_{L}$ is also fam, hence definable, in the reduct $T$ (in fact, by \cite[Theorem 4.8]{conant2020remarks}, dfs is preserved under reducts). Using Claim \ref{cla: restriction commutes with tensor}, as the value $ \chi_{\mu,n}^{\varphi}$ depends only on the reduct of $\mu'$ to the language containing $\varphi$, the aforementioned integral remains $0$ calculated in $T$.
	\end{remark}

Finite satisfiability is used crucially to obtain invariant types/measures in the reduct. The following is an example of a global $M$-definable type $p'$  in $\Th_{L'}(M^{\Sh})$ so that $p'\restriction_{L}$ divides over $M$ in $T$ (motivated by an example of forking for formulas not being preserved in reducts suggested by Atticus Stonestrom):
\begin{example}\label{ex: def in Msh forking in M}

	We let $T$ be the $L := \{ \land  \}$-theory of infinitely branching dense meet trees (with associated partial order $x \leq y \iff x \land y = x$), then $T$ is complete, NIP (and dp-minimal) and has quantifier elimination (see for example \cite[Fact 3.10]{CHERNIKOV_MENNEN_2025}). 
		
	Let $M \models T$ be arbitrary, let $N \succ M$ be $|M|^{+}$-saturated, let $X$ be a branch in $M$, i.e.~a maximal  subset of $M$ of pairwise comparable elements (hence linearly ordered). 
	Note that
	\begin{gather}
		\forall n \in \omega, a_1, \ldots, a_n \in M, \exists x \in X \bigwedge_{i \in [n]} x \not \leq a_i. \label{eq: def type reduct ex -1}
	\end{gather}
	(indeed, if $a_i \notin X$ then $a_i \perp a'_i$ for some $a'_i \in X$ by maximality; by density of the tree $X$ has no maximal element, choose $x \in X$ with $x > \{a_i : i \in X\} \cup \{a'_i : a_i \notin X\}$. Then $x \not \leq a_i$ for all $i \in [n]$ --- for $a_i \notin X$, otherwise $a'_i < x \leq a_i $).
	
	Then $X$ is externally definable, by $x < c$ for some $c \in N, c > X$ (in $N$). We use the notation from Definition \ref{def: Shelah exp context}, so let $(N'', M'') \succ^{L_P} (N', M') \succ^{L_P} (N,M)$ be elementary extensions, with each pair saturated over the previous one; this gives us saturated elementary extensions $M^{\Sh} \prec^{L'}  \widetilde{M}' \prec^{L'} \widetilde{M}''$.  
	
	By \eqref{eq: def type reduct ex -1}, $(N,M )$ satisfies: for any $n \in \omega$, $a_1, \ldots, a_n \in M$, there exists $a \in M, a < c, a \not \leq a_i$ for all $i \in [n]$. Then the same holds in $(N',M')$ by elementarity, so by saturation of the pair $(N'', M'')$ we can choose $\alpha \in M''$ so that, working in $N''$, $\alpha < c$ and $\alpha \not \leq  a$ for all $a \in M'$.
	
\

 
\tikzset{
pattern size/.store in=\mcSize, 
pattern size = 5pt,
pattern thickness/.store in=\mcThickness, 
pattern thickness = 0.3pt,
pattern radius/.store in=\mcRadius, 
pattern radius = 1pt}
\makeatletter
\pgfutil@ifundefined{pgf@pattern@name@_zw92ku9gp}{
\pgfdeclarepatternformonly[\mcThickness,\mcSize]{_zw92ku9gp}
{\pgfqpoint{0pt}{0pt}}
{\pgfpoint{\mcSize+\mcThickness}{\mcSize+\mcThickness}}
{\pgfpoint{\mcSize}{\mcSize}}
{
\pgfsetcolor{\tikz@pattern@color}
\pgfsetlinewidth{\mcThickness}
\pgfpathmoveto{\pgfqpoint{0pt}{0pt}}
\pgfpathlineto{\pgfpoint{\mcSize+\mcThickness}{\mcSize+\mcThickness}}
\pgfusepath{stroke}
}}
\makeatother

 
\tikzset{
pattern size/.store in=\mcSize, 
pattern size = 5pt,
pattern thickness/.store in=\mcThickness, 
pattern thickness = 0.3pt,
pattern radius/.store in=\mcRadius, 
pattern radius = 1pt}
\makeatletter
\pgfutil@ifundefined{pgf@pattern@name@_pa50lnerb}{
\pgfdeclarepatternformonly[\mcThickness,\mcSize]{_pa50lnerb}
{\pgfqpoint{0pt}{-\mcThickness}}
{\pgfpoint{\mcSize}{\mcSize}}
{\pgfpoint{\mcSize}{\mcSize}}
{
\pgfsetcolor{\tikz@pattern@color}
\pgfsetlinewidth{\mcThickness}
\pgfpathmoveto{\pgfqpoint{0pt}{\mcSize}}
\pgfpathlineto{\pgfpoint{\mcSize+\mcThickness}{-\mcThickness}}
\pgfusepath{stroke}
}}
\makeatother

 
\tikzset{
pattern size/.store in=\mcSize, 
pattern size = 5pt,
pattern thickness/.store in=\mcThickness, 
pattern thickness = 0.3pt,
pattern radius/.store in=\mcRadius, 
pattern radius = 1pt}
\makeatletter
\pgfutil@ifundefined{pgf@pattern@name@_scmks7nlz}{
\pgfdeclarepatternformonly[\mcThickness,\mcSize]{_scmks7nlz}
{\pgfqpoint{0pt}{-\mcThickness}}
{\pgfpoint{\mcSize}{\mcSize}}
{\pgfpoint{\mcSize}{\mcSize}}
{
\pgfsetcolor{\tikz@pattern@color}
\pgfsetlinewidth{\mcThickness}
\pgfpathmoveto{\pgfqpoint{0pt}{\mcSize}}
\pgfpathlineto{\pgfpoint{\mcSize+\mcThickness}{-\mcThickness}}
\pgfusepath{stroke}
}}
\makeatother
\tikzset{every picture/.style={line width=0.75pt}} 

\begin{tikzpicture}[x=0.75pt,y=0.75pt,yscale=-1,xscale=1]

\draw  [color={rgb, 255:red, 155; green, 155; blue, 155 }  ,draw opacity=1 ] (155.89,217.5) -- (6,9) -- (306.07,9.21) -- cycle ;
\draw   (155.89,217.5) -- (139.07,116.65) -- (171.52,116.43) -- cycle ;
\draw  [pattern=_zw92ku9gp,pattern size=6pt,pattern thickness=0.75pt,pattern radius=0pt, pattern color={rgb, 255:red, 155; green, 155; blue, 155}] (203.02,28.93) -- (251.23,28.93) -- (155.89,217.5) -- (135.33,93.37) -- (185.4,93.37) -- cycle ;
\draw  [pattern=_pa50lnerb,pattern size=6pt,pattern thickness=0.75pt,pattern radius=0pt, pattern color={rgb, 255:red, 155; green, 155; blue, 155}] (156.2,217.5) -- (105.67,116.55) -- (171.52,116.55) -- cycle ;
\draw  [pattern=_scmks7nlz,pattern size=6pt,pattern thickness=0.75pt,pattern radius=0pt, pattern color={rgb, 255:red, 155; green, 155; blue, 155}] (189.57,29.86) -- (181.23,72.51) -- (129.31,72.51) -- (88.98,72.51) -- (62.56,29.86) -- cycle ;
\draw    (47.72,122.57) -- (84.71,60.32) ;
\draw [shift={(85.73,58.6)}, rotate = 120.72] [color={rgb, 255:red, 0; green, 0; blue, 0 }  ][line width=0.75]    (10.93,-3.29) .. controls (6.95,-1.4) and (3.31,-0.3) .. (0,0) .. controls (3.31,0.3) and (6.95,1.4) .. (10.93,3.29)   ;
\draw    (47.72,122.57) -- (119.45,132.5) ;
\draw [shift={(121.43,132.77)}, rotate = 187.88] [color={rgb, 255:red, 0; green, 0; blue, 0 }  ][line width=0.75]    (10.93,-3.29) .. controls (6.95,-1.4) and (3.31,-0.3) .. (0,0) .. controls (3.31,0.3) and (6.95,1.4) .. (10.93,3.29)   ;
\draw    (266.06,83.17) -- (228.69,53.81) ;
\draw [shift={(227.12,52.58)}, rotate = 38.16] [color={rgb, 255:red, 0; green, 0; blue, 0 }  ][line width=0.75]    (10.93,-3.29) .. controls (6.95,-1.4) and (3.31,-0.3) .. (0,0) .. controls (3.31,0.3) and (6.95,1.4) .. (10.93,3.29)   ;
\draw    (206.72,163.83) -- (167.4,127.64) ;
\draw [shift={(165.93,126.28)}, rotate = 42.63] [color={rgb, 255:red, 0; green, 0; blue, 0 }  ][line width=0.75]    (10.93,-3.29) .. controls (6.95,-1.4) and (3.31,-0.3) .. (0,0) .. controls (3.31,0.3) and (6.95,1.4) .. (10.93,3.29)   ;
\draw [color={rgb, 255:red, 155; green, 155; blue, 155 }  ,draw opacity=1 ]   (152.02,30.32) -- (156.2,217.5) ;
\draw [line width=2.25]    (153.88,116.55) -- (156.2,217.5) ;
\draw    (101.49,163.83) -- (151.98,146.85) ;
\draw [shift={(153.88,146.22)}, rotate = 161.41] [color={rgb, 255:red, 0; green, 0; blue, 0 }  ][line width=0.75]    (10.93,-3.29) .. controls (6.95,-1.4) and (3.31,-0.3) .. (0,0) .. controls (3.31,0.3) and (6.95,1.4) .. (10.93,3.29)   ;
\draw  [fill={rgb, 255:red, 0; green, 0; blue, 0 }  ,fill opacity=1 ] (150.63,59.99) .. controls (150.63,58.97) and (151.46,58.14) .. (152.49,58.14) .. controls (153.51,58.14) and (154.34,58.97) .. (154.34,59.99) .. controls (154.34,61.02) and (153.51,61.85) .. (152.49,61.85) .. controls (151.46,61.85) and (150.63,61.02) .. (150.63,59.99) -- cycle ;
\draw  [fill={rgb, 255:red, 0; green, 0; blue, 0 }  ,fill opacity=1 ] (151.56,85.02) .. controls (151.56,84) and (152.39,83.17) .. (153.41,83.17) .. controls (154.44,83.17) and (155.27,84) .. (155.27,85.02) .. controls (155.27,86.05) and (154.44,86.88) .. (153.41,86.88) .. controls (152.39,86.88) and (151.56,86.05) .. (151.56,85.02) -- cycle ;
\draw  [fill={rgb, 255:red, 0; green, 0; blue, 0 }  ,fill opacity=1 ] (101.49,56.75) .. controls (101.49,55.72) and (102.32,54.89) .. (103.35,54.89) .. controls (104.37,54.89) and (105.2,55.72) .. (105.2,56.75) .. controls (105.2,57.77) and (104.37,58.6) .. (103.35,58.6) .. controls (102.32,58.6) and (101.49,57.77) .. (101.49,56.75) -- cycle ;
\draw  [fill={rgb, 255:red, 0; green, 0; blue, 0 }  ,fill opacity=1 ] (194.06,102.64) .. controls (194.06,101.62) and (194.89,100.79) .. (195.92,100.79) .. controls (196.94,100.79) and (197.77,101.62) .. (197.77,102.64) .. controls (197.77,103.66) and (196.94,104.49) .. (195.92,104.49) .. controls (194.89,104.49) and (194.06,103.66) .. (194.06,102.64) -- cycle ;

\draw (270.53,8.03) node [anchor=north west][inner sep=0.75pt]   [align=left] {$\displaystyle N''$};
\draw (28.93,118.26) node [anchor=north west][inner sep=0.75pt]    {$N$};
\draw (262.82,80.27) node [anchor=north west][inner sep=0.75pt]    {$M'$};
\draw (191.73,161.78) node [anchor=north west][inner sep=0.75pt]    {$M=N\cap M'$};
\draw (82.86,161.15) node [anchor=north west][inner sep=0.75pt]    {$X$};
\draw (153.98,42.97) node [anchor=north west][inner sep=0.75pt]  [font=\large]  {$c$};
\draw (155.27,75.51) node [anchor=north west][inner sep=0.75pt]  [font=\normalsize]  {$\alpha $};
\draw (103.84,40.72) node [anchor=north west][inner sep=0.75pt]  [font=\large]  {$b$};
\draw (197.41,89.61) node [anchor=north west][inner sep=0.75pt]  [font=\large]  {$a$};
\end{tikzpicture}

	\begin{claim}
	$p' := \tp^{L'}(\alpha/M')$ is definable over $M$ in $T'$.
\end{claim}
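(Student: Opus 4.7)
\emph{Proof proposal.} The plan is: reduce to atomic $L'$-formulas using Shelah's QE, then use QE of $T$ and an explicit description of how $\alpha$ relates to elements of $M'$ to rewrite each atomic formula involving $\alpha$ as an $L$-formula with parameters from $N \cup M$.

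By Fact \ref{fac: Sh exp qe} and Remark \ref{rem: Shelah QE formulas}, every $L'$-formula $\varphi(x,\bar{y})$ is $T'$-equivalent to $R_{\psi(x, \bar{y}, d)}$ for some $\psi(x, \bar{y}, z) \in L$ and $d \in N^z$. It thus suffices to show that for each such $\psi, d$ the set
\begin{equation*}
E_{\psi, d} := \{\bar{b} \in (M')^{\bar{y}} : N'' \models \psi(\alpha, \bar{b}, d)\}
\end{equation*}
agrees with the interpretation of some $R_{\chi(\bar{y})}$ in $\widetilde{M}'$, where $\chi(\bar{y})$ is a quantifier-free $L$-formula with parameters from $N \cup M$; any such interpretation is $L'(\emptyset) \subseteq L'(M)$-definable.

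To build $\chi$, I first describe the relation of $\alpha$ to each $e \in M'$. From $\alpha < c$, $\alpha \not\leq a$ for all $a \in M'$, and the meet-tree structure, $e \in M'$ satisfies either $e < c$ in $N'$ (and then $\alpha > e$ and $\alpha \wedge e = e$) or $e \perp c$ (and then $\alpha \perp e$ and $\alpha \wedge e = c \wedge e$). The case $e \geq c$ is excluded because $X$ is a maximal chain in $M$, so by the earlier argument no element of $M$ exceeds $c$ in $N$, and by elementarity $(N',M') \succ (N,M)$ the same holds in $M'$; the identity $\alpha \wedge e = c \wedge e$ in the second case follows from $\alpha \leq c$ (which gives $\alpha \wedge e \leq c \wedge e$) together with the observation that $\alpha < c \wedge e$ would force $\alpha \leq e$, against the choice of $\alpha$. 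Now by QE of $T$, $\psi$ is a boolean combination of atomic formulas $t_1 \bowtie t_2$, $\bowtie \in \{=, \leq\}$, with each $t_i$ a meet of variables and constants from $M$. Any term $t(\alpha, \bar{b}, d)$ either does not involve $\alpha$ (and then lies in $N'$) or has the form $\alpha \wedge s'(\bar{b}) \wedge s''(d)$ with $s'(\bar{b}) \in M'$, $s''(d) \in N$ (one of $s', s''$ possibly empty). When the $\bar{y}$-part $s'$ is non-empty, applying the trichotomy to $s'(\bar{b}) \in M'$ collapses $\alpha \wedge s'(\bar{b}) \wedge s''(d)$ to $s'(\bar{b}) \wedge s''(d)$ (if $s'(\bar{b}) < c$) or $c \wedge s'(\bar{b}) \wedge s''(d)$ (if $s'(\bar{b}) \perp c$); both alternatives are $L$-terms in $\bar{b}, d, c$, and the case split itself is an $L'(\emptyset)$-condition on $\bar{b}$ via relations $R_{u<c}$ and $R_{u \perp c}$. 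Substituting these simplified term values into the atomic comparisons yields the desired $\chi$.

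The main obstacle is the remaining sub-case in which the $\bar{y}$-part of $s$ is empty, so $s = s''(d) \in N$ and the value of $\alpha \wedge s''(d)$ may depend on $\alpha$'s specific cut in the chain below $c$; in particular it may equal $\alpha$ itself (when $s''(d) \geq c$, or when $s''(d) \in N \setminus M$ and $\alpha < s''(d)$), a ``collapse to $\alpha$'' in which $\alpha$ appears literally and is not in $N'$. I handle this by noting that any atomic formula in which such a subterm occurs reduces to $\alpha = u$, $\alpha \leq u$, or $u \leq \alpha$ for some other term value $u$; each is then decided using the trichotomy above --- $\alpha = u$ is always false for $u \in N'$ (since $M'' \cap N' = M'$ and $\alpha \in M'' \setminus M'$), and the inequalities reduce to $L'$-expressible conditions on $u$'s relation with $c$. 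Since which sub-case occurs depends only on the fixed parameters $d$ and $\alpha$ (not on $\bar{b}$), the resulting $\chi$ is a well-defined quantifier-free $L$-formula with parameters from $N \cup M$; equivalently $E_{\psi, d} = R_{\chi(\bar{y})}(\widetilde{M}')$, completing the proof.
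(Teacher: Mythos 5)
Your argument is correct, and it reaches the same computational core as the paper --- the facts \eqref{eq: def type reduct ex 1} and \eqref{eq: def type reduct ex 2}, which in your write-up appear as the dichotomy ``$e<c$ or $e\perp c$'' together with $\alpha\wedge e = c\wedge e$ --- but it gets there by a different reduction. The paper first quotes the structural theorem that in dense meet trees every formula is a Boolean combination of formulas in at most three singleton variables, and then verifies definability for the short explicit list of atomic shapes $\varphi(x,y_1,y_2)$, with at most one parameter from $N$. You bypass that ternary reduction: after Shelah quantifier elimination (Remark \ref{rem: Shelah QE formulas}) and quantifier elimination in $T$, you collapse, uniformly in $\bar b\in(M')^{\bar y}$, every term $\alpha\wedge s'(\bar b)\wedge s''(d)$ with nontrivial $M'$-part to the $\alpha$-free term $s'(\bar b)\wedge c\wedge s''(d)$ (this is exactly \eqref{eq: def type reduct ex 2}, so your case split on $s'(\bar b)<c$ versus $s'(\bar b)\perp c$ is not even needed), while the leftover terms $\alpha\wedge s''(d)$ are constants, equal either to $\alpha$ or to $s''(d)\wedge c\in N$, with the choice independent of $\bar b$; equalities of $\alpha$ with a term having nontrivial $\bar b$-part are identically false since $\alpha\not\leq a$ for all $a\in M'$. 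What this buys is self-containedness (no appeal to the three-variable theorem, which the paper cites from an omitted argument) and uniformity in the arity of $\bar y$, at the price of more syntactic bookkeeping; the paper's route trades that bookkeeping for a finite, very explicit case check. Two small imprecisions in your text are harmless but worth repairing: the claim that every atomic formula containing a constant $\alpha$-term ``reduces to $\alpha=u$, $\alpha\leq u$ or $u\leq\alpha$'' is not literally accurate (what you actually use, and what suffices, is the constant-term dichotomy just described), and the identity $M''\cap N'=M'$ is asserted without proof --- it does hold, by the same finite-satisfiability argument that gives $M'\cap N=M$ in Definition \ref{def: Shelah exp context}, but you can avoid invoking it altogether by noting that $u(\bar b)=\alpha$ with nontrivial $\bar b$-part already forces $\alpha\leq s'(\bar b)\in M'$, while a purely parametric $u$ yields a $\bar b$-free condition contributing only a fixed truth value.
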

\begin{proof}

We follow the analysis of formulas in $T$ in \cite[Theorem 3.16]{arXiv:2204.13790} (omitted in the final version of the paper \cite{CHERNIKOV_MENNEN_2025}). In $T$, every formula in any number of variables is equivalent to a Boolean combination of formulas $\varphi(x,y,z)$ in (at most) three singleton variables, which by quantifier elimination in $T$ we can assume to be quantifier-free. We also have  $R_{\varphi(\bar{x}) \land \psi(\bar{x})} \leftrightarrow^{T'} R_{\varphi(\bar{x})} \land  R_{\psi(\bar{x})} $ and $R_{\neg \varphi(\bar{x})} \leftrightarrow^{T'} \neg R_{\varphi(\bar{x})} $ for all $\varphi(\bar{x}), \psi( \bar{x}) \in L(N)$; and given a type, a definition with respect to it for a Boolean combination of formulas is given by a Boolean combination of their definitions. And for every  $\varphi(\bar{x}) \in L(M)$, $R_{\varphi(\bar{x})}(M'') = \varphi(M'')$ (evaluated in $N'' \models T$).
So it suffices to show the following for every atomic $\varphi(x,y_1,y_2) \in L$ with $x,y_1,y_2$ singletons:
	\begin{enumerate}
		\item for every fixed $b \in N$, the set $\{a \in M' : N'' \models \varphi(\alpha;a,b)\}$ is of the form $\psi(M')$ for some $\psi(y_1) \in L(N)$ (hence $L'(M)$-definable in $\widetilde{M}'$ via $R_{\psi(y_1)}(y_1)$);
		\item  the set $\{(a_1,a_2) \in (M')^2 : N'' \models  \varphi(\alpha,a_1,a_2)\}$ is of the form $\psi(M')$ for some $\psi(y_1,y_2) \in L(N)$.
	\end{enumerate} 

Up to logical equivalence in $T$,  restricting to formulas involving both $x$ and at least one $y_i$, up to permuting the variables $y_1,y_2$ and replacing by Boolean combinations, we only need to consider the following possibilities for $\varphi(x,y_1,y_2)$: $x = y_1$, $x = x \land y_1$ (i.e.~$x \leq y_1$), $x = y_1 \land y_2$, $x \land y_1 = y_1$ (i.e.~$x \geq y_1$), $x \land y_1 = x \land y_2$, 
$x \land y_1 = x \land y_1 \land y_2$ (i.e.~$(x \land y_1) \leq y_2$), $x \land y_1 \land y_2 = y_1 \land y_2$ (i.e.~$x \geq (y_1 \land y_2)$) (see the proof of \cite[Theorem 3.16]{arXiv:2204.13790}).

We have $N'' \models \alpha <c \land \alpha \not \leq a $ for all $a \in M'$ by the choice of $\alpha$, hence 
\begin{gather}
	\alpha \not \leq a \land b  \textrm{ for any } a \in M', b \in N \label{eq: def type reduct ex 1}
\end{gather}
(as otherwise $\alpha \leq a \land b \leq a \in M'$).

Also, for any $a \in M'$ we have $\alpha = \alpha \land c > a \land c$ (otherwise $\alpha = \alpha  \land c \leq a \land c \leq a$, contradicting \eqref{eq: def type reduct ex 1}). Then
\begin{gather}
	\alpha \land a = (\alpha \land c) \land a = a \land c \textrm{ for all }a \in M' \label{eq: def type reduct ex 2}
\end{gather}

%

We consider the non-trivial cases below.

\begin{enumerate}
\item $\varphi(x;y_1,y_2) = x \leq y_1$.

For any $a \in M'$, $\alpha \leq a$ does not hold by \eqref{eq: def type reduct ex 1}.
\item $\varphi(x;y_1,y_2) = (x = y_1 \land y_2)$.

For any $b \in N$ and $a \in M'$, $\alpha = a \land b$ does not hold (by \eqref{eq: def type reduct ex 1}). Similarly, for any $a_1, a_2 \in M'$,  $\alpha = a_1 \land a_2$ does not hold (by \eqref{eq: def type reduct ex 1} again).

\item $\varphi(x;y_1,y_2) = x \geq y_1$.

For any $a \in M'$, by \eqref{eq: def type reduct ex 2} we have $\alpha \geq a \Leftrightarrow \alpha \land a = a \Leftrightarrow a \land c = a \Leftrightarrow a \leq c$.

		\item $\varphi(x,y_1,y_2) = (x \land y_1 = x \land y_2)$.
\begin{enumerate}
		\item  Fix $b \in N$. If $b \land c \geq \alpha$, then $\alpha \land b = \alpha \land a$ does not hold for any $a \in M'$ (as then 
		$\alpha \land b = \alpha \land c \land b = \alpha > a \land c = \alpha \land a$ by \eqref{eq: def type reduct ex 2}). Otherwise $b \land c < \alpha$. Then $\alpha \land b = (\alpha \land c) \land b = b \land c$, so, using \eqref{eq: def type reduct ex 2}, $\alpha \land b = \alpha \land a \Leftrightarrow b \land c = a \land c \Leftrightarrow \widetilde{M}' \models R_{b \land c = y_1 \land c}(a)$.
		\item For any $a_1, a_2 \in M'$, by \eqref{eq: def type reduct ex 2} we have $\alpha \land a_1 = \alpha \land a_2 \Leftrightarrow a_1 \land c = a_2 \land c \Leftrightarrow \widetilde{M}' \models R_{y_1 \land c = y_2 \land c}(a_1,a_2)$.
	\end{enumerate}
	
	\item $\varphi(x;y_1,y_2) = (x \land y_1) \leq y_2$.
	Fixed $b \in N$. Using \eqref{eq: def type reduct ex 2}, for any $a \in M'$ we have $\alpha \land a \leq b \Leftrightarrow \alpha \land a \land b = \alpha \land b \Leftrightarrow a \land b \land c = a \land c \Leftrightarrow a \land c \leq b$.
	
	Similarly, for any $a_1,a_2 \in M'$ we get  $\alpha \land a_1 \leq a_2 \Leftrightarrow a_1 \land c \leq a_2 $.

	\item $\varphi(x;y_1,y_2) = (x \geq y_1 \land y_2)$.
	\begin{enumerate}
		\item  Fix $b \in N$. By \eqref{eq: def type reduct ex 2} we have $\widetilde{M}'' \models R_{x \geq y_1 \land b}(\alpha,a) \Leftrightarrow \alpha \geq (a \land b) \Leftrightarrow \alpha \land (a \land b) = a \land b \Leftrightarrow  a\land b \land c = a \land b   \Leftrightarrow c \geq a \land b \Leftrightarrow  \widetilde{M}' \models R_{c \geq (y_1 \land b)}(a)$.
\item For any $a_1, a_2 \in M'$, $a_1 \land a_2 \in M'$, hence $\alpha \geq a_1 \land a_2 \Leftrightarrow (a_1 \land a_2) \leq c \Leftrightarrow \widetilde{M}' \models R_{c \geq (y_1 \land y_2)}(a_1,a_2)$. \qedhere
	\end{enumerate}
\end{enumerate}
\end{proof}

By compactness, we can choose  $a \in M' \setminus M$ so that $X < a < c$, then $a < \alpha$ (by \eqref{eq: def type reduct ex 1}). So $(x > a) \in p'$. But the formula $x > a$ divides over $M$ in $T$, as by density of the tree, quantifier elimination and saturation we can choose $\leq$-incomparable $a_i \in M', i \in \omega$  with $a_i \equiv^L_M a$.
\end{example}
\begin{question}
	Is this possible for an $o$-minimal theory $T$?
\end{question}

\begin{fact}\label{fac: all meas over Sh exp are def}
Assume $T$ is NIP and $M \models T$.
	\begin{enumerate}
		\item  If all types in $S_x(M)$ are definable, then all measures in $\mathfrak{M}_x(M)$ are definable.
		\item All measures in $\mathfrak{M}_x^{L'}(M^{\Sh})$ are definable (in $T' = \Th_{L'}(M^{\Sh})$).
	\end{enumerate}
\end{fact}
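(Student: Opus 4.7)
The plan is to first prove (1) via uniform approximation of Keisler measures by averages of types, which is available in NIP because every formula has finite VC-dimension; then obtain (2) by first showing that every type in $S^{L'}_x(M^{\Sh})$ is definable in $T'$ (using Shelah's quantifier elimination together with the saturation of $N$), and applying (1) to $T'$, which is itself NIP.

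For (1), fix $\mu \in \mathfrak{M}_x(M)$ and $\varphi(x,y) \in L$. I need to show that the map $F^{\varphi}_{\mu} : S_y(M) \to [0,1]$, $q \mapsto \mu(\varphi(x,b))$ for $b \models q$, is continuous. The key input is the standard VC-theoretic uniform approximation of measures by types in NIP (see e.g.~\cite[Chapter~7]{simon2015guide}): for every $\varepsilon > 0$ there exist $n \in \omega$ and types $p_1, \ldots, p_n \in S(\mu) \subseteq S_x(M)$ such that
\[
\left| \mu(\varphi(x,b)) - \frac{1}{n} \sum_{i=1}^n \chi_{p_i}(\varphi(x,b)) \right| \leq \varepsilon \quad \text{for all } b \in M^y,
\]
where $\chi_{p_i}(\varphi(x,b)) = 1$ iff $\varphi(x,b) \in p_i$. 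By hypothesis each $p_i$ is definable over $M$, so the set $\{b \in M^y : \varphi(x,b) \in p_i\}$ is $M$-definable and hence the function $q \mapsto \chi_{p_i}(\varphi(x,b))$ (for $b \models q$) is the characteristic function of a clopen subset of $S_y(M)$. The corresponding averages are continuous with values in $\{0, 1/n, \ldots, 1\}$, and $F^{\varphi}_{\mu}$ is $\varepsilon$-close to such a function for every $\varepsilon > 0$. Therefore $F^{\varphi}_{\mu}$ is a uniform limit of continuous functions and thus continuous, so $\mu$ is definable.

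For (2), since $T'$ is NIP, by (1) it suffices to show that every type $p \in S^{L'}_x(M^{\Sh})$ is definable over $M^{\Sh}$ in $T'$. By Fact~\ref{fac: Sh exp qe} and Remark~\ref{rem: Shelah QE formulas}, every $L'$-formula is $T'$-equivalent to one of the form $R_{\psi(x,y,d)}$ with $\psi(x,y,z) \in L$ and $d \in N^z$. Fix such $\psi, d$ and a realization $\alpha \in \widetilde{M}'$ of $p$. Unwinding the interpretation of $R$-predicates in $\widetilde{M}'$ from Definition~\ref{def: Shelah exp context},
\[
\{a \in M^y : R_{\psi(x,y,d)}(x,a) \in p\} \; = \; \{a \in M^y : N' \models \psi(\alpha, a, d)\} \; = \; \psi(\alpha, M^y, d),
\]
which is externally definable in $M$ (in $T$) with parameter $(\alpha, d) \in (N')^{xz}$. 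Now I use $|M|^{+}$-saturation of $N$ in the pair $(N,M)$ as in Definition~\ref{def: Shelah exp context}: the partial $L$-type over $M$ describing the $\psi(u, y, v)$-type of $(\alpha, d)$ over $M$ is consistent (realized in $N'$), so it is realized by some $(\alpha', d') \in N$. For such $(\alpha', d')$ we have $\psi(\alpha, M^y, d) = \psi(\alpha', M^y, d') = R_{\psi(\cdot, \cdot, d')}(\alpha', M^y)$, which is $L'$-definable. This establishes definability of $p$ in $T'$, and (1) applied to $T'$ yields the conclusion.

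The conceptual core of the proof is the type-definability step in (2), where Shelah's quantifier elimination combined with the saturation of $N$ allows pushing external parameters from $N'$ back down to $N$; this is where the structure of the Shelah expansion is genuinely exploited. The VC-style uniform approximation used in (1) is standard in NIP but unavoidable, since it is precisely what lets one bootstrap from definability of types to definability of measures. Minor care is needed to ensure the argument goes through when $x$ is a tuple of arbitrary length, but this reduces to the finite-variable case as usual since definability of types and measures is checked formula by formula.
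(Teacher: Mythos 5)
Your proposal is correct and follows the same decomposition as the paper: the paper proves (1) by citing Chernikov–Simon (Theorem 2.7 of \cite{chernikov2014external}, whose proof is exactly the VC-theorem approximation of $\mu$ by averages of types in $S(\mu)$ that you spell out), and deduces (2) from (1) applied to $T'$ together with the fact that all types over $M^{\Sh}$ are definable, which you verify directly via quantifier elimination and the $|M|^{+}$-saturation of $N$. The only cosmetic point is that $F^{\varphi}_{\mu}$ is a priori defined only on types realized in $M$, so one should phrase the conclusion of (1) as "extends continuously to $S_y(M)$"; your uniform approximation argument gives exactly this.
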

\begin{proof}
	(1) By \cite[Theorem 2.7]{chernikov2014external}.
	
	(2) By (1) for $T'$, as  all types in $S_x^{L'}(M^{\Sh})$ are definable in $T'$ (by definition of Shelah's expansion).
\end{proof}

The following  key lemma in particular generalizes \cite[Lemma 3.18]{chernikov2014external} from types to measures, and from $M$ to arbitrary coherent models:
\begin{lemma}\label{lem: unique extension of def meas to Sh}
	Let $T$ be NIP, $M \models T$ and $T' := \Th_{L'}(M^{\Sh})$ (we are following the notation in Definition \ref{def: Shelah exp context}, in particular $\widetilde{M}'$ is a monster model for $T'$). Let $M_1 \preceq^{L'} \widetilde{M}'$ be  a small coherent model (i.e.~$(N,M) \preceq^{L_P} (N_1,M_1) \prec^{L_P} (N',M')$ for some $N_1$; so e.g.~$M_1 = M$).
	\begin{enumerate}
		\item 	Let $\mu \in \mathfrak{M}_x^{L}(M_1)$ be definable (in $T$). Then there exists a unique measure $\mu' \in \mathfrak{M}_x^{L'}(M_1)$ so that $\mu' \restriction_{L} = \mu$, and $\mu'$ is definable over $M_1$ in $T'$ (moreover, if $\mu$ is definable over $A \subseteq M_1$ in $T$, then $\mu'$ is also definable over $A$ in $T'$). In fact  for every $\psi(x) \in L'(M_1)$ we have:
		\begin{gather*}
			\mu'(\psi(x)) = \sup \left\{ \mu(\varphi(x)) :  \varphi(x) \in L(M_1), \varphi(M_1) \subseteq \psi(M_1) \right\} \\
			= \inf \left\{ \mu(\varphi'(x)) :  \varphi'(x) \in L(M_1), \psi(M_1) \subseteq \varphi'(M_1) \right\}.
		\end{gather*}
		
		\item If  $\bar{\mu} \in \mathfrak{M}_x^{L}(M')$ is the definable over $M_1$ extension of $\mu \in \mathfrak{M}_x^{L}(M_1)$ (in $T$), then  there exists a unique measure $\bar{\mu}' \in \mathfrak{M}_x^{L'}(M')$ so that $\bar{\mu}' \restriction_{L} = \bar{\mu}$. Moreover, $\bar{\mu}'$ is the $M_1$-definable extension of $\mu'$  in $T'$.
	\end{enumerate}
\end{lemma}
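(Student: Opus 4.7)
The plan is to combine the \L o\'s--Marczewski extension theorem (Fact~\ref{fac: classic measure ext}) with honest definitions (Fact~\ref{fac: honest defs}). Since $T = T' \cap L$, the natural inclusion from the Boolean algebra of $L(M_1)$-formulas modulo $T$-equivalence into that of $L'(M_1)$-formulas modulo $T'$-equivalence is well-defined and injective. For $\varphi \in L(M_1)$ and $\psi \in L'(M_1)$, the condition $T' \vdash \forall x (\varphi \to \psi)$ is equivalent to $\varphi(M_1) \subseteq \psi(M_1)$, using $M_1 \preceq^{L'} \widetilde{M}'$ (from coherence). Further, if $\varphi_0, \varphi_1 \in L(M_1)$ satisfy $\varphi_0(M_1) \subseteq \varphi_1(M_1)$, then $M_1 \models \forall x(\varphi_0 \to \varphi_1)$, so this $L(M_1)$-sentence lies in $T$ by completeness and $M_1 \models T$, giving $\mu(\varphi_0) \leq \mu(\varphi_1)$. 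Hence the quantities $L(\psi), U(\psi)$ defined by the sup/inf in the statement satisfy $L(\psi) \leq U(\psi)$ and bound any extension $\mu'' \in \mathfrak{M}_x^{L'}(M_1)$ of $\mu$ from below and above. By Fact~\ref{fac: classic measure ext} an extension exists, and is unique exactly when $L(\psi) = U(\psi)$ for every $\psi$.

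The heart of the proof is establishing $L(\psi) = U(\psi)$. Use Remark~\ref{rem: Shelah QE formulas} and coherence to write $\psi(x) \equiv_{T'} R_{\varphi(x,c)}(x)$ with $\varphi(x, z) \in L$ and $c \in N_1^z$. Apply Fact~\ref{fac: honest defs} in the pair $(N_1, M_1)$ via the saturated extension $(N'', M'')$: this produces $\theta(x, z_0) \in L$ and $d \in (M'')^{z_0}$ with $\theta(M_1, d) = \varphi(M_1, c)$, $\theta(M'', d) \subseteq \varphi(M'', c)$, and such that no $L$-type over $M''$ invariant over $M_1$ is pair-consistent with $P(x) \wedge \varphi(x, c) \wedge \neg \theta(x, d)$. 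Let $\bar\mu$ denote the $M_1$-definable $L$-extension of $\mu$ to $N''$; by Fact~\ref{fac: types in sup of inv meas are inv NIP} every type in the support of $\bar\mu$ is $M_1$-invariant, and combining this with the ``moreover'' of Fact~\ref{fac: honest defs} yields $\bar\mu(\varphi(x, c) \wedge \neg \theta(x, d)) = 0$, whence $\bar\mu(\theta(x, d)) = \bar\mu(\varphi(x, c)) =: r$. By definability of $\mu$, for each $\varepsilon > 0$ there is $\chi(z_0) \in L(M_1)$ with $d \models \chi$ and $\bar\mu(\theta(x, d_0)) > r - \varepsilon$ for every $d_0 \models \chi$. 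Set $\alpha(z_0) := \forall x (\theta(x, z_0) \to R_{\varphi(x, c)}(x)) \in L'(M_1)$; this is satisfied by $d$ in $\widetilde{M}''$ since $\theta(M'', d) \subseteq \varphi(M'', c) = R_{\varphi(x, c)}(\widetilde{M}'')$. By $M_1 \preceq^{L'} \widetilde{M}''$, pick $d_0 \in M_1$ with $\chi(d_0) \wedge \alpha(d_0)$: then $\varphi_0(x) := \theta(x, d_0) \in L(M_1)$ satisfies $\varphi_0(M_1) \subseteq \psi(M_1)$ and $\mu(\varphi_0) > r - \varepsilon$, giving $L(\psi) \geq r$. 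The symmetric argument with the honest definition $\theta'(x, d')$ of $\neg \varphi(x, c)$ gives $U(\psi) \leq r$, so $L(\psi) = U(\psi) = \bar\mu(\varphi(x, c))$.

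Definability of $\mu'$ over $A \subseteq M_1$ in $T'$ follows from this formula: by Remark~\ref{rem: Shelah QE formulas} any $\psi(x, y) \in L'$ is $T'$-equivalent to $R_{\varphi(x, y, c)}(x, y)$ for some $\varphi \in L$ and $c \in N$, so $\mu'(\psi(x, b)) = \bar\mu(\varphi(x, b, c))$ is continuous in $\tp^L(b/Ac)$ by $A$-definability of $\bar\mu$; the $L'$-atomic formulas $R_{\xi(y, a, c)}(b)$ (for $a \in A$, $\xi \in L$) record the value of $\xi(b, a, c)$ in $N$, so the quotient map $\tp^{L'}(b/A) \mapsto \tp^L(b/Ac)$ is well-defined and continuous, and $\mu'$ is $A$-definable in $T'$. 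For part~(2), apply part~(1) to $\bar\mu \in \mathfrak{M}_x^L(M')$ and to the coherent model $M'$: this yields a unique $\bar\mu' \in \mathfrak{M}_x^{L'}(M')$ extending $\bar\mu$, automatically $M_1$-definable in $T'$ (by part~(1) with $A = M_1$). Its restriction to $L'(M_1)$ extends $\mu$ and is $M_1$-definable, hence equals $\mu'$ by uniqueness in~(1); thus $\bar\mu'$ is the $M_1$-definable extension of $\mu'$ in $T'$. The principal technical subtlety is navigating the ``moreover'' of Fact~\ref{fac: honest defs} through the nested pair structure $(N, M) \preceq (N_1, M_1) \preceq (N', M') \preceq (N'', M'')$ to derive $\bar\mu(\varphi(x, c) \wedge \neg \theta(x, d)) = 0$ from $M_1$-invariance of the types in the support of $\bar\mu$.
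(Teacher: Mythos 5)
Your overall architecture (\L o\'s--Marczewski sandwich plus honest definitions, pulling the honest-definition parameter down into $M_1$ by $L'$-elementarity, symmetric argument for the complement) is the paper's approach, but the central step is wrong. You set $r := \bar\mu(\varphi(x,c))$, where $\bar\mu$ is the $M_1$-definable $L$-extension of $\mu$ to a big model of $T$ containing the external parameter $c$, and you claim $\bar\mu(\varphi(x,c)\wedge\neg\theta(x,d))=0$ by combining $M_1$-invariance of the support of $\bar\mu$ with the ``moreover'' clause of Fact~\ref{fac: honest defs}. That clause only excludes $M_1$-invariant types that are consistent with $P(x)\wedge\varphi(x,c)\wedge\neg\theta(x,d)$ \emph{in the pair}; the conjunct $P(x)$ is essential, and types in the support of the $L$-measure $\bar\mu$ over $N''$ need not be realizable in the predicate part, so the clause does not apply to them. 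The claim is in fact false: take $T=\mathrm{DLO}$, $M_1=M=\mathbb{Q}$, $\mu$ the restriction to $\mathbb{Q}$ of the ($\emptyset$-definable) type at $+\infty$, and $c\in N$ with $c>\mathbb{Q}$, so that $c>P$ holds in the pair $(N,M)$ and hence in all its elementary extensions. Then $\psi(x)=R_{x>c}(x)$ has empty trace on $M_1$, so every extension $\mu''\in\mathfrak{M}^{L'}_x(M_1)$ of $\mu$ gives $\mu''(\psi)=0$, while $\bar\mu(x>c)=1$; moreover any admissible honest definition $\theta(x,d)$ here is inconsistent, so $\bar\mu(\theta(x,d))=0\neq 1=\bar\mu(\varphi(x,c))$. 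So your target value $r$ is simply not the value the extensions take, and the subsequent ``pick $d_0\models\chi\wedge\alpha$ in $M_1$'' step, as well as your $A$-definability argument (which reuses the identification $\mu'(\psi(x,b))=\bar\mu(\varphi(x,b,c))$), collapse with it.

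The repair is exactly the detour you skipped: fix an arbitrary $L'$-extension $\bar\mu'$ of the \emph{global} $M_1$-definable measure $\bar\mu\in\mathfrak{M}^L_x(M')$ (it exists by Fact~\ref{fac: classic measure ext}), and work with the support of $\bar\mu'$. A type $\bar p'\in S(\bar\mu')$ is realized by some $a$ in the predicate part $M''$ of the bigger pair, and $\bar p'\restriction_L\in S(\bar\mu)$ is $M_1$-invariant (Fact~\ref{fac: types in sup of inv meas are inv NIP}); this is what lets the ``moreover'' clause bite and gives $\bar\mu'(\psi(x,b)\wedge\neg\theta(x,b,c))=0$, hence $\bar\mu'(\psi(x,b))=\bar\mu(\theta(x,b,c))=:r$, with $\theta$'s parameters in $M'$ (the paper keeps the $M_1$-parameter $b$ as a free variable and uses left transitivity of invariance for this). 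From there your sandwich argument goes through and also yields the explicit $L'(M_1)$-definition $\chi'_\varepsilon$ needed for the ``moreover'' definability statement. Separately, in part (2) you apply part (1) ``to the coherent model $M'$'', but $M'$ is not a small coherent model for the fixed monster $\widetilde M'$; the paper instead applies (1) with $M'$ as the base of its own Shelah expansion $(M')^{\Sh}$ in a larger language $L''\supseteq L'$, getting uniqueness over the bigger algebra and a fortiori over $L'(M')$, and then proves the ``moreover'' by playing the $T$-definition of $\bar\mu$ against the $T'$-definition of the $M_1$-definable extension of $\mu'$ over a common $c'\in M_1$.
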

\begin{proof}

(1) Let $\mu \in \mathfrak{M}_x^{L}(M_1)$ be definable, and let $\bar{\mu} \in \mathfrak{M}_x^{L}(M')$ be the (unique, by Remark \ref{rem: unique def ext of definable measure}) global $M_1$-definable extension of $\mu$ (in $T$).
 Let $\bar{\mu}' \in \mathfrak{M}_x^{L'}(M')$ be arbitrary so that $\bar{\mu}' \restriction_{L} = \bar{\mu}$ (note that there exists one by Fact \ref{fac: classic measure ext} applied for the Boolean algebra of $L'(M')$-definable subsets of $M'$), and let $\mu' := \bar{\mu}'|_{M_1}$. Note that $(\bar{\mu}'|_{M}) \restriction_{L} = \mu' \restriction_{L} = \mu$.

Take any $\psi(x,y) \in L'$ and $b \in M_1$, and let $r := \mu'(\psi(x,b))$. We have that $\psi(x,y)$ is equivalent in $T'$ to $R_{\varphi(x,y;e)}(x,y)$ for some $\varphi(x,y;y') \in L$ and $e$ in $N^{y'}$, i.e.~$\psi(M') = \varphi(M';e)$.
By Fact \ref{fac: honest defs} (including the ``moreover'' part) applied to $(N_1,M_1)$ there is some $\theta(x,y;z) \in L$ and some $c \in (M')^z$ so that $\theta(M',c) \subseteq \varphi(M',e) = R_{\varphi(x,y;e)}(M')$ and no type $q \in S_{x,y}^{L}(M')$ invariant over $M_1$ in $T$ is consistent with $\{ \varphi(x,y;e) \setminus \theta(x,y;c)\} \cup \{P(x) \land P(y)\}$ in $\Th_{L_P}(N',M')$ (equivalently, with $R_{\varphi(x,y;e)}(x,y) \setminus \theta(x,y;c)$ in $T'$).

Let $\bar{p}' \in S(\bar{\mu}')$ be arbitrary. Then for $\bar{p}:= \bar{p}' \restriction_{L}$ we have $\bar{p} \in S(\bar{\mu})$ by definition of support. As $\bar{\mu}$ is definable over $M_1$ in $T$, it is in particular invariant over $M_1$ in $T$, hence by NIP every type $\bar{p} \in S(\bar{\mu})$ is also $M_1$-invariant in $T$ (Fact \ref{fac: types in sup of inv meas are inv NIP}). 
Suppose that $\left( R_{\varphi(x,y;e)}(x,b) \setminus \theta(x,b;c) \right) \in  \bar{p}'$, and let $a \models \bar{p}'$ in a bigger monster model $\widetilde{M}''$ (as in Definition \ref{def: Shelah exp context}). Then $(a,b) \models R_{\varphi(x,y;e)}(x,y) \setminus \theta(x,y;c)$ and $\tp^L(a,b/M')$ is invariant over $M_1$ in $T$ (as, working in $T$, $\tp^L(b/M')$ is obviously invariant over $M_1$ as $b \in M_1$ and $\tp^L(a/M'b)$ is invariant over $M_1$ as $a \models \bar{p}$, so we can apply left transitivity of invariance, see e.g.~\cite[Remark 2.20]{chernikov2012forking}). But this contradicts the choice of $\theta(x,y;c)$. As $\bar{p}' \in S(\bar{\mu}')$ was arbitrary, this shows that  $\bar{\mu}' \left( R_{\varphi(x,y;e)}(x,b) \setminus \theta(x,b;c) \right) = 0$, so $\bar{\mu}'(\theta(x,b,c)) = \bar{\mu}(\theta(x,b,c)) = r$. 

As the measure $\bar{\mu}$ is $M_1$-definable in $T$, for every $\varepsilon \in \mathbb{R}_{>0}$ there is some $\chi_{\varepsilon}(y,z) \in L(M_1)$ so that $\chi_{\varepsilon}(y,z) \in \tp^{L}((b,c)/M_1)$ and for every $(b',c') \in (M')^{y,z}$ with $\models \chi_{\varepsilon}(b',c')$ we have $\bar{\mu}(\theta(x,b',c')) \approx^{\varepsilon} \bar{\mu}(\theta(x,b,c))$, in particular $\bar{\mu}(\theta(x,b',c')) \geq r - \varepsilon$. So we have 
\begin{gather*}
	(N',M') \models \chi_{\varepsilon}(b,c) \land \forall x \in P \left( \theta(x,b,c) \rightarrow \varphi(x,b,e) \right).
\end{gather*}
As $(N,M) \preceq (N_1,M_1) \prec (N',M') $, $b \in M_1^y$ and $e \in N^z$,  there is some $c_{\varepsilon} \in M_1^z$ so that 
\begin{gather*}
	(N,M) \models \chi_{\varepsilon}(b,c_{\varepsilon}) \land \forall x \in P \left( \theta(x,b,c_{\varepsilon}) \rightarrow \varphi(x,b,e) \right).
\end{gather*}
That is, for every $\varepsilon >0$ there is some $c_{\varepsilon} \in M_1^{z}$ so that $\theta(M';b,c_{\varepsilon}) \subseteq \varphi(M';b,e)$ and $\mu(\theta(x,b,c_{\varepsilon})) = \bar{\mu}(\theta(x,b,c_{\varepsilon}))  \geq r - \varepsilon$. Applying the same argument with $\neg \varphi(x,y;e)$ instead of $\varphi(x,y;e)$, we find some $\theta'(x,y;z') \in L$ so that 
for every $\varepsilon >0$ there is some $c'_{\varepsilon} \in M_1^{z'}$ with $\varphi(M';b,e) \subseteq \theta'(M';b, c'_{\varepsilon})$ and $\mu(\theta'(x,b,c'_{\varepsilon})) = \bar{\mu}(\theta'(x,b,c'_{\varepsilon})) \leq r + \varepsilon$. But if now $\mu'' \in \mathfrak{M}_x^{L'}(M)$ is an arbitrary measure in $T'$ extending $\mu$, it in particular has to satisfy $\mu(\theta(x,b,c_{\varepsilon})) \leq \mu''(\varphi(x,b,e)) \leq \mu(\theta'(x,b,c'_{\varepsilon}))$	for every $\varepsilon > 0$, hence $\mu''(\psi(x,b)) =  r = \mu'(\psi(x,b))$. As $\psi(x,b) \in L'(M_1)$ was arbitrary, it follows that $\mu'$ is the unique measure  in $\mathfrak{M}_x^{L'}(M)$  extending $\mu$.

Finally, if $M_1 = M$ then all measures in $\mathfrak{M}^{L'}_x(M)$ are definable in $T'$ by Fact \ref{fac: all meas over Sh exp are def}, but this need not be the case over an arbitrary coherent $M_1$. In that case we let 
$$\chi'_{\varepsilon}(y) := \exists c_{\varepsilon} \left( \chi_{\varepsilon}(y,c_{\varepsilon}) \land  \forall x \left( \theta(x,y,c_{\varepsilon}) \rightarrow R_{\varphi(x,y;e)}(x,y) \rightarrow \theta'(x,y,c'_{\varepsilon})  \right) \right).$$

Then, by the above, $\chi'_{\varepsilon}(y) \in \tp^{L'}(b/M_1)$, and for any $b' \in (M_1)^y$ with $\widetilde{M}' \models \chi'_{\varepsilon}(b')$ we have $\mu'(\psi(x,b')) \approx^{\varepsilon} \mu'(\psi(x,b)) $. This shows that $\mu'$ is definable over $M_1$ in $T'$ (also shows the ``moreover'' part as $\chi'_{\varepsilon}$ only uses the same parameters in $M_1$ as $\chi_{\varepsilon}$).

(2) Applying (1) to $\bar{\mu}$ and $(M'\restriction_{L})^{\Sh}$ (as a structure in a language $L'' \supseteq L'$), we see that $\bar{\mu}$ has a unique extension to a measure in $\mathfrak{M}^{L''}_x((M')^{\Sh})$, hence in particular a unique extension to a measure $\bar{\mu}' \in \mathfrak{M}^{L'}_x(M')$ (note that each of the two different extensions to a subalgebra would extend to the full algebra by Fact \ref{fac: classic measure ext}).

Note that $\mu' \in \mathfrak{M}_x^{L'}(M_1)$ is definable in $T'$ by (1). Let $\bar{\mu}'' \in \mathfrak{M}_x^{L'}(M')$ be the unique $M_1$-definable extension of $\mu'$, in $T'$ (by Remark \ref{rem: unique def ext of definable measure}). We claim that $\bar{\mu}'' \restriction_{L} = \bar{\mu}$, hence  $\bar{\mu}'' = \bar{\mu}'$ as wanted. Assume not, then there is some $\varphi(x,y) \in L$ and $c \in (M')^y$ with $\bar{\mu}(\varphi(x,c)) = r  < s = \bar{\mu}''(\varphi(x,c)) $. Let $\varepsilon := (s-r)/3$. As $\bar{\mu}$ is definable over $M_1$ in $T$, there is some $\chi(y) \in L(M_1)$ so that $M' \models \chi(c)$ and $\bar{\mu}(\varphi(x,c')) \approx^{\varepsilon} \bar{\mu}(\varphi(x,c))$ for all $c' \in (M')^y$ with $M' \models \chi(c')$. Similarly, as $\bar{\mu}''$ is definable over $M_1$ in $T'$, there is some $\chi''(y) \in L'(M_1)$ so that $\widetilde{M}' \models \chi''(c)$ and $\bar{\mu}''(\varphi(x,c')) \approx^{\varepsilon} \bar{\mu}''(\varphi(x,c))$ for all $c' \in (M')^y$ with $\widetilde{M}' \models \chi(c')$.
As $\widetilde{M}' \models \chi(c) \land \chi''(c)$ and $M_1 \prec^{L'} \widetilde{M}'$ (by coherence, see Definition \ref{def: Shelah exp context}), there is some $c' \in M_1^y$ so that $\widetilde{M}' \models \chi(c') \land \chi''(c')$, hence $\bar{\mu}(\varphi(x,c')) \leq r + \varepsilon < s - \varepsilon \leq  \bar{\mu}''(\varphi(x,c')) $.
But $\bar{\mu}'' |_{M_1} = \mu'$, $\bar{\mu} |_{M_1} = \mu$ and $\mu' \restriction_{L} = \mu$, a contradiction.
\end{proof}

Using this, we establish a one-to-one correspondence between global generically stable measures in $T$ and in $\Th(M^{\Sh})$:
\begin{theorem}\label{thm: corresp for gs measures}
	Let $T$ be NIP, $M \models T$, $T' := \Th_{L'}\left(M^{\Sh} \right)$ and $\widetilde{M}'$ is a monster model for $T'$ (we are following the notation in Definition \ref{def: Shelah exp context}). Then the map $\bar{\mu}' \in \mathfrak{M}_x^{L'}(M') \mapsto \bar{\mu} := \bar{\mu}'\restriction_{L}  \in \mathfrak{M}_x^{L}(M')$ defines a bijection between global generically stable measures in $T'$ and global generically stable measures in $T$. Moreover, 
	$\bar{\mu}'$ is the unique measure in $\mathfrak{M}_x^{L'}(M')$ extending $\bar{\mu}$; and for any small set $A \subset M'$: 
	\begin{enumerate}
		\item if $\bar{\mu}'$ is fam over  $A$ in $T'$, then $\bar{\mu}$ is fam over $A$ in $T$;
		\item if $\bar{\mu}$ is generically stable over $A$ in $T$, then $\bar{\mu}'$ is generically stable over $A$ in $T'$ (see Remark \ref{rem: gs over a set but not fs}).
	\end{enumerate}
\end{theorem}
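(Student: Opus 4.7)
The strategy is to use Proposition \ref{prop: properties of reducts of measures}(1) and Lemma \ref{lem: unique extension of def meas to Sh} for one direction and injectivity, then to construct the generically stable $L'$-measure via a \emph{bridge} relation identifying its values on $R$-predicates with values of an ambient $L$-extension. Proposition \ref{prop: properties of reducts of measures}(1) shows the restriction $\bar{\mu}' \mapsto \bar{\mu} = \bar{\mu}' \restriction_{L}$ sends gs $L'$-measures to gs $L$-measures; Lemma \ref{lem: unique extension of def meas to Sh} gives its injectivity. Part (1) is then immediate: if $\bar{\mu}'$ is fam over $A$ in $T'$, restricting the test formulas to $L$ (and using $\bar{\mu}' \restriction_{L} = \bar{\mu}$) shows the same $A^{x}$-witnesses are fam witnesses for $\bar{\mu}$ in $T$.

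For surjectivity and part (2), I would begin with $\bar{\mu}$ generically stable over $A$ in $T$ and pick a small coherent $M_1 \supseteq A$ (which exists by the last paragraph of Definition \ref{def: Shelah exp context}). Since $\bar{\mu}$ is $A$-definable by Fact \ref{fac: NIP gen stab meas equivs}, Lemma \ref{lem: unique extension of def meas to Sh} delivers the unique $\bar{\mu}' \in \mathfrak{M}_{x}^{L'}(M')$ with $\bar{\mu}' \restriction_{L} = \bar{\mu}$, and this $\bar{\mu}'$ is $A$-definable in $T'$. $A$-invariance of $\bar{\mu}'$ in $T'$ follows by uniqueness: every $\sigma \in \Aut^{L'}(\widetilde{M}'/A)$ lies in $\Aut^{L}(M'/A)$ and so fixes $\bar{\mu}$, whence $\sigma_{\ast}\bar{\mu}'$ is again an $L'$-extension of $\bar{\mu}$ and must equal $\bar{\mu}'$. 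By Fact \ref{fac: NIP gen stab meas equivs} it then remains to verify that $\bar{\mu}'$ is fam over $M_1$ in $T'$.

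The main tool will be the following bridge identity. Let $\hat{\mu}$ denote the unique $A$-definable extension of $\bar{\mu}$ to a sufficiently saturated $L$-monster containing $N$; note $\hat{\mu}$ is still gs over $A$ in $T$. I would show that for every $\psi(x;y,z) \in L$, $b \in (M')^{y}$ and $d \in N^{z}$,
\begin{gather*}
\bar{\mu}'(R_{\psi(x;y,d)}(x,b)) = \hat{\mu}(\psi(x,b,d)).
\end{gather*}
To prove this, define $\mu^{\ast}$ on $L'(M')$-formulas by $\mu^{\ast}(R_{\psi(x;y,d)}(x,b)) := \hat{\mu}(\psi(x,b,d))$; well-definedness reduces to showing that two such $L$-formulas with the same trace on $M'$ have symmetric difference of $\hat{\mu}$-measure zero, which holds because (i) such a symmetric difference contains no element of $M'$, and (ii) $\hat{\mu}$ is finitely satisfiable in $M_1 \restriction_{L} \subseteq M'$ by Remark \ref{rem: gs over a set but not fs}. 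Since $\mu^{\ast} \restriction_{L} = \bar{\mu}$, the uniqueness clause of Lemma \ref{lem: unique extension of def meas to Sh} forces $\mu^{\ast} = \bar{\mu}'$. Then fam of $\hat{\mu}$ over $M_1 \restriction_{L}$ in $T$, applied to the $L$-formula $\psi(x;y,z)$, produces $a_0, \ldots, a_{n-1} \in M_1^{x}$ which $\varepsilon$-approximate $\hat{\mu}(\psi(x,b,d))$ uniformly in $(b,d)$; since $\Av(\bar{a})(R_{\psi(x;y,d)}(x,b))$ and $\Av(\bar{a})(\psi(x,b,d))$ coincide by the definition of $R^{\widetilde{M}'}$, the bridge transfers the approximation to fam of $\bar{\mu}'$ over $M_1$ in $T'$, and Remark \ref{rem: Shelah QE formulas} reduces arbitrary $L'$-formulas to the $R$-form.

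The hard part will be the well-definedness of the bridge $\mu^{\ast}$: this is precisely where the \emph{generically stable} hypothesis on $\bar{\mu}$ (hence finite satisfiability of $\hat{\mu}$ inside $M'$) is indispensable, and the argument would not go through for a merely definable or invariant $\bar{\mu}$.
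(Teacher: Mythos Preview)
Your proof is correct and takes a genuinely different route from the paper's.

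\textbf{The paper's argument.} After obtaining the unique $L'$-extension $\bar{\mu}'$ (definable over $A$) from Lemma~\ref{lem: unique extension of def meas to Sh}, the paper does \emph{not} verify fam directly. Instead it picks any global coheir $\bar{\mu}'' \in \mathfrak{M}_x^{L'}(M')$ of $\mu' := \bar{\mu}'|_{M_1}$; then $\bar{\mu}''\restriction_L$ is an $M_1$-finitely-satisfiable extension of $\mu := \bar{\mu}|_{M_1}$, so by Fact~\ref{fac: basic props gen stab meas} it equals $\bar{\mu}$, whence $\bar{\mu}'' = \bar{\mu}'$ by the uniqueness clause. Thus $\bar{\mu}'$ is finitely satisfiable in $M_1$, and since it is already definable over $M_1$, Fact~\ref{fac: NIP gen stab meas equivs} gives generic stability.

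\textbf{Your argument.} You instead produce an explicit formula for $\bar{\mu}'$: extending $\bar{\mu}$ to its gs continuation $\hat{\mu}$ on an $L$-monster containing $N'$, you identify $\bar{\mu}'(R_{\psi(x;y,d)}(x,b)) = \hat{\mu}(\psi(x,b,d))$ via the uniqueness clause, and then read off fam of $\bar{\mu}'$ over $M_1$ directly from fam of $\hat{\mu}$. The well-definedness (and, by the same token, finite additivity) of your bridge measure $\mu^*$ is exactly where finite satisfiability of $\hat{\mu}$ in $M_1 \subseteq M'$ enters, as you note.

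\textbf{Comparison.} The paper's coheir trick is shorter and avoids passing to the ambient $N'$; yours is more constructive and yields a concrete description of $\bar{\mu}'$ in terms of the $L$-data $\hat{\mu}$, which can be convenient if one later needs to compute $\bar{\mu}'$ on specific $L'$-formulas. Both hinge on the same two ingredients: the uniqueness of the $L'$-extension (Lemma~\ref{lem: unique extension of def meas to Sh}) and finite satisfiability of the gs measure in a small model inside $M'$.
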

\begin{proof}

If $\bar{\mu}' \in \mathfrak{M}_x^{L'}(M')$ is fam over some small $A \subset M'$ in $T'$, then $\bar{\mu} := \bar{\mu}' \restriction_{L}$ is also fam over $A$ in $T$ by Proposition \ref{prop: properties of reducts of measures}(1). 

Conversely, assume $\bar{\mu} \in \mathfrak{M}_x^{L}(M')$ is generically stable in $T$, over some small set $A \subset M'$. Let $M_1$ with $A \subseteq M_1 \prec^{L'} \widetilde{M}'$ be a small coherent model (exists by Definition \ref{def: Shelah exp context}), in particular $\bar{\mu}$ is definable over $M_1$ in $T$. Then, by Lemma \ref{lem: unique extension of def meas to Sh}, $\bar{\mu}$ extends to a unique measure $\bar{\mu}' \in \mathfrak{M}_x^{L'}(M')$, and $\bar{\mu}'$ is definable over $A$ in $T'$ (by the ``moreover'' part of Lemma \ref{lem: unique extension of def meas to Sh}(1)).

Let $\mu := \bar{\mu}|_{M_1},  \mu' := \bar{\mu}'|_{M_1}$.
		Let $\bar{\mu}'' \in  \mathfrak{M}_x^{L'}(M')$ be an extension of $\mu'$ finitely satisfiable in $M_1$ (every measure over a model has a global coheir, by Fact \ref{fac: classic measure ext} applied to externally definable subsets of this model). Then $\bar{\mu}'' \restriction_{L}$ is also finitely satisfiable in $M_1$, hence is equal to $\bar{\mu}$ (by generic stability and Fact \ref{fac: basic props gen stab meas}, $\bar{\mu}$ is the unique $M_1$-invariant extension of $\mu$ in $T$). But then $\bar{\mu}'' = \bar{\mu}'$ by uniqueness, hence  $\bar{\mu}'$ is also finitely satisfiable in $M_1$, so generically stable over $M_1$ in $T'$ by NIP (Fact \ref{fac: NIP gen stab meas equivs}).
\end{proof}

\subsection{Externally definable fsg subgroups of definable groups}
\label{sec: Externally definable fsg subgroups of definable groups are definable}
First we show that every externally definable fsg subgroup of a definable group is already definable:

\begin{theorem}\label{thm: main for fsg subgroups}
	Assume $T$ is NIP, $M \models T$ and $G$ is an $M$-definable group. Let $\widetilde{M}' \succ^{L'} M^{\Sh}$ be a monster model for $T' := \Th(M^{\Sh})$.
	 Assume that $H'$ is an externally definable subgroup of $G(M)$ which is fsg in $T'$ (respectively,  a subgroup of $G(M')$ type-definable over $M$ in $T'$). Then $H'$ is already an  $L(M)$-definable subgroup of $G$ in $T$ (respectively, $H'(M')$ is  $L(M)$-type-definable in $T$ defined by a partial type of size at most $|T|$).
	
\end{theorem}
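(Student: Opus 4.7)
The plan is to recover $H'(M')$ as the stabilizer in $T$ of a suitable measure, using the fsg property to produce that measure and using the extension uniqueness in Theorem \ref{thm: corresp for gs measures} to translate between $T$ and $T'$. Concretely, I will obtain via fsg (Proposition \ref{prop: fsg iff inv gen stab meas}) a left-$H'(M')$-invariant, generically stable measure $\mu' \in \mathfrak{M}^{L'}_{H'}(M')$; note that $\mu'$ is $M$-invariant by uniqueness of the $H'(M')$-invariant generically stable measure (Fact \ref{fac: fsg groups basic props}(1)), since $H'$ is $M$-invariant as a set; so $\mu'$ is generically stable over $M$ in $T'$. Setting $\mu := \mu' \restriction_L$, Proposition \ref{prop: properties of reducts of measures}(1) makes $\mu$ generically stable over $M$ in $T$, hence in particular $L(M)$-definable, and so $\Stab_G(\mu) \leq G(M')$ is $L(M)$-type-definable by $\leq |T|$ formulas (Fact \ref{fac: stab of def meas type def}).

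The central claim is $\Stab_G(\mu) = H'(M')$. The inclusion $\supseteq$ is immediate from restricting $h \cdot \mu' = \mu'$ to $L$. For $\subseteq$, given $g \in \Stab_G(\mu)$, both $\mu'$ and $g \cdot \mu'$ lie in $\mathfrak{M}^{L'}_x(M')$ and have $L$-reduct $\mu$; by the uniqueness of extension of a generically stable measure from $T$ to $T'$ (Theorem \ref{thm: corresp for gs measures}), $g \cdot \mu' = \mu'$, so $g \in \Stab_G(\mu')$ computed in $T'$. Since $\mu'$ is supported on $H'$ and $H'$ is a subgroup, $g \notin H'(M')$ would give $H' \cap g^{-1} H' = \emptyset$; but $g \cdot \mu' = \mu'$ forces every $p \in S(\mu')$ to satisfy $p \vdash H'(x) \wedge H'(g \cdot x)$, a contradiction.

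This already yields the type-definable case: $H'(M') = \Stab_G(\mu)$ is $L(M)$-type-definable in $T$ by at most $|T|$ formulas. For the (first) definable case we additionally use $H' = \varphi(M, d) = H'(M') \cap M$ for some $\varphi(x,y) \in L$ and $d \in N$. Writing $\Stab_G(\mu) = \pi(M')$ for an $L(M)$-type $\pi(x)$, the partial $L_P$-type $\{P(x)\} \cup \pi(x) \cup \{\neg \varphi(x, d)\}$ is inconsistent in the pair $(N', M')$ (any realization in $P$ would land in $\pi(M') \setminus \varphi(M', d) = \emptyset$). By $|N|^+$-saturation of $(N', M')$ over $(N, M)$, a finite $\pi_0 \subseteq \pi$ already gives $(N, M) \models \forall x (P(x) \wedge \bigwedge \pi_0(x) \to \varphi(x, d))$; combined with $\pi(M) = H'$, this yields $H' = \bigwedge \pi_0(M)$, so $H'$ is $L(M)$-definable by a single formula.

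The main obstacle is the identification $\Stab_G(\mu) = H'(M')$: the nontrivial direction depends crucially on the extension uniqueness of Theorem \ref{thm: corresp for gs measures}, which lets us promote $g \cdot \mu = \mu$ in $T$ to $g \cdot \mu' = \mu'$ in $T'$ and so interact with the support of $\mu'$ inside $H'$. A secondary subtlety is securing $M$-invariance of $\mu'$ in the type-definable case (where Fact \ref{fac: fsg groups basic props}(2) does not directly apply), which is handled by the uniqueness of the fsg invariant measure rather than by generic stability of $\mu'$ over $M$ out of the box.
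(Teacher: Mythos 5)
Your proof is correct and follows essentially the paper's own route: identify $H'(M')$ with $\Stab_G(\mu'\restriction_L)$ via the uniqueness of the $L'$-extension of a generically stable measure (Theorem \ref{thm: corresp for gs measures}, resting on Lemma \ref{lem: unique extension of def meas to Sh}), with the inclusion $\Stab_G(\mu')\subseteq H'$ handled by the standard disjoint-coset/support argument, and then a compactness/saturation step to get an actual $L(M)$-formula in the definable case. Your one deviation — extracting $M$-invariance of $\mu'$ from uniqueness of the invariant measure on the fsg group, so that everything is type-definable over $M$ from the start rather than over the auxiliary model $M_1$ the paper works with (which forces its final parameter-descent interpolation) — is a valid and slightly cleaner handling of the base.
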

\begin{proof}

Let $H' \leq G$ be $M$-type-definable and fsg, both in the sense of the $L'$-theory $T' := \Th \left(M^{\Sh} \right)$ (we are following the notation in Definition \ref{def: Shelah exp context}).

By Proposition \ref{prop: fsg iff inv gen stab meas} (as $T'$ is NIP by Fact \ref{fac: Sh exp qe}), there is $\mu' \in \mathfrak{M}^{L'}_{H'}(M')$ generically stable over some small $M^{\Sh} \prec^{L'} M_1 \prec^{L'} \widetilde{M}'$ (in $T'$) and such that $g \cdot \mu' = \mu'$ for all $g \in H'(M')$. 
Note that $H'$ is both the left and the right stabilizer of $\mu'$ in $G$ (indeed, clearly $H' \subseteq \Stab_{G}(\mu')$, and by \cite[Remark 3.39]{chernikov2024definable} $\Stab_{G}(\mu')$ is the smallest  among all $M$-type-definable subgroup $K \leq G$ with $\mu' \in \mathfrak{M}_{K}(M')$).

Let $\mu := \mu'|_{L}$. Then $\mu \in \mathfrak{M}_{G}^L(M')$ and, by Proposition \ref{prop: properties of reducts of measures}, $\mu$ is generically stable over $M_1$, idempotent and  generically transitive in $T$.
 Let $ H := \Stab_G(\mu)$ (in $G(M')$). Then $H$ is an $L(M_1)$-type-definable subgroup of $G(M')$   and $\mu \in \mathfrak{M}_{H}^L(M')$.

We claim that $H(M')$ is the stabilizer of $\mu'$ in $G(M')$ (hence equal to $H'(M')$). Clearly $H'(M') \subseteq H(M')$. 

Conversely, assume $g \in H(M')$, hence $g \cdot \mu = \mu$.  As $\mu$ is in particular definable over $M_1$ in $T$, by Lemma \ref{lem: unique extension of def meas to Sh}, $\mu'$ is the unique measure in $\mathfrak{M}^{L'}_{G}(M')$ extending  $\mu$. And note that $(g \cdot \mu') \restriction_{L} = g \cdot (\mu' \restriction_{L}) = g \cdot \mu = \mu$, so $g \cdot \mu' \in \mathfrak{M}^{L'}_{G}(M') $ is also an extension of $\mu$. Hence $g \cdot \mu' = \mu'$, so $g \in H'(M')$.

Hence $H'(M') = H(M')$. Say $H'$ was defined by a partial $L'(M)$-type $\pi'(x)$ (of size $\leq |T'|$), and $H$ is defined by a partial $L(M_1)$-type $\pi(x)$ (of size $\leq |T|$), without loss of generality both closed under conjunctions.    Then, by saturation of $\widetilde{M}'$, for every formula $\varphi(x) \in \pi'$  there are formulas $\psi(x;y) \in L$ and $b \in M_1^y$ with $\psi(x;b) \in \pi$ and $\varphi'(x) \in \pi'$ so that $\varphi'(x) \vdash^{T'} \psi(x,b) \vdash^{T'} \varphi(x)$.  Hence $H'(M')$ is already defined by some $\pi'' \subseteq \pi'$ of size $\leq |T|$. 
And as $M^{\Sh} \prec^{L'} \widetilde{M}'$, there is some $b' \in M^{y}$ so that  $\varphi'(x) \vdash^{T'} \psi(x,b') \vdash^{T'} \varphi(x)$. This implies that $H'$ is  $L(M)$-type-definable (by a partial type of size $\leq |T|$); and if $H'$ was actually $L'(M)$-definable in $T'$, we get that it is $L(M)$-definable.
\end{proof}

\begin{remark}
The proof of Theorem \ref{thm: main for fsg subgroups} goes through with $M_1$ instead of $M$, for an arbitrary coherent model $M_1 \prec^{L'} \widetilde{M}'$ so that $(M,N) \prec^{L_P} (M_1,N_1) \prec^{L_{P}}(M',N')$ for some $N_0 \prec^L N_1 \prec^L N'$.
\end{remark}

\begin{remark}\label{rem: no fsg subgroups applies to reducts}
	We note that Theorem \ref{thm: main for fsg subgroups} also applies to arbitrary reducts of $M^{\Sh}$ expanding $M$. Indeed, assume $M'' = M^{\Sh} \restriction_{L''}$ for some $L \subseteq L'' \subseteq L' = L^{\Sh}$, $G$ an $M$-definable group and $H$ is a subgroup of $G$ definable in $M''$ and fsg in the sense of $\Th_{L''}(M'')$. Then, Fact  \ref{fac: group props preserved in Sh exp}(1), $H$ is also fsg in the sense of $\Th_{(L'')^{\Sh}}((M'')^{\Sh})$, and $(M'')^{\Sh} = (M')^{\Sh}$ (up to a correct identification of the languages), so Theorem \ref{thm: main for fsg subgroups} applies.
	\end{remark}

\begin{corollary}\label{cor: fsg subgroups in RCVF}
Let $\RCVF$ be the theory of real closed valued fields, $M'' \models \RCVF$ arbitrary. If $G$ is definable in the reduct $M$ of $M''$ to $\RCF$, and $H \leq G$ is definable in $M''$ and is fsg, then $H$ is already definable in $M$.
\end{corollary}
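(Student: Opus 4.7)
The plan is to deduce this directly from Theorem \ref{thm: main for fsg subgroups} via Remark \ref{rem: no fsg subgroups applies to reducts}, by exhibiting the $\RCVF$-structure $M''$ as a reduct of the Shelah expansion $M^{\Sh}$ of its $\RCF$-reduct $M$. Since $\RCF$ is o-minimal and hence NIP, the whole framework of the paper is available for $M$.

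The main step is to check that the valuation ring $\mathcal{O}$ of $M''$ is externally definable in $M$. As a valuation ring on a real closed field whose ordering extends the one on $M$, $\mathcal{O}$ is a convex subring of $M$, so I would pick an element $c$ in a sufficiently saturated $\RCF$-elementary extension $N \succ M$ realizing the cut lying above every element of $\mathcal{O}$ and below every positive element of $M \setminus \mathcal{O}$; then $\mathcal{O} = \{x \in M : -c \leq x \leq c\}$ is externally definable using $c \in N$. Since $\RCVF$ eliminates quantifiers in the language $L_{\RCF} \cup \{\mathcal{O}\}$ (or equivalently in the language with a binary valuation-divisibility predicate, which is quantifier-free definable from $\mathcal{O}$), every $L_{\RCVF}(M)$-formula is equivalent in $M''$ to a formula in some language $L_{\RCF} \subseteq L'' \subseteq L^{\Sh}$, so $M''$ is a reduct of $M^{\Sh}$ expanding $M$ in the precise sense required by Remark \ref{rem: no fsg subgroups applies to reducts}.

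Given this, since $G$ is $M$-definable (hence $M''$-definable) and $H \leq G$ is definable and fsg in $\Th(M'')$, Remark \ref{rem: no fsg subgroups applies to reducts} gives exactly the conclusion: $H$ is $L_{\RCF}(M)$-definable in $M$.

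I do not expect any serious obstacle here --- the argument is almost entirely a black-box application. The only thing genuinely requiring verification is the classical observation that $\mathcal{O}$, being a convex subset of an o-minimal structure, is externally definable, which is a one-line consequence of saturation. A minor point to be careful about is matching the fsg hypothesis: fsg is asserted in $\Th(M'')$, and Remark \ref{rem: no fsg subgroups applies to reducts} is stated exactly for reducts of $M^{\Sh}$ expanding $M$, so this matches without further work (and is internally consistent with Fact \ref{fac: group props preserved in Sh exp}(1), which would in any case propagate fsg to $\Th((M'')^{\Sh}) = \Th(M^{\Sh})$ if needed).
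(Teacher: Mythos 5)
Your proposal is correct and follows essentially the same route as the paper: the valuation ring is convex, hence externally definable in the $\RCF$-reduct $M$, so $M''$ is a reduct of $M^{\Sh}$ expanding $M$, and Remark \ref{rem: no fsg subgroups applies to reducts} (i.e.\ Theorem \ref{thm: main for fsg subgroups}) applies. The appeal to quantifier elimination for $\RCVF$ is harmless but not needed, since viewing $M''$ in the language of rings plus a predicate for the valuation ring already exhibits it as such a reduct.
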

\begin{proof}
	Let $L''$ be the language of rings with an additional unary predicate for the valuation ring. Given any $L''$-model $M'' \models \RCVF$, let $M \models \RCF$ be its reduct to the language of rings $L$. As the valuation ring is convex, it is externally definable in $M$, hence $M''$ is a (proper) reduct of $M^{\Sh}$. Using Remark \ref{rem: no fsg subgroups applies to reducts}, Theorem \ref{thm: main for fsg subgroups} applies.
\end{proof}

\subsection{Fsg groups both type-definable and externally definable}
\label{sec: Type-definable and externally definable fsg groups are definable}
We also show that, for a sufficiently saturated $M$, every type-definable  fsg group in $M$ which is externally definable, is already definable.

\begin{theorem}\label{thm: type-def and ext def fsg implies def}
Let $T$ be an NIP $L$-theory, $G$ a type-definable over $M_0 \models T$ group  and $M \succ M_0$ is $|M_0|^{+}$-saturated. Assume $G$ is fsg in $T$ as a type-definable group, and $G(M)$ is definable in $M^{\Sh}$. Then $G$ is already a definable group in $M$ (hence $H=G$ is fsg as a definable group in $T'$).
\end{theorem}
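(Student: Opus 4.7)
The plan is to use the unique generically stable, left-$G$-invariant measure $\mu$ on $G$ provided by fsg, together with honest definitions, to identify $G(M')$ with the $L'(M)$-definable set $H'$ witnessing that $G(M)$ is definable in $M^{\Sh}$, and then to recognize $G(M')$ as a stabilizer of $\mu$, which is $L(M)$-type-definable in $T$. I follow the notation of Definition \ref{def: Shelah exp context} with the theorem's $M$ playing the role of the base model. By Remark \ref{rem: Shelah QE formulas} I may write $G(M) = \psi(M,b)$ for some $\psi(x,y) \in L$ and $b \in N^y$; set $H'(M') := \psi(M',b) = R_{\psi(x,b)}(M')$. By $L'$-elementarity $H'(M')$ is a group under the $L$-definable operations inherited from $G$. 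By Proposition \ref{prop: fsg iff inv gen stab meas}(4), using $|M_0|^+$-saturation of $M$, I obtain $\mu \in \mathfrak{M}^L_G(M')$ left-$G(M')$-invariant and generically stable over $M$ in $T$; moreover, the cited proof (via \cite[Proposition 3.26]{chernikov2024definable}) yields that $\mu$ is finitely satisfiable in $G(M)$ (not merely in $M$).

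The inclusion $H'(M') \subseteq G(M')$ is immediate from $L_P$-elementarity of $(N,M) \prec (N',M')$: for each $\chi(x) \in G$, the sentence $\forall x\,(P(x) \land \psi(x,b) \to \chi(x))$ holds in $(N,M)$ (since $\psi(M,b) = G(M) \subseteq \chi(M)$) and transfers to $(N',M')$. For the reverse inclusion the key step is $\mu(\psi(x,b)) = 1$: by Fact \ref{fac: honest defs} there exist $\theta(x,z) \in L$ and $d \in (M')^z$ with $\theta(M,d) = \psi(M,b) = G(M)$ and $\theta(x,d) \land P(x) \to \psi(x,b)$ on $M'$, so $\neg\theta(M,d) \cap G(M) = \emptyset$ and finite satisfiability of $\mu$ in $G(M)$ forces $\mu(\neg\theta(x,d)) = 0$; hence $\mu(\psi(x,b)) \geq \mu(\theta(x,d)) = 1$. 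Let $\mu' \in \mathfrak{M}^{L'}_x(M')$ be the unique extension of $\mu$ (Lemma \ref{lem: unique extension of def meas to Sh}); then $\mu'(H') = 1$. For $g \in G(M')$, $G$-invariance of $\mu$ together with uniqueness of the extension yields $g \cdot \mu' = \mu'$, so $\mu'(\{x : g \cdot x \in H'\}) = \mu'(H') = 1$, and hence $\mu'(\{x \in H' : g \cdot x \in H'\}) = 1 > 0$. Thus some $x_0 \in H'(M')$ satisfies $g \cdot x_0 \in H'(M')$, and since $H'(M')$ is a group, $g = (g \cdot x_0) \cdot x_0^{-1} \in H'(M')$. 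Therefore $G(M') = H'(M')$.

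To upgrade to $L(M)$-definability in $T$, the argument of Fact \ref{fac: stab of def meas type def} yields that $S := \{g \in \varphi_0(M') : g \cdot \mu = \mu\}$ (with $\varphi_0$ as in Remark \ref{rem: type-def group formula}) is $L(M)$-type-definable by some $\pi(x)$ of size $\leq |T|$. We have $G(M') \subseteq S$ by left $G$-invariance, while for $g \in S$ we have $g \cdot \mu = \mu$; since $\mu$ is supported on $[G]$ and $g \cdot \mu$ on $[gG]$, agreement of supports forces $G \cap gG \neq \emptyset$, whence $g \in G$. So $\pi(M') = G(M') = \psi(M',b)$; since $M'$ is sufficiently saturated for $T$, compactness gives $\pi(x) \equiv_T \psi(x,b) \equiv_T G(x)$ and a finite sub-conjunction $\pi_0(x) \subseteq \pi(x)$ already has $\pi_0(x) \equiv_T G(x)$, so this single $L(M)$-formula $\pi_0$ defines $G$ in $T$. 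The main obstacle is the step $\mu(\psi(x,b)) = 1$ in the second paragraph, which crucially combines the honest definition of $\psi(x,b)$ inside $L$ with the finite satisfiability of $\mu$ in $G(M)$ (stronger than in $M$) afforded by fsg.
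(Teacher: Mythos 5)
Your proposal is essentially the paper's proof, with two local variations. The paper also takes the canonical left-invariant measure $\bar{\mu}\in\mathfrak{M}^L_G(M')$ generically stable over $M$ (Proposition \ref{prop: fsg iff inv gen stab meas}), extends it uniquely to $\bar{\mu}'\in\mathfrak{M}^{L'}_x(M')$ (Lemma \ref{lem: unique extension of def meas to Sh}), checks $\bar{\mu}'(H')=1$ and $G(M')$-invariance by uniqueness, concludes $G(M')=H'(M')$, and finishes by compactness. Where you differ: you get $\mu'(H')=1$ from an honest definition $\theta(x,d)$ of $\psi(x,b)$ together with finite satisfiability of $\mu$ in $G(M)$ (which is indeed supplied by the proof of Proposition \ref{prop: fsg iff inv gen stab meas}(3)$\Rightarrow$(1)), whereas the paper uses the explicit $\inf$-formula of Lemma \ref{lem: unique extension of def meas to Sh}(1) together with $|M_0|^+$-saturation of $M$; both are valid. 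Your positive-measure-intersection argument for $G(M')\subseteq H'(M')$ is the same in substance as the paper's disjoint-coset contradiction.

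Two points in your last paragraph need tightening. First, the stabilizer detour is unnecessary, and as written the step ``for $g\in S$ ... whence $g\in G$'' is too quick: a priori $g\in\varphi_0(M')\setminus G$, and from $G\cap gG\neq\emptyset$ one must still recover $g\in G$ using associativity and cancellation of the ambient partial operation on $\varphi_0$ (writing $a=g\cdot h$ with $a,h\in G$, one gets $g\cdot e=a\cdot h^{-1}\in G$ and then $g=g\cdot e$ by right cancellation); this is routine but not automatic. Second, and more importantly, the final compactness cannot be justified by ``$M'$ is sufficiently saturated for $T$'': the parameter $b$ lies in $N\setminus M'$, so ``$\pi(x)\equiv_T\psi(x,b)$'' does not typecheck inside the $T$-monster $M'$, and equality of two small type-definable sets in a saturated model never by itself yields definability. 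The finite sub-conjunction must be extracted by compactness against the $L'$-definable set $R_{\psi(x,b)}$ in the saturated pair $(N',M')$ (equivalently in the $L'$-monster $\widetilde{M}'$), exactly as in the paper's last line; once $G(M')=H'(M')$ is known, this directly gives $G(M')=\bigcap_{\alpha\in s}\psi_\alpha(M')$ for a finite $s$, so your detour through $\pi$ can be dropped. With these repairs your argument is correct.
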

\begin{proof}
	
	First we generalize Fact \ref{fac: group props preserved in Sh exp}(1) and show that $G$ is still fsg as a type-definable group in $T' := \Th_{L'}(M^{\Sh})$. 	The proof as for definable groups in \cite[Theorem 3.19]{chernikov2014external}  could be adapted, but we give a proof with measures instead.

		As $G$ is type-definable over $M_0$ in $T$ and $M \succ^L M_0$ is $|M_0|^+$-saturated, by Proposition \ref{prop: fsg iff inv gen stab meas} there is a global measure $\bar{\mu} \in \mathfrak{M}_G^L(M')$ which is $G(M')$-left-invariant and generically stable over $M$ in $T$. Let $\mu := \bar{\mu}\restriction_{M}$.

		By Lemma \ref {lem: unique extension of def meas to Sh}, there exists a unique measure $\bar{\mu}' \in \mathfrak{M}_x^{L'}(M')$ with $\bar{\mu}' \restriction_{L} = \bar{\mu}$, clearly $\bar{\mu}' \in \mathfrak{M}_{G}^{L'}(M')$. By Theorem \ref{thm: corresp for gs measures}, $\bar{\mu}'$ is generically stable over $M$ in $T'$.

		By the ``moreover'' part of Lemma \ref{lem: unique extension of def meas to Sh}(2), $\bar{\mu}'$ is definable over $M$ in $T'$. Let $\mu' := \bar{\mu}'|_{M}$.
		Let $\bar{\mu}'' \in  \mathfrak{M}_x^{L'}(M')$ be an extension of $\mu'$ finitely satisfiable in $M$ (every measure over a model has a global coheir, by Fact \ref{fac: classic measure ext} applied to externally definable subsets of this model). Then $\bar{\mu}'' \restriction_{L}$ is also finitely satisfiable in $M$, hence is equal to $\bar{\mu}$ (by Fact \ref{fac: basic props gen stab meas}, $\bar{\mu}$ is the unique $M$-invariant extension of $\mu$ in $T$). But then $\bar{\mu}'' = \bar{\mu}'$ by uniqueness, hence  $\bar{\mu}'$ is also finitely satisfiable in $M$, so generically stable over $M$ in $T'$.
 And $\bar{\mu}'$ is also $G(M')$-invariant: given $g \in G(M')$, $g \cdot \bar{\mu} = \bar{\mu} $ and $(g \cdot \bar{\mu}') \restriction_{L} = g \cdot (\bar{\mu}'\restriction_{L}) = g \cdot \bar{\mu} = \bar{\mu}$, so $g \cdot \bar{\mu}' = \bar{\mu}'$ by uniqueness. By Proposition \ref{prop: fsg iff inv gen stab meas}, $\bar{\mu}'$ witnesses that the type-definable group $G$ is fsg in $T'$.

	Now  $G$ is type-definable over $M_0$ is $T$ by some partial type $\{\psi_{\alpha}(x) : \alpha < |M_0|\}$ with $\psi_{\alpha} \in L(M_0)$, without loss of generality closed under finite conjunctions. And $G(M)$ is also definable in $M^{\Sh}$, that is 	there is some formula 
	$H(x) \in L'(M)$ so that $G(M) = H(M)$.
	For every $\alpha$, as $\psi_{\alpha}(M) \supseteq H(M)$, $H(M') = \theta(M',c)$ for some $\theta(x,y) \in L$ and $c \in N^y$,  and $(N',M') \succ (N,M)$, we still have $\psi_{\alpha}(M') \supseteq H(M')$, hence $G(M') \supseteq H(M')$, but in general this could be a proper containment (e.g.~take $G$ as the type-definable subgroup of infinitesimals on a unit circle group in $T =\RCF$).
	
	We claim that still $\bar{\mu}'$ is supported on $H(x)$, that is $\bar{\mu}' \in \mathfrak{M}_{H}^{L'}(M')$  in $T'$.
	Indeed,	by Lemma \ref{lem: unique extension of def meas to Sh}(1), 
	$$\mu'(H(x)) = \inf \left\{ \mu(U(x)) :  U(x) \in L(M), H(M) \subseteq U(M) \right\}.$$
	As $\mu \in \mathfrak{M}^{L}_G(M)$, we have $\mu(\psi_{\alpha}(x)) = 1$ for all $\alpha$.
	But if $U(x) \in L(M)$ and $U(M) \supseteq H(M) = G(M)$, then by $|M_0|^+$-saturation of $M$ we have $U(M) \supseteq \psi_{\alpha}(M)$ for some $\alpha$ (using closure under conjunctions), so $\mu(U(x)) = 1$. We conclude $\mu'(H(x)) = 1$.
	
	So we have $H(M') \leq G(M')$, and we claim $H(M') = G(M')$. Assume not, then $H(M') \cap g \cdot H(M') = \emptyset$ for some $g \in G(M')$. As $\bar{\mu}' \in \mathfrak{M}^{L'}_{H}(M')$, we have $S(\bar{\mu}') \subseteq S^{L'}_{H}(M'), S(g \cdot \bar{\mu}') \subseteq S^{L'}_{g \cdot H}(M')$, so $S(\bar{\mu}') \cap  S(g \cdot \bar{\mu}') = \emptyset$, so $\bar{\mu}' \neq g \cdot \bar{\mu}'$ --- a contradiction as $\bar{\mu}'$ is $G(M')$-invariant.
	
 Finally, as $H(M') = G(M')$, saturation of the $L_P$-structure $(N', M')$ implies $G(M') = \bigcap_{\alpha \in s}\psi_{\alpha}(M')$ for some finite $s$, so $G$ is $L(M_0)$-definable in $T$.
\end{proof}

The following proposition gives some criteria that could be used to see that certain externally definable groups are not fsg in $\Th(M^{\Sh})$:

\begin{proposition}\label{prop: ext def V def fsg implies definable} ($T$ NIP) Suppose $M_0 \prec M$ and $M$ is $|M_0|^+$-saturated. 
\begin{enumerate}
	\item Suppose $G$ is type-definable over $M_0$ and $G(M) = H(M)$ for $H$ a definable group in $M^{\Sh}$.  If $H$ is fsg as a definable group in $T' = \Th(M^{\Sh})$, then $G$ is fsg as a type-definable group in $T$ (hence again $G$ is already definable).
	\item Similarly, suppose $G$ is $\bigvee$-definable over $M_0$ and $G(M) = H(M)$ for $H$ a definable group in $M^{\Sh}$.  If $H$ is fsg as a definable group in $T'$, then $G$ is definable (and fsg) in $M$.
\end{enumerate}
\end{proposition}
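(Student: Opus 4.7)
The strategy mirrors Theorem \ref{thm: type-def and ext def fsg implies def}, but transfers fsg from $T'$ down to $T$. Since $T'$ is NIP (Fact \ref{fac: Sh exp qe}), Proposition \ref{prop: fsg iff inv gen stab meas} yields a witness $\mu' \in \mathfrak{M}^{L'}_H(M')$: generically stable over $M$, $H(\widetilde{M}')$-invariant, and finitely satisfiable in $H(M) = G(M)$. Its reduct $\mu := \mu'\restriction_L$ is then generically stable in $T$ over $M$ by Proposition \ref{prop: properties of reducts of measures}(1) and inherits $H(\widetilde{M}')$-invariance; moreover, finite satisfiability of $\mu'$ in $H(M) \subseteq \psi_\alpha(M)$ for each $\psi_\alpha$ in the partial type $\pi_G$ defining $G$ forces $\mu(\psi_\alpha) = 1$, so $\mu \in \mathfrak{M}_G^L(M')$.

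Now consider $K := \Stab_G(\mu)$, type-definable over $M$ in $T$ by Fact \ref{fac: stab of def meas type def}. Then $K \supseteq H(\widetilde{M}') \cap G$, so $K(M) \supseteq H(M) = G(M)$; combined with $K \subseteq G$, we get $K(M) = G(M)$. For each defining formula $\varphi \in \pi_K$ (an $L$-formula with finitely many $M$-parameters), the partial $L$-type $\pi_G(x) \cup \{\neg\varphi(x)\}$ has at most $|M_0|$ parameters and is unrealized in $M$; by $|M_0|^+$-saturation of $M$ in $T$ it is inconsistent, and compactness gives $\pi_G \vdash \varphi$. Varying $\varphi$ yields $G \subseteq K$, hence $G = K$. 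So $\mu$ is a generically stable $G(\widetilde{M}')$-invariant measure supported on $G$, witnessing $G$ is fsg in $T$ (Proposition \ref{prop: fsg iff inv gen stab meas}). Theorem \ref{thm: type-def and ext def fsg implies def} then concludes $G$ is already $L(M)$-definable.

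\textbf{Plan for Part (2).} The setup is the same; write $G = \bigcup_\alpha \chi_\alpha$ with the family closed under finite disjunctions. Since $\chi_\alpha(M) \subseteq H(M)$, the $L'$-sentence $\forall x(\chi_\alpha(x) \to \phi(x))$ (with $\phi$ defining $H$) holds in $M^{\Sh}$ and hence in $\widetilde{M}'$, giving $G(\widetilde{M}') \subseteq H(\widetilde{M}')$. For the reverse inclusion, the key step is to produce some $\chi_{\alpha_0}$ with $\mu'(\chi_{\alpha_0}) > 1/2$: using fim of $\mu'$ over $M$ plus finite satisfiability of $\mu'$ in $H(M)$, approximate $\mu'$ by an average $\Av(\bar{a})$ with $\bar{a} \in H(M)^n = \bigcup_\beta \chi_\beta(M)^n$; by finiteness of $\bar{a}$ and directedness, $\bar{a} \in \chi_{\alpha_0}(M)^n$ for some $\alpha_0$, so $\Av(\bar{a})(\chi_{\alpha_0}) = 1$, and the fim approximation for $\chi_{\alpha_0}$ then forces $\mu'(\chi_{\alpha_0})$ close to $1$. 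Given such $\chi_{\alpha_0}$, for any $g \in H(\widetilde{M}')$ the $H$-invariance of $\mu'$ gives $\mu'(g\chi_{\alpha_0}) = \mu'(\chi_{\alpha_0}) > 1/2$, so by inclusion--exclusion $\chi_{\alpha_0} \cap g\chi_{\alpha_0}$ has positive $\mu'$-measure and is non-empty, placing $g \in \chi_{\alpha_0} \cdot \chi_{\alpha_0}^{-1}$. Hence $H \subseteq \chi_{\alpha_0} \cdot \chi_{\alpha_0}^{-1} \subseteq G$ (the latter since $\chi_{\alpha_0} \subseteq G$ and $G$ is a group), so $G = H = \chi_{\alpha_0} \cdot \chi_{\alpha_0}^{-1}$ is $L(M_0)$-definable via an existential formula over the ambient group; Fact \ref{fac: group props preserved in Sh exp}(1) then transfers fsg from $T'$ to $T$.

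\textbf{Main obstacle.} In Part (1) the delicate point is the final saturation--compactness argument ensuring $K$ (type-definable over the larger base $M$) equals $G$ (type-definable over $M_0$); it exploits that the stabilizer formulas have only finitely many $M$-parameters so that $|M_0|^+$-saturation of $M$ in $L$ still applies. In Part (2) the main subtlety is matching the fim approximation for $\chi_{\alpha_0}$ (a per-formula property, with $\theta_n$ chosen for a specified formula) with the $\alpha_0$ determined only after the tuple $\bar{a}$ is chosen; this is cleanest when the $\chi$-family consists of instances of a single parametrized $L(M_0)$-formula $\chi(x,y)$, so that fim for $\chi$ gives uniform approximation in $y$, with a uniformization step otherwise.
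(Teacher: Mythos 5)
Your Part (1) is essentially the paper's argument (pull the generically stable $H(M')$-invariant witness down to the reduct and observe it is supported on $G$), and your extra step via $K=\Stab_G(\mu)$ is a correct and welcome way to upgrade $H(M')$-invariance (equivalently $G(M)$-invariance) of the reduct measure to full $G(M')$-invariance: since each formula in the type defining $K$ has only finitely many parameters in $M$ and $\pi_G$ lives over $M_0$, the $|M_0|^+$-saturation argument does give $G\subseteq K$, hence $G=\Stab_G(\mu)$ and fsg via Proposition \ref{prop: fsg iff inv gen stab meas}; the paper compresses this step, but your version is fine.

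Part (2), however, has a genuine gap exactly where you flag it, and the deferred ``uniformization step'' is not available along the lines you propose. Your plan needs a single member $\chi_{\alpha_0}$ of the $\bigvee$-definition with $\mu'(\chi_{\alpha_0})>1/2$, and you try to get it from fim/fam: choose an approximating tuple $\bar a\subseteq H(M)$ and then the $\chi_{\alpha_0}$ covering $\bar a$. But fam is a per-formula statement: the approximation guarantee is only for instances of the formula you fixed \emph{before} choosing $\bar a$, whereas $\chi_{\alpha_0}$ is determined only \emph{after} $\bar a$ is chosen; for a general $\bigvee$-definable group over $M_0$ the family involves (up to $|M_0|$-many) different formulas, and closure under finite disjunctions takes you outside any single parametrized family, so the circularity cannot be broken this way. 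Nor does finite satisfiability of $\mu'$ in $H(M)$ help: the partial type $\{H(x)\}\cup\{\neg\psi_i(x):i\in I\}$ can be finitely satisfiable in $M$ even though its full set of realizations misses $M$, so no finitary argument rules out $\mu'(\psi_i)\le 1/2$ for all $i$. The missing ingredient is the honest-definitions input packaged in Lemma \ref{lem: unique extension of def meas to Sh}(1): since $\mu'$ is the unique $L'$-extension of its $L$-reduct, $\mu'(H(x))=\sup\{\mu(U):U\in L(M),\ U(M)\subseteq H(M)\}=1$, and then $|M_0|^+$-saturation of $M$ (applied to the type $U(x)\cup\{\neg\psi_i(x):i\in I\}$, which has at most $|M_0|$ parameters) pushes each such $U$ under a single $\psi_i$, yielding $\sup_i\mu'(\psi_i)=1$. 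With that, your coset/inclusion--exclusion endgame (or the paper's: $G(M')=H(M')$ followed by compactness in the pair $(N',M')$ to extract a finite subcover) goes through; without it, your argument only covers the special case where the $\bigvee$-definition consists of instances of one formula with a uniformly directed parameter family.
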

\begin{proof}
	(1) Let $\bar{\mu}' \in \mathfrak{M}_{H}^{L'}(M')$ be a global left-$H(M')$-invariant measure generically stable over $M$ in $T'$ (Fact \ref{fac: fsg groups basic props}(2)). Let $\bar{\mu} := \bar{\mu}' \restriction_{L}$, then $\bar{\mu}$ is generically stable over $M$ in $T$ (Proposition \ref{prop: properties of reducts of measures}), hence generically stable over some $M_1$ with $M_0 \leq M_1 \prec M$ and $|M_1| \leq |T|$. Also $\bar{\mu} \in \mathfrak{M}_{G}^{L}(M')$ (if $G(x) = \bigcap_{i \in I}{\psi_{i}(x)}$ with $\psi_{i}(x) \in L(M_0)$, then $H(M) = G(M)$ implies $H(x) \vdash^{T'} \psi_i(x)$ for all $i \in I$, so $\bar{\mu}(\psi_i(x)) = \bar{\mu}'(\psi_i(x)) \geq \bar{\mu}'(H(x)) = 1$) and $\bar{\mu}$ is left-$H(M')$-invariant. Let $\mu := \bar{\mu}\restriction_{L}$, then $\mu$ is generically stable over a small model $M_1 \prec^L M$, $\mu \in \mathfrak{M}_{G}^L(M)$ and $\mu$ is left-$G(M)$-invariant (as $G(M) = H(M)$) --- witnessing that $G$ is fsg as a type-definable group in $T$.

(2) Similarly, suppose $G$ is $\bigvee$-definable over $M_0$, $M$ is $|M_0|^+$-saturated and $G(M) = H(M)$ for $H$ a definable group in $M^{\Sh}$.  If $H$ is fsg as a definable group in $T'$, then $G$ is definable (and fsg).

Indeed, let again $\bar{\mu}' \in \mathfrak{M}_{H}^{L'}(M')$ be a global left-$H(M')$-invariant measure generically stable over $M$ in $T'$. Say $G(x) = \bigvee_{i \in I}{\psi_{i}(x)}$ with $\psi_{i}(x) \in L(M_0)$, and without loss of generality $\{\psi_{i}(x) : i \in I\}$ is closed under disjunctions. Then $\psi_i(M) \subseteq H(M)$ for all $i \in I$ and $G(M') \leq H(M')$, as $H(M) = G(M)$. 
By Lemma \ref{lem: unique extension of def meas to Sh}(1) we have
	$$1 = \bar{\mu}'(H(x)) = \sup \left\{ \bar{\mu}'(U(x)) :  U(x) \in L(M), U(M) \subseteq H(M) \right\}.$$
If $U(x) \in L(M)$ and $U(M) \subseteq H(M) = G(M)$, then by $|M_0|^+$-saturation of $M$ we have $U(M) \subseteq \psi_{i}(M)$ for some $i \in I$ (using closure under disjunctions), so $\sup \left\{ \bar{\mu}'(\psi_i(x)) :  i \in I \right\} = 1$. Assume $G(M') \lneq H(M')$, then $G(M') \cap h \cdot G(M') = \emptyset$ for some $h \in H(M')$. We have $\bar{\mu}'(\psi_i(x)) > 1/2$ for some $i \in I$, so also $\bar{\mu}'(h \cdot \psi_i(x)) > 1/2$ by $H(M')$-invariance of $\bar{\mu}'$, contradicting these sets being disjoint. So $H(M') = G(M')$, hence $G(M') = \bigcup_{i \in I_0}\psi_{i}(M')$ for some finite $I_0 \subseteq I$ (by saturation of the $L_P$-structure $(N', M')$), so $G$ is $L(M_0)$-definable in $T$.
%
%
\end{proof}

\begin{example}\label{ex: infinitesim not fsg}
(1) Let $T = \RCF$, $M \models T$ is $|T|^{+}$-saturated, $G(M) = ([0,1)(M), + \mod 1)$ is the definable (non-standard) unit circle group, and $H(M) \leq G(M)$ is the subgroup of infinitesimals, then $H(M) = G^{00}(M)$ is definable in $M^{\Sh}$ (as it is convex with respect to the circular order) and a type-definable over $M_0 := \mathbb{R}$ subgroup of bounded index of $G$ in $M$, but not definable in $M$ by $o$-minimality. It follows that $H$ is not fsg in $T' := \Th_{L'}(M^{\Sh})$ (and $G^{00}$ is not fsg as a type-definable group in $T$), either by Theorem \ref{thm: main for fsg subgroups} or Proposition \ref{prop: ext def V def fsg implies definable}(1).

(2)	Let $M = (K, \Gamma, k, v) \models \ACVF$ with valuation ring $\mathcal{O}_v = \{a \in K : v(a) \geq 0\}$, and let $v'$ be a coarsening of $v$, i.e.~a valuation $v'$ on $K$ with $\mathcal{O}_{v'} \supseteq \mathcal{O}_v$. Then there is a convex subgroup $\Gamma' \leq \Gamma$ such that $v'K \cong vK/\Gamma'$. Then $\Gamma'$ is definable in $M^{\Sh}$, and hence $\mathcal{O}_{v'} = \{ a \in K : a \in \mathcal{O}_{v} \lor v(a) \in \Gamma' \}$ is also definable in $M^{\Sh}$ (see e.g.~\cite[Example 2.2]{zbMATH07845700}) and a subgroup of $(K,+)$. It follows that $\mathcal{O}_{v'}$ is not fsg in $M^{\Sh}$, by Theorem \ref{thm: main for fsg subgroups} (or by Proposition \ref{prop: ext def V def fsg implies definable} if $\Gamma'$ is either type-definable or $\bigvee$-definable in $M$). 
\end{example}

\subsection{Generically stable measures as a hyper-definable set}\label{sec: Generically stable measures as a hyper-definable set}
An important point in \cite{hrushovski2016non} and subsequent work is that the set of global generically stable types in NIP theories can be viewed as a pro-definable (i.e.~a type-definable set of length $|T|$ tuples)  set in $\cU^{\eq}$. Here we consider an analogous presentation for the set of global generically stable measures in an NIP theory, but as a \emph{hyper-definable}  set  of  length $|T|$ tuples. In particular this is convenient for carrying out some compactness arguments with generically stable measures (which do not form a closed subset of the space of all global Keisler measures), e.g.~in Section \ref{sec: abelian ext def subgroups}. Recall from Fact \ref{fac: NIP gen stab meas equivs}:
\begin{definition}
	A Keisler measure $\mu \in \mathfrak{M}_x(\cU)$ is \emph{fam} (finitely approximated measure) if for every $\varphi(x,y) \in L$ and $\varepsilon > 0$ there is some $n$ and $\bar{a}^{\mu}_{\varphi, \varepsilon} = (a_1, \ldots, a_n) \in (\cU^x)^n$ so that $\bar{a}^{\mu}_{\varphi, \varepsilon}$ is an \emph{$\varepsilon$-approximation for $\mu$ on $\varphi(x,y)$}, i.e.~for every $b \in \cU^y$ we have 
		\begin{gather*}
		 \left \lvert \Av(\bar{a}^{\mu}_{\varphi, \varepsilon}; \varphi(x,b))	- \mu \left( \varphi(x,b) \right) \right \rvert \leq \varepsilon.
		\end{gather*}
	For $A \subseteq \cU$, we say that $\mu$ is \emph{fam over $A$} if $\bar{a}^{\mu}_{\varphi, \varepsilon}$ can be chosen in $A$ for all $\varphi, \varepsilon$.
\end{definition}

\begin{fact}\label{fac: unif e approx for gs measures in NIP}

	\begin{enumerate}
		\item If $T$ is NIP and $M \prec \cU$, then $\mu \in \mathfrak{M}_x(\cU)$ is generically stable (over $M$) if and only if it is fam (over $M$).
		\item For every NIP formula $\varphi(x,y) \in L$ (in $T$) and $\varepsilon > 0$ there is some $n_{\varphi, \varepsilon} \in \omega$  depending only on $\varphi$ and $\varepsilon$ so that: for any $\mu \in  \mathfrak{M}_x(\cU)$, if $\mu$ is fam over $M$ then there is an $\varepsilon$-approximation $\bar{a}^{\mu}_{\varphi, \varepsilon} = (a_1, \ldots, a_n) \in M^n$ for $\mu$ on $\varphi(x,y)$ with $n \leq n_{\varphi, \varepsilon}$ (see e.g.~\cite[Proposition 2.13]{zbMATH06966829} for an explanation of the uniform bound).
\end{enumerate}
\end{fact}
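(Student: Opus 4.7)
The plan is to deduce part (1) essentially from the equivalences recorded in Fact~\ref{fac: NIP gen stab meas equivs}, and to handle part (2) via the Vapnik--Chervonenkis $\varepsilon$-approximation theorem applied to the dual set system of an NIP formula.

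For part (1), if $\mu$ is generically stable over $M$ then by Fact~\ref{fac: NIP gen stab meas equivs} it is fim over $M$, and the fam condition is an immediate consequence of the fim condition by extracting a single tuple $\bar{a}$ satisfying $\theta_n(\bar{a})$ for $n$ large enough (any such $\bar{a}$ is an $\varepsilon$-approximation of $\mu$ on $\varphi(x,y)$ by clause (a) of fim). Conversely, if $\mu$ is fam over $M$ and $M$-invariant, I would check directly the two properties that characterize generic stability in NIP from Fact~\ref{fac: NIP gen stab meas equivs}(1): definability over $M$ is clear because the map $F^{\varphi}_{\mu,M}\colon q(y)\mapsto \mu(\varphi(x,b))$ is a uniform limit of the continuous functions $q\mapsto \Av(\bar{a}^{\mu}_{\varphi,\varepsilon};\varphi(x,b))$ as $\varepsilon\to 0$; and finite satisfiability in $M$ follows because any $\varphi(x,b)$ of positive measure must be hit by some element of any $\varepsilon$-approximating tuple with $\varepsilon$ small enough.

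For part (2), the key input is that NIP of $\varphi(x,y)$ is equivalent to the dual family $\{\varphi(a,y)\colon a\in \cU^x\}$ having finite VC dimension $d = d(\varphi) < \infty$. The uniform VC approximation theorem (Talagrand, refining Vapnik--Chervonenkis) then says that for any finitely additive probability measure $\nu$ on the Boolean algebra generated by the instances of $\varphi$ and any $\varepsilon>0$, there is an $\varepsilon$-approximation of $\nu$ on $\varphi(x,y)$ of size at most $n_{\varphi,\varepsilon} = O(d\varepsilon^{-2})$, with $n_{\varphi,\varepsilon}$ depending only on $d$ and $\varepsilon$. To land inside $M$, I would use the fam hypothesis to first produce some $\delta$-approximation $\bar{a}\in M^{n}$ of $\mu$ (with $\delta\ll\varepsilon$, and $n$ possibly very large), then apply the VC theorem to the empirical measure $\Av(\bar{a})$ to extract a sub-tuple $\bar{a}'\in M^{n_{\varphi,\varepsilon/2}}$ of bounded size which is an $\varepsilon/2$-approximation for $\Av(\bar{a})$; a triangle-inequality argument then shows $\bar{a}'$ is an $\varepsilon$-approximation for $\mu$.

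The main obstacle, and really the only nontrivial content of the statement, is the uniform-in-$\mu$ bound in part (2): naively fam only gives some approximating tuple whose length may depend on $\mu$ and on the global structure. The VC theorem is precisely the tool that makes the bound depend only on the formula $\varphi$ and on $\varepsilon$, by turning NIP of $\varphi$ into a combinatorial (VC) bound that is oblivious to the specific measure being approximated. The remaining verifications (uniform approximation implies continuity of $F^{\varphi}_{\mu,M}$; sub-sample extraction preserves approximation up to triangle inequality) are routine.
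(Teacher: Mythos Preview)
The paper does not actually prove this statement: it is recorded as a \emph{Fact}, with part (1) being a restatement of Fact~\ref{fac: NIP gen stab meas equivs} and part (2) citing \cite[Proposition 2.13]{zbMATH06966829} for the uniform bound. Your sketch is correct and matches the standard arguments in those references: part (1) via the equivalences in Fact~\ref{fac: NIP gen stab meas equivs} (noting that fam over $M$ already forces $M$-invariance, so you need not assume it separately), and part (2) via the VC $\varepsilon$-approximation theorem together with a subsampling from a large fam approximation to land inside $M$.
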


\begin{remark}\label{rem: eps approx precise size}
	In Fact \ref{fac: unif e approx for gs measures in NIP}(2), taking $n'_{\varphi, \varepsilon} := (n_{\varphi, \varepsilon})! $, we can
choose an $\varepsilon$-approximation of size \emph{exactly} $n'_{\varphi, \varepsilon}$ (indeed, given a tuple $\bar{a}$ of size $n \leq n_{\varphi, \varepsilon}$ which is an $\varepsilon$-approximation, we can take a tuple of length $n'_{\varphi, \varepsilon}$ which is a concatenation of (integer many) copies of $\bar{a}$ --- it still gives an $\varepsilon$-approximation.
\end{remark}

Assume that $T$ is NIP, and let $x$ be a tuple of variables of fixed sort.

\begin{definition}\label{def: a^mu coding measure}
	Given any generically stable measure $\mu \in \mathfrak{M}_x(\cU)$, we associate to it a tuple $\bar{a}^{\mu} = \left(\bar{a}^{\mu}_{\varphi, 1/k} : \varphi(x,y) \in L, k \in \mathbb{N} \right)$ of length $|T|$ so that $\bar{a}^{\mu}_{\varphi, 1/k} \in (\cU^{x})^{n_{\varphi, 1/k}}$ is an $1/k$-approximation for $\mu$ on $\varphi(x,y)$ of length $n_{\varphi, 1/k}$ (exists by Remark \ref{rem: eps approx precise size}).
\end{definition}
 We will establish a bijection between the set of global generically stable measures (in $x$) and a particular hyperdefinable set of such tuples.

\begin{definition}
	We let $X' = X'_x$ be the set of all tuples 
	$$\bar{a} = \left(\bar{a}_{\varphi, 1/k} : \varphi(x,y) \in L, k \in \mathbb{N} \right)$$ 
	in $\cU$ satisfying: 
\begin{gather*}
	\bigwedge_{\varphi(x,y) \in L} \bigwedge_{k_1, k_2 \geq k \in \mathbb{N} } \forall b \left( \left\lvert\Av(\bar{a}_{\varphi, 1/k_1}; \varphi(x,b))  - \Av(\bar{a}_{\varphi, 1/k_2}; \varphi(x,b))  \right \rvert \leq 2/k \right).
\end{gather*}
\end{definition}
Then $X'$ is $\emptyset$-type-definable, and $\bar{a}^{\mu} \in X'$ for any generically stable $\mu  \in \mathfrak{M}_x(\cU)$. By definition, for every $\bar{a} \in X'$, $\varphi(x,y) \in L$ and $b \in \cU^y$, the limit $\lim_{k \to \infty} \Av(\bar{a}_{\varphi, 1/k}; \varphi(x,b))$ exists.

\begin{definition}\label{def: X_x}
	Let $X = X_x$ be the set of tuples $\bar{a} \in X'_x$ satisfying 
\begin{gather*}
	\bigwedge_{\varphi_1(x,y_1), \varphi_2(x,y_2) \in L} \bigwedge_{k \in \mathbb{N}} \forall b_1 \forall b_2 \Big( \neg \exists x( \varphi_1(x,b_1) \land \varphi_2(x,b_2)) \rightarrow \\
	\big  ( \lvert \Av \left( \bar{a}_{\varphi_1 \lor \varphi_2, 1/k}; \varphi_1(x,b_1) \lor \varphi_2(x,b_2)  \right) -\\
\Av(\bar{a}_{\varphi_1,  1/k}; \varphi_1(x,b_1)) + \Av(\bar{a}_{\varphi_2,  1/k}; \varphi_2(x,b_2))  ) \rvert \leq 3/k \big) \Big).
\end{gather*}
Then  $X$ is $\emptyset$-type-definable, and for every $\bar{a} \in X$, defining $\mu_{\bar{a}}$ via 
$$\mu_{\bar{a}}(\varphi(x,b)) := \lim_{k \to \infty} \Av(\bar{a}_{\varphi, 1/k}; \varphi(x,b))$$
 for all $\varphi(x,b) \in L(\cU)$, it satisfies $\mu_{\bar{a}}(\varphi_1(x,b_1) \lor \varphi_2(x,b_2)) = \mu_{\bar{a}}(\varphi_1(x,b_1)) + \mu_{\bar{a}}(\varphi_2(x,b_2))$ for all disjoint $\varphi_i(x,b_i) \in L(\cU)$. Hence $\mu_{\bar{a}}$ is a Keisler measure. 
\end{definition}

\begin{remark}\label{rem: 2/k approx}
	For an arbitrary $\bar{a} \in X$, definition of $X'$ guarantees that $\bar{a}_{\varphi, 1/k}$ is a $2/k$-approximation for $\mu_{\bar{a}}$ on $\varphi(x,y)$ (but not necessarily a $1/k$-approximation).
\end{remark}
Hence $\mu_{\bar{a}}$ is generically stable (by Fact \ref{fac: NIP gen stab meas equivs}(1)). We also have $\bar{a}^{\mu} \in X_x$ for every generically stable measure $\mu_x$, using that $\bar{a}^{\mu}_{\varphi, 1/k}$ is a $1/k$-approximation for $\mu$. 

\begin{definition}
For two tuples $\bar{a}, \bar{a}'$, we define $E = E_x$ to be the set of pairs $(\bar{a},\bar{a}')$ satisfying  
\begin{gather*}
	\bigwedge_{\varphi(x,y) \in L} \bigwedge_{k \in \mathbb{N}} \forall b \left( \left \lvert \Av \left(\bar{a}_{\varphi, 1/k}; \varphi(x,b) \right) - \Av \left(\bar{a}'_{\varphi, 1/k}; \varphi(x,b) \right) \right \rvert \leq 4/k \right).
\end{gather*}
\end{definition}
Then $E$ is an $\emptyset$-type-definable equivalence relation on $X$ (but not an intersection of definable equivalence relations), and  for any $\bar{a},\bar{a}' \in X$, $E(\bar{a}, \bar{a}')$ holds if and only if $\mu_{\bar{a}} = \mu_{\bar{a}'}$. Indeed, if $E(\bar{a}, \bar{a}')$ holds, then for any $\varphi(x,b)$ we have 
\begin{gather*}
\mu_{\bar{a}}(\varphi(x,b)) = \lim_{k \to \infty} \Av(\bar{a}_{\varphi, 1/k}; \varphi(x,b)) \\
= \lim_{k \to \infty} \Av(\bar{a}'_{\varphi, 1/k}; \varphi(x,b)) = \mu_{\bar{a}'}(\varphi(x,b)).	
\end{gather*}
Conversely, if $\mu_{\bar{a}} = \mu_{\bar{a}'}$, then for any $\varphi$ and $k$ we have that $\bar{a}_{\varphi, 1/k}$ is a $2/k$-approximation for $\mu_{\bar{a}}$  and $\bar{a}'_{\varphi, 1/k}$ is a $2/k$-approximation for $\mu_{\bar{a}'}$  on $\varphi(x,y)$ (by Remark \ref{rem: 2/k approx}), hence $E(\bar{a}, \bar{a}')$ holds. 

We have thus established the following:
\begin{proposition}
	Consider the hyperdefinable set $\widetilde{\mathcal{M}}_x := X_x/E_x$. For a generically stable measure $\mu \in \mathfrak{M}_{x}(\cU)$, we consider the hyperimaginary $[\mu] := \bar{a}^{\mu}/E \in \widetilde{\mathcal{M}}_x$. Then the map $\mu \mapsto [\mu]$ is a bijection from the space of global generically stable measures to $\widetilde{\mathcal{M}}_x$ (with the inverse $\bar{a}/E \mapsto \mu_{\bar{a}}$).
\end{proposition}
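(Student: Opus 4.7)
\medskip

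The plan is to verify that both maps are well-defined and mutually inverse, assembling the observations already recorded in the preceding definitions. Everything reduces to careful bookkeeping with the approximation slacks $1/k, 2/k, 3/k, 4/k$ appearing in the type-definitions of $X'_x$, $X_x$, and $E_x$.

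First I would check that $\mu \mapsto [\mu]$ is well-defined, i.e.\ that $\bar{a}^{\mu} \in X_x$ for every generically stable $\mu$. Membership in $X'_x$ follows since, by construction in Definition~\ref{def: a^mu coding measure}, $\bar{a}^{\mu}_{\varphi,1/k}$ is a $(1/k)$-approximation for $\mu$ on $\varphi(x,y)$, so for $k_1,k_2\ge k$ both averages at any $b$ lie within $1/k$ of $\mu(\varphi(x,b))$, hence within $2/k$ of each other. Membership in $X_x$ uses finite additivity of $\mu$: if $\exists x(\varphi_1(x,b_1)\land\varphi_2(x,b_2))$ fails then $\mu(\varphi_1(x,b_1)\lor\varphi_2(x,b_2))=\mu(\varphi_1(x,b_1))+\mu(\varphi_2(x,b_2))$, and each of the three terms is approximated to within $1/k$ by the corresponding average, so the triangle inequality gives the $3/k$ bound. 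Thus $[\mu]\in\widetilde{\mathcal{M}}_x$ is well-defined.

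Next I would verify injectivity of $\mu\mapsto[\mu]$. If $[\mu]=[\mu']$, then $E(\bar{a}^\mu,\bar{a}^{\mu'})$ holds, so for every $\varphi,k,b$ the averages $\Av(\bar{a}^\mu_{\varphi,1/k};\varphi(x,b))$ and $\Av(\bar{a}^{\mu'}_{\varphi,1/k};\varphi(x,b))$ differ by at most $4/k$. Since the former lies within $1/k$ of $\mu(\varphi(x,b))$ and the latter within $1/k$ of $\mu'(\varphi(x,b))$, we get $|\mu(\varphi(x,b))-\mu'(\varphi(x,b))|\le 6/k$ for every $k$, hence $\mu=\mu'$. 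For surjectivity and the inverse, given $\bar{a}\in X_x$, the discussion preceding the proposition already establishes: the limit defining $\mu_{\bar{a}}(\varphi(x,b))$ exists by the $X'_x$-condition; the $X_x$-condition yields finite additivity on pairs of disjoint formulas in the limit, and hence finite additivity in general (induct on the number of terms, pairing two at a time), so $\mu_{\bar{a}}$ is a Keisler measure; Remark~\ref{rem: 2/k approx} together with Fact~\ref{fac: NIP gen stab meas equivs}\eqref{item: fam} gives that $\mu_{\bar{a}}$ is generically stable. Finally, to see that $[\mu_{\bar{a}}]=\bar{a}/E$, note that $\bar{a}^{\mu_{\bar{a}}}_{\varphi,1/k}$ is a $(1/k)$-approximation and $\bar{a}_{\varphi,1/k}$ a $(2/k)$-approximation for $\mu_{\bar{a}}$ on $\varphi$, so their averages agree up to $3/k\le 4/k$, giving $E(\bar{a}^{\mu_{\bar{a}}},\bar{a})$.

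No step is individually difficult; the main subtlety is that the approximation constants must be chosen so that $E$ has enough slack to absorb both the $1/k$-approximation error built into $\bar{a}^{\mu}$ and the $2/k$-recovery error coming from Remark~\ref{rem: 2/k approx}, which is why the slack $4/k$ in the definition of $E_x$ is critical and the factor $3/k$ (rather than $2/k$) appears in the disjointness clause defining $X_x$. This is the only place where one needs to be careful; once the constants line up, injectivity, surjectivity, and the inverse identity each reduce to a one-line application of the triangle inequality.
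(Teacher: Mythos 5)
Your proposal is correct and follows essentially the same route as the paper: the paper's "proof" is exactly the cumulative triangle-inequality bookkeeping in the surrounding discussion (membership of $\bar{a}^{\mu}$ in $X_x$ via the $1/k$-approximations, generic stability of $\mu_{\bar{a}}$ via Remark \ref{rem: 2/k approx} and the fam criterion, and $E(\bar{a},\bar{a}')$ holding iff the associated measures coincide), which you reassemble with the same constants. Nothing essential is missing.
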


\begin{remark}
	We can think of $\Cb(\mu) := \dcl^{\heq}([\mu])$ as a canonical base for $\mu$. In particular we have $\sigma(\mu) = \mu$ if and only if $\sigma(\Cb(\mu)) = \Cb(\mu)$ for any automorphism $\sigma$ of $\cU^{\heq}$; and $\Cb(\mu)$ is the smallest $\dcl^{\heq}$-closed set over which $\mu$ is definable.
\end{remark}

 Next we observe that some of the important operations on generically stable measures correspond to type-definable maps on $\widetilde{\mathcal{M}}$.

\begin{fact}\label{fac: approx for product} (see \cite[Proposition 2.13]{zbMATH07456275})
	Let $\mu_x$ and $\nu_y$ be global generically stable measures and $\varphi(x,y;z) \in L$. Assume that $\bar{a} = (a_1, \ldots, a_m)$ is an $\varepsilon$-approximation for $\mu_x$ on $\varphi_1(x;y,z) :=  \varphi(x,y,z)$, and $\bar{b} = (b_1, \ldots, b_n)$ is an $\varepsilon$-approximation for $\nu_y$ on $\varphi_2(y;x,z) := \varphi(x,y,z)$, then $\bar{a} \times \bar{b} := \left( (a_i, b_j) : (i,j) \in [m] \times [n] \right)$ is a $2 \varepsilon$-approximation for $\mu_x \otimes \nu_y$ on $\varphi(x,y;z)$.
\end{fact}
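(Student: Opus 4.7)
The plan is to estimate $|\Av(\bar{a}\times\bar{b};\varphi(x,y,c))-(\mu\otimes\nu)(\varphi(x,y,c))|$ by a two-step triangle inequality, each step costing at most $\varepsilon$. Fix $c\in\cU^{z}$ and observe by double counting that
\[
\Av(\bar{a}\times\bar{b};\varphi(x,y,c))=\frac{1}{m}\sum_{i=1}^{m}\Av(\bar{b};\varphi(a_{i},y,c)).
\]
The first step applies the hypothesis on $\bar{b}$: since $\bar{b}$ is an $\varepsilon$-approximation for $\nu$ with respect to $\varphi_{2}(y;x,z)$, uniformly in the parameter $(a_{i},c)$ we have $|\Av(\bar{b};\varphi(a_{i},y,c))-\nu(\varphi(a_{i},y,c))|\leq\varepsilon$, so the displayed average is within $\varepsilon$ of $\frac{1}{m}\sum_{i}\nu(\varphi(a_{i},y,c))$.

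The second step rewrites this quantity as an integral. Since $\Av(\bar{a};\varphi(x,y',c))=\frac{1}{m}\sum_{i}\mathbf{1}[\varphi(a_{i},y',c)]$ is a simple definable function of $y'$, linearity of the integral gives
\[
\frac{1}{m}\sum_{i=1}^{m}\nu(\varphi(a_{i},y,c))=\int \Av(\bar{a};\varphi(x,y',c))\,d\nu(y').
\]
Pointwise in $y'$, the hypothesis on $\bar{a}$ yields $|\Av(\bar{a};\varphi(x,y',c))-\mu(\varphi(x,y',c))|\leq\varepsilon$; since $\mu$ is generically stable, the function $y'\mapsto\mu(\varphi(x,y',c))$ is Borel-definable, hence $\nu$-integrable, and integrating the pointwise bound against $\nu$ produces a gap of at most $\varepsilon$ between $\int \Av(\bar{a};\varphi(x,y',c))\,d\nu(y')$ and $\int\mu(\varphi(x,y',c))\,d\nu(y')=(\nu\otimes\mu)(\varphi(x,y,c))$.

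To finish I would invoke commutativity of $\otimes$ for generically stable measures to rewrite $(\nu\otimes\mu)(\varphi(x,y,c))$ as $(\mu\otimes\nu)(\varphi(x,y,c))$, and then combine the two $\varepsilon$-bounds by the triangle inequality to obtain the $2\varepsilon$-approximation on $\varphi(x,y;z)$. The only nonroutine ingredient is this commutativity (equivalently, the Fubini-style swap $\int d\mu\,d\nu=\int d\nu\,d\mu$ for generically stable Keisler measures), which is a standard feature of generically stable measures in NIP theories; everything else is double counting and pointwise application of the two defining approximation inequalities.
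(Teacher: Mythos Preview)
Your argument is correct, and in fact slightly simpler than you realize. The paper does not prove this statement---it is recorded as a Fact with a citation---so there is no proof to compare against here.

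One small point: under the convention used in this paper (see the proof of Claim~\ref{cla: restriction commutes with tensor}), $(\mu\otimes\nu)(\varphi(x,y,c))=\int F_{\mu}^{\varphi(x;y,c)}\,d\nu=\int\mu(\varphi(x,y',c))\,d\nu(y')$, so your second step lands directly on $(\mu\otimes\nu)$, not $(\nu\otimes\mu)$. The appeal to commutativity of $\otimes$ for generically stable measures is therefore unnecessary; your two-step triangle inequality already gives the $2\varepsilon$ bound against $(\mu\otimes\nu)(\varphi(x,y,c))$ with no further ingredients. (Different sources order the tensor product differently, which is likely the source of the slip.)
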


\begin{proposition}\label{prop: tensor is type-def}
	The map $([\mu], [\nu]) \in \widetilde{\mathcal{M}}_x \times \widetilde{\mathcal{M}}_y \mapsto [\mu \otimes \nu] \in  \widetilde{\mathcal{M}}_{xy}$ is $\emptyset$-type-definable.
\end{proposition}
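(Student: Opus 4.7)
The plan is to exhibit the graph of the operation $([\mu],[\nu]) \mapsto [\mu \otimes \nu]$ as a $\emptyset$-type-definable subset of $\widetilde{\mathcal{M}}_x \times \widetilde{\mathcal{M}}_y \times \widetilde{\mathcal{M}}_{xy}$ by writing down an explicit $\emptyset$-type-definable set $\Phi \subseteq X_x \times X_y \times X_{xy}$ whose image under the quotient projection is precisely that graph. Concretely, $\Phi(\bar{a},\bar{b},\bar{c})$ is the partial type consisting, for each $\varphi(x,y,z) \in L$ (with $z$ an arbitrary tuple of parameter variables) and each $k \in \mathbb{N}$, of the first-order formula
\begin{gather*}
    \forall b \in \cU^z \Big| \Av(\bar{c}_{\varphi, 1/k}; \varphi(x,y,b)) - \Av(\bar{a}_{\varphi_1, 1/k} \times \bar{b}_{\varphi_2, 1/k}; \varphi(x,y,b)) \Big| \leq 7/k,
\end{gather*}
where $\varphi_1(x;y,z) := \varphi(x,y,z)$ and $\varphi_2(y;x,z) := \varphi(x,y,z)$ are the same $L$-formula regrouped so that the object variable is $x$ (resp.\ $y$) and the remaining variables serve as parameters, and $\bar{a}_{\varphi_1, 1/k} \times \bar{b}_{\varphi_2, 1/k}$ denotes the product tuple as in Fact \ref{fac: approx for product}.

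Next I would verify the two inclusions between $\Phi$ and the preimage of the graph. Given $(\bar{a},\bar{b},\bar{c}) \in X_x \times X_y \times X_{xy}$ with $\Phi(\bar{a},\bar{b},\bar{c})$, Remark \ref{rem: 2/k approx} yields that $\bar{c}_{\varphi,1/k}$, $\bar{a}_{\varphi_1,1/k}$, $\bar{b}_{\varphi_2,1/k}$ are $2/k$-approximations for $\mu_{\bar{c}},\mu_{\bar{a}},\mu_{\bar{b}}$ on $\varphi,\varphi_1,\varphi_2$ respectively; Fact \ref{fac: approx for product} (with $\varepsilon = 2/k$) then promotes $\bar{a}_{\varphi_1,1/k} \times \bar{b}_{\varphi_2,1/k}$ to a $4/k$-approximation for $\mu_{\bar{a}} \otimes \mu_{\bar{b}}$ on $\varphi(xy;z)$. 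Combining these estimates with the $7/k$-bound from $\Phi$ via the triangle inequality gives $|\mu_{\bar{c}}(\varphi(x,y,b)) - (\mu_{\bar{a}} \otimes \mu_{\bar{b}})(\varphi(x,y,b))| \leq 13/k$ for every $k$, hence the two measures agree and $\mu_{\bar{c}} = \mu_{\bar{a}} \otimes \mu_{\bar{b}}$. Conversely, if this equality holds, the same two approximation bounds imply that the difference of the two averages appearing in $\Phi$ is at most $2/k + 4/k = 6/k \leq 7/k$, so $\Phi(\bar{a},\bar{b},\bar{c})$ holds.

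Finally, totality of the graph map on $\widetilde{\mathcal{M}}_x \times \widetilde{\mathcal{M}}_y$ requires, given $\bar{a} \in X_x$ and $\bar{b} \in X_y$, the existence of some $\bar{c} \in X_{xy}$ with $\mu_{\bar{c}} = \mu_{\bar{a}} \otimes \mu_{\bar{b}}$; this is provided by the canonical representative $\bar{a}^{\mu_{\bar{a}} \otimes \mu_{\bar{b}}}$ from Definition \ref{def: a^mu coding measure}, using that under NIP the tensor product of two generically stable measures is again generically stable (so Definition \ref{def: a^mu coding measure} applies to it). Combining everything, the preimage in $X_x \times X_y \times X_{xy}$ of the graph of $\otimes$ equals the $\emptyset$-type-definable set cut out by $\Phi$, which is precisely what it means for the operation $\otimes$ to be $\emptyset$-type-definable as a map between the hyperdefinable sets $\widetilde{\mathcal{M}}_x \times \widetilde{\mathcal{M}}_y$ and $\widetilde{\mathcal{M}}_{xy}$. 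The main (mild) obstacle is tracking the various $c/k$ constants coming from the different approximation estimates; allowing the slack $7/k$ is enough to make both inclusions go through uniformly.
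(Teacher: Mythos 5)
Your proposal is correct and follows essentially the same route as the paper: the paper also cuts out the graph inside $X_x \times X_y \times X_{xy}$ by the condition that the averages along $\bar{c}_{\varphi,1/k}$ and along the product tuple $\bar{a}_{\varphi_1,1/k} \times \bar{b}_{\varphi_2,1/k}$ differ by at most a fixed multiple of $1/k$ (the paper uses $6/k$ rather than your $7/k$, which is immaterial), and verifies the equivalence with $\mu_{\bar{c}} = \mu_{\bar{a}} \otimes \mu_{\bar{b}}$ via Remark \ref{rem: 2/k approx} and Fact \ref{fac: approx for product} with $\varepsilon = 2/k$, noting $E$-invariance. Your extra remark on totality (that $\mu \otimes \nu$ is again generically stable, so a code $\bar{c}$ exists) is a harmless addition that the paper leaves implicit.
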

\begin{proof}

Indeed, let $\Gamma$ be the $\emptyset$-type-definable set of triples $(\bar{a}', \bar{a}'', \bar{a}''') \in X_x \times X_{y} \times X_{x,y}$ satisfying
\begin{gather*}
	\bigwedge_{\varphi(x,y;z) \in L} \bigwedge_{k \in \mathbb{N}} \forall b \Big( \Big \lvert \Av \left( \bar{a}'''_{\varphi(x,y;z), 1/k}; \varphi(x,y;b) \right) - \\
	  \Av \left(  \bar{a}'_{\varphi_1(x;y,z), 1/k} \times \bar{a}''_{\varphi_2(y;x,z), 1/k} ; \varphi(x,y;b) \right) \Big \rvert \leq  6/k \Big)
\end{gather*}
(using the notation in Fact \ref{fac: approx for product}).
Then $\models \Gamma(\bar{a}', \bar{a}'', \bar{a}''')$ if and only if $\mu_{\bar{a}'} \otimes \mu_{\bar{a}''} = \mu_{\bar{a}'''}$, using Remark \ref{rem: 2/k approx} and  Fact \ref{fac: approx for product} with $\varepsilon := 2/k$, and $\Gamma$ is $E$-invariant on each of its coordinates.
\end{proof}

Similarly we have:
\begin{proposition}\label{prop: push-forward is type-def}
	Let $f: \cU^x \to \cU^y$ be an $A$-definable function. Then $[\mu] \in \widetilde{\mathcal{M}}_x \mapsto [f_{\ast} \mu] \in \widetilde{\mathcal{M}}_y$ is an $A$-type-definable function.
\end{proposition}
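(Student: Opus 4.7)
The plan is to mimic the argument for Proposition~\ref{prop: tensor is type-def}: write down an $A$-type-definable relation $\Gamma \subseteq X_x \times X_y$ that holds at $(\bar{a}', \bar{a}'')$ exactly when $f_* \mu_{\bar{a}'} = \mu_{\bar{a}''}$, and observe that it is $E_x \times E_y$-invariant, so that it descends to the graph of an $A$-type-definable map $[\mu] \mapsto [f_*\mu]$ (on the locus where $f$ is defined on $\mu$, which is itself cut out by a type-definable condition).

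The key observation is how to transport approximations through $f$. Fix an $L$-formula $\Gamma_f(x, u; \bar{w})$ and a tuple $c \in A^{\bar{w}}$ so that $\Gamma_f(x, u; c)$ defines the graph of $f$, and for each $\varphi(y; z) \in L$ set
\[ \varphi^f(x; z, \bar{w}) := \exists u \bigl( \Gamma_f(x, u; \bar{w}) \land \varphi(u, z) \bigr) \in L. \]
By the definition of push-forward, $f_* \mu(\varphi(y, b)) = \mu(\varphi^f(x; b, c))$ for every $b \in \cU^z$, and for any tuple $\bar{a} = (a_1, \ldots, a_m) \in (\cU^x)^m$ lying in $\dom(f)$ we have $\Av(f(\bar{a}); \varphi(y, b)) = \Av(\bar{a}; \varphi^f(x; b, c))$. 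Hence an $\varepsilon$-approximation for $\mu$ on $\varphi^f(x; z, c)$ is carried via $f$ to an $\varepsilon$-approximation for $f_*\mu$ on $\varphi(y; z)$.

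With this in hand, I would let $\Gamma$ be the $A$-type-definable set of pairs $(\bar{a}', \bar{a}'') \in X_x \times X_y$ satisfying the support condition $\Av(\bar{a}'_{\exists u\, \Gamma_f(x,u;\bar{w}), 1/k}; \exists u\, \Gamma_f(x, u; c)) \geq 1 - 3/k$ for every $k$, together with
\begin{gather*}
\bigwedge_{\varphi(y; z) \in L} \bigwedge_{k \in \mathbb{N}} \forall b \, \Big( \bigl\lvert \Av(\bar{a}''_{\varphi(y;z), 1/k}; \varphi(y, b)) \\
- \Av\bigl( f(\bar{a}'_{\varphi^f(x; z, \bar{w}), 1/k}); \varphi(y, b) \bigr) \bigr\rvert \leq 4/k \Big),
\end{gather*}
where $f$ is evaluated component-wise via $\Gamma_f(-,-;c)$. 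The first condition ensures that $\mu_{\bar{a}'}$ is supported on $\dom(f)$ in the limit, so that $f_* \mu_{\bar{a}'}$ is well-defined. By Remark~\ref{rem: 2/k approx}, both $\bar{a}''_{\varphi, 1/k}$ and $f(\bar{a}'_{\varphi^f, 1/k})$ are then $2/k$-approximations for $\mu_{\bar{a}''}$ and for $f_*\mu_{\bar{a}'}$ on $\varphi(y; z)$, respectively; so, exactly as for $E$ in Definition~\ref{def: X_x}, the displayed condition holds for every $\varphi, k, b$ if and only if $f_* \mu_{\bar{a}'} = \mu_{\bar{a}''}$. Since this characterisation of $\Gamma$ is invariant under $E_x$ in the first coordinate and under $E_y$ in the second, $\Gamma$ descends to the graph of an $A$-type-definable function on $\widetilde{\mathcal{M}}_x \to \widetilde{\mathcal{M}}_y$ on the locus where $f$ is defined on $\mu$. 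I do not foresee any real difficulty; the only minor subtlety is bookkeeping the partiality of $f$, and everything else is a straightforward variant of the tensor argument.
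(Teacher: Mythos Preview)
Your approach is correct and essentially the same as the paper's: both write down a type-definable condition comparing the $\varphi$-approximation for $\mu_{\bar a''}$ against the $\varphi^f$-approximation for $\mu_{\bar a'}$, using Remark~\ref{rem: 2/k approx} to get the $4/k$ bound. The only cosmetic difference is that the paper compares $\Av(\bar a'_{\psi,1/k};\psi(x,b))$ directly on the $x$-side (with $\psi(x;z):=\varphi(f(x);z)$) rather than first pushing the tuple through $f$; since $f$ is total in the statement, your support condition and bookkeeping about partiality are unnecessary, and the two formulations are literally equal.
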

\begin{proof}

Given $\varphi(y;z) \in L$, let $\psi(x;z) := \varphi(f(x);z) \in L(A)$. Recall that $(f_{\ast} \mu)_y$ is the unique measure satisfying $f_{\ast} \mu (\varphi(y;b)) = \mu_x( \psi(x;b) )$ for all $\varphi$ and $b \in \cU$. Then the graph is $A$-type-definable via 
\begin{gather*}
	\bigwedge_{\varphi(y,z) \in L} \bigwedge_{k \in \mathbb{N}} \forall b \Big( \left \lvert \Av \left(\bar{a}'_{\psi(x;z), 1/k} ; \psi(x,b) \right) - \Av \left(\bar{a}''_{\varphi(y;z), 1/k} ; \varphi(y,b) \right) \right \rvert \leq 4/k\Big).
\end{gather*}
\end{proof}

\begin{definition}\label{def: X_G}
	For a partial type $\pi(x)$ over a small set of parameters $A$, we let $\widetilde{\mathcal{M}}_{\pi}$ be the set of all $ [\mu] \in \widetilde{\mathcal{M}}_x$ for $\mu$  a generically stable measure in $\mathfrak{M}_{\pi}(\cU)$ (and similarly $X_{\pi}$ the set of all $\bar{a} \in X_x$ so that $\mu_{\bar{a}} \in \mathfrak{M}_{\pi}(\cU)$), i.e.~$\pi \subseteq p$ for every $p \in S(\mu)$ (equivalently, $\mu(\varphi(x))=1$ for all $\varphi(x) \in \pi$). Then $\widetilde{\mathcal{M}}_{\pi}$ is an $A$-type-definable subset of $\widetilde{\mathcal{M}}_{x}$ (and $X_{\pi}$ is an $A$-type-definable subset of $X_x$), via the ($E$-invariant) set of all $\bar{a} \in  X$ satisfying
	\begin{gather*}
		\bigwedge_{\varphi(x,b) \in \pi} \bigwedge_{k \in \mathbb{N}} \Av\left( \bar{a}_{\varphi, 1/k}; \varphi(x,b) \right) \geq 1 - 2/k.
	\end{gather*}
	In particular, if $G$ is a type-definable group, $\widetilde{\mathcal{M}}_{G}$ represents the set of all generically stable measures supported on $G$.
\end{definition}

\begin{remark}\label{rem: meas code rep inside the partial type}
	For every $[\mu] \in \mathfrak{M}_{\pi}(\cU)$, we have $\mu = \mu_{\bar{a}}$ for some representative $\bar{a} \in X_{\pi} \cap \pi(\cU)^{\bar{y}}$ (by Lemma \ref{lem: generically stable localizing}).
\end{remark}

\begin{proposition}
	Let $G$ be an $A$-type-definable group. Then the set  of all $[\mu] \in \widetilde{\mathcal{M}}_{G}$ with $\mu \in \mathfrak{M}_G(\cU)$ idempotent (generically transitive) is $A$-type-definable.
\end{proposition}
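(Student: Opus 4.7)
The plan is to express both idempotence and generic transitivity as equalities involving the tensor product and the push-forward along the $A$-definable group operation $m(x,y) := x \cdot y$ (whose graph $\Gamma_m$ is an $L(A)$-formula; see Remark~\ref{rem: type-def group formula}), and then to invoke Propositions~\ref{prop: tensor is type-def} and~\ref{prop: push-forward is type-def}, together with type-definability of $E_x$, to verify type-definability at the level of representatives in $X_G$.

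For idempotence, I would first rewrite $\mu \ast \mu = \mu$ as $m_{\ast}(\mu \otimes \mu) = \mu$; equivalently, for every $\varphi(x;\bar{w}) \in L$ and every $\bar{b} \in \cU^{\bar{w}}$,
\begin{equation*}
\mu\bigl(\varphi(x;\bar{b})\bigr) = (\mu \otimes \mu)\bigl(\psi_\varphi(x,y;\bar{b})\bigr),
\end{equation*}
where $\psi_\varphi(x,y;\bar{w}) := \exists u\,\bigl(\Gamma_m(x,y,u) \wedge \varphi(u;\bar{w})\bigr) \in L(A)$. For $\bar{a} \in X_G$, Remark~\ref{rem: 2/k approx} gives that $\bar{a}_{\varphi,1/k}$ is a $2/k$-approximation for $\mu_{\bar{a}}$ on $\varphi$, while Fact~\ref{fac: approx for product} (with $\varepsilon = 2/k$) then yields that $\bar{a}_{\psi_\varphi(x;y,\bar{w}),1/k} \times \bar{a}_{\psi_\varphi(y;x,\bar{w}),1/k}$ is a $4/k$-approximation for $\mu_{\bar{a}} \otimes \mu_{\bar{a}}$ on $\psi_\varphi$. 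Passing to the limit as in the construction of $\Gamma$ in Proposition~\ref{prop: tensor is type-def}, idempotence of $\mu_{\bar{a}}$ becomes equivalent to the $A$-type-definable and $E$-invariant condition asserting that for every $\varphi(x;\bar{w}) \in L$, every $k \in \mathbb{N}$, and every $\bar{b}$, the two averages above differ by at most $6/k$; this descends to an $A$-type-definable subset of $\widetilde{\mathcal{M}}_G$.

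For generic transitivity, by Fact~\ref{fac: pushforward gen stab}(2) the relevant condition is $f_{\ast}(\mu^{(2)}) = \mu^{(2)}$ for the $A$-definable map $f(x_1,x_0) = (x_1 \cdot x_0, x_0)$ (this already forces idempotence, since projecting to the first coordinate yields $\mu \ast \mu = \mu$). Equivalently, $(\mu \otimes \mu)\bigl(\varphi(x_1,x_0;\bar{b})\bigr) = (\mu \otimes \mu)\bigl(\psi'_\varphi(x_1,x_0;\bar{b})\bigr)$ for every $\varphi(x_1,x_0;\bar{w}) \in L$ and every $\bar{b}$, where $\psi'_\varphi(x_1,x_0;\bar{w}) := \exists u\,\bigl(\Gamma_m(x_1,x_0,u) \wedge \varphi(u,x_0;\bar{w})\bigr) \in L(A)$. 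Applying Fact~\ref{fac: approx for product} on both sides (now comparing two $4/k$-approximations of $\mu_{\bar{a}} \otimes \mu_{\bar{a}}$) yields the analogous $A$-type-definable, $E$-invariant condition on $\bar{a} \in X_G$, with tolerance $8/k$ instead of $6/k$.

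The only technical subtlety will be the bookkeeping of approximation rates: since membership in $X_G$ guarantees only $2/k$- (rather than $1/k$-) approximation by Remark~\ref{rem: 2/k approx}, tensoring doubles the error to $4/k$, which gives $6/k$ against the single-measure side of the idempotence equation and $8/k$ when comparing two tensor products for generic transitivity. The precise constants do not matter; all that is needed is that they tend to $0$ as $k \to \infty$, which automatically ensures that the stated type-definable inequalities capture equality of the corresponding limits, and hence of the measures.
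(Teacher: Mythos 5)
Your proposal is correct and follows essentially the same route as the paper: the paper's proof is precisely the observation that idempotence is $\mu = f_{\ast}(\mu \otimes \mu)$ for the multiplication map and generic transitivity is $\mu \otimes \mu = g_{\ast}(\mu \otimes \mu)$ for $g(x,y)=(x, x\cdot y)$, combined with Propositions \ref{prop: tensor is type-def} and \ref{prop: push-forward is type-def}. Your explicit unwinding of the $\varepsilon$-approximation bookkeeping (the $6/k$, $8/k$ tolerances) just re-derives at the level of representatives what those two propositions already give, so it is harmless but not needed.
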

\begin{proof}
	By Propositions \ref{prop: tensor is type-def} and \ref{prop: push-forward is type-def}, as $\mu$ is idempotent if and only if $\mu = f_{\ast}(\mu \otimes \mu)$ for $f(x,y) = x \cdot y$, and $\mu$ is generically transitive if and only if $\mu \otimes \mu = g_{\ast}(\mu \otimes \mu)$ for $g(x,y) = (x, x \cdot y)$.
\end{proof}

\begin{lemma}\label{lem: unif defining stab of mu}
	Let $G = G(x)$ be an $A$-type-definable group. Then there is an $A$-type-definable set $\St_{G} \subseteq G \times \widetilde{M}_G$ so that for any $[\mu] \in \widetilde{M}_G$ we have $\St_{G}(\cU;[\mu]) = \Stab_{G}(\mu)$ (the left stabilizer of $\mu$, Definition \ref{def: stab of a measure}). And similarly we have an $A$-type-definable set  $\St_{G}^{\leftrightarrow}$  for the two-sided stabilizers.
\end{lemma}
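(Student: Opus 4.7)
The plan is to translate the defining condition for $g \in \Stab_G(\mu)$ — namely $\mu(\varphi(x,b)) = \mu(\varphi(g \cdot x, b))$ for every $\varphi(x,y) \in L$ and $b \in \cU^y$ — into a partial $L(A)$-type whose free variables are $g$ and the infinite tuple coding $[\mu] \in \widetilde{\mathcal{M}}_G$, using the finite approximations of $\mu$ built into each representative $\bar{a} \in X_G$. Using the presentation from Remark \ref{rem: type-def group formula}, I fix $\varphi_0(x) \in G(x)$ on whose realization set the operation $\cdot$ is an $A$-definable total function, and for each $\varphi(x,y) \in L$ introduce the $L(A)$-formula $\varphi'(x, y, z) := \exists u \, (\Gamma_{\cdot}(z, x, u) \land \varphi(u, y))$, so that whenever $g, x \in \varphi_0(\cU)$ — in particular when $g, x \in G(\cU)$ — we have $\varphi'(x, b, g) \Leftrightarrow \varphi(g \cdot x, b)$.

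The central step is to establish the following equivalence: for any $\bar{a} \in X_G$ and $g \in G(\cU)$, $g \in \Stab_G(\mu_{\bar{a}})$ if and only if for every $\varphi(x, y) \in L$, $k \in \mathbb{N}$, and $b \in \cU^y$,
\begin{equation*}
\left| \Av(\bar{a}_{\varphi, 1/k}; \varphi(x, b)) - \Av(\bar{a}_{\varphi', 1/k}; \varphi'(x, b, g)) \right| \leq 4/k.
\end{equation*}
By Remark \ref{rem: 2/k approx}, the tuples $\bar{a}_{\varphi, 1/k}$ and $\bar{a}_{\varphi', 1/k}$ are $2/k$-approximations for $\mu_{\bar{a}}$ on $\varphi(x, y)$ and $\varphi'(x, y, z)$ respectively, and $\mu_{\bar{a}}(\varphi'(x, b, g)) = \mu_{\bar{a}}(\varphi(g \cdot x, b))$ since $\mu_{\bar{a}}$ is supported on $G$. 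Hence if $g$ stabilizes $\mu_{\bar{a}}$ both averages lie within $2/k$ of the common value $\mu_{\bar{a}}(\varphi(x, b))$, yielding the inequality; conversely, the inequality for all $k$ gives $|\mu_{\bar{a}}(\varphi(x, b)) - \mu_{\bar{a}}(\varphi(g \cdot x, b))| \leq 8/k$ for every $k$, forcing equality.

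Accordingly, I would define $\St_G$ as the set cut out in $G \times X_G$ by the partial $L(A)$-type $G(x)$ together with, for each $\varphi(x, y) \in L$ and $k \in \mathbb{N}$, the single $L(A)$-formula
\begin{equation*}
\forall y \, \left| \Av(\bar{a}_{\varphi, 1/k}; \varphi(-, y)) - \Av(\bar{a}_{\varphi', 1/k}; \varphi'(-, y, x)) \right| \leq 4/k
\end{equation*}
(a genuine first-order formula, since each $\bar{a}_{\varphi, 1/k}$ has a fixed length $n_{\varphi, 1/k}$ depending only on $\varphi$ and $k$, so the averages take values in a finite subset of $[0, 1]$ expressible via counting formulas). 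By the equivalence above, the resulting set equals $\Stab_G(\mu_{\bar{a}})$ in the $g$-coordinate for every $\bar{a} \in X_G$, hence depends only on $[\bar{a}]_E \in \widetilde{\mathcal{M}}_G$, so it is $E$-invariant in the second coordinate and descends to the required $A$-type-definable subset $\St_G \subseteq G \times \widetilde{\mathcal{M}}_G$ in $\cU^{\heq}$. The two-sided version $\St_G^{\leftrightarrow}$ is obtained by additionally imposing the analogous inequalities with $\varphi''(x, y, z) := \varphi(x \cdot z, y)$ in place of $\varphi'$. No serious obstacle arises here; the main care needed is in matching the approximation tolerances so that the type genuinely captures $\Stab_G(\mu)$ in the limit while remaining a conjunction of first-order $L(A)$-formulas.
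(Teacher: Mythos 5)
Your proposal is correct and follows essentially the same route as the paper: express the stabilizer condition via first-order inequalities between averages over the coding tuples $\bar{a}_{\varphi,1/k}$, check (using Remark \ref{rem: 2/k approx} and Fact \ref{fac: stab of def meas type def}) that the resulting partial type cuts out exactly $\Stab_G(\mu_{\bar{a}})$ in each fiber, and note $E$-invariance so the set descends to $G \times \widetilde{\mathcal{M}}_G$. The only (cosmetic) difference is that the paper compares $\Av(\bar{a}_{\varphi',1/k};\varphi'(x;b,1_G))$ with $\Av(\bar{a}_{\varphi',1/k};\varphi'(x;b,g))$ using the single tuple for $\varphi'$ (tolerance $5/k$), whereas you compare the $\varphi$-tuple against the $\varphi'$-tuple (tolerance $4/k$); both yield the required sandwich and the same set in the limit.
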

\begin{proof}
For $\varphi(x,y) \in L$, let $\varphi'(x;y,x') := \varphi(x' \cdot x; y)$. We consider the definable sets 
\begin{gather*}
\St_{G, \varphi(x,y) , 1/k} := 
	\Big\{(g, \bar{a}_{\varphi', 1/k}) \in \cU^x \times  M^{\bar{y}_{\varphi', 1/k}}  : \\
	\forall b \  \left \lvert \Av\left( \bar{a}_{\varphi', 1/k}; \varphi'(x;b,1_{G})  \right)  -  \Av\left( \bar{a}_{\varphi', 1/k}; \varphi'(x;b,g) \right) \right \rvert \leq 5/k \Big\}
\end{gather*}
and the type-definable set 
\begin{gather*}
	\St_{G} := \left\{ (g, \bar{a}) \in G \times X_G :  \bigwedge_{\varphi \in L, k \in \mathbb{N}} (g, \bar{a}_{\varphi', 1/k}) \in \St_{G, \varphi(x,y) , 1/k} \right\}.
\end{gather*}
Given $\bar{a} \in X_{G}$, if $g  \in  \Stab_{G,\varphi(x;y), 1/k}(\mu_{\bar{a}})$, then (by Fact \ref{fac: stab of def meas type def}(1)) for all $b \in \cU^y$ we have 
	$$\left \lvert  \mu_{\bar{a}}(\varphi(x,b)) - \mu_{\bar{a}} (\varphi(g \cdot x,b) )\right \rvert \leq 1/k,$$
	 hence $(g,\bar{a}) \in \St_{G, \varphi(x,y) , 1/k}$ (using Remark \ref{rem: 2/k approx}).  Conversely, if $(g,\bar{a}) \in \St_{G, \varphi(x,y) , 1/k}$ then 
	 $$\left \lvert  \mu_{\bar{a}}(\varphi(x,b) ) - \mu_{\bar{a}} (\varphi(g \cdot x,b) )\right \rvert \leq 9/k$$
	  for all $b$, hence $g \in \Stab_{G,\varphi(x;y), 18/k}(\mu_{\bar{a}})$ (by Fact \ref{fac: stab of def meas type def}(2)). 
	  Hence 
	  \begin{gather}
	  	\forall \bar{a} \in X_{G}, \  \Stab_{G, \varphi, 1/k}(\mu_{\bar{a}}) \subseteq \St_{G, \varphi, 1/k}(\cU, \bar{a}) \subseteq \Stab_{G,\varphi, 18/k}(\mu_{\bar{a}});
	  \end{gather}
	  and so 
	\begin{gather*}
		\Stab_{G}(\mu_{\bar{a}}) = G \cap \bigcap_{\varphi,k}  \Stab_{G,\varphi(x;y), 1/k}(\mu_{\bar{a}}) \\
		= G \cap \bigcap_{\varphi,k} \St_{G, \varphi(x,y) , 1/k}(\cU, \bar{a}) = \St(\cU, \bar{a}).
	\end{gather*}
	
	We define $\St_{G}^{\leftrightarrow}$ and $\St^{\leftrightarrow}_{G, \varphi(x,y) , 1/k}$ for the two-sided stabilizers similarly.
\end{proof}

\begin{remark}\label{rem: one stabilizer implies two stabilizers}
	As $|M| \geq 2$  without loss of generality, given any formulas $\varphi_1(x,y_1), \varphi_2(x,y_2) \in L$ we can choose $\varphi(x, y) \in L$, with $y = (y_1, y_2, y', y'')$ and $\varphi(x,y) := (y' = y'' \land \varphi_1(x,y_1)) \lor (y' \neq y'' \land \varphi_2(x,y_2)) $ so that for any $\bar{a} \in X_{G}$ we have $\St_{G,\varphi,k}(x, \bar{a}) \vdash \St_{G,\varphi_i, k}(x,\bar{a})$ for both $i \in \{1,2\}$.
\end{remark}

\subsection{Externally definable, definably amenable subgroups}\label{sec: abelian ext def subgroups}

In this section we consider externally (type-)definable, definably amenable subgroups of definable groups.

First we note that Lemma \ref{lem: unique extension of def meas to Sh} also provides an alternative proof of Fact \ref{fac: group props preserved in Sh exp}(3) for definable amenability,  generalizing to type-definable groups and arbitrary models of $T'$:
\begin{corollary}\label{cor: def am is preserved in Msh new proof}
	($T$ NIP) Assume $M \models T$, $\widetilde{M}' \succ M^{\Sh}$ is a monster model, and $G$ is a type-definable and definably amenable group (working in a monster model $M' \models T$). Then $G$ remains definably amenable in $T' = \Th(M^{\Sh})$.
\end{corollary}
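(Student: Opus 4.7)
The plan is to combine Fact~\ref{fac: def am def measure} with the uniqueness of extension in Lemma~\ref{lem: unique extension of def meas to Sh}. First I would apply Fact~\ref{fac: def am def measure} to obtain a global left-$G(M')$-invariant measure $\mu \in \mathfrak{M}_G^{L}(M')$ which is definable over some small $L$-elementary submodel. Enlarging this submodel if necessary, and using that every small subset of $M'$ is contained in a small coherent model (see Definition~\ref{def: Shelah exp context}), I may assume $\mu$ is definable over a small coherent model $M_1 \preceq^{L} M'$ containing the parameters over which $G$ is type-definable. In particular, $\mu$ is the (unique) $M_1$-definable extension of $\mu|_{M_1}$ in $T$, so we are in the setting of Lemma~\ref{lem: unique extension of def meas to Sh}(2).

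Next, Lemma~\ref{lem: unique extension of def meas to Sh}(2) produces a unique measure $\mu' \in \mathfrak{M}_x^{L'}(M')$ with $\mu' \restriction_{L} = \mu$. Since $G$ is type-definable in $T$ by some partial $L(M_1)$-type $\pi(x)$, hence also type-definable in $T'$ by the same $\pi(x)$, and $\mu(\psi(x)) = 1$ for every $\psi \in \pi$, the equality $\mu'\restriction_{L} = \mu$ forces $\mu'(\psi(x)) = 1$ for every $\psi \in \pi$; thus $\mu' \in \mathfrak{M}_G^{L'}(M')$.

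The key step is then to check that $\mu'$ remains left-$G(M')$-invariant. For any $g \in G(M')$, the translated measure $g \cdot \mu' \in \mathfrak{M}_G^{L'}(M')$ satisfies
\[
(g \cdot \mu') \restriction_{L} \; = \; g \cdot (\mu' \restriction_{L}) \; = \; g \cdot \mu \; = \; \mu,
\]
where the last equality uses the $G(M')$-invariance of $\mu$ in $T$. But $\mu'$ is the unique element of $\mathfrak{M}_x^{L'}(M')$ extending $\mu$ by Lemma~\ref{lem: unique extension of def meas to Sh}(2), so $g \cdot \mu' = \mu'$. Hence $\mu'$ witnesses that $G$ is definably amenable in $T'$.

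The only mildly delicate point in this plan is the very first step, namely arranging that the small model over which $\mu$ is $T$-definable is \emph{coherent}, since otherwise the uniqueness clause of Lemma~\ref{lem: unique extension of def meas to Sh} is not available; once this is secured, the rest is a purely formal transfer via the uniqueness of the $L'$-lift. No generic stability or finite satisfiability hypothesis on $\mu$ is needed, in contrast with the argument used in Fact~\ref{fac: group props preserved in Sh exp}.
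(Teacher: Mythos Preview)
Your proposal is correct and follows essentially the same route as the paper's proof: obtain a $G(M')$-invariant $M_1$-definable measure $\mu$ via Fact~\ref{fac: def am def measure}, pass to a small coherent model containing $M_1$, lift $\mu$ uniquely to $\mu' \in \mathfrak{M}_x^{L'}(M')$ using Lemma~\ref{lem: unique extension of def meas to Sh}, and then verify $G(M')$-invariance of $\mu'$ by the uniqueness-of-extension argument $(g\cdot\mu')\restriction_L = g\cdot\mu = \mu$. The only cosmetic difference is that the paper first fixes $M_1$ and then enlarges to a coherent $M_2 \supseteq M_1$, whereas you absorb this into a single step; your remark that coherence is the one point requiring care is exactly right.
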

\begin{proof}
	By Fact \ref{fac: def am def measure} applied in $M' \models T$ (which is a monster model for $T$), there exists a left-$G(M')$-invariant measure $\bar{\mu} \in \mathfrak{M}_{G}^L(M')$ and small $M_1 \prec^{L} M'$ so that both $G$ is type-definable over $M_1$ and $\bar{\mu}$ is 	definable over  $M_1$. Let $M_2 \prec^{L'} \widetilde{M}'$ be a small coherent model containing $M_1$. By Lemma \ref{lem: unique extension of def meas to Sh}, let $\bar{\mu}'$ be the unique extension of $\bar{\mu}$ to a measure in $\mathfrak{M}_x^{L'}(M')$, in particular $\bar{\mu}' \in \mathfrak{M}_G^{L'}(M')$. As in the proof of Theorem \ref{thm: type-def and ext def fsg implies def},  $\bar{\mu}'$ is also $G(M')$-invariant: given $g \in G(M')$, $g \cdot \bar{\mu} = \bar{\mu} $ and $(g \cdot \bar{\mu}') \restriction_{L} = g \cdot (\bar{\mu}'\restriction_{L}) = g \cdot \bar{\mu} = \bar{\mu}$, so $g \cdot \bar{\mu}' = \bar{\mu}'$ by uniqueness. This shows that $G$ is definably amenable in $T'$.
\end{proof}
\begin{remark}
	However this proof does not seem to apply to definable extreme amenability however, since it is not known that if $G$ is definably  extremely amenable (NIP) then there exists a definable $G$-invariant \emph{type}.
\end{remark}

\begin{remark}
	Note that Theorems \ref{thm: main for fsg subgroups} and \ref{thm: type-def and ext def fsg implies def} need not hold if we only assume definable amenability instead of fsg. Indeed, we have the following facts:
	
	\begin{fact} \cite[Proposition 3.20]{chernikov2022definable}
	If $T$ is NIP and $G$ is a type-definable group that is definably amenable, then $G^{00}$ is also definably amenable (hence definably amenable in $\Th(M^{\Sh})$ by Fact \ref{fac: group props preserved in Sh exp}).
\end{fact}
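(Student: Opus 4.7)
The plan is to build a left-$G^{00}(\cU)$-invariant measure $\nu \in \mathfrak{M}_{G^{00}}(\cU)$ from a left-$G(\cU)$-invariant $\mu \in \mathfrak{M}_{G}(\cU)$ (which exists by Fact \ref{fac: def am def measure}), via a disintegration along the quotient map $\pi \colon G(\cU) \to K := G(\cU)/G^{00}$. The quotient $K$ carries a canonical structure of a compact Hausdorff topological group (the logic topology), and the pushforward $\pi_{*}\mu$ is a left-$K$-invariant regular Borel probability measure on $K$, hence equal to the normalized Haar measure $\lambda_{K}$.

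Heuristically, I would fix a (set-theoretic) section $s \colon K \to G(\cU)$ of $\pi$ and set $\nu(X) := \mu(\phi^{-1}(X))$ for $L(\cU)$-definable $X \subseteq G^{00}$, where $\phi(g) := g \cdot s(\pi(g))^{-1}$ lands in $G^{00}$ by construction. For $h \in G^{00}(\cU)$, the substitution $g \mapsto hg'$ combined with the normality of $G^{00}$ (so that $\pi(hg') = \pi(g')$, hence $s(\pi(hg')) = s(\pi(g'))$) identifies $\phi^{-1}(hX)$ with $h \cdot \phi^{-1}(X)$, and the left-$G(\cU)$-invariance of $\mu$ then yields $\nu(hX) = \nu(X)$. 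So this $\nu$ is the desired $G^{00}(\cU)$-invariant measure supported on $G^{00}$, whose existence witnesses definable amenability of $G^{00}$.

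The main obstacle is that $\phi^{-1}(X)$, being essentially a union of right translates of $X$ over the (bounded but possibly uncountable) index set $K$, need not itself be $L(\cU)$-definable, so $\mu(\phi^{-1}(X))$ has no a priori meaning as a Keisler measure evaluation. To address this I would extend $\mu$ to its canonical regular Borel probability measure $\tilde\mu$ on the compact Stone space $S_{G}(\cU)$, along which the factoring of $\pi$ becomes continuous, and apply a Haar-disintegration $\tilde\mu = \int_{K} \tilde\mu_{C}\, d\lambda_{K}(C)$ with each $\tilde\mu_{C}$ concentrated on the fiber over $C$. Restricting the identity-coset fiber $\tilde\mu_{e}$ back to $L(\cU)$-definable subsets of $G^{00}$ produces the Keisler measure $\nu$, and the equivariance $h \cdot \tilde\mu_{C} = \tilde\mu_{hC}$, valid Haar-a.e.\ by uniqueness of disintegration combined with the $G(\cU)$-invariance of $\tilde\mu$, delivers $h \cdot \nu = \nu$ for every $h \in G^{00}(\cU)$. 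The non-Polish nature of $S_{G}(\cU)$ is the delicate point; I would circumvent it by disintegrating over a metrizable quotient of $K$ corresponding to a countable sub-$\sigma$-algebra generated by $L(M)$-formulas (taking $M$ countable with $\mu$ definable over $M$, using L\"owenheim--Skolem and the Borel-definability guaranteed by NIP via Fact \ref{fac: NIP gen stab meas equivs}), or alternatively by producing $\nu$ directly as a Hahn--Banach invariant mean on the space of bounded functions constant on $G^{00}(\cU)$-orbits inside suitable $L(\cU)$-definable envelopes of the partial type cutting out $G^{00}$.
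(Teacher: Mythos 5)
The paper does not prove this statement: it is quoted from \cite[Proposition 3.20]{chernikov2022definable}, and the argument there is model-theoretic rather than measure-theoretic — it runs through the NIP theory of f-generics and stabilizers (definable amenability yields a strongly f-generic global type $p$; by NIP its stabilizer is $G^{000}=G^{00}$, and $p$ concentrates on a single coset $g\cdot G^{00}$, so $g^{-1}\cdot p$ is a $G^{00}(\cU)$-invariant type concentrating on $G^{00}$, giving even definable \emph{extreme} amenability of $G^{00}$). Your disintegration route is genuinely different, and it has a gap I do not see how to close.

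The gap is that you are conditioning on a Haar-null event. The pushforward of $\tilde\mu$ to $K=G/G^{00}$ is indeed the Haar measure $\lambda_K$, but whenever $K$ is infinite (the typical case, e.g.\ the circle for a definably compact $o$-minimal group; if $K$ is finite the identity fiber has positive measure and one can just normalize $\tilde\mu$ restricted to it) the identity coset is $\lambda_K$-null. A disintegration $\tilde\mu=\int_K \tilde\mu_C\,d\lambda_K(C)$, even where it exists, determines the fiber measures only for $\lambda_K$-almost every $C$: it assigns no canonical meaning to ``the'' measure $\tilde\mu_e$ over the identity coset, and no particular choice of it is forced to concentrate on the fiber, let alone be invariant. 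Likewise the equivariance $h\cdot\tilde\mu_C=\tilde\mu_{hC}$ you invoke comes from uniqueness of disintegrations, so it holds only $\lambda_K$-a.e.\ with the exceptional null set depending on $h$; you need \emph{exact} invariance of the one measure $\nu$ under every one of the unboundedly many $h\in G^{00}(\cU)$, and there is no continuity in $h$ or countable dense subgroup available to upgrade ``a.e.\ in $C$'' to ``at the null point $C=e$, for all $h$''. Passing to a countable sub-$\sigma$-algebra or metrizable quotient does not help: the image of $e$ is still null there, and a Keisler measure must in the end be defined coherently on \emph{all} $L(\cU)$-definable subsets of $G^{00}$ and be invariant under \emph{all} of $G^{00}(\cU)$, which is exactly the gluing problem left open (the section $s$ and the map $\phi$ are non-measurable, as you acknowledge, so the first heuristic cannot be made literal either). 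The Hahn--Banach/invariant-mean alternative is too vague to assess, but note a telling symptom: your scheme makes no essential use of NIP, whereas the known proof hinges on it (via $G^{000}=G^{00}$ and the existence and stabilizers of f-generics); that missing model-theoretic input is precisely what the abstract disintegration picture cannot supply.
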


\begin{fact} \cite[Remark 8.3(i)]{NIP2}\label{fac: G^00 ext def sometimes} Let $T$ be NIP and $M \models T$ saturated. Assume that $G$ is definable in $M$, definably amenable, and $G/G^{00}$ is a compact Lie group (any definably amenable group in an $o$-minimal theory satisfies these requirements). Then $G^{00}$ is definable in $M^{\Sh}$.
	\end{fact}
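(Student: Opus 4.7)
Our strategy is to combine three ingredients: definable amenability of $G$ in $T$ yields a global $G(\cU)$-invariant Keisler measure $\mu \in \mathfrak{M}_G(\cU)$ definable over some small model $M_0 \prec M$ (Fact \ref{fac: def am def measure}), hence Borel-definable over $M_0$ by NIP; the logic topology on $G/G^{00}$ is exactly the compact Lie group topology on it, so the identity coset admits a countable descending open neighborhood basis $(V_n)_{n\in\omega}$ with $\bigcap_n V_n = \{eG^{00}\}$; and saturation of $M$ (via an elementary pair with an $|M|^+$-saturated $N \succ M$) lets us compress countably many $M$-definable conditions into a single formula with a parameter in $N$.

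First I would write $G^{00}(\cU) = \bigcap_n \pi^{-1}(V_n)$ for the canonical projection $\pi\colon G \to G/G^{00}$; each $\pi^{-1}(V_n)$ is the preimage of an open set in the logic topology, hence a $\bigvee$-definable subset of $G$ in $T$. Using Fact \ref{fac: stab of def meas type def}, I would then replace this geometric description by a sequence of approximate-stabilizer conditions of the form
\begin{gather*}
	\pi^{-1}(V_n) \cap G(M) \;=\; \Stab_{G,\varphi_n,\delta_n}(\mu)(M)
\end{gather*}
for suitably chosen $\varphi_n(x,y) \in L$ and rationals $\delta_n \to 0$. Here the Lie structure on $G/G^{00}$ is essential: the identity has a finite-dimensional chart, whose finitely many coordinate functions pull back to finitely many test-formulas, which can further be merged into a single $\varphi(x,y) \in L$ by (the trick of) Remark \ref{rem: one stabilizer implies two stabilizers}; this leaves only the scale $\delta > 0$ as a varying parameter, so that $G^{00}(M) = \bigcap_{n} \Stab_{G,\varphi,1/n}(\mu)(M)$.

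The closing step is to pick, in an $|M|^+$-saturated $N\succ M$, a single scale-parameter $c \in N$ lying below every standard $1/n$ (existence by saturation, as $\Stab_{G,\varphi,\delta}(\mu)$ is a definable family of $M_0$-definable sets decreasing in $\delta$); the externally definable set $R_{\psi(x,c)}(M) := \Stab_{G,\varphi,c}(\mu)(M)$ then equals $G^{00}(M)$, exhibiting $G^{00}$ as definable in $M^{\Sh}$. \textbf{The main obstacle} is ensuring a single test-formula $\varphi$ is enough to cut out $G^{00}$ at \emph{every} scale: without finite-dimensionality of $G/G^{00}$, one only gets $G^{00}$ as a countable intersection of externally definable sets, which need not itself be externally definable. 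This is precisely the step where the compact-Lie hypothesis, as opposed to merely compact Hausdorff, is indispensable; the rest is soft compactness in the pair $(N,M)$ together with the Borel-definability of $\mu$ guaranteed by NIP.
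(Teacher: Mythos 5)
Your central step does not work. Definable amenability hands you a global \emph{left-$G(\cU)$-invariant} measure $\mu$ (Fact \ref{fac: def am def measure}), and for such a $\mu$ every $g\in G$ satisfies $\sup_b\lvert \mu(\varphi(x,b))-\mu(\varphi(g\cdot x,b))\rvert=0$; by Fact \ref{fac: stab of def meas type def}(2) this puts $g$ into $\Stab_{G,\varphi,\varepsilon}(\mu)$ for every $\varphi$ and every $\varepsilon>0$. So $\Stab_G(\mu)=G$ and \emph{all} approximate stabilizers of $\mu$ contain $G$; consequently your proposed identity $\pi^{-1}(V_n)\cap G(M)=\Stab_{G,\varphi_n,\delta_n}(\mu)(M)$ would force $V_n$ to be all of $G/G^{00}$, and $\bigcap_n\Stab_{G,\varphi,1/n}(\mu)$ is $G(M)$, not $G^{00}(M)$ (unless $G=G^{00}$). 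Approximate stabilizers of the invariant measure simply cannot detect $G^{00}$. The ``chart'' step is also not a real argument: $\pi$ is not definable, the coordinate functions of a chart of $G/G^{00}$ at the identity do not pull back to anything definable, and $\pi^{-1}(V_n)$ is only $\bigvee$-definable, so there are no finitely many ``test formulas'' to merge via Remark \ref{rem: one stabilizer implies two stabilizers}. If you want a stabilizer equal to $G^{00}$ you would need, e.g., an $f$-generic type (as in the definably amenable NIP theory), which is a genuinely different argument from the one you wrote.

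Even granting a uniformly definable family $X_\delta$ decreasing in $\delta$ with $\bigcap_n X_{1/n}(M)=G^{00}(M)$, your final compression is one-sided: choosing $c\in N$ below every standard $1/n$ only gives $X_c(M)\subseteq\bigcap_n X_{1/n}(M)$, and nothing guarantees $G^{00}(M)\subseteq X_c(M)$ — an infinitesimal scale will in general cut out a proper subset, since the parameter $b$ now ranges over $N$. The correct move is to realize, in an $|M|^+$-saturated $N\succ M$, the type over $M$ in the parameter variable consisting of the positive conditions $\psi(a,z)$ for $a\in G^{00}(M)$ together with the negative conditions $\neg\psi(a,z)$ for $a\in G(M)\setminus G^{00}(M)$; finite satisfiability is witnessed by members of the decreasing countable family itself. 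That is also where the hypothesis on $G/G^{00}$ actually enters: via second countability/metrizability of the compact Lie group one writes $G^{00}$ as a countable decreasing intersection of definable sets squeezed by logical compactness between preimages of closed and open balls — no finite-dimensional chart is involved. (Note the paper itself only imports this statement from [NIP2, Remark 8.3(i)] without proof, so that source is the one to measure your argument against.)
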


	As $G^{00}$ is rarely definable in $M$, even for definably compact groups in $o$-minimal theories (Example \ref{ex: infinitesim not fsg}), Theorem \ref{thm: type-def and ext def fsg implies def} implies in particular that  $G^{00}$ is externally definable, type-definable and  definably amenable, but not fsg in such situations.
\end{remark}

\begin{question}
\begin{enumerate}
	\item Is $G^{00}$ externally definable for all groups definable in $o$-minimal theories?
	\item 	We reiterate Problem 4.21 from \cite{chernikov2014external}: if $G$ is definable in an NIP theory $T$ and $\cU \models T$ is saturated, does naming $G^{00}(\cU)$ by a new predicate
preserve NIP? (Note that $G^{00}$ is not always externally definable, even in stable theories --- in $(\mathbb{Z}, +)$, it is an intersection of infinitely many formulas, not definable, so not externally definable. What about external  definability of the local analog $G^{00}_{\varphi}$ (see \cite[Section 7.14]{hrushovski2024approximate})?
\end{enumerate}
\end{question}

What description can one expect for externally definable (amenable) subgroups of definable groups?
\begin{example}\label{ex: ext def ab wo type-def subgrps}
	
	Let $M_0 = G(M_0) := (\mathbb{Z}, +, <)$ and $M \succ M_0$ a saturated extension.
	Then $H(M) := \mathbb{Z} \leq G(M)$ is externally definable by convexity in the order on $M$, but 
	\begin{enumerate}
		\item $H(M)$ is not type-definable in $M$ and does not contain any type-definable in $M$ subgroups except $\{0\}$;
		\item $H(M)$ is not of the form $K(\cU) \cap M$ for an $\cU$-definable subgroup $K(\cU) \leq G(\cU)$.
	\end{enumerate}
	Considering instead a convex subgroup given by a cut with large cofinality on both sides in a saturated model $M$ of $(\mathbb{R},+,<)$ shows that also 
	\begin{enumerate}
		\item[(3)]	externally definable (abelian) subgroups need not be $\Aut(M/A)$-invariant for any small $A \subset M$ either.
	\end{enumerate}
\end{example}

\noindent At least in this case we have the following approximations for $H$ is $T$:
\begin{definition}
	\begin{enumerate}
	\item  a formula $\varphi(x,y) \in L$ (e.g.~$y_1 < x < y_2$) so that the  family $\mathcal{F} := \{\varphi(M,b) : b \in M^y, \varphi(M,b) \subseteq H(M)\}$  of uniformly $M$-definable subsets of $H(M)$ is \emph{product-directed} (i.e.~for any $n \in \mathbb{N}$ and $X_1, \ldots, X_n \in \mathcal{F}$ there is $X \in \mathcal{F}$ with $(\bigcup_{i \in [n]} X_i)^2 \subseteq X$) and $H(M) = \bigcup \mathcal{F}$. 
	\item a type-definable (over a small set of parameters in $\cU$) subgroup $K(\cU) \leq G(\cU)$ so that $H(M) = K(M)$; moreover, $K$ is uniformly type-definable: there is a formula $\varphi(x,y) \in L$ (again $y_1 < x < y_2$) and a small family $\mathcal{F} = \{\varphi(\cU,b) : b \in \Omega \}$  with $\Omega \subset \cU^y$ of uniformly $\cU$-definable sets so that $\mathcal{F}$ is \emph{downwards directed} (i.e.~for any $n \in \mathbb{N}$ and $X_1, \ldots, X_n \in \mathcal{F}$ there is $X \in \mathcal{F}$ with $X \subseteq \bigcap_{i \in [n]} X_i$) and $\bigcap_{b \in \Omega} \varphi(M,b) = H(M)$.
\end{enumerate}
\end{definition}

Note that conversely by compactness, the union (respectively, the intersection) of $\varphi(M,b)$ for any such family gives an externally definable subgroup of $G(M)$.

\begin{question}
	Is every externally definable (definably amenable/abelian) subgroup of a definable group the union of a definable product-directed family of its subsets? Respectively, the intersection of a downwards product-directed family of uniformly externally definable sets? (See an analogous Question \ref{q: honest defs directed} for externally definable sets.)
 \end{question}

In this section we get a weaker presentation, as  the union of a (uniformly) product-directed \emph{type-definable} family of subsets indexed by a strict externally pro-definable set (Theorem \ref{thm: ext def abelian final approx in M}); as well as the trace on $M$ of a (not necessarily uniformly) $\cU$-type-definable subgroup of $G(\cU)$ (Remark \ref{rem: am subg is trace of type-def}).

We begin with a couple of auxiliary observations. 
\begin{remark}\label{rem: stab contained in a subgroup}
	If $G$ is a type-definable group, $H \leq G$ a type-definable subgroup and $\mu \in \mathfrak{M}_{H}(\cU)$, then $\Stab_{G}(\mu) = \Stab_{H}(\mu) \leq H$ (and similarly for the two-sided stabilizers).
	
	Indeed, suppose $g \in \Stab_{G}(\mu) \setminus \Stab_{H}(\mu)$, then $g \in \Stab_{G}(\mu) \setminus H$, hence $g \cdot H(\cU) \cap H(\cU) = \emptyset$, so by compactness $g \cdot \varphi(\cU) \cap \varphi(\cU) = \emptyset$ for some formula $\varphi(x)$ in the partial type defining $H$ (without loss of generality closed under conjunctions).  As $\mu \in \mathfrak{M}_{H}(\cU)$ we have $\mu(\varphi(x)) = 1$, and as $g \in \Stab_{G}(\mu)$ also $\mu(g \cdot \varphi(x)) = \mu(\varphi(x)) = 1$, a contradiction.
\end{remark}

\begin{lemma}\label{lem: generically stable localizing}($T$ NIP) 
	Assume $M_0 \preceq M$, $M$ is $|M_0|^{+}$-saturated,  $U$ is a type-definable set over $M_0$, $V$ a type-definable set over an arbitrary small model $M_1 \prec \cU$. If $\mu \in \mathfrak{M}_{U}(\cU)$ is generically stable over $M$ and $U(M) \subseteq V(\cU)$, then $\mu \in \mathfrak{M}_{V}(\cU)$.
\end{lemma}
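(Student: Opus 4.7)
The plan is to invoke Fact \ref{fac: NIP gen stab meas equivs}(2) to replace the generic stability of $\mu$ over $M$ by the fim property: for every $\psi(x,y) \in L$ and $\varepsilon > 0$ there are $n \in \omega$ and $\theta_n(\bar{x}) \in L(M)$ with $\mu^{\otimes n}(\theta_n) > 0$ (in fact tending to $1$) such that any $\bar{a} \models \theta_n$ is an $\varepsilon$-approximation for $\mu$ on $\psi(x,y)$. I will find such a $\bar{a}$ lying entirely inside $U(M)^n$ and then use the hypothesis $U(M) \subseteq \psi(\cU, b)$ to force $\mu(\psi(x,b)) \geq 1 - \varepsilon$.

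Let $\pi_U(x) \subseteq L(M_0)$ be a partial type defining $U$, closed under finite conjunctions. Fix $\psi(x,b) \in \pi_V$ (so $b$ is a finite tuple in $M_1$) and $\varepsilon > 0$, and pick $n$ and $\theta_n$ as above. For every $\phi \in \pi_U$, $\mu(\phi) = 1$ gives $\mu^{\otimes n}(\bigwedge_{i \in [n]} \phi(x_i)) = 1$, hence $\mu^{\otimes n}\bigl(\theta_n(\bar{x}) \wedge \bigwedge_{i \in [n]} \phi(x_i)\bigr) > 0$. Since $\mu$ is finitely satisfiable in $M$ (Fact \ref{fac: NIP gen stab meas equivs}(1)), so is $\mu^{\otimes n}$, and this formula is realized in $M^n$. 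Thus the partial type $\{\theta_n(\bar{x})\} \cup \bigcup_{i \in [n]} \pi_U(x_i)$ is finitely consistent in $M$, has all parameters in $M$ and cardinality at most $|M_0|$ (the parameters of $\theta_n$ being finite and $|\pi_U| \leq |M_0|$), so $|M_0|^+$-saturation of $M$ yields a realization $\bar{a} = (a_1,\ldots,a_n) \in U(M)^n$ with $\bar{a} \models \theta_n$.

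By the choice of $\theta_n$, this $\bar{a}$ is an $\varepsilon$-approximation for $\mu$ on $\psi(x,y)$, and the hypothesis $U(M) \subseteq V(\cU) \subseteq \psi(\cU, b)$ gives $\models \psi(a_i, b)$ for every $i$. Hence $\Av(\bar{a}; \psi(x,b)) = 1$ and $\mu(\psi(x,b)) \geq 1 - \varepsilon$. Letting $\varepsilon \to 0$ and running over all $\psi(x,b) \in \pi_V$ gives $\mu \in \mathfrak{M}_V(\cU)$. The only non-routine step is localizing the fim-approximation inside $U(M)^n$ rather than merely $M^n$; this is exactly where both $|M_0|^+$-saturation of $M$ and the small size of $\pi_U$ (since $U$ is type-definable over $M_0$) are used in tandem. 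Once the approximation lives in $U(M)^n$, the containment $U(M) \subseteq \psi(\cU, b)$ does the rest.
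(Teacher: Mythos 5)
Your proof is correct and follows essentially the same route as the paper: use the fim property of $\mu$ to get an approximating tuple, realize the partial type $\{\theta_n(\bar{x})\}\cup\bigcup_i\pi_U(x_i)$ inside $U(M)^n$ by combining $\mu^{\otimes n}$-positivity with $|M_0|^+$-saturation of $M$, and then use $U(M)\subseteq V(\cU)$ to bound $\mu(\psi(x,b))$ (the paper phrases this as a contradiction, you argue directly). Your detour through finite satisfiability of $\mu^{\otimes n}$ in $M$ is unnecessary (and quietly invokes preservation of finite satisfiability under Morley products): positivity of $\mu^{\otimes n}$ on the formula already gives consistency, which is all that saturation needs since the parameters lie in $M\prec\cU$.
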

\begin{proof}

Assume $U$ is defined by a partial type $\{\varphi_i(x) : i \in I\}$ with $\varphi_i(x) \in L(M_0)$ and $V$ is defined by a partial type $\{\psi_j(x, b_j) : j \in J\}$ with $\psi_j(x,y_j) \in L, b_j \in M_1^{y_j}$. We need to show that $\mu(\psi_j(x, b_j)) = 1$ for all $j \in J$. Assume this fails, say $\mu(\neg \psi_j(x, b_j)) = \varepsilon > 0$ for some $j \in J$.

	By Fact \ref{fac: NIP gen stab meas equivs}, $\mu$ is fim over $M$, in particular there exist $n \in \mathbb{N}$ and a formula $\theta(x_1,\ldots,x_n) \in L(M)$ such that: for any $\bar{a}$ in $\cU$ with $\models \theta(\abar)$ we have 
		\begin{equation*} 
		\sup_{b \in \cU^{y_j}} |\Av(\abar; \varphi_j(x,b)) - \mu(\varphi_j(x,b))| < \varepsilon/2,
		\end{equation*} 
and $\mu^{\otimes n} \left( \theta \left(\bar{x} \right) \right) >0$. For every finite $I_0 \subseteq I$ we have $\mu \left(\bigwedge_{i \in I_0} \varphi_i(x) \right) =1$ (as $\mu \in \mathfrak{M}_{U}(\cU)$), hence $\mu^{\otimes n} \left(\bigwedge_{i \in I_0, t \in [n]} \varphi_i(x_t) \right) = 1$, and so 
$$\mu^{\otimes n} \left(\bigwedge_{i \in I_0, t \in [n]} \varphi_i(x_t) \land \theta(x_1, \ldots, x_n) \right) > 0,$$
in particular this formula is consistent. Then, by $|M_0|^{+}$-saturation of $M$, we can find $\bar{a} = (a_1, \ldots, a_n)$ with each $a_i \in U(M)$ and $\models \theta(\bar{a})$. Then, by construction, for some $i \in [n]$ we have $a_i \in U(M) \setminus \psi_j(M,b_j)$, hence $a_i \in U(M) \setminus V(\cU)$, contradicting the assumption.
\end{proof}

We generalize \cite[Proposition 6.3]{hils2021definable} from definable abelian groups to type-definable, definably amenable groups:
\begin{theorem}\label{thm: approx def am by stab gs}
	Let $T$ be NIP, and $G$ type-definable over $M_0 \models T$ and  definably amenable. Then there are $M_0$-type-definable sets $\St_G \subseteq G(\cU) \times \cU^{\bar{y}}, X_{G} \subseteq \cU^{\bar{y}}$ with $|\bar{y}| = |T|$ so that:
	\begin{enumerate}
		\item for every $\bar{a} \in X_{G}$, $\St_G(\cU; \bar{a}) = \Stab_{G}(\mu_{\bar{a}}) \leq G(\cU)$ for a measure $\mu_{\bar{a}} \in \mathfrak{M}_{G}(\cU)$ generically stable over $\bar{a}$;
		\item $G(\cU) = \bigcup_{\bar{a} \in X_{G}} \St_{G}(\cU, \bar{a})$;
		\item the family $\{\St_{G}(\cU, \bar{a}) : \bar{a} \in X_{G}\}$ is directed, i.e.~for any small $I \subseteq X_{G}$ there is some $\bar{a}_{I} \in X_{G}$ so that $\St_{G}(\cU, \bar{a}) \subseteq \St_{G}(\cU, \bar{a}_{I}) $ for all $\bar{a} \in I$.
	\end{enumerate}
\end{theorem}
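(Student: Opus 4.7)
The plan is to leverage the hyperdefinable framework for generically stable measures developed in Section \ref{sec: Generically stable measures as a hyper-definable set}. The sets $X_G$ (Definition \ref{def: X_G}) and $\St_G$ (Lemma \ref{lem: unif defining stab of mu}) are both $M_0$-type-definable since $G$ is, and for each $\bar{a} \in X_G$ the measure $\mu_{\bar{a}}$ is automatically generically stable over $\bar{a}$ (being fam over $\bar{a}$, hence fim, hence generically stable) with $\St_G(\cU, \bar{a}) = \Stab_G(\mu_{\bar{a}})$. This delivers clause (1) immediately.

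The engine of the argument will be a \emph{finite covering} lemma: for any finite $g_1, \ldots, g_n \in G(\cU)$, there is $\bar{a}_0 \in X_G$ with $\{g_1, \ldots, g_n\} \subseteq \St_G(\cU, \bar{a}_0)$. To prove this, I pick a small model $M \succ M_0$ containing $g_1, \ldots, g_n$; by Fact \ref{fac: def am def measure} there is a global left-$G(\cU)$-invariant measure $\nu_0 \in \mathfrak{M}_G(\cU)$ definable over some small $M^{\ast} \succ M$; applying the symmetrization of Fact \ref{fac: symmetrization measure} over $M^{\ast}$ produces $\nu$ with $\nu \restriction_{M^{\ast}} = \nu_0 \restriction_{M^{\ast}}$, generically stable over some $M^{\ast\ast} \succ M^{\ast}$, and fixed by every $M^{\ast}$-definable function that fixes $\nu_0$. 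Since each left-translation $L_{g_i}$ is $M^{\ast}$-definable and $L_{g_i,\ast}\nu_0 = \nu_0$ by $G(\cU)$-invariance, we get $L_{g_i,\ast}\nu = \nu$, so $g_i \in \Stab_G(\nu)$. Finally $\nu \in \mathfrak{M}_G(\cU)$ because every formula $\psi \in L(M_0)$ in the partial type defining $G$ already lies in $L(M^{\ast})$, so $\nu(\psi) = \nu_0(\psi) = 1$, and Lemma \ref{lem: generically stable localizing} applies. Setting $\bar{a}_0 := \bar{a}^\nu$ completes the lemma, and the case $n = 1$ gives clause (2).

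Clause (3) then follows by a compactness argument. Given small $I \subseteq X_G$, I assemble the partial type
\begin{gather*}
\Pi(\bar{a}') := \{X_G(\bar{a}')\} \cup \big\{(g, \bar{a}') \in \St_G : \bar{a} \in I,\ g \in \St_G(\cU, \bar{a})\big\}
\end{gather*}
in the variable $\bar{a}'$, with parameters in $M_0 \cup I \cup \bigcup_{\bar{a} \in I} \St_G(\cU, \bar{a})$. Any finite subtype mentions only finitely many $g_1, \ldots, g_n$ and finitely many stabilizer-approximation formulas $(\varphi_j, k_j)$, and the finite covering lemma produces $\bar{a}^{\ast} \in X_G$ with $g_l \in \St_G(\cU, \bar{a}^{\ast})$ for all $l$, hence $(g_l, \bar{a}^{\ast}) \in \St_{G, \varphi_j, k_j}$ for all $l, j$. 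So $\Pi$ is finitely satisfiable and, working in a sufficiently saturated ambient monster (enlarging $\cU$ if needed and descending using the $M_0$-type-definability of $X_G$ and $\St_G$), some $\bar{a}_I \in X_G$ realizes $\Pi$, yielding $\St_G(\cU, \bar{a}) \subseteq \St_G(\cU, \bar{a}_I)$ for all $\bar{a} \in I$.

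The most delicate point will be the finite covering lemma, where one must simultaneously confirm that the symmetrization of Fact \ref{fac: symmetrization measure} (i) preserves left-invariance under each of the particular $g_i$'s, and (ii) preserves the support on $G$, interacting cleanly with Lemma \ref{lem: generically stable localizing}. A secondary subtlety is that ``$\St_G(\cU, \bar{a}) \subseteq \St_G(\cU, \bar{a}')$'' is a $\forall g$-statement whose reformulation as a literal partial type in $\bar{a}'$ requires introducing $g$ as an auxiliary variable ranging over the stabilizer, which is why the condition is encoded through specific witnesses $g$ in $\Pi$ and combined with saturation.
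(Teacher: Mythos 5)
Clauses (1) and (2) of your proposal are essentially the paper's argument: the sets $X_G$, $\St_G$ from Section \ref{sec: Generically stable measures as a hyper-definable set} give (1), and your finite covering lemma is a finitary version of the paper's Claim \ref{cla: ab of gs stabilizers} (Fact \ref{fac: def am def measure} plus the symmetrization of Fact \ref{fac: symmetrization measure}, applied to the left translations, which are definable over the model in question), so (2) is fine. The problem is clause (3). Your partial type $\Pi(\bar a')$ has as parameters all elements of $\bigcup_{\bar a \in I}\St_G(\cU,\bar a)$; these stabilizers are type-definable subgroups of $G(\cU)$ and are in general of the same cardinality as the monster, so $\Pi$ is a type over a parameter set that is not small and saturation of $\cU$ does not produce a realization. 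Passing to a larger monster $\cU'$ does not repair this: a realization $\bar a_I' \in \cU'$ of $\Pi$ only guarantees that the \emph{$\cU$-points} of each $\St_G(\cU,\bar a)$ land in $\St_G(\cU',\bar a_I')$; it does not give the entailment $\St_G(x,\bar a)\vdash \St_G(x,\bar a_I')$, and the containment of stabilizers computed in $\cU$ is not a condition on $\bar a_I'$ expressible by a small type over small parameters, so there is no way to "descend" a realization back into $\cU$ (e.g.\ realizing $\tp(\bar a_I'/M_0 I)$ inside $\cU$ need not preserve it). Note this gap is not confined to infinite $I$: even for two tuples $\bar a_1,\bar a_2\in X_G$ your finite covering lemma only covers finitely many group elements, so the whole of clause (3) rests on this compactness step.

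The paper avoids this by arguing structurally rather than by compactness over the stabilizers: given small $I$, take a small $(|M_0|+|T|)^{+}$-saturated model $M\supseteq M_0\cup I$; each $\mu_{\bar a}$, $\bar a\in I$, is then generically stable over $M$; Claim \ref{cla: ab of gs stabilizers} yields a generically stable $\mu_M$ with $G(M)\subseteq \Stab_G(\mu_M)$; Lemma \ref{lem: generically stable localizing} (this is where the saturation of $M$ is used) upgrades the inclusion of $G(M)$ to the statement that each $\mu_{\bar a}$ is supported on $\Stab_G(\mu_M)$, and Remark \ref{rem: stab contained in a subgroup} then gives $\Stab_G(\mu_{\bar a})\leq\Stab_G(\mu_M)=\St_G(\cU,\bar a_I)$. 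If you want to keep a compactness-style argument, the fix is to replace your witness conditions ``$(g,\bar a')\in\St_G$ for all $g\in\St_G(\cU,\bar a)$'' by the condition ``$\mu_{\bar a}$ concentrates on $\Stab_G(\mu_{\bar a'})$'', which \emph{is} type-definable in $(\bar a,\bar a')$ over $M_0$ (via the average-approximation formulas, as in Claim \ref{cla: more precise directedness for stabilizers}); that type has small parameters $M_0\cup I$, its finite satisfiability does follow from your finite covering lemma (choosing representatives of the tuples inside $G$, Remark \ref{rem: meas code rep inside the partial type}), and Remark \ref{rem: stab contained in a subgroup} then converts the realization into the desired containments.
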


\begin{proof}
	
We will use some material from Section \ref{sec: hyperdef sets from measures}. Namely, we consider the $M_0$-type-definable set $X_{G}$ (Definition \ref{def: X_G}): generically stable measures in $\mathfrak{M}_G(\cU)$ are precisely the measures of the form $\mu_{\bar{a}}$ (Definition \ref{def: X_x}) for $\bar{a} \in X_G$ (but possibly $\mu_{\bar{a}} = \mu_{\bar{a}'}$ for $\bar{a} \neq \bar{a}' \in X_{G}$), and $\mu_{\bar{a}}$ is generically stable over $\bar{a}$. By Remark \ref{rem: meas code rep inside the partial type} we may assume that every element of every tuple in $X_G$ is in $G(\cU)$.   By Lemma \ref{lem: unif defining stab of mu} we have an $M_0$-type-definable set $\St_{G}(x,\bar{y})$ so that $\left( \Stab_{G}(\mu_{\bar{a}}) \right)(\cU) = \St(\cU, \bar{a})$ for all $\bar{a} \in X_G$. 

\begin{claim}\label{cla: ab of gs stabilizers}
	For every small model $M \succ M_0$ there is a measure $\mu_{M} \in \mathfrak{M}_{G}(\cU)$ and a small model $M' \succ M$ so that $\mu_{M}$ is generically stable over $M'$ and $G(M) \subseteq \Stab_{G}(\mu_{M})$.
\end{claim}
\begin{proof}
	 As $G$ is definably amenable and NIP, by Fact \ref{fac: def am def measure} there is a measure $\mu \in \mathfrak{M}_{G}(\cU)$  which is left-$G(\cU)$-invariant, and definable over some small model $M_1 \prec \cU$, without loss of generality $M \prec M_1$.
  Then, using Fact \ref{fac: symmetrization measure}, $\mu|_{M_1}$ can be extended to a measure $\mu_{M} \in \mathfrak{M}_{G}(\cU)$ which is generically stable over some small $M_1 \prec M' \prec \cU$ and left $G(M)$-invariant (see also \cite[Section 7]{NIP2}).
\end{proof}

In particular, for every small model $M$, $G(M) \subseteq \Stab_{G}(\mu)$ and $\mu = \mu_{\bar{a}}$ for some $\bar{a} \in X_{G}$, hence $G(M) \subseteq \Stab_{G}(\mu_{\bar{a}}) = \St_G(\cU; \bar{a})$. This shows that $G(\cU) =  \bigcup_{\bar{a} \in X_{G}} \St_{G}(\cU, \bar{a})$.

Now let $(\bar{a}_i : i \in I)$ be a small tuple of elements of $X_{G}$. Let $M$ be a small $(|M_0|+|T|)^{+}$-saturated model containing $M_0$ and all of $\bar{a}_i$, then $\mu_{\bar{a}_i} \in \mathfrak{M}_{G}(\cU)$ is generically stable over $M$ for all $i \in I$. Let $\mu_{M} \in \mathfrak{M}_{G}(\cU)$ be given  by Claim \ref{cla: ab of gs stabilizers}, then $\mu_{M}$ is generically stable over some small $M' \succ M$ and $\Stab_{G}(\mu_{M})$ is type-definable over $M'$. As  $G(M) \subseteq \Stab_{G}(\mu_{M})$, we have $\mu_{\bar{a}_i} \in \mathfrak{M}_{\Stab_{G}(\mu_{M})}(\cU)$ by Lemma \ref{lem: generically stable localizing}. Hence  $\Stab_{G}(\mu_{\bar{a}_i}) \leq \Stab_{G}(\mu_{M})$ for all $i \in I$, by Remark \ref{rem: stab contained in a subgroup}. As $\mu_{M} = \mu_{\bar{a}}$ for some $\bar{a} \in X_{G}$, we have $\St_{G}(\cU; \bar{a}_i) \subseteq \St_{G}(\cU; \bar{a})$ for all $i \in I$.
\end{proof}

This immediately implies a generalization of \cite[Corollary 6.4]{hils2021definable} from abelian to definably amenable groups:
\begin{corollary}
	($T$ NIP) If $G$ is definably amenable with no indiscernible linearly ordered (by inclusion) family of type-definable subgroups, then $G$ is fsg.
\end{corollary}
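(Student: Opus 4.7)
The plan is to prove the contrapositive: assuming $G$ is definably amenable but not fsg, we construct an indiscernible sequence $(\bar{b}_i : i \in \omega)$ in $X_G$ whose associated type-definable subgroups $\St_G(\cU, \bar{b}_i)$ form a strictly $\subseteq$-increasing chain, directly contradicting the hypothesis. By Proposition~\ref{prop: fsg iff inv gen stab meas}, failure of fsg means $\St_G(\cU, \bar{a}) = \Stab_G(\mu_{\bar{a}}) \subsetneq G(\cU)$ for every $\bar{a} \in X_G$. Combined with the directed covering $G(\cU) = \bigcup_{\bar{a} \in X_G} \St_G(\cU, \bar{a})$ from Theorem~\ref{thm: approx def am by stab gs}, this lets us recursively build a strict chain $(\bar{a}_n : n < \omega)$ together with witnesses $g_n \in \St_G(\cU, \bar{a}_{n+1}) \setminus \St_G(\cU, \bar{a}_n)$: at step $n+1$, pick some $g_n \notin \St_G(\cU, \bar{a}_n)$, choose $\bar{a}'$ with $g_n \in \St_G(\cU, \bar{a}')$, and apply directedness to $\{\bar{a}_n, \bar{a}'\}$ to obtain $\bar{a}_{n+1}$. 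By the chain property, $g_n \in \St_G(\cU, \bar{a}_m)$ for all $n < m$, and for each $n$ some $\psi_n \in \St_G$ witnesses $\neg \psi_n(g_n, \bar{a}_n)$. The chain can be prolonged to any desired length in a sufficiently saturated $\cU$.

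\medskip

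The main obstacle is that ``$\St_G(\cU, \bar{a}) \subseteq \St_G(\cU, \bar{a}')$'' is not type-definable on $(\bar{a}, \bar{a}')$: by compactness it unpacks as $\bigwedge_{\psi \in \St_G} \bigvee_{S \subseteq \St_G \text{ finite}} \theta_{\psi, S}(\bar{a}, \bar{a}')$, where $\theta_{\psi, S}(\bar{y}_0, \bar{y}_1) := \forall x \bigl( \bigwedge_{\psi' \in S} \psi'(x, \bar{y}_0) \to \psi(x, \bar{y}_1) \bigr)$ is a first-order formula. The plan is to uniformize all of these existential witnesses along a thinned subchain, available by sufficient saturation: by iterated pigeonhole (Erd\H{o}s--Rado applied to a chain of length a large successor above $2^{|T|}$, or by transfinite iteration of Ramsey's theorem for two colors) we fix a single $\psi^* \in \St_G$ for which $\neg \psi^*(g_n, \bar{a}_n)$ holds cofinally, and, for each $\psi \in \St_G$, a single finite $S_\psi^* \subseteq \St_G$ so that $\theta_{\psi, S_\psi^*}(\bar{a}_n, \bar{a}_m)$ holds for every pair $n < m$ of the thinned subchain. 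On this uniformized subchain, every relevant relation between indices is now encoded by a single first-order formula.

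\medskip

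A standard Ramsey-plus-compactness extraction then produces an indiscernible sequence $(\bar{b}_i, h_i : i \in \omega)$ in $X_G \times G(\cU)$ over $M_0$ which EM-realizes the uniformized subchain and hence satisfies, for every $\psi \in \St_G$: (i) $\theta_{\psi, S_\psi^*}(\bar{b}_i, \bar{b}_j)$ for all $i < j$; (ii) $\psi(h_i, \bar{b}_j)$ for all $i < j$; and (iii) $\neg \psi^*(h_i, \bar{b}_i)$ for every $i$. Property~(i) yields $\St_G(\cU, \bar{b}_i) \subseteq \St_G(\cU, \bar{b}_j)$, and properties (ii)--(iii) give $h_i \in \St_G(\cU, \bar{b}_j) \setminus \psi^*(\cU, \bar{b}_i)$, so $h_i \in \St_G(\cU, \bar{b}_j) \setminus \St_G(\cU, \bar{b}_i)$ (since $\St_G \vdash \psi^*$). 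Hence $\St_G(\cU, \bar{b}_i) \subsetneq \St_G(\cU, \bar{b}_j)$ for every $i < j$, exhibiting an indiscernible family of type-definable subgroups of $G$ linearly ordered by proper inclusion --- contradicting the hypothesis. The most delicate step is the pigeonhole of the second paragraph, which has to be carried out uniformly over the partial type $\St_G$ of size $\leq |T|$.
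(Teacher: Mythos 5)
Your proposal is correct, and at bottom it is the same argument the paper has in mind: the paper's proof is a one-liner asserting that, since $\{\St_{G}(\cU,\bar{a}) : \bar{a} \in X_{G}\}$ is directed and covers $G(\cU)$, the hypothesis forces $G(\cU)=\St_{G}(\cU,\bar{a})=\Stab_{G}(\mu_{\bar{a}})$ for a single $\bar{a}$, whence fsg by Proposition \ref{prop: fsg iff inv gen stab meas}; you prove exactly the contrapositive of the suppressed step, building a strictly increasing chain from ``every stabilizer is proper'' plus covering and directedness, and extracting an indiscernible, strictly $\subseteq$-increasing family. Where you genuinely diverge is in handling the delicate point that $\St_{G}(\cU,\bar{a})\subseteq\St_{G}(\cU,\bar{a}')$ is not a type-definable condition on $(\bar{a},\bar{a}')$: your Erd\H{o}s--Rado uniformization of the compactness witnesses $\theta_{\psi,S}$ (which needs the chain to have length about $\beth_2(|T|)^+$, slightly more than ``a successor above $2^{|T|}$'') is correct but heavier than necessary. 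The paper's own toolkit gives two cheaper alternatives: (a) the proof of directedness in Theorem \ref{thm: approx def am by stab gs} actually produces, at each step, the \emph{type-definable} relation ``$\mu_{\bar{a}_n}$ is supported on $\Stab_{G}(\mu_{\bar{a}_m})$'' (the partial type $\pi(\bar{y}_1,\bar{y}_2)$ of Claim \ref{cla: more precise directedness for stabilizers}, which implies inclusion by Remark \ref{rem: stab contained in a subgroup}), so tracking this relation along the chain makes preservation of the inclusions under indiscernible extraction automatic; or (b) uniform directedness (Proposition \ref{prop: getting unif directed}) supplies a single implication formula per $\psi$ up front. Either would let you extract from a chain of length just above $2^{|T|}$ (only the pigeonhole on the failing formula $\psi^*$ remains), but your version is complete as it stands.
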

\begin{proof}
	Since the family $\{\St_{G}(\cU; \bar{a} : \bar{a} \in X_{G} \}$ is directed and its union is $G(\cU)$, it follows from the assumption that $G(\cU) = \St_{G}(\cU; \bar{a})$ for some $\bar{a} \in X_{G}$, hence $G(\cU) = \Stab_{G}(\mu_{\bar{a}})$ for a generically stable measure $\mu_{\bar{a}} \in \mathfrak{M}_{G}(\cU)$, witnessing that $G$ is fsg.
\end{proof}

\begin{remark}
Theorem \ref{thm: approx def am by stab gs} also holds with two-sided stabilizers of generically stable measures instead of just left stabilizers.
\end{remark}
\begin{proof}
	As $G$ is definably amenable and NIP, by \cite[Lemma 6.2]{chernikov2018definably} there is a measure $\mu \in \mathfrak{M}_{G}(\cU)$  which is both left and right $G(\cU)$-invariant, and definable over some small model. Then Claim \ref{cla: ab of gs stabilizers} holds with  $G(M) \subseteq \Stab^{\leftrightarrow}_{G}(\mu_{M})$ (the two-sided stabilizer of $\mu_{M}$, Definition \ref{def: stab of a measure}), applying Fact \ref{fac: def am def measure} to each of left and right multiplications by elements of $G(M)$. The proof of Theorem \ref{thm: approx def am by stab gs} then goes through working with the type-definable set $\St_G^{\leftrightarrow}$ (Lemma \ref{lem: unif defining stab of mu}) defining two-sided stabilizers instead of $\St_{G}$.
\end{proof}

The following example, checked in discussion with Kyle Gannon, demonstrates that we cannot assume the groups $\St_{G}(\cU;\bar{a}) \leq G$ in Theorem \ref{thm: approx def am by stab gs} to be fsg (equivalently, we cannot assume that the measures $\mu_{\bar{a}}$ are supported on their stabilizers $\Stab_{G}(\mu_{\bar{a}})$):
\begin{example}\label{ex: no fsg subroups in R+}
	Let $M_0 = (R,+, <, \ldots)$ be an $o$-minimal (expansion of an) ordered (abelian) group, $G(M_0) = (R,+)$, $\cU \succ M_0$ a monster model.  There are no non-trivial type-definable fsg subgroups of $G(\cU)$.
	
	Indeed, first note that there are no non-trivial definable subgroups of $G(\cU)$ (by $o$-minimality any such group would be a finite union of intervals with end-points in $\cU$, but taking $a$ close enough to the right end-point of the right-most interval $I$, $a+a \notin I$). Next, we have:
	\begin{claim}
		If $\mu \in \mathfrak{M}_{G}(\cU)$ is any measure, then the subgroup $\Stab_{G}(\mu) \leq G(\cU)$ is convex.
	\end{claim}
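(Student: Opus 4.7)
The plan is to fix $g_0 \in \Stab_G(\mu)$ with $g_0 > 0$ and show every $h \in G(\cU)$ with $0 < h < g_0$ lies in $\Stab_G(\mu)$; combined with the subgroup property this gives convexity in the ordered group $G(\cU) = (R(\cU), +)$.

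The first step is to observe that any interval $(a,b) \subseteq R(\cU)$ with $b - a \leq g_0$ has $\mu((a,b)) = 0$. Indeed, because $g_0 \in \Stab_G(\mu)$, the translates $(a + n g_0, b + n g_0)$ for $n \in \mathbb{Z}$ all have the same $\mu$-measure; and because $b - a \leq g_0$, they are pairwise disjoint. Finite additivity applied to arbitrary $N$-fold unions yields $N \mu((a,b)) \leq 1$ for every $N$, forcing $\mu((a,b)) = 0$. The same pigeonhole shows $\mu(\{a\}) = 0$ for every singleton (using the translates $\{a + n g_0\}$), and therefore $\mu$ of any half-open or closed interval of length $\leq g_0$ is zero as well.

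The second step is to upgrade this to translation invariance by $h$ on every interval. For any interval $J \subseteq R(\cU)$ --- bounded, half-bounded, or all of $R(\cU)$ --- and any $h$ with $|h| < g_0$, the symmetric difference $J \triangle (J + h)$ is contained in a union of at most two intervals, each of length $\leq |h| < g_0$, and so has $\mu$-measure zero by Step 1. Hence $\mu(J) = \mu(J + h)$.

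Finally, by $o$-minimality of $M_0$ (and hence of $\cU$), every $L(\cU)$-definable subset $X \subseteq R(\cU)$ in one variable is a finite disjoint union of intervals and singletons, and $X + h$ decomposes in parallel; summing the measures termwise and invoking the two preceding steps gives $\mu(\varphi(x, b)) = \mu(\varphi(x - h, b))$ for every $\varphi(x, y) \in L$ and $b \in \cU^y$, so $h \in \Stab_G(\mu)$. The main thing to be careful about is the accounting of the (at most two) short intervals and finitely many singletons appearing in the symmetric differences and in the decomposition of $X$, but each is of measure zero by Step 1, so the computation is routine.
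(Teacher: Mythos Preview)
Your proof is correct and takes a genuinely different route from the paper's. Both arguments begin the same way: by $o$-minimality it suffices to check translation invariance on intervals (and points). But the mechanisms diverge from there.

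The paper argues by contradiction: assuming $0 < c < b$, $b \in \Stab_G(\mu)$, and $\mu(I) \neq \mu(c+I)$ for some interval $I$, it runs a case analysis on the shape of $I$. For a half-line $I = (-\infty,d)$ one has the chain $I \subseteq c+I \subseteq b+I$, so $\mu(I) < \mu(c+I) \leq \mu(b+I)$, contradicting $b \in \Stab_G(\mu)$; the bounded case $I = (a,d)$ is handled by a more delicate monotonicity comparison, embedding $(-\infty,a] \sqcup (c+I)$ into $(-\infty,b+d)$ (and a dual computation when $\mu(c+I) < \mu(I)$).

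Your approach instead first isolates a clean lemma: any interval of length at most $g_0$ has measure zero, because its $\mathbb{Z} g_0$-translates are pairwise disjoint and equimeasure. Once this is in hand, translation by any $h$ with $|h| < g_0$ moves an interval by a symmetric difference of measure zero, and you are done. This is arguably more conceptual --- the ``short intervals are null'' step does all the work and the rest is bookkeeping --- while the paper's argument trades that lemma for a more hands-on case split that never invokes disjoint translates. Your route would also adapt more readily to settings where one wants the stronger conclusion that $\mu$ vanishes on all bounded definable sets whenever $\Stab_G(\mu)$ is nontrivial.
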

	\begin{proof}
		Assume $0 < c < b $ in $\cU$ and $b \in \Stab_{G}(\mu)$. Suppose $c \notin \Stab_{G}(\mu)$, i.e.~$\mu(X) \neq \mu(c+ X)$ for some $\cU$-definable set $X$. Then, by $o$-minimality, we have $\mu(I) \neq \mu(c+ I)$ for some non-empty $\cU$-definable interval $I$.
		\begin{enumerate}
			\item $I = (-\infty, d)$ for some $d \in \cU$.
			 Then  $I \subseteq c+I \subseteq b+ I$ and necessarily $\mu(c+ I) > \mu(I)$, contradicting $\mu(I) = \mu(b+ I)$.
			
			\item $I = (d, + \infty)$ for some $d \in \cU$.
			 Then $b+ I \subseteq c+ I \subseteq I$ and necessarily $\mu(c+ I) < \mu(I)$, contradicting $\mu(I) = \mu(b+ I)$.
			 \item $I = (a,d)$ for some $a<b \in \cU$. 
			 
			 \noindent Suppose $\mu(c+I) > \mu(I)$. Then $
			 \mu((-\infty, d)) = \mu( (-\infty, a]) + \mu( I) < \mu( (-\infty, a]) + \mu( c+ I) = \mu( (-\infty, a] \sqcup (c+I)) \leq \mu(b + (-\infty, d))$, thus $b \notin \Stab_{G}(\mu)$. 
			 
			\noindent  If $\mu(c+I) < \mu(I)$, we have $0 < \mu(I \setminus c+ I) \leq \mu((a,a+c))$, so $\mu((a, + \infty)) > \mu(b+ (a, + \infty))$, thus $b \notin \Stab_{G}(\mu)$.
			\item And similarly for closed intervals. \qedhere
		\end{enumerate}
	\end{proof}
 Every type-definable fsg group $H(\cU) \leq G(\cU)$ is of the form $\Stab_{G}(\mu)$ for a generically stable measure $\mu$ supported on it, hence definable in $\cU^{\Sh}$ by the claim,  so it has to be definable in $\cU$ by Theorem \ref{thm: type-def and ext def fsg implies def}.
\end{example}

\begin{remark}
	By \cite[Theorem 1.5]{chernikov2024definable}, Example \ref{ex: no fsg subroups in R+} also shows that there are no  idempotent generically stable measures in  $(R,+, <, \ldots)$ (except for $\tp(1/\cU)$), which is a $dp$-minimal ordered abelian group. Contrast this with the case of abelian groups in ACVF \cite[Lemma 5.1]{hrushovski2019valued}.
\end{remark}

Next we strengthen ``directed'' in Theorem \ref{thm: approx def am by stab gs} to ``uniformly directed'' (similar to Definition \ref{def: unif dir}):
\begin{proposition}\label{prop: getting unif directed}
 In Theorem \ref{thm: approx def am by stab gs}(3), the family $\{\St_{G}(\cU, \bar{a}) : \bar{a} \in X_{G}\}$ is moreover \emph{uniformly directed}, i.e.~for any  $\varphi(x,y) \in L, k \in \mathbb{N}$ there are $\varphi'(x,y') \in L, k' \in \mathbb{N}$ so that: for any small $I \subseteq X_{G}$ there is some $\bar{a}_{I} \in X_{G}$ so that $\St_{G,\varphi',1/k'}(x,\bar{a}) \vdash \St_{G,\varphi,1/k}(x,\bar{a}_{I})$ for all $\bar{a} \in I$ simultaneously.
\end{proposition}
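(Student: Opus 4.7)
The plan is to reduce uniform directedness for fixed $\varphi, k$ to a single ``uniform absorption'' lemma and then apply it to the specific $\bar{a}_I$ constructed in the proof of Theorem \ref{thm: approx def am by stab gs}.

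\textbf{Key Claim.} For each $(\varphi, k)$ there exist $(\varphi', k')$ depending only on $(\varphi, k)$ such that whenever $\bar{a}, \bar{b} \in X_G$ satisfy $\mu_{\bar{a}} \in \mathfrak{M}_{\Stab_G(\mu_{\bar{b}})}(\cU)$, one has $\St_{G,\varphi',1/k'}(x,\bar{a}) \vdash \St_{G,\varphi,1/k}(x,\bar{b})$.

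Granting the Key Claim, the proposition follows immediately: for a small $I \subseteq X_G$, I construct $\bar{a}_I$ exactly as in the proof of Theorem \ref{thm: approx def am by stab gs}(3), with $\mu_{\bar{a}_I} = \mu_M$ left $G(M)$-invariant and generically stable over $M'$. Then for every $\bar{a} \in I$, Lemma \ref{lem: generically stable localizing} gives $\mu_{\bar{a}} \in \mathfrak{M}_{\Stab_G(\mu_{\bar{a}_I})}(\cU)$, and the Key Claim yields the desired implication with $(\varphi', k')$ independent of $I$ (and of the particular $\bar{a} \in I$).

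To prove the Key Claim, I argue by compactness. Assume it fails: then for each $(\varphi', m) \in L \times \mathbb{N}$ there is a witnessing triple $(\bar{a}_{\varphi',m}, \bar{b}_{\varphi',m}, g_{\varphi',m}) \in X_G \times X_G \times G(\cU)$ with $\mu_{\bar{a}_{\varphi',m}}$ supported on $\Stab_G(\mu_{\bar{b}_{\varphi',m}})$, $g_{\varphi',m} \in \St_{G,\varphi',1/m}(\cU,\bar{a}_{\varphi',m})$, and $g_{\varphi',m} \notin \St_{G,\varphi,1/k}(\cU,\bar{b}_{\varphi',m})$. Using Remark \ref{rem: one stabilizer implies two stabilizers} to combine finitely many stabilizer formulas into a single one, I build a finitely satisfiable partial type in $(\bar{a},\bar{b},g)$ imposing that $\bar{a}, \bar{b} \in X_G$, that $\mu_{\bar{a}}$ is supported on $\Stab_G(\mu_{\bar{b}})$, that $g \in \St_{G,\varphi',1/m}(\cU,\bar{a})$ for all $(\varphi', m)$, and that $g \notin \St_{G,\varphi,1/k}(\cU,\bar{b})$. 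A realization $(\bar{a}, \bar{b}, g)$ in a sufficiently saturated monster then satisfies $g \in \bigcap_{\varphi',m} \St_{G,\varphi',1/m}(\cU,\bar{a}) = \St_G(\cU,\bar{a}) = \Stab_G(\mu_{\bar{a}})$ by Lemma \ref{lem: unif defining stab of mu}; Remark \ref{rem: stab contained in a subgroup} then gives $\Stab_G(\mu_{\bar{a}}) \leq \Stab_G(\mu_{\bar{b}})$; and a second appeal to Lemma \ref{lem: unif defining stab of mu} yields $\Stab_G(\mu_{\bar{b}}) \subseteq \Stab_{G,\varphi,1/k}(\mu_{\bar{b}}) \subseteq \St_{G,\varphi,1/k}(\cU,\bar{b})$, contradicting $g \notin \St_{G,\varphi,1/k}(\cU,\bar{b})$.

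The main obstacle I anticipate is verifying that the condition ``$\mu_{\bar{a}} \in \mathfrak{M}_{\Stab_G(\mu_{\bar{b}})}(\cU)$'' is $\emptyset$-type-definable in $(\bar{a}, \bar{b})$, so that compactness does produce the realizer $(\bar{a}, \bar{b}, g)$. This holds because $\Stab_G(\mu_{\bar{b}}) = \St_G(\cU, \bar{b})$ is type-defined (over $\bar{b}$) by the formulas $\St_{G,\psi,1/m}(x,\bar{b})$ via Lemma \ref{lem: unif defining stab of mu}, and the constraint that $\mu_{\bar{a}}$ give measure $1$ to each of these formulas translates, via the hyperdefinable presentation of the space of generically stable measures (Section \ref{sec: Generically stable measures as a hyper-definable set}, as in Definition \ref{def: X_G}), to a type-definable condition on $(\bar{a},\bar{b})$.
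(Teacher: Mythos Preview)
Your proposal is correct and follows essentially the same approach as the paper: the paper isolates precisely your Key Claim (as Claim~\ref{cla: more precise directedness for stabilizers}), proves it by the same compactness argument after observing that ``$\mu_{\bar a_1}\in\mathfrak{M}_{\Stab_G(\mu_{\bar a_2})}(\cU)$'' is type-definable in $(\bar a_1,\bar a_2)$, and then plugs in the $\bar a_I$ built in the proof of Theorem~\ref{thm: approx def am by stab gs}.
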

\begin{proof}

First we refine Remark \ref{rem: stab contained in a subgroup} in the case when $H = \Stab_{G}(\nu)$ for some generically stable measure $\nu \in \mathfrak{M}_{G}(\cU)$ (possibly different from $\mu$):
\begin{claim}\label{cla: more precise directedness for stabilizers}
	For every $\varphi(x,y) \in L, k \in \mathbb{N}$ there exist $\xi(x,y') \in L, k' \in \mathbb{N}$ satisfying the following: if $\bar{a}_1, \bar{a}_2 \in X_{G}$ and $\mu_{\bar{a}_1} \in \mathfrak{M}_{\Stab_{G}(\mu_{\bar{a}_2})}(\cU)$, then $ \St_{G, \xi , 1/(k')}(x, \bar{a}_1) \vdash \St_{G, \varphi, 1/k}(x, \bar{a}_2)$.
\end{claim}
\begin{proof}
For every $\varphi(x,y) \in L, k \in \mathbb{N}$, $\St_{G, \varphi, 1/k}(\cU, \bar{a})$ is defined by $\psi_{\varphi, 1/k}(x;\bar{a})$ for some $\psi_{\varphi, 1/k}(x;y') \in L$ depending only on $\varphi, \varepsilon$ (but not on $\bar{a}$). Then  ``$\mu_{\bar{a}_1} \in \mathfrak{M}_{\Stab_{G}(\mu_{\bar{a}_2})}(\cU)$'' is a type-definable condition on $(\bar{a}_1, \bar{a}_2) \in X_G \times X_G$ via  a partial type $\pi(\bar{y}_1, \bar{y}_2)$ given by 
$$\bigwedge_{\varphi \in L, \  k, \ell \in \mathbb{N}}  \Av\left( (\bar{a}_1)_{\psi_{\varphi, 1/k}, 1/\ell}; \psi_{\varphi, 1/k}(x;(\bar{a}_2)_{\varphi', 1/k}) \right) \geq 1 - 2/\ell
$$
(see Lemma \ref{lem: unif defining stab of mu}).

Now suppose claim fails for $\varphi(x,y)$ and $k$. Then, using Remark \ref{rem: one stabilizer implies two stabilizers}, for any $n \in \mathbb{N}$ and $\varphi_i(x,y_i)  \in L, k_i \in \mathbb{N}$ for $i \in [n]$ there are some $b, \bar{a}_1, \bar{a}_2$ in $\cU$ with 
$$ \models X_{G}(\bar{a}_1) \land X_{G}(\bar{a}_2) \land \pi(\bar{a}_1, \bar{a}_2) \land   \bigwedge_{i \in [n]} \St_{G, \varphi_i, 1/(k_i)}(b, \bar{a}_1) \land \neg \St_{G, \varphi, 1/k}(b, \bar{a}_2).$$
By compactness we thus find some  $b, \bar{a}$ in $\cU$ so that
$$\models X_{G}(\bar{a}_1) \land X_{G}(\bar{a}_2) \land  \pi(\bar{a}_1, \bar{a}_2)  \land  \St_{G}(b, \bar{a}_1) \land \neg \St_{G, \varphi, 1/k}(b, \bar{a}_2).$$
But this contradicts Remark \ref{rem: stab contained in a subgroup}, with  $\mu := \mu_{\bar{a}_1}, \nu :=  \mu_{\bar{a}_2}, H := \Stab_G(\mu_{\bar{a}_2})$.
\end{proof}

Now in the proof of directedness in Theorem \ref{thm: approx def am by stab gs}, given any small tuple $(\bar{a}_i : i \in I)$  of elements of $X_{G}$,
we found a generically stable measure  $\mu_{M} \in \mathfrak{M}_{G}(\cU)$, say $\mu_{M} = \mu_{\bar{a}}$ for $\bar{a} \in X_{G}$, so that  $\mu_{\bar{a}_i} \in \mathfrak{M}_{\Stab_{G}(\mu_{M})}(\cU)$ for all $i \in I$. By Claim \ref{cla: more precise directedness for stabilizers} we have $\xi$ and $k'$ so that 
$$ \models \forall x \left( \St_{G, \xi , 1/k'}(x, \bar{a}_i) \rightarrow \St_{G, \varphi , 1/k}(x, \bar{a}) \right)$$
for all $i \in I$, as wanted.
\end{proof}

We give a more explicit proof of Proposition \ref{prop: getting unif directed} in the case of an abelian group $G$. For (automorphism-) invariant measures $\mu$,$\nu$ in an NIP group, we have that $\Stab_{G}(\mu) \subseteq \Stab_{G}(\mu \ast \nu)$ and $\Stab_{G}(\nu) \subseteq \Stab_{G}^{\leftarrow}(\mu \ast \nu)$ (the right stabilizer). This point was used e.g.~in \cite[Lemma 6.2]{chernikov2018definably} to show that every definably amenable NIP group admits a bi-invariant measure (see also \cite[Proposition 3.23]{chernikov2022definable}). The following is a  refinement of \cite[Section 6, (A)]{hils2021definable}: 
\begin{lemma}\label{lem: convolution stab uniformly}
	Let $G = G(x)$ be a type-definable abelian group, $n \in \omega$ and $\mu_i \in \mathfrak{M}_{G}(\cU)$ for $i \in [n]$ generically stable. Let $\mu :=  \bigast_{i \in [n]} \mu_i$. For $\varphi(x,y)$, let $\varphi'(x; y, x') := \varphi(x \cdot x', y)$. Then for any $i \in [n]$ and $\varepsilon > 0$, 
	$$ \Stab_{G, \varphi', \varepsilon} (\mu_i) \subseteq \Stab_{G, \varphi, 2 \varepsilon} (\mu)$$
	 (in the notation of Fact \ref{fac: stab of def meas type def}); in particular, $\Stab_{G}(\mu_i) \subseteq \Stab_{G}(\mu)$.
\end{lemma}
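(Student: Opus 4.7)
The strategy is to isolate the $i$-th factor in the convolution using commutativity of $G$, reduce a statement about $\mu$ to a pointwise statement about $\mu_i$ via Fubini, and then pass from the semantic inequality back to the $L$-definable set $\Stab_{G,\varphi,2\varepsilon}(\mu)$ via the tolerance factor in Fact \ref{fac: stab of def meas type def}.

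Fix $i \in [n]$ and $g \in \Stab_{G,\varphi',\varepsilon}(\mu_i)$. Let $\nu \in \mathfrak{M}_G(\cU)$ denote the convolution of the measures $\mu_j$ for $j \in [n]\setminus\{i\}$ (in any order). Since each $\mu_j$ is generically stable (hence in particular Borel-definable), iterated convolution is well-defined and remains generically stable, so $\nu$ itself is generically stable; and because $G$ is abelian, $\mu = \mu_i \ast \nu$. Fix $b \in \cU^y$ and let $h \in \{1_G, g\}$. Unwinding the definition of convolution and using that Fubini holds between two generically stable measures in NIP,
\[
\mu(\varphi(h\cdot x,b))
= (\mu_i \otimes \nu)_{x_1,x_2}\bigl(\varphi(h\cdot x_1 \cdot x_2, b)\bigr)
= \int_{x_2} \mu_i\bigl(\varphi'(h\cdot x_1, b, x_2)\bigr)\, d\nu(x_2),
\]
where in the second equality I use the definition $\varphi'(x_1;y,x') = \varphi(x_1\cdot x', y)$ and integrate $\mu_i$ in the inner variable.

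By Fact \ref{fac: stab of def meas type def}(1) applied to the measure $\mu_i$, the formula $\varphi'(x_1;y,x')$, and the parameter pair $(b, x_2)$, for every $x_2 \in \cU^x$ we have
\[
\bigl|\mu_i\bigl(\varphi'(x_1, b, x_2)\bigr) - \mu_i\bigl(\varphi'(g\cdot x_1, b, x_2)\bigr)\bigr| \le \varepsilon.
\]
Integrating this pointwise bound against $\nu$ and subtracting the two integral representations above (with $h=1_G$ and $h=g$) gives
\[
\bigl|\mu(\varphi(x,b)) - \mu(\varphi(g\cdot x, b))\bigr| \le \varepsilon
\]
for every $b \in \cU^y$. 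Applying the converse half Fact \ref{fac: stab of def meas type def}(2) (which loses a factor of $2$) places $g$ in the $L(M)$-definable set $\Stab_{G,\varphi,2\varepsilon}(\mu)$, as desired. The ``in particular'' for full stabilizers follows by intersecting over all $\varepsilon > 0$.

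The main technical care is justifying the Fubini exchange and the convolution reordering on a merely \emph{type}-definable group: one has to invoke Remark \ref{rem: type-def group formula} to arrange that the products $h \cdot x_1 \cdot x_2$ are defined $(\mu_i \otimes \nu)$-almost surely, and to verify that generic stability (hence the applicability of Fubini to $\mu_i$ against $\nu$) is preserved under convolution with the remaining generically stable measures. Once these standard facts about the Keisler-measure calculus in NIP are in place, the argument is a clean application of Fubini plus the two-sided bounds defining $\Stab_{G,\varphi,\varepsilon}$.
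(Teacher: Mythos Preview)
Your proof is correct and follows essentially the same route as the paper: write $\mu=\mu_i\ast\nu$ with $\nu=\bigast_{j\neq i}\mu_j$ (using abelianity and $\otimes$-commutation of generically stable measures), expand the convolution as an integral $\int \mu_i(\varphi'(x_1;b,x_2))\,d\nu(x_2)$, bound the integrand pointwise via Fact~\ref{fac: stab of def meas type def}(1), and conclude via Fact~\ref{fac: stab of def meas type def}(2). One minor remark: you do not actually need Fubini here, since the definition of $(\mu_i\otimes\nu)(\psi(x_1,x_2))$ already integrates $\mu_i$ on the inside against $\nu$ on the outside, which is exactly the order you want.
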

\begin{proof}
For $I \subseteq [n]$, let $\mu_{I} :=  \bigast_{i \in I} \mu_i$. As $\mu_i$ are $\otimes$-commuting by generic stability and $G$ is abelian, the order in the product does not matter, and $\mu_{I}$ is generically stable  (see \cite[Proposition 3.15]{chernikov2022definable}). In particular $\mu = \mu_i \ast \mu_{[n] \setminus \{i\}}$ for all $i \in [n]$.

Assume $g \in \Stab_{G, \varphi', \varepsilon} (\mu_i)$. Then, by Fact \ref{fac: stab of def meas type def}(1), for all $(b,g') \in \cU^{y}\times \cU^x$ we have  
\begin{gather}
	\left \lvert \mu ( \varphi'(x; b,g'))  -  \mu ( \varphi'(g \cdot x; b, g')) \right \rvert \leq \varepsilon. \label{eq: close for all params phi'}
\end{gather}

	Given an arbitrary $b \in \cU^y$, let $M \prec \cU$ be a small model so that all of $\mu_i$ are definable over $M$, $G$ is type-definable over $M$, $g \in G(M)$ and $b \in M^y$. We have
	\begin{gather*}
		\left \lvert \mu (\varphi(x;b) ) -  \mu (\varphi(g \cdot x;b)) \right \rvert \\
		= 		\left \lvert \mu_i \ast \mu_{[n] \setminus \{i\}} (\varphi(x;b) )  - \mu_i \ast \mu_{[n] \setminus \{i\}} (\varphi(g \cdot x;b) )\right \rvert \\
		= \int_{q \in S_y(M)} \left \lvert \mu ( \varphi(x \cdot g'; b))  -  \mu ( \varphi(g \cdot x \cdot g'; b)) \right \rvert d \mu'_{[n] \setminus \{i\}}(q) 
		\overset{\eqref{eq: close for all params phi'}}{\leq } \varepsilon,
	\end{gather*}
where $\mu'_{[n] \setminus \{i\}}$ is the regular Borel probability measure on $S_y(M)$ extending $\mu_{[n] \setminus \{i\}}$, and $g' \models q$ arbitrary in $\cU$. As $b$ was arbitrary, this implies $g \in \Stab_{G, \varphi, 2 \varepsilon} (\mu)$ by Fact \ref{fac: stab of def meas type def}(2).
\end{proof}

\begin{proof}[A more explicit proof of Proposition \ref{prop: getting unif directed} for  abelian $G$.]
	Let $I \subseteq X_G$ be small. For any finite $J \subseteq I$, let $\mu_{J} :=  \bigast_{\bar{a} \in J} \mu_{\bar{a}}$. As $\mu_{J} \in \mathfrak{M}_{G}(\cU)$ is generically stable, we have $\mu_{J} = \mu_{\bar{a}_{J}}$ for some $\bar{a}_{J} \in X_{G}$.  By Lemma \ref{lem: convolution stab uniformly}, for each $\bar{a} \in J$ we have $\Stab_{G, \varphi', \varepsilon} (\mu_{\bar{a}}) \subseteq \Stab_{G, \varphi, 2 \varepsilon} (\mu_{\bar{a}_{J}})$, hence, using the notation from the proof of Lemma \ref{lem: unif defining stab of mu},
$$ \models \forall x \left( \St_{G, \varphi'(x,y) , 1/(32 k)}(x, \bar{a}) \rightarrow \St_{G, \varphi(x,y) , 1/k}(x, \bar{a}_{J}) \right).$$
 As $I$ is small, by compactness there exists $\bar{a}' \in X_{G}$ so that 
 $$ \models \bigwedge_{\varphi(x,y) \in L, k \in \mathbb{N}}  \forall x \left( \St_{G, \varphi'(x,y) , 1/(32 k)}(x, \bar{a}) \rightarrow \St_{G, \varphi(x,y) , 1/k}(x, \bar{a}') \right)$$
 for all $\bar{a} \in I$ simultaneously. Hence the formula $\St_{G, \varphi'(x,y) , 1/(32 k)}(x, \bar{y}) \in \St_{G}(x;\bar{y})$ satisfies the requirement. 
\end{proof}

We also include a proof of Claim \ref{cla: more precise directedness for stabilizers} without using compactness theorem, in the hope for future applications:

\begin{proof}[A proof of Claim \ref{cla: more precise directedness for stabilizers} without using compactness.]

	For $\varphi(x,y) \in L$, where $y$ is any tuple of variables, let $\varphi'(x; y, y') := \varphi(y' \cdot x; y)$. The following are easy to verify from Fact \ref{fac: stab of def meas type def} (for any definable measure $\mu$), using that for all $b$ and $g$ we have $\varphi(g \cdot x;b) = \varphi'(x; b,g)$:
 \begin{gather}
 	\Stab_{G,\varphi', \varepsilon}(\mu) \subseteq \Stab_{G,\varphi, 2 \varepsilon}(\mu), \ 
 	\left( \Stab_{G,\varphi', \varepsilon}(\mu) \right)^{-1} \subseteq \Stab_{G,\varphi, 2 \varepsilon}(\mu), \label{eq: epsilon stabilizers 1}\\
 	\left( \Stab_{G, (\varphi')', \varepsilon}(\mu) \right) \cdot  \left( \Stab_{G, (\varphi')', \varepsilon}(\mu) \right)  \subseteq \Stab_{G, \varphi,  4 \varepsilon}(\mu)\label{eq: epsilon stabilizers 2}
 \end{gather}
(note that $\cdot$ and  $^{-1}$ are defined as $\Stab_{G, \varphi, \varepsilon}(\mu)  \subseteq \varphi_0(\cU)$ from Remark \ref{rem: type-def group formula} always).
%

 Without loss of generality $T$ contains a constant symbol. Then for any $\varphi_1(x,y_1), \varphi_2(x,y_2) \in L$ we can choose $\xi(x, \bar{y}) \in L$, with $\bar{y} = (y_1, y_2, y')$ so that $\Stab_{G,\xi, \varepsilon}(\mu)\subseteq \Stab_{G,\varphi_i, 2\varepsilon}(\mu)$ for both $i \in \{1,2\}$.

Also, given any formula $\varphi(x,y) \in L, k \in \mathbb{N}$, by Lemma \ref{lem: unif defining stab of mu} for any $\bar{a} \in X_{G}$ we have
\begin{gather}
	\forall \bar{a} \in X_{G}, \  \Stab_{G, \varphi, 1/k}(\mu_{\bar{a}}) \subseteq \St_{G, \varphi, 1/k}(\cU, \bar{a}) \subseteq \Stab_{G,\varphi, 18/k}(\mu_{\bar{a}}),\label{eq: epsilon stabilizers 3}
\end{gather} 
 and $\St_{G, \varphi, 1/k}(\cU, \bar{a})$ is defined by $\psi(x;\bar{a}_{\varphi, 1/k}, \bar{a}_{\varphi', 1/k})$ for some $\psi(x;y_{\psi}) \in L$ depending only on $\varphi, \varepsilon$ (but not on $\bar{a}$).
 
 Let us choose such $\xi(x, y_{\xi})$ for $\varphi_1 (x, y_1) := ((\varphi(x,y)')')'$ and $\varphi_2(x,y_2) := \psi(x,y_{\psi})$ where $\psi(x,\bar{a})$ is the formula as above defining  $\St_{G,\varphi_1, 1/(8k)}(\mu_{\bar{a}})$. And let $k' := 1/(18 \cdot 8 \cdot k)$ (in particular $1/k' < 1/2$).

Now suppose towards contradiction that $g \in \Stab_{G,\xi, 1/k'}(\mu) \setminus \Stab_{G,\varphi,1/k}(\nu)$ for $\nu \in \mathfrak{M}_{G}(\cU)$ a generically stable measure, hence $\nu = \mu_{\bar{a}}$ for some $\bar{a} \in X_G$.

Then $g \cdot \Stab_{G,\varphi_1, 1/(8k)}(\nu) \cap \Stab_{G,\varphi_1, 1/(8k)}(\nu) = \emptyset$ (otherwise there are some $h_1,h_2 \in \Stab_{G,((\varphi')')', 1/(8k)}(\nu)$ so that $g \cdot h_1 = h_2$, hence, using \eqref{eq: epsilon stabilizers 1} and \eqref{eq: epsilon stabilizers 2}, $g = h_2 \cdot h_1^{-1} \in \Stab_{G, \varphi, 1/k}(\nu)$ --- a contradiction).
Using \eqref{eq: epsilon stabilizers 3} and the choice of $k'$, we have $\St_{G, \varphi_1, 1/k'}(\cU, \bar{a}) \subseteq \Stab_{G,\varphi_1, 1/(8k)}(\nu)$, hence also $g \cdot \St_{G, \varphi_1, 1/k'}(\cU, \bar{a}) \cap \St_{G, \varphi_1, 1/k'}(\cU, \bar{a}) = \emptyset$. 
As $\mu \in \mathfrak{M}_{\Stab_{G}(\nu)}(\cU)$,  we have $\mu (\St_{G, \varphi_1, 1/k'}(x, \bar{a})) = 1$. And as $g \in \Stab_{G,\xi, 1/k'}(\mu)$ and $1/k' < 1/2$, by the choice of $\xi$ we have $\mu (g \cdot \St_{G, \varphi_1, 1/k'}(x, \bar{a})) > 1/2$ --- a contradiction since the total measure is $1$.
\end{proof}

Using Theorem \ref{thm: approx def am by stab gs} and earlier results, we get a description for externally type-definable, definably amenable subgroups of definable groups when \emph{working in a monster model of $M^{\Sh}$}, as a directed union of a family of type-definable \emph{in $T$} subroups:
\begin{proposition}\label{prop: ext def am descr in M'}
		Let $T$ be NIP,  $M \models T$, $\widetilde{M}' \succ M^{\Sh}$ a monster model for $T'$, $G$ a definable group in $M$ and $H(M') \leq G(M')$ a subgroup which is $L'(M)$-type-definable and definably amenable in $T' = \Th(M^{\Sh})$. 
	 Then there exist an $L(M)$-type-definable set $\St_G(x,\bar{y})$ and $L'(M)$-type-definable set $Y \subseteq (M')^{\bar{y}}$, with $|\bar{y}| = |T|$, so that:
	\begin{enumerate}
		\item for every $\bar{a} \in Y(M')$, $\St_G(M'; \bar{a})$  is a type-definable in $M' \models T$ subgroup of $H(M')$ of the form $\Stab_{G}(\mu_{\bar{a}}) \leq H(M')$ for a measure $\mu_{\bar{a}} \in \mathfrak{M}^{L}_{G}(M')$ generically stable over $\bar{a}$ in $T$;
		\item $H(M') = \bigcup_{\bar{a} \in Y} \St_{G}(M', \bar{a})$;
		\item the family $\{\St_{G}(M', \bar{a}) : \bar{a} \in Y\}$ is directed;
		\item $H(M) = \St_G(M; \bar{a})$ for all $\bar{a} \in Y$.
	\end{enumerate}
\end{proposition}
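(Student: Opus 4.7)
The plan is to apply Theorem~\ref{thm: approx def am by stab gs} to the type-definable, definably amenable group $H$ in the NIP theory $T'=\Th(M^{\Sh})$ (recall $T'$ is NIP by Fact~\ref{fac: Sh exp qe}), and then descend the resulting data to $T$ using the transfer machinery of Sections~\ref{sec: Transfer of properties of measures between T and T sh} and~\ref{sec: Generically stable measures as a hyper-definable set}.

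First, applying Theorem~\ref{thm: approx def am by stab gs} in $T'$ to $H$ yields $L'(M)$-type-definable sets $\St^{T'}_H(x,\bar{y}')$ and $X^{T'}_H\subseteq(\widetilde{M}')^{\bar{y}'}$ (with $|\bar{y}'|=|T'|$) such that for each $\bar{a}'\in X^{T'}_H$, $\St^{T'}_H(\widetilde{M}';\bar{a}')=\Stab^{T'}_H(\mu'_{\bar{a}'})$ for a generically stable-in-$T'$ measure $\mu'_{\bar{a}'}\in\mathfrak{M}^{L'}_H(\widetilde{M}')$, and these stabilizers directedly cover $H(\widetilde{M}')$. For each such $\bar{a}'$ I set $\mu_{\bar{a}'}:=\mu'_{\bar{a}'}\restriction_{L}$. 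By Theorem~\ref{thm: corresp for gs measures}, $\mu_{\bar{a}'}$ is generically stable in $T$, and by Lemma~\ref{lem: unique extension of def meas to Sh} the measure $\mu'_{\bar{a}'}$ is the \emph{unique} $L'$-extension of $\mu_{\bar{a}'}$. Using this uniqueness---if $g\cdot\mu_{\bar{a}'}=\mu_{\bar{a}'}$, then $g\cdot\mu'_{\bar{a}'}$ is an $L'$-extension of $\mu_{\bar{a}'}$ and so equals $\mu'_{\bar{a}'}$---together with Remark~\ref{rem: stab contained in a subgroup} applied to $\mu'_{\bar{a}'}\in\mathfrak{M}^{L'}_H$, I get the key identity
\[
\Stab^{T'}_H(\mu'_{\bar{a}'})=\Stab^{T}_G(\mu_{\bar{a}'})\quad\text{as subgroups of }G(M').
\]

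In $T$, Section~\ref{sec: Generically stable measures as a hyper-definable set} supplies the $L(M)$-type-definable set $X^T_G$ of length-$|T|$ codes for generically stable-in-$T$ measures supported on $G$, and Lemma~\ref{lem: unif defining stab of mu} gives an $L(M)$-type-definable $\St_G(x,\bar{y})$ with $\St_G(M';\bar{a})=\Stab^T_G(\mu_{\bar{a}})$ for all $\bar{a}\in X^T_G$. Since an $L'$-code $\bar{a}'\in X^{T'}_H$ contains $1/k$-approximations for all $L'$-formulas (and in particular for all $L$-formulas), its projection $\pi(\bar{a}')$ to the $L$-coordinates lies in $X^T_G$ and codes $\mu'_{\bar{a}'}\restriction_L=\mu_{\bar{a}'}$. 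I then define
\[
Y:=\left\{\bar{a}\in X^T_G : \mu'_{\bar{a}}\in\mathfrak{M}^{L'}_H(M')\ \text{and}\ H(M)\subseteq\St_G(M';\bar{a})\right\},
\]
where $\mu'_{\bar{a}}$ is the unique $L'$-extension of $\mu_{\bar{a}}$ from Lemma~\ref{lem: unique extension of def meas to Sh}.

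The core technical step is verifying that $Y$ is $L'(M)$-type-definable. Writing $H(x)=\bigwedge_{i\in I}\varphi_i(x)$ with $\varphi_i\in L'(M)$, the infimum formula in Lemma~\ref{lem: unique extension of def meas to Sh}(1) (applied over the monster $M'$) shows that $\mu'_{\bar{a}}(\varphi_i)=1$ is equivalent to: for every $\psi(x,y)\in L$, $j\in\mathbb{N}$, and $\bar{c}\in\widetilde{M}'$ with $\widetilde{M}'\models\forall x\,(\varphi_i(x)\to\psi(x,\bar{c}))$, one has $\Av(\bar{a}_{\psi,1/j};\psi(x,\bar{c}))\geq 1-2/j$. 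Quantifying universally over $\bar{c}$ and conjoining over $i,\psi,j$ gives an $L'(M)$-type in $\bar{a}$. The second clause is the $L(M)$-type $\bigwedge_{g\in H(M)}\St_G(g;\bar{a})$, indexed by elements of $H(M)\subseteq M$. Properties (1)--(4) then follow: (1) is immediate from the construction; (2) uses directedness in $T'$ together with Claim~\ref{cla: ab of gs stabilizers} applied to $M$ in $T'$, producing for any $h\in H(M')$ some $\bar{a}'\in X^{T'}_H$ with $\{h\}\cup H(M)\subseteq\Stab^{T'}_H(\mu'_{\bar{a}'})$, whence $\pi(\bar{a}')\in Y$ and $h\in\St_G(M';\pi(\bar{a}'))$; (3) transfers directedness from $T'$ through $\pi$ (again adjoining a code for $M$ to secure the second clause of $Y$); and (4) combines the two clauses in $Y$ via the key identity, sandwiching $\St_G(M;\bar{a})$ between $H(M)$ and itself. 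The hard part will be the bookkeeping in verifying $L'(M)$-type-definability of $Y$, where the sup/inf characterization of Lemma~\ref{lem: unique extension of def meas to Sh} must be unpacked carefully in terms of the approximations coming from the hyperdefinable coding.
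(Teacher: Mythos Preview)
Your proposal is correct and follows essentially the same route as the paper: apply Theorem~\ref{thm: approx def am by stab gs} to $H$ in $T'$, descend the resulting generically stable measures via Theorem~\ref{thm: corresp for gs measures}, and use the key identity $\Stab^{T'}_H(\mu')=\Stab^T_G(\mu'\restriction_L)$ (which the paper also extracts from the proof of Theorem~\ref{thm: main for fsg subgroups} together with Remark~\ref{rem: stab contained in a subgroup}).

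The only difference is in establishing $L'(M)$-type-definability of $Y$. The paper defines $Y$ as the projection to the $L$-coordinates of $\{\bar{a}'\in X'_H : H(M)\subseteq \St'_G(M',\bar{a}')\}$ and simply cites compactness in $\widetilde{M}'$; this bypasses any appeal to the inf/sup characterization. Your argument via Lemma~\ref{lem: unique extension of def meas to Sh}(1) also works, but note that the lemma is stated for a \emph{small coherent} model $M_1$, not for the monster: you should choose a coherent $M_1$ containing $\bar{a}$, apply the inf formula there, and observe that $\varphi_i(M_1)\subseteq\psi(M_1)$ lifts to $\varphi_i(M')\subseteq\psi(M')$ by $L'$-elementarity of $M_1\prec^{L'}\widetilde{M}'$. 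The paper's projection argument is shorter and avoids this detour.
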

\begin{proof}
	Let $\St'_{H}(x,\bar{y}')$ and $X'_{H}(\bar{y}')$ be $L'(M)$-type-definable as given by  Lemma \ref{lem: unif defining stab of mu}  applied to $H $ in $T'$, and $L(M)$-type-definable $\St_{G}(x,\bar{y}), X_{G}(\bar{y})$ given by Lemma \ref{lem: unif defining stab of mu} for $G$ in $T$. Here $\bar{y}' = \left(y_{\varphi, 1/k} : \varphi(x,y) \in L', k \in \mathbb{N} \right)$ is a tuple of length $|T'|$, and $\bar{y} = \bar{y}'\restriction_{L} := \left(y_{\varphi, 1/k} : \varphi(x,y) \in L, k \in \mathbb{N} \right)$ is a subtuple of length $|T|$.

	So generically stable measures in $\mathfrak{M}^{L'}_{H}(\cU)$ are precisely the measures of the form $\bar{\mu}_{\bar{a}'}$ with $\bar{a}' \in X'_{H}(M')$, and  $\St'_{H}(M',\bar{a}') = \Stab_{H}(\mu_{\bar{a}'})$. Given $\bar{a}' \in X'_H$, let $\bar{a} := \bar{a}'\restriction_{L}$, then $\bar{a} \in X_{G}$,  $(\mu_{\bar{a}'})\restriction_{L} = \mu_{\bar{a}}$ (where the former is given by Definition \ref{def: X_x} in $T'$, and the latter in $T$), $\mu_{\bar{a}}$ is generically stable (in $T$) and $\mu_{\bar{a}'}$ is the unique measure extending it; and every $\bar{a} \in X_{G}$ extends (non-uniquely) to $\bar{a}' \in X'_{G}$  (both by Theorem \ref{thm: corresp for gs measures}).

	By Remark \ref{rem: stab contained in a subgroup} in $T'$, for any generically stable $\bar{\mu}' \in \mathfrak{M}^{L'}_{H}(M')$ we have $\Stab_{G}(\bar{\mu}') = \Stab_{H}(\bar{\mu}') \leq H(M')$; and, as the proof of Theorem \ref{thm: main for fsg subgroups} shows, $\Stab_{G}(\bar{\mu}') = \Stab_{G}(\bar{\mu}' \restriction_{L})$, so 
	\begin{gather}
		\St_{G}(M', \bar{a}) = \St'_{G}(M',\bar{a}') = \St'_{H}(M', \bar{a}') \leq H(M') \textrm{ for all } \bar{a}' \in X'_{H}. \label{eq: internal stab subrp of H}
	\end{gather}

	By Theorem \ref{thm: approx def am by stab gs} applied to $H$ in $T'$, the family $\{\St'_{H}(M',\bar{a}') : \bar{a}' \in X'_{H} \}$ is directed and its union is $H(M')$. 
Let 
$$Y := \{\bar{a} \in X_{G} : \exists \bar{a}' \in X'_{H}  \left( \bar{a}' \restriction_{L} = \bar{a} \ \land H(M) \  \subseteq  \St'_{G}(M', \bar{a}') \right)\}.$$
 By $L'(M)$-type-definability of $X'_{H}$ and compactness in $\widetilde{M}'$, $Y$ is also $L'(M)$-type-definable. And $Y$ and $\St_{G}$ satisfy the requirement.
\end{proof}

\begin{remark}\label{rem: am subg is trace of type-def}
	In particular, if $G$ is definable and $H \leq G(M)$ is externally (type-)definable and definably amenable in $T'$, then $H(M) = K(M)$ for some $K(M') \leq H(M')$ a type-definable in $M'$ subgroup of $H(M')$. Can we additionally assume that $K(M')$ is definably amenable in $T$?
\end{remark}

Next we would like to extract from Proposition \ref{prop: ext def am descr in M'} a description of $H(M)$ \emph{entirely in $M$}. As Example \ref{ex: ext def ab wo type-def subgrps} shows, in general there might be no $M$-type-definable subgroups of $H(M)$. We are going to present $H(M)$ as a union of a (uniformly, product-) directed family of type-definable subsets of the form $\St_{G}(M, \bar{a})$ with $\bar{a}$  in $M$ but \emph{not necessarily in $X_{G}$} (as $X_{G} \cap M$ can be empty). Such a set corresponds to the intersection of approximate stabilizers of a (possibly infinite) sequence of finitely supported in $M$ measures (as opposed to a single, not necessarily finitely supported, generically stable measure when $\bar{a} \in X_{G}$), hence no longer gives a subgroup of $G(M)$ --- but the directedness of the family will be preserved.

\begin{theorem}\label{thm: ext def abelian final approx in M}
	Let $T$ be NIP,  $M \models T$, $G$ a definable group in $M$ and $H(M) \leq G(M)$ a subgroup that is $L'(M)$-definable and definably amenable  in $T'$.
	 Then there is an $M$-type-definable in $T$ set $\Gamma(x, \bar{y})$ and a strict $L'(M)$-pro-definable set $Z \subseteq \cU^{\bar{y}}$ with $|\bar{y}| = |T|$ so that:

	\begin{enumerate}
		
		\item $H(M) = \bigcup_{\bar{a} \in Z} \Gamma(M, \bar{a})$;
		\item  the family $\{\Gamma(M, \bar{a}) : \bar{a} \in Z\}$ is uniformly product directed.
	\end{enumerate}
	
	\end{theorem}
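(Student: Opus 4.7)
The plan is to upgrade Proposition~\ref{prop: ext def am descr in M'} --- which presents $H(M')$ over the $T$-monster $M'$ --- to a presentation of $H(M)$ using only internal $L(M)$-type-definable data for $\Gamma$, with the parameter set $Z$ carved pro-definably in $L'(M)$. I would take $\Gamma(x,\bar{y}) := \St_G(x,\bar{y})$ exactly as in Proposition~\ref{prop: ext def am descr in M'}; by Lemma~\ref{lem: unif defining stab of mu} this is an $L(M)$-type-definable set of the form $\bigcap_{\varphi,k} \St_{G,\varphi,1/k}(x, \bar{y}_{\varphi,1/k})$, so the ``in $T$'' requirement on $\Gamma$ is immediate.

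Define
\[
Z := \{\bar{a} \in X_G(\cU) : \Gamma(M,\bar{a}) \subseteq H(M)\},
\]
with $X_G$ as in Definition~\ref{def: X_G}. The first conjunct is $L(M_0)$-type-definable in $T$, and the second is an $L'(M)$-partial type, equivalent to $\bigwedge_{g \in G(M) \setminus H(M)} \bigvee_{(\varphi,k)} \neg \St_{G,\varphi,1/k}(g, \bar{a}_{\varphi,1/k})$ (using $L'(M)$-definability of $H(M)$). Since the projection of $Z$ to each finite subtuple $\bar{y}_{\varphi,1/k}$ reduces to the corresponding projection of $X_G$ (an $L(M_0)$-definable set of tuples in $G$), $Z$ is strict $L'(M)$-pro-definable. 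For coverage, $\supseteq$ holds by construction; $\subseteq$ follows from Proposition~\ref{prop: ext def am descr in M'}(4): for any $\bar{a} \in Y$ one has $\Gamma(M,\bar{a}) = H(M)$ and $\bar{a} \in Z$, so a single parameter already covers $H(M)$.

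For uniform product-directedness, given a small $I \subseteq Z$, I would produce $\bar{a}_* \in Z$ uniformly dominating each $\bar{a} \in I$. In the abelian case this follows from Lemma~\ref{lem: convolution stab uniformly} applied to $\mu_* := \mu_{\bar{a}_1} \ast \cdots \ast \mu_{\bar{a}_n}$: this convolution is generically stable, and combining \eqref{eq: epsilon stabilizers 1}--\eqref{eq: epsilon stabilizers 2} with the lemma yields the uniform implication $\St_{G,\varphi',1/(32k)}(x,\bar{a}) \cdot \St_{G,\varphi',1/(32k)}(x,\bar{a}) \vdash \St_{G,\varphi,1/k}(x,\bar{a}_*)$ with $\varphi' := ((\varphi')')'$, for all $\bar{a} \in I$ simultaneously, giving precisely the uniform product-containment required. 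In the general definably amenable case one instead invokes Proposition~\ref{prop: getting unif directed} (via Claim~\ref{cla: more precise directedness for stabilizers}) to obtain $\bar{a}_* \in X_G$ with the uniform property on the $\St_{G,\varphi,1/k}$ level.

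The main obstacle is ensuring $\bar{a}_* \in Z$, i.e., $\Gamma(M,\bar{a}_*) \subseteq H(M)$, rather than merely $\bar{a}_* \in X_G$. The plan is to arrange $\mu_{\bar{a}_*}$ to be supported on $H(M')$: apply Theorem~\ref{thm: approx def am by stab gs} inside $T'$ to cover the $\St'_H(M', \bar{a}'_i) \leq H(M')$ (with $\bar{a}'_i \in X'_H$ extending the given $\bar{a}_i$, supplied by Proposition~\ref{prop: ext def am descr in M'}) by a common $\bar{a}'_* \in X'_H$, whose associated measure $\mu_{\bar{a}'_*}$ is supported on $H(M')$ in $T'$. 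By Theorem~\ref{thm: corresp for gs measures}, the $L$-reduct $\mu_* := \mu_{\bar{a}'_*}\restriction_L$ remains generically stable and supported on $H(M')$, so by Remark~\ref{rem: stab contained in a subgroup} we have $\Stab_G(\mu_*) \leq H(M')$, whence $\Gamma(M,\bar{a}_*) = \St_G(M,\bar{a}_*) \subseteq H(M') \cap M = H(M)$. The delicate point is to combine this support argument with the formula-level uniformity supplied by Proposition~\ref{prop: getting unif directed}, ensuring a single $\bar{a}_*$ simultaneously witnesses both uniform product-domination and membership in $Z$.
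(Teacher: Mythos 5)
Your proposal diverges from the paper at the choice of the parameter set $Z$, and this is where it breaks. You set $Z:=\{\bar a\in X_G(\cU):\Gamma(M,\bar a)\subseteq H(M)\}$, but this set is not (visibly) strict $L'(M)$-pro-definable: the condition $\Gamma(M,\bar a)\subseteq H(M)$ is a conjunction over $g\in G(M)\setminus H(M)$ of \emph{negations of partial types} (infinite disjunctions $\bigvee_{(\varphi,k)}\neg\St_{G,\varphi,1/k}(g,\bar a_{\varphi,1/k})$), hence not a partial type at all, and your claim that the projections of $Z$ ``reduce to the corresponding projections of $X_G$'' is false — both the coherence conditions defining $X_G$ and your extra containment condition couple all the coordinates, and projections of type-definable sets are only type-definable, not definable, so strictness is lost. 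The paper's proof avoids this precisely by \emph{not} requiring $\bar a\in X_G$: its $Z$ is the set of all tuples whose $(\varphi,1/k)$-coordinates are finite tuples from $H$, so every finite projection is a power of the $L'(M)$-definable set $H$ (strictness is immediate, and $Z$ has plenty of points with coordinates in $M$), while the inclusion $\Gamma(M,\bar a)\subseteq H(M)$ is recovered afterwards from a compactness statement (Claim \ref{cla: approx stabs imply H}: a single approximate stabilizer level $\St_{G,\varphi,1/k}$ already implies $H$). In particular the sets $\Gamma(M,\bar a)$ are no longer subgroups — which is exactly the feature the theorem is designed around.

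The second gap is in your uniform directedness argument: for a small $I\subseteq Z$ you need each $\bar a\in I$ to lift to some $\bar a'\in X'_H$, i.e.\ the unique generically stable extension of $\mu_{\bar a}$ to $T'$ must be supported on $H(M')$. Theorem \ref{thm: corresp for gs measures} gives existence and uniqueness of the extension but says nothing about its support, and Proposition \ref{prop: ext def am descr in M'} supplies such lifts only for $\bar a\in Y$, not for arbitrary elements of your $Z$ (membership in your $Z$ only constrains the $M$-points of the stabilizer, not the measure or its global stabilizer). So the domination step, and with it the ``delicate point'' you flag at the end, does not go through for your $Z$. The paper's mechanism for handling arbitrary parameters with entries in $H$ is Claim \ref{cla: putting things together}: any finite tuple $\bar a^*$ from $H(M')$ occurs as the relevant coordinate of a code of the \emph{finitely supported} measure $\Av^{L'}(\bar a^*)$, which is generically stable and lies in $\mathfrak M^{L'}_H(M')$, so the uniform directedness established on $X'_H$ (Claims \ref{cla: uniform directedness in H} and \ref{cla: approx stab in T imply T'}) transfers to all such tuples; elementarity $M^{\Sh}\prec^{L'}\widetilde M'$ then pulls both the directedness witnesses and the covering witnesses down into $M$. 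This average-measure device and the elementarity descent are the core of the paper's proof and are missing from your proposal; without them, your choice of $Z$ satisfies neither the strict pro-definability nor the uniform product-directedness claims.
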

\begin{proof}

We continue in the setting and notation of (the proof of) Proposition \ref{prop: ext def am descr in M'}. So $H(M') \leq G(M')$ is $L'(M)$-definable in $T'$. And, from 
Proposition \ref{prop: ext def am descr in M'}\eqref{eq: internal stab subrp of H}, we have $H(M') = \bigcup_{\bar{a}' \in X'_{H}} \St'_{H}(M', \bar{a}')$ and $H(M) = \St_{G}(M, \bar{a})$ for all $\bar{a} \in  Y$. 
Here $\bar{a}' = \left(\bar{a}_{\varphi, 1/k} : \varphi(x,y) \in L', k \in \mathbb{N} \right)$ is a tuple of length $|T'|$, and $\bar{a} = \bar{a}'\restriction_{L} := \left(\bar{a}_{\varphi, 1/k} : \varphi(x,y) \in L, k \in \mathbb{N} \right)$ is a subtuple of length $|T|$ (and similarly for $\bar{y}'$ and $\bar{y} := \bar{y}' \restriction L$).
The following claims refine/uniformize the steps in the proof of Proposition \ref{prop: ext def am descr in M'}.

\begin{claim}\label{cla: uniform directedness in H}
	For every $\varphi(x,y) \in L', k \in \mathbb{N}$ there are some $\varphi'(x,y') \in L', k' \in \mathbb{N}$ so that: for every $n \in \mathbb{N}$ and $\bar{a}'_1, \ldots, \bar{a}'_n \in X'_{H}$ there is some $\bar{a}' \in X'_{H}$ so that $\St'_{H, \varphi', 1/k'} \left(x,(\bar{a}'_i)_{\varphi', 1/k'} \right) \vdash^{T'} \St'_{H, \varphi, 1/k}\left(x,\bar{a}'_{\varphi, 1/k} \right)$ for all $i \in [n]$.
\end{claim}
\begin{proof}
	By Proposition \ref{prop: getting unif directed} applied to $H$ in $T'$.
\end{proof}

\begin{claim}\label{cla: approx stabs imply H}
	There are some $\varphi(x,y) \in L, k \in \mathbb{N}$ so that: for every $\bar{a}' \in X'_{H}$,  $\St_{G, \varphi, 1/k} \left(x, \bar{a}_{\varphi, 1/k} \right) \vdash^{T'} \psi(x)$.
\end{claim}
\begin{proof}
Suppose not. Hence, using Remark \ref{rem: one stabilizer implies two stabilizers}, for any $n \in \mathbb{N}$ and $\varphi_i(x,y_i)  \in L, k_i \in \mathbb{N}$ for $i \in [n]$ there are some $b, \bar{a}'$ in $M'$ with 
$$\widetilde{M}' \models \neg H(b) \land X'_{H}(\bar{a}') \land  \bigwedge_{i \in [n]} \St_{G, \varphi_i, 1/(k_i)}(b, \bar{a}).$$
By saturation of $\widetilde{M}'$ we thus find some  $b, \bar{a}'$ in $M'$ so that

$$\widetilde{M}' \models \neg H(b) \land X'_{H}(\bar{a}') \land  \St_{G}(b, \bar{a}),$$
contradicting \eqref{eq: internal stab subrp of H}.
\end{proof}

\begin{claim} \label{cla: approx stab in T imply T'}
	For every $\varphi(x,y) \in L', k \in \mathbb{N}$ there is $\varphi'(x,y') \in L, k' \in \mathbb{N}$ so that: for every $\bar{a}' \in X'_{H}$	 we have $\St_{G, \varphi', 1/k'}\left(x,\bar{a}_{\varphi', 1/k'} \right) \vdash^{T'} \St'_{H, \varphi, 1/k} \left(x,\bar{a}'_{\varphi, 1/k} \right) $.
\end{claim}
\begin{proof}
Similarly, follows by compactness in $T'$ as $\St_{G}(M', \bar{a}) = \St'_{H}(M', \bar{a}') \leq H(M')$ for all $\bar{a}' \in X'_{H}$ by \eqref{eq: internal stab subrp of H}.
\end{proof}

Uniform (product-) directedness holds for arbitrary tuples  in $H(M')$  as opposed to just $\bar{a}' \in X'_{H}$:
\begin{claim}\label{cla: putting things together}
	For any $\varphi(x,y) \in L'$ and $ k\in \mathbb{N}$ there are $\varphi'(x,y') \in L, k' \in \mathbb{N}$ so that:  for any $n \in \mathbb{N}$ and finite tuples $\bar{a}^{\ast}_1, \ldots, \bar{a}^{\ast}_n $ in $H(M')$ of length corresponding to $\bar{y}_{\varphi', 1/k'}$, there is a tuple $\bar{a}^{\ast}$ in $H(M')$ of length corresponding to $\bar{y}_{\varphi, 1/k}$ so that $\left( \bigcup_{i \in [n]}\St_{G, \varphi', 1/k'}\left(M',\bar{a}^{\ast}_i \right) \right)^2 \subseteq \St'_{H, \varphi, 1/k}\left(M',\bar{a}^{\ast} \right)$.
\end{claim}
\begin{proof}

Given $\varphi(x,y) \in L'$ and $k \in \mathbb{N}$, by \eqref{eq: epsilon stabilizers 2} there exist $\varphi_1(x,y_1)$ and $k_1 \in \mathbb{N}$ so that $(\St_{G, \varphi_1, 1/k_1}(M';\bar{a}'))^2 \subseteq \St_{G, \varphi, 1/k}(M';\bar{a}')$ for any $\bar{a}' \in X'_{H}(M')$. 

Applying Claims \ref{cla: uniform directedness in H} and \ref{cla: approx stab in T imply T'} to $\varphi_1, k_1$ we find $\varphi'(x,y') \in L, k' \in \mathbb{N}$ so that for any $n$ and $\bar{a}'_1, \ldots, \bar{a}'_n \in X'_{H}$ there is $\bar{a}' \in X'_{H}$ so that 
$$\left( \bigcup_{i \in [n]}\St_{G, \varphi', 1/k'}\left(M',(\bar{a}_i)_{\varphi', 1/k'} \right) \right)^2 \subseteq \St'_{H, \varphi, 1/k}\left(M',\bar{a}'_{\varphi, 1/k} \right).$$

Now let $\bar{a}^{\ast}_1, \ldots, \bar{a}^{\ast}_n $ be arbitrary tuples in $H(M')$ of length corresponding to $\bar{y}_{\varphi', 1/k'}$. Then there exist $\bar{a}'_1, \ldots, \bar{a}'_n \in X'_{H}$ so that $(\bar{a}'_i)_{\varphi', 1/k'} = a^{\ast}_i$ for all $i \in [n]$. Namely, $\Av^{L'}(\bar{a}_i^{\ast})$ (calculated in $T'$) is a (finitely supported) generically stable measure in $\mathfrak{M}_{H}^{L'}(M')$, so we can choose $\bar{a}'_i \in X'_{H}$ so that $\mu_{\bar{a}'_i} = \Av^{L'}(\bar{a}_i^{\ast})$ and $(\bar{a}'_i)_{\varphi', 1/k'} = a^{\ast}_i$). Then there is some $\bar{a}' \in X'_{H}$ so that 
$$\left( \bigcup_{i \in [n]}\St_{G, \varphi', 1/k'}\left(M',\bar{a}^{\ast}_i \right) \right)^2 \subseteq \St'_{H, \varphi, 1/k}\left(M',\bar{a}'_{\varphi, 1/k} \right).$$
And, by Remark \ref{rem: meas code rep inside the partial type} we can choose $\bar{a}'$ contained in $H(M')$, so in particular $\bar{a}^{\ast} := \bar{a}'_{\varphi,1/k}$ is contained in $H(M')$.
\end{proof}

	Now we let $\Gamma(x,\bar{y}) := \St_{G}(x,\bar{y})$ and take the strict $L'(M)$-pro-definable set 
	$$Z := \left\{\bar{a} \in (M')^{\bar{y}}: \bigwedge_{\varphi \in L, k \in \mathbb{N}} \bar{a}_{\varphi, 1/k} \subseteq H(M') \right\}.$$
	
	By Claim \ref{cla: putting things together} and using $M^{\Sh} \prec^{L'} \widetilde{M}'$ repeatedly it follows that the family $\{\St_{G}(M, \bar{a}) : \bar{a} \in Z(M) \}$ is uniformly product-directed. 
	Using Claim \ref{cla: approx stabs imply H} we have $\Gamma(M,\bar{a}) \subseteq H(M)$ for every $\bar{a} \in Z(M)$. And as $H(M) = \St'_{H}(M, \bar{a}')$ for some $\bar{a}' \in X'_{H}$, given any finite $A \subseteq H(M)$ and using $M^{\Sh} \prec^{L'} \widetilde{M}'$ repeatedly we can choose $\bar{a} \in Z(M)$ with $A \subseteq \St_{G}(M, \bar{a})$.
\end{proof}


\begin{remark}
	By Proposition \ref{prop: ext def subgroups approx V-def}, we can also write $H(M)$ as a union of a uniformly $L(M)$-$\bigvee$-definable directed family of its subgroups.
\end{remark}

Finally, we discuss the relation to some results in the literature on definable envelopes of subgroups of NIP groups preserving certain properties:
\begin{fact}
	Let $M \models T$ be NIP, $\cU \succ M$ a monster model and $G$ an $M$-definable group. Let $H$ be a (not necessarily definable) subgroup of $G(M)$.
	\begin{enumerate}
		\item \cite{zbMATH05662715} If $H$ is abelian, there exists a $\cU$-definable abelian subgroup $K \leq G(\cU)$ with $H \subseteq K(M)$.
		\item \cite{zbMATH06177016} If $H$ is nilpotent, there exists a $\cU$-definable subgroup $K \leq G(\cU)$ of the same nilpotency class as $H$ with $H \subseteq K(M)$.
		\item \cite{zbMATH06177016} If $H$ a normal subgroup of $G(M)$ and solvable, there exists a $\cU$-definable subgroup $K \leq G(\cU)$ of the same derived length  as $H$ with $H \subseteq K(M)$.
		\item \cite{zbMATH06578125} If $H$ is solvable, there exists a type-definable in $\cU$ subgroup $K \leq G(\cU)$ given by a downwards directed family of  uniformly $\cU$-definable sets with $H \subseteq K(M)$.
	\end{enumerate}
\end{fact}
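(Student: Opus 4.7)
The plan is to prove all four parts from a single organizing principle: in an NIP group, the Baldwin--Saxl chain condition forces any intersection of a uniformly definable family of subgroups to equal a finite subintersection. This is the workhorse: it converts ``intersect over a large, possibly non-definable index set'' into ``intersect finitely'', hence keeps definability. The subtlety in each part is iterating this engine without leaving the prescribed class (abelian, nilpotent of class $c$, solvable of derived length $d$).

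For (1), I would take the bicentralizer $K := C_G(C_G(H))$, where $C_G(H) := \bigcap_{h \in H} C_G^G(h)$ and each $C_G^G(h)$ is the $M(h)$-definable centralizer of $h$ in $G$. First apply Baldwin--Saxl to the uniformly $M$-definable family $\{C_G^G(h) : h \in H\}$ to obtain $C_G(H) = C_G^G(h_1) \cap \cdots \cap C_G^G(h_n)$ for some $h_i \in H$, so $C_G(H)$ is $M$-definable. A second application to the uniformly $\cU$-definable family $\{C_G^G(g) : g \in C_G(H)(\cU)\}$ makes $K$ a $\cU$-definable subgroup of $G(\cU)$. Since $H$ is abelian, $H \subseteq C_G(H)$, and then $H \subseteq K$ by definition. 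For abelianness of $K$: from $H \subseteq C_G(H)$ we get $K = C_G(C_G(H)) \subseteq C_G(H)$, and $K$ commutes with $C_G(H) \supseteq K$, so $K \subseteq C_G(K)$.

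For (2) and (3) I would iterate the centralizer/normalizer trick along the lower central series (resp.\ derived series) of $H$, using Baldwin--Saxl at each layer to keep the defining conjunctions finite. In (3) the normality of $H$ in $G(M)$ is used to propagate $G(M)$-conjugation-invariance through the construction, so that the commutator-defined envelopes remain normal. The critical point is to choose the envelope \emph{tightly} enough at each layer so that the nilpotency class, respectively derived length, is not increased; concretely, one replaces $H^{(i)}$ by a $\cU$-definable subgroup $K_i$ containing $H^{(i)}$ and satisfying the appropriate commutator identity, then descends $K_{i} \supseteq K_{i+1} \supseteq \cdots$ so that the relevant commutator (of depth $c$ or $d$) is forced to be trivial. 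The main obstacle is exactly this class-preservation: a crude definable envelope will generically have strictly larger commutator depth, and one must use the specific structure of commutator words together with NIP-uniform stabilization of intersections to control it; this is the nontrivial ingredient imported from \cite{zbMATH06177016}.

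For (4), the weaker type-definable conclusion allows a more flexible construction: instead of collapsing each commutator layer into a single formula, one takes directed intersections of uniformly $\cU$-definable ``approximate'' layers built along the derived series of $H$. Directedness is arranged at the formula level, ensuring the intersection is a subgroup; solvability of bounded derived length is preserved because the directed intersection of $\cU$-definable subgroups of derived length $\le d$ still satisfies the universal identity $[[\cdots[x_0,x_1]\cdots],x_{2^d-1}] = 1$. The expected main difficulty across all four parts is bookkeeping in the iterated step: verifying that at each centralizer/commutator pass one has not increased the nilpotency class or derived length, which is the content of the cited papers and is essentially a group-theoretic identity argument leveraging the NIP-based Baldwin--Saxl uniformity to stay inside a definable (or, in (4), pro-definable directed) family.
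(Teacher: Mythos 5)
This statement appears in the paper only as a cited Fact (Shelah for (1), de Aldama for (2)--(3), and the third reference for (4)); the paper supplies no proof, so I am measuring your argument against the standard proofs in that literature.

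There is a genuine gap in your organizing principle, and it sits exactly where the real work is. Baldwin--Saxl does \emph{not} say that an arbitrary (infinite) intersection of a uniformly definable family of subgroups equals a finite subintersection; it says there is an $N$ such that every \emph{finite} intersection equals an $N$-fold subintersection. An infinite intersection is therefore in general only type-definable (a downward directed intersection of $N$-fold subintersections), and in NIP groups such intersections need not be definable: the uniformly definable family of balls $\{x : v(x)\ge v(a)\}$ in $(\mathbb{Q}_p,+)$, or in a saturated real closed valued field, contains strictly descending chains, satisfies Baldwin--Saxl with $N=1$, and the intersection along a cofinal descending sequence in a saturated model is type-definable but not definable. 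Consequently the first step of your proof of (1) --- ``$C_G(H)=C^G_G(h_1)\cap\cdots\cap C^G_G(h_n)$, so $C_G(H)$ is $M$-definable'' --- is unjustified, and it is the load-bearing step: once it is granted, the rest of (1) is correct ($H\subseteq C_G(C_G(H))$, $C_G(C_G(H))\subseteq C_G(H)$, and abelianness all follow formally; also, your second application of Baldwin--Saxl is superfluous, since the centralizer of a definable set is definable outright by a single quantified formula).

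The actual proof (Shelah, de Aldama) applies Baldwin--Saxl only to finite configurations and then uses compactness in the \emph{parameters}: for pairwise commuting $g_1,\dots,g_n$ one gets $I\subseteq[n]$ with $|I|\le N$ and $\bigcap_{i\le n}C_G(g_i)=\bigcap_{i\in I}C_G(g_i)=:D$; since the $g_j$ commute pairwise, each $g_j$ lies in $D$ and $D\subseteq C_G(g_j)$, so $g_1,\dots,g_n\in Z(D)$. Thus every finite subset of $H$ is contained in a definable abelian subgroup of the uniform shape $Z(C_G(y_1,\dots,y_N))$, and saturation of $\cU$ produces a single tuple $\bar b\in G(\cU)^N$ with $H\subseteq Z(C_G(\bar b))=:K$. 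The envelope is obtained by realizing a partial type in the parameters of a uniformly definable family of abelian subgroups, not by making $C_G(H)$ itself definable. For (2)--(4) you essentially defer the class-preservation arguments to the cited papers, so those parts are a plan rather than a proof; note also that the very contrast between (2)--(3) (a single definable envelope) and (4) (only a downward directed, type-definable envelope) reflects the failure of the ``infinite intersection equals finite subintersection'' principle on which your whole scheme rests.
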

\begin{question}
Is the analogous statement true for $H$ an amenable subgroup of $G(M)$?	
\end{question}

In particular, every externally definable subgroup of $G(M)$ with a corresponding property is contained in the trace on $M$ of an $\cU$-definable subgroup $K(\cU)$ of $G(\cU)$ with this property.  We can actually choose $K$ to be $M$-definable in this case, e.g.:

\begin{proposition}\label{prop: ext def ab in def ab}
	Let $M \models T$ be NIP, $G$ a  definable group in $M$, and $H \leq G(M)$ an externally definable abelian subgroup. Then there exists an $M$-definable abelian subgroup $K(M) \leq G(M)$ so that $H(M) \leq  K(M)$.
\end{proposition}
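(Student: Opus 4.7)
The plan is to give a direct construction of $K$ using the centralizer of $H$ in $G$, combined with the Baldwin--Saxl uniform finiteness theorem for subgroups in NIP theories. Note that external definability of $H$ is inessential for this argument: I only use that $H$ is an abelian subgroup of $G(M)$.

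First I would consider the centralizer of $H$ in $G$,
\[
K_0 := C_G(H) = \bigcap_{h \in H} C_G(h).
\]
Since $\{C_G(h) : h \in G\}$ is a uniformly $L(\emptyset)$-definable family of subgroups of $G$ and $T$ is NIP, Baldwin--Saxl gives finitely many $h_1, \ldots, h_n \in H$ with $K_0 = \bigcap_{i=1}^n C_G(h_i)$. As each $h_i$ lies in $M$, this is already an $L(M)$-definable subgroup of $G$; and $H \subseteq K_0(M)$ because $H$ is abelian.

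Next, let $K := Z(K_0) = \{g \in K_0 : gk = kg \text{ for all } k \in K_0\}$ be the center of $K_0$. This is an $L(M)$-definable subgroup of $K_0$, abelian by its very definition. To see $H \subseteq K(M)$: for any $h \in H$ and $k \in K_0$ we have $kh = hk$ by definition of $K_0 = C_G(H)$, so $h$ centralizes all of $K_0$, i.e.\ $h \in Z(K_0)(M) = K(M)$. This provides the required $M$-definable abelian subgroup of $G$ containing $H$.

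The only nontrivial ingredient is Baldwin--Saxl, which is classical in the NIP setting, so there is no real obstacle. The conceptual point is the elementary set-theoretic identity $H \subseteq Z(C_G(H))$ which holds for any abelian subgroup $H$ of any group $G$; NIP is then invoked only to ensure that $C_G(H)$ itself is an $M$-definable intersection (rather than merely type-definable or externally definable), after which the center is automatically $M$-definable as well.
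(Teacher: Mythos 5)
The argument breaks down at the Baldwin--Saxl step, and this is a genuine gap rather than a technicality. Baldwin--Saxl in an NIP theory says only that there is an integer $N$, depending on the formula defining the uniformly definable family, such that every \emph{finite} intersection of members equals a subintersection of at most $N$ of them. It does not say that the intersection of an \emph{infinite} subfamily --- here $C_G(H)=\bigcap_{h\in H}C_G(h)$ --- is attained by finitely many members. That stronger conclusion is a chain condition which holds in stable theories but fails under NIP: for instance in $\Th(\mathbb{Q}_p)$ the uniformly definable family of subgroups $\{a\mathbb{Z}_p : a\neq 0\}$ of $(\mathbb{Q}_p,+)$ satisfies the Baldwin--Saxl conclusion trivially with $N=1$ (it is a chain), yet $\bigcap_{a} a\mathbb{Z}_p=\{0\}$ is not equal to any finite subintersection. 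So you have no justification that $K_0=C_G(H)$ is $M$-definable; in general it is only an intersection of an infinite family of $M$-definable subgroups. The purely group-theoretic part of your argument ($H\subseteq Z(C_G(H))$ for $H$ abelian) is fine, but it has nothing definable to act on.

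Your remark that external definability of $H$ is ``inessential'' is the warning sign: if the argument worked, it would show that \emph{every} abelian subgroup of a definable NIP group lies in an $M$-definable abelian subgroup, which is much stronger than the known envelope theorems (cf.\ the fact quoted in Section \ref{sec: abelian ext def subgroups}), where the abelian envelope is only definable over a saturated extension $\cU$ --- precisely because ``$K\supseteq H$'' is not a first-order condition on the parameters of $K$ when $H$ is an arbitrary subgroup. The paper's proof uses external definability exactly at this point: an honest definition attached to $H$ produces an $L(M')$-definable set whose trace on $M$ is $H(M)$, a double centralizer of that set gives an $L(M')$-definable abelian subgroup of $G(M')$ containing $H$, and since containment of the $L'(M)$-definable set $H$ is now expressible, elementarity of $M^{\Sh}\prec^{L'}\widetilde{M}'$ moves the parameter into $M$. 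Any repair of your centralizer approach would similarly have to replace $H$ by an internally definable approximation first; Baldwin--Saxl alone cannot do this.
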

\begin{proof}
	Let $\theta(x) \in L(M')$ be an honest definition for $\neg H(x)$, i.e.~$\theta(x) \vdash^{T'} \neg H(x)$ and $\neg H(M) = \theta(M)$. Then $H(M) =\neg \theta(M)$ and $H(x) \vdash^{T'} \neg \theta(x)$.
	
	Let $K(M') := C_{G}(C_{G}(\neg \theta(M')))$ (where $C_{G}(A)$ denotes the centralizer of $A$ in $G(M')$). As  $\widetilde{M}' \succ^{L'} M^{\Sh}$, we have that $H(M')$ is an abelian subgroup of $G(M')$, hence $K(M')$ is an abelian $L(M')$-definable subgroup of $G(M')$ and $H(M') \subseteq  K(M')$.  Say $K(M')$ is defined by $\psi(x,b)$ for $\psi(x,y) \in L$ and $b \in (M')^y$. Using $\widetilde{M}' \succ^{L'} M^{\Sh}$ again, there is some $b' \in M^y$ so that still $H(M) \subseteq \psi(M,b')$ and $\psi(M,b')$ is an abelian subgroup of $G(M)$.
\end{proof}

\section{Hyperdefinable group chunk for partial type-definable types}
\label{sec: Hyperdefinable group chunk for partial type-definable types}

\subsection{Preliminaries on hyperdefinability}\label{sec: Basics on hyperdefinability}
We refer to e.g.~\cite[Chapter 3]{wagner2000simple}, \cite[Chapter 15]{casanovas2011simple} for basics on hyperdefinability. We set up some notation in a way convenient for the discussion of large partial types and their type-definability (which agrees with the usual treatment of hyperimaginaries, see e.g.~``uniform definitions'' in \cite[Section 1]{fanlo2023piecewise}).

A hyperdefinable set is a quotient $P = X/E$  with $X$ non-empty ($\emptyset$-)type-definable set and $E$ a type-definable equivalence relation. Given $a \in X$, $a_E$ denotes $a/E \in P$, and given $a \in P$, $a^* \in a$ denotes a representative of $a$; we let $\quot_E: X \to P, \quot_E(a) := a_E$ denote the quotient map. The elements of a hyperdefinable set are called \emph{hyperimaginaries}. We let $\cU^{\heq}$ denote $\cU$ together with the collection of all hyperimaginaries modulo $(\emptyset)$-type-definable equivalence relations.

Let $E_i$ be type-definable equivalence relations on $\cU^{x_i}$. A partial type $\pi((x_i)_{i \in I})$ over $\cU$ is \emph{$E_i$-invariant on $x_i$} if $\pi((x_i)_{i \in I}) \land E_i(x_i, x'_i) \vdash \pi(\bar{x}')$ where $\bar{x}'$ is obtained from $(x_i)_{i \in I}$ replacing $x_i$ by $x'_i$. For a formula $\varphi((x_i)_{i \in I_0}) \in L(\emptyset)$, consider the partial type  
$$\Delta_{\varphi}((x_i)_{i \in I_0}) := \exists (x'_i)_{i \in I_0} \left( \bigwedge E_i(x_i,x'_i) \land \varphi((x'_i)_{i \in I_0}) \right)$$
$E_i$-invariant on $x_i$ for all $i \in I_0$.
\begin{lemma}\label{lem: hyperdef part types formulas}
	If $\pi((x_i)_{i \in I})$ is closed under conjunctions, it is $E_i$-invariant on $x_i$ for all $i \in I$ if and only if $\pi(x) \equiv \bigcup_{\varphi \in \pi} \Delta_{\varphi}((x_i)_{i \in I_0})$.
\end{lemma}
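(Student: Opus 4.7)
The plan is to verify both implications by a standard compactness/saturation argument, using only reflexivity of the $E_i$ and closure of $\pi$ under conjunctions.

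The $(\Leftarrow)$ direction is immediate. For every $\varphi((x_i)_{i \in I_0}) \in L(\emptyset)$, the partial type $\Delta_{\varphi}$ is $E_i$-invariant on each $x_i$: replacing $x_i$ by $x'_i$ with $E_i(x_i,x'_i)$ preserves the existential statement (compose with $E_i(x'_i, x''_i) \circ E_i(x_i, x'_i)$ and transitivity). A union of $E_i$-invariant partial types is $E_i$-invariant, so if $\pi \equiv \bigcup_{\varphi \in \pi} \Delta_\varphi$, then $\pi$ is $E_i$-invariant on each $x_i$.

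For $(\Rightarrow)$, first note $\pi \vdash \bigcup_{\varphi \in \pi} \Delta_\varphi$: if $\bar{x} \models \pi$, then for every $\varphi \in \pi$ we have $\varphi((x_i)_{i\in I_0})$, and taking $x'_i := x_i$ with $E_i(x_i,x_i)$ by reflexivity witnesses $\Delta_\varphi$. For the converse entailment, fix an arbitrary $\bar{x} = (x_i)_{i\in I}$ realizing $\bigcup_{\varphi\in\pi}\Delta_\varphi$ and an arbitrary $\psi \in \pi$ with (finite) variable set $I_0 \subseteq I$; I will show $\models \psi((x_i)_{i \in I_0})$. Consider the partial type
\[
\Phi((x'_i)_{i \in I}) := \pi((x'_i)_{i \in I}) \cup \{E_i(x_i, x'_i) : i \in I\}
\]
with parameters from $\bar{x}$.

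The main step is to verify $\Phi$ is consistent. Given any finite fragment, it mentions formulas $\varphi_1,\dots,\varphi_n \in \pi$ with variables in some finite $I_1 \subseteq I$, together with equivalence conditions $E_i(x_i,x'_i)$ for $i$ in some finite $I_2 \subseteq I$. By closure under conjunctions, $\varphi := \bigwedge_j \varphi_j \in \pi$, so by hypothesis $\bar{x}$ satisfies $\Delta_\varphi$, yielding witnesses $(x'_i)_{i \in I_1}$ with $E_i(x_i,x'_i)$ and $\varphi((x'_i)_{i \in I_1})$; for indices in $I_2 \setminus I_1$, set $x'_i := x_i$ and use reflexivity. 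Hence by saturation of $\cU$, $\Phi$ is realized by some $(x'_i)_{i \in I}$, which satisfies $\pi$ and is $E_i$-equivalent to $\bar{x}$ coordinatewise. Now apply $E_i$-invariance of $\pi$ successively for each $i \in I_0$ (a \emph{finite} sequence of steps, since $I_0$ is finite), at each step replacing $x'_i$ by $x_i$ inside the realization; this produces $(y_i)_{i\in I} \models \pi$ with $y_i = x_i$ for $i \in I_0$, whence $\psi((x_i)_{i\in I_0}) = \psi((y_i)_{i\in I_0})$ holds. As $\psi$ was arbitrary, $\bar{x} \models \pi$.

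The only subtle point is precisely that the iterative substitution is taken over the finite $I_0$ rather than over all of $I$; this is why the claim is proved one formula $\psi \in \pi$ at a time, even though $\Phi$ itself involves all of $I$ and is handled in one go by saturation.
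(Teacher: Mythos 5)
Your argument is essentially correct, and both directions rest on the same ingredients as the paper (reflexivity and symmetry of the $E_i$, closure of $\pi$ under conjunctions, compactness), but your implementation of $(\Rightarrow)$ differs from the paper's. The paper argues at the level of entailment: from the invariance statement $\pi((x_i)_{i\in I})\wedge\bigwedge_i E_i(x_i,x_i')\vdash \varphi((x_i')_{i\in I_0})$ it extracts, by compactness and closure under conjunctions, a single $\psi\in\pi$ with $\psi((x_i')_{i\in I_0})\wedge\bigwedge_{i\in I_0}E_i(x_i,x_i')\vdash\varphi((x_i)_{i\in I_0})$, i.e.\ $\Delta_\psi\vdash\varphi$ --- a short uniform extraction, one formula of $\pi$ at a time. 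You instead argue semantically: given $\bar x\models\bigcup_{\varphi\in\pi}\Delta_\varphi$, you realize the type $\Phi=\pi((x_i')_{i\in I})\cup\{E_i(x_i,x_i'):i\in I\}$ and then translate finitely many coordinates back using invariance; your insistence on substituting only over the finite variable set $I_0$ of the target formula is exactly the right point of care.

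One step is misstated, though the repair is immediate: you cannot invoke ``saturation of $\cU$'' to realize $\Phi$, because $\pi$ is a partial type over $\cU$ (its parameter set may be large) and the variable tuple $(x_i)_{i\in I}$ may be infinite of arbitrary size, so $\Phi$ is not a small type over a small set and need not be realized in $\cU$. What your finite-satisfiability computation actually yields is consistency of $\Phi$; realize it in an elementary extension (a larger monster), run the finitely many invariance steps there --- invariance, being an entailment between partial types, persists to any elementary extension --- and conclude $\psi((x_i)_{i\in I_0})$ there; since $\psi$ and the coordinates $x_i$ for $i\in I_0$ lie in the structure where $\bar x$ was taken, elementarity brings the conclusion back. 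With that rewording your proof is complete. (Also, in $(\Leftarrow)$ you need symmetry of $E_i$ in addition to transitivity to compare the old and new witnesses, which is harmless since the $E_i$ are equivalence relations.)
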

\begin{proof}
	Right to left is clear.  Conversely, assume  $\varphi((x_i)_{i \in I_0}) \in \pi$. Clearly, $\varphi((x_i)_{i \in I_0}) \vdash \Delta_{\varphi}((x_i)_{i \in I_0})$. On the other hand, as $\pi((x'_i)_{i \in I}) \land \bigwedge_{i \in I} E_i(x,x') \vdash \pi((x'_i)_{i \in I})$, by compactness and closure of $\pi$ under conjunctions there is some $\psi((x_i)_{i \in I_0}) \in \pi$ so that $\psi((x'_i)_{i \in I_0}) \land \bigwedge_{i \in I_0}E(x_i,x'_i) \vdash \varphi((x_i)_{i \in I_0})$, hence $\Delta_{\psi}((x_i)_{i \in I_0}) \vdash \varphi((x_i)_{i \in I_0})$. 
\end{proof}

Let $P = X/E$ and $Q_i = Y_i/F_i$ be hyper-definable. 
By a \emph{partial type $\pi(x)$ on $P$ over a (possibly large) set of hyperimaginaries} $B = \{ b_{F_i} \in Q_i : i \in I\}$ we mean a partial type of the form $\pi^*(x, (b_i)_{i \in I})$ over a set of real elements $\{b_i : i \in I\}$ so that $\pi^*(x; (y_i)_{i \in I})$ is a partial type over $\emptyset$ which is $E$-invariant on $x$ and $F_i$-invariant on $y_i$  and $\pi^*(x, (b_i)_{i \in I}) \vdash X(x) \cup \{Y_i (y_i)\}_{i \in I}$. Up to logical equivalence of partial types, this does not depend on the choice of the representatives $b_i, i \in I$.
	We say that $a_E \models \pi(x)$ if $a \models \pi^*(x; (b_i)_{i \in I})$. Again, this does not depend on the choice of representatives. For $\pi(x), \rho(x)$ partial types on $P$, we write $\pi(x) \vdash \rho(x)$ if $\pi^*(x, (b_i)_{i \in I}) \vdash \rho^*(x, (c_j)_{j \in J})$. We say that $\pi(x)$ (over $\cU^{\heq}$) is \emph{closed under implication} if for every small partial type $\rho(x)$ on $P$ (over hyperimaginary parameters) with  $\pi(x) \vdash \rho(x)$ we have $\rho^*(x, (c_j)_{j \in J}) \subseteq \pi^*(x; (b_i)_{i \in I})$. In particular if $\pi, \rho$ are closed under implication and $\pi \vdash \rho, \rho \vdash \pi$, then $\pi^*(x, (b_i)_{i \in I}) = \rho^*(x, (c_j)_{j \in J})$.

For a small set $B \subseteq \cU^{\heq}$, a set $V \subseteq P$ is $B$-type-definable if $V = \{b \in P : \models \pi(b) \}$ for $\pi$ a partial type on $P$ over $B$ (then $\quot^{-1}_{E}(V)$ is a $B^*$-type-definable subset of $X$ for any set $B^*$ of representatives of $B$).
Given hyperdefinable sets $P = X/E$ and $Q = Y/F$, the cartesian product $P \times Q$ is canonically identified with the hyperdefinable set $(X \times Y) / (E \land F)$, where $(x,y) (E\land F) (x',y')$ if $x E x'$ and $y F y'$. This allows to talk about hyperdefinable relations and (partial) functions. In particular, if $f$ is a $B$-type definable function from $P$ to $Q$, then for any $B$-type-definable sets $V \subseteq P, W \subseteq Q$, both $f(V) \subseteq Q$ and $f^{-1}(W) \subseteq P$ are $B$-type-definable. If $\rho(x,y)$ is a partial type on $P \times Q$ over $c \subseteq \cU^{\heq}$, witnessed by $\rho^*(x,y;c^*)$ and $b \in Q$, then $\rho(x;b)$ is a partial type on $P$ witnessed by $\rho^*(x;b^*,c^*)$.

\begin{definition}\label{def: def type hyperim}
	We say that a partial type $\pi(x)$ (over $\cU^{\heq}$) on $P$ is \emph{type-definable over $A \subseteq \cU^{\heq}$} if for every sort $Q$ in $\cU^{\heq}$ and small partial type  $\gamma(x,y)$ on $P \times Q$ over $\emptyset$,  the set $\{b \in Q : \pi(x) \vdash \gamma(x,b)\}$ is type-definable over $A$ by some partial type $\rho(y) =: (d_{\pi}x) \gamma(x,y)$ on $Q$.
\end{definition}
\begin{remark}\label{rem: around typ-def types def}
	By Lemma \ref{lem: hyperdef part types formulas}, it suffices to require this for $\gamma(x,y) := \Delta_{\varphi} (x,y)$, $\varphi(x,y) \in L$; equivalently, we can let $\gamma(x,y)$ be an arbitrary small type over $A$, as then we can take $(d_\pi x) \gamma(x,y) := (d_\pi x) \gamma'(x,y, \bar{a})$, where $\bar{a}$ is a tuple enumerating $A$ and $\gamma'(x,y,z)$ is a partial type on $P \times Q \times R$ such that $\gamma(x,y) = \gamma'(x,y,\bar{a})$.
\end{remark}
	
\begin{definition}
		If $C \subseteq \mathbb{M}^{\heq}$ is a set of parameters, we define $\pi|_{C} := \bigcup \{\gamma(x,m) : \mathbb{M} \models (d_{\pi}x) \gamma(x,m), m \in C, \gamma(x,y)\textrm{ small partial type over }\emptyset\}$. If $C$ is an $|A|^+$-saturated model, then $\pi|_C$ determines $\pi$.
\end{definition}

\begin{remark}\label{rem: realize over dcl heq}
	If $C \subseteq \mathbb{M}^{\heq}$ is a small set of parameters, $b \in \cU^{\heq}, a \models \pi|_{C b}$ and $d \in \dcl^{\heq}(C, b)$, then $a \models \pi|_{C d}$.
	
	Indeed, as $d \in \dcl^{\heq}(C, b)$, there is a partial $C$-type-definable function $f: Y/F \to Z/G$ so that $f(b) = d$. Let $\Gamma_f(y,z)$ be the partial type over $C$ defining the graph of $f$, then $\Gamma_f(b, d)$.
	Now assume that $\gamma(x,z)$ is a partial type on $X/E \times Y/F$ over $C$ such that $\pi(x) \vdash \gamma(x,d)$. Then $\pi(x) \vdash \exists z \left( \Gamma_f(b,z) \land \gamma(x,z) \right)$, with a small partial type on $X/E \times Z/G$ over $C, b$ on the right. Hence $ \models \exists z \left( \Gamma_f(b,z) \land \gamma(a,z) \right)$ by assumption, so $ \models \gamma(a,d)$ as $d$ is the unique realization of $\Gamma_f(b,z)$.
\end{remark}

\begin{definition}\label{def: pushforward of types}
	Let $A \subseteq \cU^{\heq}$, $\pi(x)$ a partial type over $\cU^{\heq}$ on $P$, $Q$ a sort in $\cU^{\heq}$ and $f: P \to Q$ an $A$-type-definable partial function \emph{defined on $\pi$} (i.e.~$\pi(x) \vdash \exists y \Gamma_{f}(x,y) $), in which case already $\pi(x)|_{A} \vdash \exists y \Gamma_f(x,y) $). We define the \emph{pushforward} 
	\begin{gather*}
		(f_{\ast} \pi)(y) := \bigcup\Big\{\gamma(y) : \pi(x) \vdash \exists u (\Gamma_f(x,u) \land \gamma(u)), \\
		\gamma(y) \textrm{ is  a small partial type on } Q\Big\}.
	\end{gather*}
	Then $f_{\ast}\pi$ is a partial type on $Q$. (This corresponds to the pushforward of the corresponding filter of type-definable subsets of $P$.)
\end{definition}

\begin{remark}\label{rem: pushfoward preserves type-def}
	If $\pi, f$ are type-definable over $A$, then $f_{\ast}\pi$ is also type-definable over $A$: for every small partial type $\gamma(y,z)$ on $Q$ over $\emptyset$, we have 
	$$(d_{(f_{\ast} \pi)} y) \gamma(y,z) = d_{\pi}x (\exists u (\Gamma_f(x,u) \land \gamma(u,z))).$$
\end{remark}

\begin{remark}\label{rem: pushforward sets of realiz}
	In the same situation as above, if $\pi$ is a partial type closed under implication, then for any $B \supseteq A$ we have $f( \pi|_{B}( \cU)) = (f_{\ast} \pi)|_{B}(\cU)$.
	
	Indeed, we clearly have $f(\pi|_{B}(\cU)) \subseteq (f_{\ast} \pi)|_{B}(\cU)$ from the definition. Then $\pi|_{B}(\cU) \subseteq f^{-1}(f(\pi|_{B}(\cU))) \subseteq f^{-1}((f_{\ast} \pi)|_{B}(\cU))$, hence $\pi|_{B}(\cU) = f^{-1}((f_{\ast} \pi)|_{B}(\cU))$ using $B$-type-definability of these sets and closure of $\pi$ under implication.
\end{remark}

\begin{definition}
	Let $\pi(x), \rho(y)$ be $A$-type-definable partial types over $\cU^{\heq}$ on sorts $P,Q$ respectively. We define a partial type $(\pi \otimes \rho)(x,y)$ on $P \times Q$  by taking
	$$(d_{\pi \otimes \rho} x y)\gamma(x,y;z) := (d_{\pi} x) \left( (d_{\rho} y) \gamma(x,y,z) \right)$$
	for every small partial type $\gamma(x,y;z)$ over $\emptyset$ on $P \times Q \times R$. (This is well-defined by Remark \ref{rem: around typ-def types def} as $(d_{\rho} y) \gamma(x,y,z)$ is a small partial type on $P \times R$ over $A$.) 
\end{definition}

\begin{remark}\label{rem: prod of partial types realiz}
	It follows from the definition that for any $B \supseteq A$, $(a, b) \models (\pi \otimes \rho)|_{B}$ if and only if $a \models \pi|_{B}$ and $b \models \rho|_{B,a} $.
\end{remark}

\begin{definition}\label{def: germs}
	Let $\pi(x)$ be a partial $C$-type-definable type over $\cU^{\heq}$. Two type-definable functions $f(x,b)$ and $g(x,c)$, for $b,c$ hyperimaginaries, defined on $\pi$, are said to have the same \emph{$\pi$-germ}, written $f(x,b) \sim_{\pi} g(x,c)$, if $f(a,b) = g(a,c)$ \emph{for all} $a \models \pi|_{C, b,c}$. Equivalently, if $\pi(x) \vdash \exists z \left( \Gamma_f(x,b,z) \land \Gamma_g(x,c,z)\right)$; if and only if $ \models (d_{\pi}x) \exists z  \left(\Gamma_f(x,b,z) \land \Gamma_g(x,c,z) \right)$ --- a type-definable condition on $(b,c)$. Then $\sim_{\pi}$ is an equivalence relation on all type-definable partial functions defined on $\pi$, and we let $[f(x,b)]_{\pi}$ denote the $\pi$-germ of $f(x,b)$.
	\end{definition}
	
	\begin{remark}\label{rem: push forward germs}
		In fact, push-forwards of $\pi$ are well-defined for $\pi$-germs of type-definable partial functions defined on $\pi$: if  $f(x) \sim_{\pi} g(x)$, then $f_{\ast} \pi = g_{\ast} \pi$. Indeed, if $\gamma(u)$ is  a small partial type,  $\pi(x) \vdash \exists u (\Gamma_f(x,u) \land \gamma(u))$ if and only if $\pi(x) \vdash \exists u (\Gamma_g(x,u) \land \gamma(u))$.
\end{remark}
	
		\begin{remark}\label{rem: real partial type in heq}
		If $\pi$ is a partial type over $\cU$ closed under implication, then  $\pi$ viewed as a partial type on the home sort over $\cU^{\heq}$ is also closed under implication.  And if $\pi$ was $A$-type-definable, $A \subseteq \cU$, then $\pi$ remains an $A$-type-definable partial type in $\cU^{\heq}$.
	\end{remark}
\subsection{Hyperdefinable group chunk}\label{sec: Hyperdefinable group chunk proof}

 We will need a hyper-definable generalization of the group chunk theorem from \cite[Section 3.4]{hrushovski2019valued} to \emph{type-definable} partial types (as opposed to definable partial types; see also \cite[Section 3]{rideau2021short}; and \cite[Section 4.7]{wagner2000simple} for a different hyper-definable group chuck theorem in simple theories).  We refer to e.g.~\cite[Section 4.3]{wagner2000simple} or \cite{fanlo2023piecewise} for the basics on hyper-definable groups. A hyper-definable group is given by a hyper-definable set $G$ and a type-definable binary function $\cdot: G^{2} \to G$ such that $(G, \cdot)$ forms a group.

\begin{remark}\label{rem: translates of partial types}
	If $(G, \cdot_{G})$ is a hyper-definable group, $\pi(x)$ a partial type on $G$ (over $\cU^{\heq}$) and $g \in G(\cU)$, we let $g \cdot \pi := (t_g)_{\ast} \pi$ where $t_g: G \to G$ is the type-definable map $h \in G \mapsto g \cdot h$. Then $g \cdot \pi$ is a partial type on $G$, and if $\pi$ was $C$-type-definable, then $\pi$ is $C \cup \{g \}$-type-definable. Note that if $g \cdot \pi = \pi$, then also $g^{-1} \cdot \pi = \pi$.
\end{remark}

 \begin{definition}\label{defn: group chunk}
 	Let $P = X/E$ with $X,E$ type-definable over $C \subseteq \cU^{\heq}$. A \emph{group chunk} over a small set of parameters $C$ on $P$ is  a $C$-type-definable partial type $\pi(x)$ on $P$ in $T^{\heq}$ closed under implication and $C$-type-definable partial functions $F,H: P^2 \to P$ defined on $\pi^{\otimes 2}$ (and hence on $\pi^{\otimes 2}|_{C}$) and $K: P^2 \to P$, so that:
 	\begin{enumerate}
 		\item for all $a \models \pi|_{C}$, $(F_a)_{\ast} \pi = \pi$, where $F_a(x) := F(a,x)$;
 		\item if $(a,b) \models \pi^{\otimes 2}|_{C}$, then $H(a,F(a,b)) = b$ and  $ K(b, F(a,b)) = a$ (in particular the partial function $K$ is  defined).
 		\item if $(a,b,c) \models \pi^{\otimes 3}|_{C}$, then $F(a,F(b,c)) = F(F(a,b), c)$.  	\end{enumerate}
 \end{definition}
 
 \begin{remark}
 \begin{enumerate}
 
 	\item Note that by Definition \ref{defn: group chunk}(1), if $(a,b) \models \pi^{\otimes 2}|_{C}$, then $(a, F(a,b)) \models \pi^{\otimes 2}|_{C}$, hence $H(a,F(a,b))$ in Definition \ref{defn: group chunk}(2) is defined. However, the requirement that $ K(y, F(x,y))$ is defined on $g_{\ast}(\pi^{\otimes 2})$ for  $g(x,y) := (y, F(x,y))$ needs to be made separately (follows from (1) if $\pi$ $\otimes$-commutes with itself).
 	\item The composition of partial functions in Definition \ref{defn: group chunk}(3) is well-defined. Indeed, assume $(a,b,c) \models \pi^{\otimes 3}|C$. Then $(b,c) \models \pi^{\otimes 2}|_{C,a}$, so $F(b,c) \models \pi^{\otimes 2}|_{C,a, b}$ by Definition \ref{defn: group chunk}(1), so $(a, F(b,c)) \models \pi^{\otimes 2}|_{C}$. Also $c \models \pi|_{C,a,b}$, hence $c \models \pi|_{C, F(a,b)}$ (by Remark \ref{rem: realize over dcl heq}).
 	
 	\item In \cite[Section 3.4]{hrushovski2019valued}, in Definition \ref{defn: group chunk}(2) one simply requires $a \in \dcl(C, b, F(a,b))$, $b \in \dcl(C, a, F(a,b))$, which by compactness allows to find definable partial functions $H,K$. Since  $\dcl^{\heq}$ is only given by partial types rather than formulas, we cannot apply the same compactness argument here, and require the existence of type-definable partial functions $H,K$ explicitly.
 \end{enumerate}
 \end{remark}

 Of course, every type-definable translation invariant partial type on a hyper-definable group gives rise to a group chunk:
\begin{remark}\label{rem: group chunk from a group}
	Assume $G$ is a hyper-definable over $C \subseteq \cU^{\heq}$ group and $\pi(x)$ is a partial left-$G$-invariant type on $G$ (over $\cU^{\heq}$), type-definable over $C$. Let $\Gamma(x,y,z)$ be the $C$-type-definable graph of multiplication in $G$, without loss of generality $\Gamma(x,y,z)$ defines a graph of partial function from any two of its coordinates to the third one (intersecting with the partial type defining $G$ on every coordinate, if necessary). Consider the partial type-definable functions  $F(x,y)$ with $\Gamma_{F}(x,y,z) := \Gamma(x,y,z)$, $H$ with $\Gamma_{H}(x,y,z) := \Gamma(x,z,y)$ and $K$ with $\Gamma_{K}(x,y,z) := \Gamma(z,x,y)$. Then $(\pi, F,H,K)$ is a group chunk satisfying Definition \ref{defn: group chunk}.
\end{remark}

Conversely,

\begin{theorem}\label{thm: hyperdef group chunk gives group}
	Let $(\pi,F,H,K)$ be a group chunk over $C$ on $P$. Then there exists a $C$-hyper-definable group $G = (Y/F', \cdot_G)$ and a $C$-type-definable partial function $f: \pi \to G$ defined and injective on $\pi$, so that for any $(a,b) \models \pi^{\otimes 2}|_{C}$ we have $f(F(a,b)) = f(a) \cdot_{G} f(b)$ and the global partial type $f_{\ast} \pi$ concentrates on $G$ and is left-$G$-invariant.
\end{theorem}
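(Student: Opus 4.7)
The plan is to adapt the classical Weil group chunk construction to the hyperdefinable, type-definable setting, following the outline of \cite[Section 3.4]{hrushovski2019valued} but with the type-definability distinctions treated via the framework of Section \ref{sec: Basics on hyperdefinability}. The underlying group will be built as equivalence classes of pairs $(a,b) \models \pi^{\otimes 2}|_C$, with a pair $(a,b)$ thought of as representing the element ``$b \cdot a^{-1}$'' of the group-to-be.

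First I would observe that the left translations $F_a := F(a,\cdot)$ are well-behaved: axiom (1) says $(F_a)_* \pi = \pi$, so $F_a$ preserves $\pi$ in the sense of Remark \ref{rem: pushforward sets of realiz}; axiom (2) supplies the partial inverse $H_a := H(a,\cdot)$. For $(a,b) \models \pi^{\otimes 2}|_C$ the $\pi$-germ of the partial map $z \mapsto F(b, H(a,z))$ is a $C$-type-definable function that morally represents ``$b \cdot a^{-1} \cdot z$''. I would then define the relation
\begin{gather*}
E'\bigl((a,b),(a',b')\bigr) \iff\ \models (d_\pi z)\bigl[F(b, H(a,z)) = F(b', H(a',z))\bigr]
\end{gather*}
on $\pi^{(2)} := \{(a,b) : (a,b) \models \pi^{\otimes 2}|_C\}$. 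Reflexivity and symmetry are immediate; transitivity follows by realizing $z$ generically over all four pairs simultaneously and invoking Remark \ref{rem: realize over dcl heq}. Type-definability of $E'$ over $C$ is built in via Definition \ref{def: def type hyperim} and Remark \ref{rem: around typ-def types def}. This yields the $C$-hyperdefinable set $G := \pi^{(2)}/E'$, which will play the role of $Y/F'$ in the theorem.

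Multiplication on $G$ is defined by the standard composition trick: given $[(a_1,b_1)], [(a_2,b_2)] \in G$, I would find $E'$-equivalent representatives $(a'_1,b'_1), (a'_2,b'_2)$ with $b'_1 = a'_2$ (using axiom (1) to slide within an $E'$-class by composing with a generic element) and set the product to be $[(a'_1, b'_2)]$. Well-definedness on $E'$-classes and $C$-type-definability follow from axiom (3) together with Remark \ref{rem: pushfoward preserves type-def}. The identity is the class $[(a,a)]$ for generic $a$ (a single class by axiom (3)) and inverses are given by coordinate swap, verified via axiom (2). The map $f : \pi \to G$ is defined by $f(a) := [(b, F(a,b))]_{E'}$ for some/any $b \models \pi|_{C,a}$; independence of the choice of $b$ follows from axiom (3), $C$-type-definability from Remark \ref{rem: pushfoward preserves type-def}, injectivity on $\pi$ from axiom (2) (since $K$ recovers $a$ from the second pair), and the multiplicativity relation $f(F(a,b)) = f(a) \cdot_G f(b)$ from the multiplication rule. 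Finally, left $G$-invariance of $f_* \pi$ amounts to showing that for any $g \in G$ and $a \models \pi|_{C,g}$ there is $a' \models \pi|_{C,g}$ with $g \cdot_G f(a) = f(a')$, which is immediate upon representing $g$ by a pair generic over $a$.

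The main obstacle will be establishing that the candidate multiplication on $G$ is genuinely $C$-type-definable (not merely piecewise-definable as in \cite[Section 3.4]{hrushovski2019valued}, where the ambient partial type is itself definable). In the purely definable-type setting of \emph{loc. cit.}, one recovers $H, K$ from $\dcl^{\heq}$ via compactness and captures germs by individual formulas; here, since $\pi$ is only type-definable, the operator $d_\pi$ returns partial types rather than formulas, so the ``slide to common generic'' step defining multiplication has to be verified via push-forwards of type-definable partial types. It is precisely to enable this that the group chunk data includes $H$ and $K$ as explicit type-definable partial functions, rather than merely as elements of $\dcl^{\heq}$.
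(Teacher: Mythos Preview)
Your approach is essentially the paper's: build the group from $\pi$-germs of the left translations $F_a$, realized as equivalence classes of pairs of generics, with the equivalence being equality of the associated germs. The paper makes this explicit by first introducing the abstract germ group $\mathfrak{G}_F(\pi)$ (the subgroup of invertible $\pi$-germs generated by the $[F_a]$), proving that every element has the form $[F_a]\circ[F_b]^{-1}$, and only then identifying it with a hyperdefinable quotient of pairs; you go straight to pairs. Modulo a coordinate swap (the paper lets $(a,b)$ stand for $[F_a]\circ[F_b]^{-1}$ rather than $[F_b]\circ[F_a]^{-1}$), the constructions coincide.

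There is, however, one concrete technical slip. You take the underlying set to be $\pi^{(2)}=\{(a,b):(a,b)\models\pi^{\otimes 2}|_C\}$, whereas the paper uses the larger $P'\times P'$ with $P'=\pi|_C(\cU)$. Your choice does not obviously close under the operations you need. For instance, in your definition $f(a):=[(b,F(a,b))]_{E'}$ with $b\models\pi|_{C,a}$, axiom (1) gives $F(a,b)\models\pi|_{C,a}$, but nothing in the group-chunk axioms forces $F(a,b)\models\pi|_{C,b}$, so $(b,F(a,b))$ need not lie in $\pi^{(2)}$. The same problem recurs when you ``slide'' representatives to make $b_1'=a_2'$: even if the slid pairs have the right germ, you have no guarantee they land back in $\pi^{\otimes 2}|_C$. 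The fix is exactly what the paper does: work over $P'\times P'$ (which is still $C$-type-definable), where all the required pairs live automatically, and define $E'$ and $\cdot_G$ by the germ conditions you wrote, which are type-definable by $C$-type-definability of $\pi$. With that change your outline is correct.
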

\begin{proof}
	We follow closely the proof of \cite[Proposition 3.15]{hrushovski2019valued}, but provide some details.
	
	We let $\mathfrak{F}(\pi)$ be the semigroup under composition of all $\cU^{\heq}$-type-definable functions $h$ such that $h_{\ast} \pi = \pi$. This semigroup has a quotient consisting of the $\pi$-germs of elements in $\mathfrak{F}(\pi)$. Inside of which the invertible $\pi$-germs form a group, denoted $\mathfrak{G}(\pi)$.
	
		Let $P' := \{ a \in P : a \models \pi|_{C}(x) \}$. 
	
	For every $a \in P'$, the $\pi$-germ $[F_a] := [F(a,x)]_{\pi}$ is invertible, in fact $[H_a] := [H(a,x)]_{\pi}$ is the two-sided inverse of $[F_a]$. Indeed, for any $a \models \pi|_{C}$ and $a_0 \models \pi|_{C,a}$, let $b := F(a,a_0)$, $b \models \pi|_{C,a}$ by by Definition \ref{defn: group chunk}(1). Then $a_0 = H(a, F(a,a_0))$ by Definition \ref{defn: group chunk}(2). So $ [H_a] \circ [F_a](a_0) = a_0$. Conversely, for any $a \models \pi|_{C}$, $B \supseteq C$ and $b \models \pi|_{B,a}$, there is some $a_0 \models \pi|_{B,a}$ with $b = F(a,a_0)$ (by Remark \ref{rem: pushforward sets of realiz}, as $(F_{a})_{\ast} \pi = \pi$). Again, $a_0 = H(a, F(a,a_0))$ (as $B \supseteq C$ was arbitrary, this shows in particular that $(H_a)_{\ast} \pi = \pi$), hence $[F_a] \circ [H_a] (b) = b$.
	
	Let $\mathfrak{G}_F(\pi)$ be the subgroup of $\mathfrak{G}(\pi)$ generated by the elements $\{[F_a] : a \in P'\}$ and their inverses. 
	
	The map $a \in P' \mapsto [F_a] \in \mathfrak{G}_{F}(\pi)$ is injective. Indeed, let $a,a' \in P'$ be such that $[F_a] = [F_{a'}]$. Let $b \models \pi|_{C, a, a'}$. Then by Definition \ref{defn: group chunk}(2), $a = K(b, F(a,b)) = K(b, F(a',b))  = a'$ (note that we are testing injectivity on a single realization of $\pi$).

\begin{claim*}
	Every element of $\mathfrak{G}_{F}(\pi)$ is of the form $[F_a] \circ [F_b]^{-1}$ for some $a,b \in P'$.
\end{claim*}
\begin{proof}
To see this, first we make the following two observations.
	\begin{enumerate}
	\item For any $a \models \pi|_{C}$, $D \supseteq C$ and $c \models \pi|_{D,a}$, there is some $b \models \pi|_{D,a}$ so that $[F_b] = [F_a] \circ [F_c]$.
	
		\item For any $a \models \pi|_{C}$, $D \supseteq C$ and $b \models \pi|_{D,a}$, there is some $c \models \pi|_{D,a}$ such that $[F_{a}]^{-1} \cdot [F_{b}] = [F_c]$.
		\end{enumerate}
		
For (1), let $b := F(a,c)$, $b \models \pi|_{D,a}$ by Definition \ref{defn: group chunk}(1). Let $e \models \pi|_{D, a,b,c}$ be arbitrary, then $(a,c,e) \models \pi^{\otimes 3}|_{C}$, and by Definition \ref{defn: group chunk}(3) we have $[F_{b}](e) = F(b,e) = F(F(a,c),e) = F(a, F(c,e)) = [F_a] \circ [F_c] (e)$, so $[F_b] = [F_a] \circ [F_c]$.
		
For (2), we have $F(a,b) \models \pi|_{D,a}$ (by Definition \ref{defn: group chunk}(1)). And there is $c \models \pi|_{D,a}$ such that $F(a,c) = b$ (by Definition \ref{defn: group chunk}(1) and Remark \ref{rem: pushforward sets of realiz}). Let $e \models \pi|_{D, a,b,c}$ be arbitrary. As $(a,c,e) \models \pi^{\otimes 3}|_{C}$, by Definition \ref{defn: group chunk}(3) we have $[F_b](e) = F(F(a,c), e) = F(a, F(c,e)) = [F_a] \circ [F_c] (e)$. Hence $[F_b] = [F_a] \circ [F_c]$, so $[F_{a}]^{-1} \cdot [F_{b}] = [F_c]$.

To prove the claim, first note that $[F_a]$ is of the required form for every $a \in P'$: by (1) there exist $b,c \models \pi|_{C}$ with $[F_b] = [F_a] \circ [F_c]$, hence $[F_a] = [F_b] \circ [F_c]^{-1}$. Then it suffices to show that $[F_a] \circ [F_b]^{-1} \circ [F_c] \circ [F_{d}]^{-1}$ has the required form for all $a,b,c,d \in P'$. Let $e_0 \models \pi|_{C,a,b,c,d}$. By (2) there is $e_1 \models \pi|_{C,a,b,c,d}$ with $[F_d]^{-1} \circ [F_{e_0}] = [F_{e_1}]$. By (1) there is $e_2 \models \pi|_{C,a,b,c,d}$ with $[F_c] \circ [F_{e_1}] = [F_{e_2}]$. By (2) there is $e_3 \models \pi|_{C,a,b,c,d}$ with $[F_{b}]^{-1} \circ [F_{e_2}] = [F_{e_3}]$, and by (1) there is $e_4 \models \pi|_{C,a,b,c,d}$ with $[F_a] \circ [F_{e_3}] = [F_{e_4}]$. It follows that $[F_a] \circ [F_b]^{-1} \circ [F_c] \circ [F_d]^{-1} \circ [F_{e_0}] = [F_{e_4}]$, hence $[F_a] \circ [F_b]^{-1} \circ [F_c] \circ [F_d]^{-1}  = [F_{e_4}] \circ [F_{e_0}]^{-1}$, as wanted.
\end{proof}

%

	Using the claim, we can identify $\mathfrak{G}_{F}(\pi)$ with a hyperdefinable group. Let $Y := P' \times P'$, $F' := \{((a,b), (a',b')) \in Y^2 : [F_a] \circ [F_b]^{-1} = [F_{a'}] \circ [F_{b'}]^{-1}\}$. By $C$-type-definability of $\pi$, $F'$ is a $C$-type-definable equivalence relation on $Y$ (see Definition \ref{def: germs}) by 
	\begin{gather*}
		F((a,b), (a',b')) \iff
		 (d_{\pi} y) \Big( \exists u \exists v \exists u' \exists v' \\
		 \left(\Gamma_{H}(b,y,u) \land 
		\Gamma_{F}(a,u,v) \land \Gamma_{H}(b', y, u') \land  \Gamma_{F}(a',u', v') \land F'(v,v') \right) \Big)
	\end{gather*}

		We let $G := Y/F'$ (note that a quotient of a hyperdefinable set by a type-definable equivalence relation is a hyper-definable set).
		Similarly, the set 
	\begin{gather*}
		\left\{ (a_1, \ldots, a_6) \in P' : [F_{a_1}] \circ [F_{a_2}]^{-1} \circ [F_{a_3}] \circ [F_{a_4}]^{-1} = [F_{a_5}] \circ [F_{a_6}]^{-1} \right\}
	\end{gather*} 
	is $C$-type-definable by $C$-type-definability of $\pi$. Hence $\cdot_{G}$ is $C$-type-definable.
	For $a \in P'$, we let $f(a) := (b,c)/F'$ for some $(b,c) \in Y$ with $[F_a] = [F_{b}] \circ [F_c]^{-1}$. Then $f: P' \to G$ is a partial function defined on $\pi|_{C}$ (by the Claim) and injective (as discussed above).
	Note that the graph of $f$ is a $C$-type-definable subset of $ P \times G$ using type-definability of $\pi$ as above.
	
	For any $(a,b) \models \pi^{\otimes 2}|_{C}$, we have $c := F(a,b) \models \pi|_{C,a}$. For any $e \models \pi|_{C, a,b}$,  $[F_c](e) = F(F(a,b), e) = F(a,F(b,e)) = [F_a] \circ [F_b] (e)$, so $[F_c] = [F_a] \circ [F_b]$. This shows that $f(F(a,b)) = f(a) \cdot_{G} f(b)$.

Finally, given an arbitrary $D \supseteq C$, assume $g \models (f_{\ast} \pi)|_{C}$ and 	$h \models (f_{\ast} \pi)|_{D,g}$. By Remark \ref{rem: pushforward sets of realiz} there  exist $a \models \pi|_{C}$ with $f(a) = g$ and  $b \models \pi|_{D,g}$ with $f(b) = h$. As $f$ is injective on $\pi|_{C}$, $a \in \dcl^{\heq}(C, g)$, so $b \models \pi|_{D,a}$ (by Remark \ref{rem: realize over dcl heq}). Then $F(a,b) \models \pi|_{D,a}$ and $g \cdot h = f(a) \cdot f(b) = f(F(a,b))$, hence $g \cdot h \models (f_{\ast}\pi)|_{D,a}$, so $g \cdot h \models (f_{\ast}\pi)|_{D,g}$ (by Remark \ref{rem: realize over dcl heq} again). This implies that $g \cdot (f_{\ast}\pi) = f_{\ast}\pi$. And for an arbitrary $g \in G$ we have $g = g_1 \cdot g_2^{-1}$ for $g_1, g_2 \models (f_{\ast} \pi)|_{C}$. By Remark \ref{rem: translates of partial types} it follows that $g \cdot (f_{\ast}\pi) = f_{\ast}\pi$, as wanted.
\end{proof}

\begin{theorem}\label{thm: equiv of cats groups vs chunks}
	Let $G = (P, \cdot_G)$ and $H = (Q, \cdot_H)$ with $P = X/E, Q = Y/F$ be groups hyperdefinable over $C$. Let $\pi$ be a left-$G$-invariant partial type concentrating on $G$ and type-definable over $C$. Let $f: P \to Q$ be a $C$-type-definable partial function defined on $\pi|_{C}$ and such that for all $a \models \pi|_{C}, b \models \pi|_{C, a}$ we have $f(a \cdot_G b) = f(a) \cdot_H f(b)$. Then there exists a unique type-definable over $C$ group homomorphism $f': G \to H$ that agrees with $f$ on realizations of $\pi|_{C}$. Moreover, if $f$ is injective on $\pi|_{C}(\cU)$, then $f'$ is also injective.
\end{theorem}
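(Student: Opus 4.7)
The plan is to define $f' : G \to H$ by
\[
f'(g) := f(g \cdot_G b) \cdot_H f(b)^{-1}
\]
for some (equivalently any) $b \models \pi|_{C, g}$, and then verify well-definedness, $C$-type-definability, the homomorphism property, agreement with $f$ on $\pi|_{C}(\cU)$, and injectivity when $f \restriction_{\pi|_{C}(\cU)}$ is injective. The right-hand side makes sense because left-$G$-invariance of $\pi$ gives $g \cdot_G b \models \pi|_{C, g}$ as well. Uniqueness of $f'$ is automatic from left-$G$-invariance: any $C$-type-definable homomorphism $f''$ extending $f \restriction_{\pi|_{C}(\cU)}$ must satisfy $f''(g) \cdot_H f(b) = f''(g) \cdot_H f''(b) = f''(g \cdot_G b) = f(g \cdot_G b)$.

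The technical heart is well-definedness. Abbreviate $h_b := f(g \cdot_G b) \cdot_H f(b)^{-1}$ for $b \models \pi|_{C, g}$. The key sub-claim is: if $c \models \pi|_{C, g, b}$, then $h_b = h_{b \cdot_G c}$. To see this, apply the cocycle hypothesis once to the pair $(b, c)$ (yielding $f(b \cdot_G c) = f(b) \cdot_H f(c)$) and once to $(g \cdot_G b, c)$ (yielding $f((g \cdot_G b) \cdot_G c) = f(g \cdot_G b) \cdot_H f(c)$, where we use Remark \ref{rem: realize over dcl heq} and $g \cdot_G b \in \dcl^{\heq}(C, g, b)$ to get $c \models \pi|_{C, g \cdot_G b}$); rearranging and using associativity $(g \cdot_G b) \cdot_G c = g \cdot_G (b \cdot_G c)$ delivers the sub-claim. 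Now, if $b_1, b_2 \models \pi|_{C, g}$ satisfy the extra independence $b_2 \models \pi|_{C, g, b_1}$, set $c := b_1^{-1} \cdot_G b_2$; left-$G$-invariance of $\pi$ converts $b_2 \models \pi|_{C, g, b_1}$ into $c \models \pi|_{C, g, b_1}$, so the sub-claim gives $h_{b_1} = h_{b_1 \cdot_G c} = h_{b_2}$. For arbitrary $b_1, b_2 \models \pi|_{C, g}$ we then bridge through $b_3 \models \pi|_{C, g, b_1, b_2}$.

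Once well-definedness is established, $C$-type-definability is immediate: the graph of $f'$ is the partial type
\[
R(g, h) := (d_{\pi}\, b)\bigl(f(g \cdot_G b) = h \cdot_H f(b)\bigr)
\]
on $G \times H$, which is $C$-type-definable by Definition \ref{def: def type hyperim} and Remark \ref{rem: around typ-def types def} using $C$-type-definability of $f$ and of the group operations. The homomorphism property is then a formal calculation: for $g_1, g_2 \in G$, pick $b \models \pi|_{C, g_1, g_2, f'(g_1), f'(g_2)}$, observe $g_2 \cdot_G b \models \pi|_{C, g_1, f'(g_1)}$ by left-$G$-invariance, and apply $R$ twice to get $f((g_1 \cdot_G g_2) \cdot_G b) = f'(g_1) \cdot_H f'(g_2) \cdot_H f(b)$; comparing with the defining relation for $f'(g_1 \cdot_G g_2)$ forces $f'(g_1 \cdot_G g_2) = f'(g_1) \cdot_H f'(g_2)$. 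Agreement with $f$ on $\pi|_{C}(\cU)$ is a direct corollary of the cocycle hypothesis. Finally, if $f$ is injective on $\pi|_{C}(\cU)$ and $f'(g) = 1_H$, picking $b \models \pi|_{C, g}$ gives $f(g \cdot_G b) = f'(g) \cdot_H f(b) = f(b)$ with both $g \cdot_G b, b \in \pi|_{C}(\cU)$, so $g \cdot_G b = b$ and $g = 1_G$.

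The main obstacle is the well-definedness step: a direct pairwise computation using a common joint-independent witness $c \models \pi|_{C, g, b_1, b_2}$ does not close the argument on its own, and one must first use left-$G$-invariance of $\pi$ to link sufficiently independent pairs by a multiplicative step inside $\pi$ itself before bridging arbitrary pairs through a joint-independent realization.
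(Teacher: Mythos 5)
Your proposal is correct and follows essentially the same route as the paper: you define $f'(g) := f(g \cdot_G b) \cdot_H f(b)^{-1}$ for a witness $b \models \pi|_{C,g}$ (the paper's $f'(g) = f(a)\cdot f(b)^{-1}$ with $a = g\cdot b$), prove well-definedness from the cocycle identity together with left-invariance and the $\dcl^{\heq}$-remark, and obtain type-definability via the same $(d_\pi x)$-formula as in the paper's proof. The only difference is cosmetic: you organize well-definedness through the sub-claim $h_b = h_{b\cdot c}$ and then bridge two witnesses via a joint realization, whereas the paper compares $f(a)f(b)^{-1}f(e)$ and $f(c)f(d)^{-1}f(e)$ directly against $f(ab^{-1}e)$ for a common $e \models \pi|_{C,a,b,c,d}$ and cancels $f(e)$ --- the same computation in a different order.
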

\begin{proof}
	We follow closely the proof of \cite[Proposition 3.16]{hrushovski2019valued}, but provide some details.
	
	First note that $\pi|_{C}(\cU)$ generates $G(\cU)$, which clearly implies that there can only exist a unique group homomorphism extending $f$. Indeed, given $g \in G(\cU)$, let $e \models \pi|_{C,g}$ in $G(\cU)$. Then $g \cdot e \models \pi|_{C,g}$ by $G$-invariance of $\pi$. And $g = (g \cdot e) \cdot e^{-1}$. (Note that we do not claim that $a^{-1} \models \pi|_C$ --- this would follow if we assumed additionally that $\pi$ $\otimes$-commutes with itself.)

		To show existence, for any $g \in G(\cU)$ we define $f'(g) := f(a) \cdot f(b)^{-1}$ for some $a,b \models \pi|_{C}$ with $g = a \cdot b^{-1}$. To show that $f'$ is well-defined, it suffices to show that for all $a,b,c,d \models \pi|_{C}$ with $a \cdot b^{-1} = c \cdot d^{-1}$ we have $f(a) \cdot f(b)^{-1} = f(c) \cdot f(d)^{-1}$. As $H$ is a group, it suffices to show that for any $e \models \pi|_{C, a,b,c,d}$ we have $f(a) \cdot f(b)^{-1} \cdot f(e) = f(c) \cdot f(d)^{-1} \cdot f(e)$. 
	In particular $e \models \pi|_{C,a,b,b^{-1}}$ (by Remark \ref{rem: realize over dcl heq}), hence $b^{-1} \cdot e \models \pi|_{C,a,b}$ by invariance of $\pi$. Then, by assumption on $f$, we have $f(e) = f( (b \cdot b^{-1}) \cdot e) = f( b \cdot (b^{-1} \cdot e)) = f(b) \cdot f(b^{-1} \cdot e)$, hence $f(b)^{-1} \cdot f(e) = f(b^{-1} \cdot e)$; and $f(a) \cdot f(b^{-1} \cdot e) = f(a \cdot b^{-1} \cdot e)$. So $f(a) \cdot f(b)^{-1} \cdot f(e) = f(a \cdot b^{-1} \cdot e)$.
	 Similarly, $f(d)^{-1} \cdot f(e) = f(d^{-1} \cdot e)$ and $f(c) \cdot f(d^{-1} \cdot e) = f(c \cdot d^{-1} \cdot e)$, so $f(c) \cdot f(d)^{-1} \cdot f(e) = f(c \cdot d^{-1} \cdot e)$. As $a \cdot b^{-1} \cdot e = c \cdot d^{-1} \cdot e$ by assumption, the required equality holds.
	 
	 We show that $f'$ is a homomorphism. First, for any $g,h \in G(\cU)$ we have $f'(g \cdot h^{-1}) = f'(g) \cdot (f'(h))^{-1}$. Indeed, let $e \models \pi|_{C, g, h}$. Then, using definition of $f'$ and that $g\cdot e, h \cdot e \models \pi|_{C}$ by $G$-invariance of $\pi$, we have $f'(g) = f(g \cdot e) \cdot f(e)^{-1}$, $f'(h) = f(h \cdot e) \cdot f(e)^{-1}$ and $f'(g \cdot h^{-1}) = f'(g \cdot (e \cdot e^{-1}) \cdot h^{-1}) = f'((g \cdot e) \cdot (h \cdot e)^{-1}) = f( g\cdot e) \cdot f(h \cdot e)^{-1}$. Combining $f'(g) \cdot f'(h)^{-1} = f(g \cdot e) \cdot f(e)^{-1} \cdot f(e) \cdot f(h \cdot e)^{-1} = f'(g) \cdot f'(h)$. Second, we have $f'(g^{-1}) = f'(g)^{-1}$ for any $g \in G(\cU)$. Indeed, if $g = a \cdot b^{-1}$ for some $a,b \models \pi|_{C}$, then by definition of $f'$ we have $f'(g^{-1}) = f'( (a \cdot b^{-1})^{-1}) = f'(b \cdot a^{-1}) = f(b) \cdot f(a)^{-1} = \left(f(a) \cdot f(b)^{-1} \right)^{-1} = f'(a \cdot b^{-1})^{-1} = f'(g)^{-1}$. Finally, for any $g,h \in G(\cU)$ using these two remarks we have $f'(g \cdot h) = f'(g \cdot (h^{-1})^{-1}) = f'(g) \cdot f'(h^{-1})^{-1} = f'(g) \cdot (f'(h)^{-1})^{-1} = f'(g) \cdot f'(h)$.

	 Assume that $f$ is injective on $\pi|_{C}(\cU)$, and let $f'(a) = f'(b)$ for some $a,b \in G$. Let $e \models \pi|_{C, a, b}$, then $a = (a \cdot e) \cdot e^{-1}, b = (b \cdot e) \cdot e^{-1}$ and $f'(a) = f(a \cdot e) \cdot f(e)^{-1}, f'(b) = f
	 (b \cdot e) \cdot f(e)^{-1}$. Then $f(a \cdot e) = f(b \cdot e)$, so $a \cdot e = b \cdot e$ by injectivity of $f$, hence $a = b$. Hence $f'$ is injective as well.

	 Now for $C$-type-definability of $f'$. Given any $a \in G, b \in H$, let $e \models \pi|_{C,a,b}$ be arbitrary. Then $a = ( a \cdot e) \cdot e^{-1}$ and, 	 by definition of $f'$, $f'(a) = f(a \cdot e) \cdot f(e)^{-1}$. That is, $f'(a) = b$ if and only if $f(a \cdot_{G} e) = b \cdot_{H} f(e)$. In other words, if and only if
	  \begin{gather}
	 	\pi(x) \vdash \label{eq: explicit formula for group iso}\\ 
	 	\exists u, u', v, v' \Big( \Gamma_{\cdot_G}(a,x,u) \land \Gamma_f(u,u') \land \Gamma_f(x,v) \land \Gamma_{\cdot_H}(b, v, v') \land F(u',v') \Big) \nonumber,
	 \end{gather}
	where on the right we have a small partial type over $C$ (invariant with respect to $E,F$ in the relevant coordinates). By type-definability of $\pi$ over $C$, the set of pairs $(a,b) \in P \times Q$ for which this holds is type-definable over $C$.
\end{proof}

In particular this implies that the group $G$ in Theorem \ref{thm: hyperdef group chunk gives group} is unique, up to a type-definable isomorphism:
\begin{corollary}\label{cor: unique group from group chunk}
	Let $(\pi,F,H,K)$ be a group chunk over $C$ on $P$, and for $i \in \{1,2\}$ let $G_i$ be $C$-hyper-definable groups and $C$-type-definable partial functions $f_i: \pi \to G_i$ defined and injective on $\pi$, so that for any $(a,b) \models \pi^{\otimes 2}|_{C}$ we have $f_i(F(a,b)) = f_i(a) \cdot_{G} f_i(b)$ and the global partial type $(f_i)_{\ast} \pi$ concentrates on $G_i$ and is left-$G_i$-invariant. Then there is  $C$-type-definable group isomorphism $\iota: G_1 \to G_2$.
\end{corollary}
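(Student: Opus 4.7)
The plan is to reduce to Theorem \ref{thm: equiv of cats groups vs chunks} by transferring the group chunk structure from $\pi$ to $(f_1)_\ast\pi$ on $G_1$. The key observation is that since $f_1$ is injective on $\pi|_C(\cU)$, the partial inverse $f_1^{-1}$ is a $C$-type-definable partial function from $G_1$ to $P$ defined exactly on $(f_1)_\ast\pi|_C$, with graph type-defined by $\exists x\,(\Gamma_{f_1}(x,y) \wedge x = z)$. Hence the composition $g := f_2 \circ f_1^{-1}$ is a $C$-type-definable partial function $G_1 \to G_2$ defined on $(f_1)_\ast\pi|_C$, namely the one whose graph is $\exists u\,(\Gamma_{f_1}(u,y) \wedge \Gamma_{f_2}(u,z))$.

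Next I would check that $g$ is multiplicative on $(f_1)_\ast\pi$. Let $a' \models (f_1)_\ast\pi|_C$ and $b' \models (f_1)_\ast\pi|_{C,a'}$. By Remark \ref{rem: pushforward sets of realiz} and injectivity of $f_1$ on $\pi|_C$, there is a unique $a \in \dcl^{\heq}(C,a')$ with $f_1(a) = a'$, and by Remark \ref{rem: realize over dcl heq} we have $a \models \pi|_C$; similarly there is a unique $b \in \dcl^{\heq}(C,a',b')$ with $f_1(b) = b'$, and $b \models \pi|_{C,a'} = \pi|_{C,a}$, so $(a,b) \models \pi^{\otimes 2}|_C$. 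Thus $a' \cdot_{G_1} b' = f_1(a) \cdot_{G_1} f_1(b) = f_1(F(a,b))$, and so
\[
  g(a' \cdot_{G_1} b') = f_2(F(a,b)) = f_2(a) \cdot_{G_2} f_2(b) = g(a') \cdot_{G_2} g(b').
\]
Since $(f_1)_\ast\pi$ is left-$G_1$-invariant, concentrates on $G_1$, and is $C$-type-definable (Remark \ref{rem: pushfoward preserves type-def}), Theorem \ref{thm: equiv of cats groups vs chunks} applies and produces a unique $C$-type-definable group homomorphism $\iota: G_1 \to G_2$ extending $g$ on $(f_1)_\ast\pi|_C(\cU)$. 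Because $f_2$ and $f_1^{-1}$ are both injective on their relevant domains, $g$ is injective on $(f_1)_\ast\pi|_C(\cU)$, so the ``moreover'' clause of Theorem \ref{thm: equiv of cats groups vs chunks} gives that $\iota$ is injective.

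To obtain a two-sided inverse, I would run the symmetric argument with the roles of $G_1$ and $G_2$ swapped, producing a $C$-type-definable injective group homomorphism $\iota': G_2 \to G_1$ extending $f_1 \circ f_2^{-1}$ on $(f_2)_\ast\pi|_C(\cU)$. Then $\iota' \circ \iota: G_1 \to G_1$ is a $C$-type-definable group homomorphism that agrees with the identity on $f_1(\pi|_C(\cU)) = (f_1)_\ast\pi|_C(\cU)$; by the uniqueness clause of Theorem \ref{thm: equiv of cats groups vs chunks} it must equal $\mathrm{id}_{G_1}$, and symmetrically $\iota \circ \iota' = \mathrm{id}_{G_2}$. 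I do not anticipate a serious obstacle here: all the work has been packaged into Theorem \ref{thm: equiv of cats groups vs chunks}, and the only point requiring minor care is the bookkeeping with hyperimaginary parameters needed to verify that $f_1^{-1}$ extracts an honest realization of $\pi|_{C}$ from a realization of $(f_1)_\ast\pi|_C$, which is exactly what injectivity together with Remarks \ref{rem: realize over dcl heq} and \ref{rem: pushforward sets of realiz} provide.
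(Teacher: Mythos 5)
Your proposal is correct and follows essentially the same route as the paper's proof: both form the $C$-type-definable injections $f_2\circ f_1^{-1}$ and $f_1\circ f_2^{-1}$ between the pushforward types, verify multiplicativity by pulling realizations back along the injective $f_i$, and then invoke Theorem \ref{thm: equiv of cats groups vs chunks} together with its uniqueness clause to get mutually inverse $C$-type-definable homomorphisms. The extra detail you supply for the multiplicativity check (via Remarks \ref{rem: pushforward sets of realiz} and \ref{rem: realize over dcl heq}) is exactly what the paper leaves implicit.
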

\begin{proof}

	By assumption $(f_i)_{\ast}\pi$ concentrates on $G_i$ and is left-$G_i$-invariant, and $C$-type-definable (Remark \ref{rem: pushfoward preserves type-def}). 
	By Remark \ref{rem: pushforward sets of realiz} and injectivity of $f_i$ on $\pi$, 	$g_1 := f_2 \circ f_1^{-1} : (f_1)_{\ast}\pi \to (f_2)_{\ast}\pi$ and $g_2 := f_1 \circ f_2^{-1} : (f_2)_{\ast}\pi \to (f_1)_{\ast}\pi$ are  $C$-type-definable injections with $g_1 \circ g_2 = \id_{(f_2)_{\ast}\pi}, g_2 \circ g_1 = \id_{(f_1)_{\ast}\pi}$; and for any $(a,b) \models ((f_i)_{\ast}\pi)^{\otimes 2}|_{C}$, $g_i(a \cdot_{G_i} b) = g_i(a) \cdot_{G_{3-i}} g_i(b)$. By Theorem \ref{thm: equiv of cats groups vs chunks}, there exist unique $C$-type-definable injective group homomorphisms $h_i: G_i \to G_{3-i}$ with $h_i \restriction_{(f_i)_{\ast}\pi } = g_i$. As (by Theorem \ref{thm: equiv of cats groups vs chunks}) $h_i \circ h_{3-i}$ is the unique  isomorphism of $G_i$ extending $g_i \circ g_{3-i}$, $h_i \circ h_{3-i} = \id_{G_{3-i}}$, hence $h_i: G_i \to G_{3-i}$ is a group isomorphism.
\end{proof}

\section{Externally definable fsg groups}\label{sec: Externally definable fsg groups}

\subsection{Type-definable filters associated to definable measures}\label{sec: Type-definable filters associated to definable measures}

\begin{definition}\label{def: pi mu}
	Assume that $\mu \in \mathfrak{M}_x(\cU)$ is a global Keisler measure. We define the partial type $\pi_{\mu} := \{ \varphi(x) \in \mathcal{L}_x(\cU) : \mu(\varphi(x)) = 1 \} = \{ \neg \varphi(x) \in \mathcal{L}_x(\cU) : \mu(\varphi(x)) = 0 \}$ (note that $\pi_\mu$ is closed under conjunction and implication). \end{definition}
	
\begin{remark}\label{rem: pi mu defines support}
	We have $S(\mu) = \{ p \in S_x(\cU) : \pi_{\mu} \subseteq p \}$ and for any $C \subseteq \cU$, $a \models \pi_{\mu}|_{C} \iff \tp(a/C) \in S(\mu|_{C})$.
\end{remark}

The following are straightforward adaptations of the usual notions from complete to partial types.
\begin{definition}\label{def: props of partial types}
Given $A \subseteq \cU$, a partial type $\pi(x)$ over $\cU$ is
\begin{enumerate}
\item \emph{invariant over $A$} if for every formula $\varphi(x,y)$ and $b \in \cU^y$, we have $\varphi(x,b) \in \pi \iff \varphi(x, \sigma(b)) \in \pi$.
	\item \emph{(type-)definable over $A$} if  for every formula $\varphi(x;y) \in L(\emptyset)$, the set $\{ b \in \cU^y : \varphi(x,b) \in \pi \}$ is (type-)definable over $A$.
	\item \emph{finitely satisfiable in $A$} if every formula $\varphi(x) \in \pi$ has a realization in $A$. And $\pi(x)$ is \emph{strongly finitely satisfiable in $A$} if for every $\varphi(x) \in L(\cU)$ such that $\varphi(A) = \emptyset$, $\neg \varphi(x) \in \pi$.
\end{enumerate}

\end{definition}

\begin{remark}
	Note that if $\pi(x)$ is a partial type over $\cU$ type-definable over $A$ and closed under implication, then viewed as a partial type on the home sort over $\cU^{\heq}$ (Remark \ref{rem: real partial type in heq}), it remains type-definable over $A$ in the sense of Definition \ref{def: def type hyperim}.
\end{remark}

\begin{remark}\label{rem: pi mu type-def for mu def}
If $\mu$ is invariant (definable, finitely satisfiable) over $A$, then  $\pi_\mu$ is invariant (type-definable, strongly finitely satisfiable) over $A$.  Indeed, if the measure  $\mu$ is definable over $A$, then for every formula $\varphi(x,y) \in L$, the set $\{b \in \cU^{y} : \mu( \varphi(x,b))=1\}$ is type-definable over $A$.
\end{remark}

\begin{lemma}\label{lem: props of pi mu}

	\begin{enumerate}
	\item If $\mu_x, \nu_y \in \mathfrak{M}(\cU)$ are definable measures, then $\pi_{\mu} \otimes \pi_{\nu} = \pi_{\mu \otimes \nu}$. In particular, if $\mu$ is generically stable then  $\pi_{\mu}$ commutes with itself.
	\item For $\mu \in \mathfrak{M}_x(\cU)$  and $f$ a definable map, we have $f_{\ast} \pi_{\mu} = \pi_{f_{\ast} \mu}$.
		\item If $G$ is a type-definable group and $\mu \in \mathfrak{M}_{G}(\cU)$, then $\pi_{\mu}(x) \vdash G(x)$. And if  $\mu$ is $G$-invariant, then $\pi_{\mu}$ is also $G$-invariant.
	\end{enumerate}

\end{lemma}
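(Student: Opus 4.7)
I would prove the three parts in the order (3), (2), (1), since each builds conveniently on the previous ones. For (3), if $G$ is type-definable by a partial type $\{\psi_i(x) : i \in I\}$ (closed under conjunction), then $\mu \in \mathfrak{M}_G(\cU)$ says exactly that $\mu(\psi_i(x)) = 1$ for all $i$, so every $\psi_i(x) \in \pi_\mu$ and hence $\pi_\mu(x) \vdash G(x)$. The $G$-invariance claim will fall out once we have (2): if $g \in G(\cU)$ and $t_g: G \to G$ denotes left-translation by $g$, then $g \cdot \pi_\mu = (t_g)_\ast \pi_\mu$, and by (2) this equals $\pi_{(t_g)_\ast \mu} = \pi_{g \cdot \mu} = \pi_\mu$.

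For (2), fix a definable map $f$ defined on $\mu$ (equivalently, on $\pi_\mu$, since $\pi_\mu(x) \vdash \exists y\, \Gamma_f(x,y)$ iff $\mu(\exists y\, \Gamma_f(x,y)) = 1$). Unwinding Definitions \ref{def: definable pushforward} and \ref{def: pushforward of types}: a formula $\gamma(y,c)$ is in $f_\ast \pi_\mu$ iff $\pi_\mu(x) \vdash \exists u(\Gamma_f(x,u) \land \gamma(u,c))$, iff $\mu(\exists u(\Gamma_f(x,u) \land \gamma(u,c))) = 1$, iff $(f_\ast \mu)(\gamma(y,c)) = 1$, iff $\gamma(y,c) \in \pi_{f_\ast \mu}$. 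Extending from formulas to small partial types $\gamma(y)$ is immediate since $\pi_\mu$ is closed under implication and both sides are filters.

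The substance is in (1). Assuming $\mu, \nu$ are definable, Remark \ref{rem: pi mu type-def for mu def} gives that $\pi_\mu, \pi_\nu$ are type-definable, so $\pi_\mu \otimes \pi_\nu$ makes sense. I would verify formula by formula: given $\varphi(x,y;z) \in L$ and $c \in \cU^z$, I want
\[
	\varphi(x,y,c) \in \pi_\mu \otimes \pi_\nu \iff (\mu \otimes \nu)(\varphi(x,y,c)) = 1.
\]
By definition, the left-hand side says $c \models (d_{\pi_\mu} x)\bigl((d_{\pi_\nu} y)\varphi(x,y,z)\bigr)$. Set $\rho(x,z) := (d_{\pi_\nu} y)\varphi(x,y,z)$, a partial type whose $c$-slice defines $\{a \in \cU^x : \nu(\varphi(a,y,c)) = 1\}$. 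Then the LHS says: for every formula $\psi(x,z) \in \rho$, $\mu(\psi(x,c)) = 1$. Because $\pi_\nu$ (hence $\rho$) is closed under conjunction, regularity of $\mu$ gives $\mu\bigl(\{a : \nu(\varphi(a,y,c)) = 1\}\bigr) = \inf_{\psi \in \rho}\mu(\psi(x,c))$, so the LHS is equivalent to $\mu\bigl(\{a : \nu(\varphi(a,y,c)) = 1\}\bigr) = 1$. On the other hand, the disintegration formula for the tensor product (exactly as in Claim \ref{cla: restriction commutes with tensor}) gives
\[
	(\mu \otimes \nu)(\varphi(x,y,c)) = \int F^{\varphi(x,y;c)}_{\nu, M}\bigl((a,c)/M\bigr)\, d\tilde{\mu}(a),
\]
and since the integrand is $[0,1]$-valued, this integral equals $1$ iff $\nu(\varphi(a,y,c)) = 1$ for $\mu$-almost-every $a$, i.e., iff the same $\mu$-mass condition holds. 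This establishes the equality of partial types. The "in particular" clause then follows because generic stability of $\mu$ implies $\mu_x \otimes \mu_y = \mu_y \otimes \mu_x$ as measures on $\varphi(x,y)$ for any $L(\cU)$-formula (Fact \ref{fac: NIP gen stab meas equivs} plus symmetry of fim measures), and applying (1) transfers this to $\pi_\mu$.

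\textbf{Main obstacle.} The only nontrivial step is reconciling the "definitional" side of $\pi_\mu \otimes \pi_\nu$ (which is phrased via nested $d$-schemas on partial types $\gamma$) with the analytic side of $(\mu \otimes \nu)(\varphi)$ (which is given by a Fubini integral). The key enabler is that $\nu$ is definable, so that $\rho(x,z) := (d_{\pi_\nu} y)\varphi(x,y,z)$ is a genuine type-definable partial type whose $c$-slices are exactly the $\mu$-measurable sets we need; once this is in place, the closure of $\pi_\mu$ and $\pi_\nu$ under implication turns the two sides into obviously equivalent conditions.
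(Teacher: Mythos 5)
Your proof is correct and follows essentially the same route as the paper: parts (2) and (3) are the same definition-unwinding the paper gives (with the invariance claim in (3) likewise reduced to (2) via the translation map $t_g$), and for (1) the paper also reduces to the Fubini identity $(\mu\otimes\nu)(\varphi)=\int F^{\varphi}_{\nu,M}\,d\tilde{\mu}$ and exploits definability of $\nu$. The only (cosmetic) difference is in the last step of (1): the paper uses continuity of $F^{\varphi}_{\nu,M}$ to say the integral is $1$ iff $F^{\varphi}_{\nu,M}\equiv 1$ on the support $S(\mu|_M)$ and then invokes the realization criterion for $\pi_{\mu}\otimes\pi_{\nu}$, whereas you say both sides are equivalent to the type-definable set $\{a:\nu(\varphi(a,y,c))=1\}$ having $\mu$-measure $1$, using regularity of the Borel extension of $\mu$ — an equivalent piece of bookkeeping.
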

\begin{proof}
(1) Given a formula  $\varphi(x,y) \in L(\cU)$, let $M$ be a small model such that $\mu_x,\nu_y, \varphi(x,y)$ are $M$-definable. 
  By definition of $\otimes$, we have 
	$\varphi(x,y) \in \pi_{\left(\mu \otimes \nu \right)} \iff \mu \otimes \nu \left( \varphi(x,y) \right) = 1 \iff \int_{S(\mu|_{M})} F^{\varphi(x,y)}_{\nu, M} (p)  d \mu_{M} = 1$, where $\mu|_{M} \in \mathfrak{M}_x(M)$ is the restriction of $\mu$ to $M$, $S(\mu|_{M})$ is its support,  $\mu_M$ is the unique regular Borel probability measure extending $\mu$ on $S_x(M)$. As the function $F^{\varphi(x,y)}_{\nu, M}: p \in S_x(M) \mapsto \nu(\varphi(a,y)) \in [0,1]$ for some/any $a \models p$ is continuous by definability of $\nu$, this integral is $=1$ if and only if $F^{\varphi(x,y)}_{\nu, M}(p) = 1$ for all $p \in S(\mu|_M)$. That is, if and only if for every $a \models \pi|_{M} $ and $b \models \pi|_{M,a}$ we have $\models \varphi(a,b)$ (by Remark \ref{rem: pi mu defines support}); if and only if $\varphi(x,y) \in \pi_{\mu} \otimes \pi_{\nu}$ (by  Remark \ref{rem: prod of partial types realiz}).
	
	(2) Given $\varphi(x) \in L(\cU)$ and a definable map $f$, by Definition \ref{def: pushforward of types}, we have $\varphi(x) \in f_{\ast} \pi_{\mu} \iff \varphi(f(x)) \in \pi_{\mu} \iff \mu(\varphi(f(x))) = 1$, by Definition \ref{def: definable pushforward} $\iff f_{\ast} \mu(\varphi(x)) = 1 \iff \varphi(x) \in \pi_{(f_{\ast \mu})}$.
	
	(3) If $\mathfrak{M}_{G}(\cU)$, then by definition $\pi_{\mu}(x) \vdash G(x)$.  And assuming $\mu$ is $G$-invariant, for every $g \in G(\cU)$ we have (see Definition \ref{rem: translates of partial types}): 
	$g \cdot \pi_{\mu} = (t_g)_{\ast} \pi_{\mu} = \pi_{(t_g)_{\ast} \mu} = \pi_{g \cdot \mu} = \pi_{\mu}$ by (2), where $t_g: G \to G$ is the definable map $h \in G \mapsto g \cdot h$.
	\end{proof}
	
\begin{remark}
	In the context of Lemma \ref{lem: props of pi mu}(1), we have $S(\mu) \otimes S(\nu)$ is a dense subset of $S(\mu \otimes \nu)$ (similarly to \cite[Proposition 4.3]{chernikov2022definable}), and $\pi_{\mu}\otimes \pi_{\nu}$ is a partial type, hence corresponds to a closed set of types.
\end{remark}

\begin{remark}
In relation to Lemma \ref{lem: props of pi mu}(3), note that if $G$ is a (type-)definable group and $\mu \in \mathfrak{M}_{G}$ is a definable measure, then its (left) stabilizer $\Stab_{G}(\mu) = \{ g \in G(\cU) : g \cdot \mu = \mu \}$ (Definition \ref{def: stab of a measure}) is a type-definable subgroup of $G$ (Fact \ref{fac: stab of def meas type def}). Similarly, for a partial type $\pi(x) \vdash G(x)$, we can consider its (left) stabilizer $\Stab_{G}(\pi) = \{ g \in G(\cU) : g \cdot \pi = \pi \}$. Then, using Remark \ref{rem: pi mu defines support}, $\Stab_{G}(\mu) \subseteq \Stab_{G}(\pi_{\mu}) = \{g \in G(\cU) : g \cdot S(\mu) = S(\mu) \}$ is a subgroup of $G$ and $\pi_{\mu}$ is type-definable (Remark \ref{rem: pi mu type-def for mu def}), however $ \Stab_{G}(\pi_{\mu})$ need not be type-definable. 
	
Indeed, let $M$ be a countable strongly minimal expansion of a (necessarily abelian) group $G(M)$, $\cU \succ M$ a monster model. Consider the Keisler measure $\mu := \sum_{q \in M} \alpha_q \cdot \delta_{q}$ for some \emph{pairwise-distinct} $\alpha_q \in (0,1), \sum_{q \in M} \alpha_q = 1$, then $\mu \in \mathfrak{M}_{G}(\cU)$ is smooth (over $M$). Note that $S(\mu) = \{\tp(q/\cU) : q \in M\} \cup \{p\}$, where $p \in S_{G}(\cU)$ is the unique (by strong minimality) non-algebraic type. Note that $\Stab_{G}(\mu) = \{ 0 \}$: given any $g \in G(\cU) \setminus \{0\}$, take any $q \in M$, then  $0 < (g \cdot \mu)(x = q) = \mu(x = g^{-1} \cdot  q )$,  and as $g^{-1} \cdot q \neq q$ we conclude $(g \cdot \mu)(x = q) \neq \mu(x = q)$ using that $\alpha_q$ are pairwise distinct.  And $\pi_{\mu} = \{ \varphi(x) \in L(\cU) : \models \bigwedge_{q \in M} \varphi(q) \}$, hence  $M \subseteq \Stab_G (\pi_{\mu}) $ as $M$ is a subgroup. In fact,  $M = \Stab_G (\pi_{\mu}) $: given any $g \in G(\cU) \setminus M$ and $q \in M$, $g \cdot q \notin M$, hence  $\mu(x \neq g \cdot q) = 1$. But $g \cdot \mu( x \neq g \cdot q) = \mu( g \cdot x \neq g \cdot q) = \mu(x \neq q) \leq 1 - \alpha_q < 1$. In particular $\Stab_G (\pi_{\mu})$ is not type-definable, as a countable subset of $\cU$.
\end{remark}

\begin{corollary}\label{cor: measure theoretic group chunk}
Let $T$ be NIP, $G$ an $M$-type-definable group, and $\mu \in \mathfrak{M}_{G}(\cU)$ an $M$-definable left-invariant measure (such $M$ and $\mu$ exist  whenever $G$ is a definably amenable NIP group, Fact \ref{fac: def am def measure}). Then $G$ can be recovered, up to an $M$-type-definable isomorphism of $M$-hyperdefinable groups in $\cU^{\heq}$, from its $M$-type-definable group chunk $(\pi_{\mu}, F,H,K)$ (Remark \ref{rem: group chunk from a group}).
\end{corollary}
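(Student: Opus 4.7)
The plan is to combine Remark \ref{rem: group chunk from a group}, Theorem \ref{thm: hyperdef group chunk gives group}, and Corollary \ref{cor: unique group from group chunk} in the obvious way, once we verify that the data $(\pi_\mu, F, H, K)$ arising from $\mu$ is in fact an $M$-type-definable group chunk in the sense of Definition \ref{defn: group chunk}.

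First I would observe that $\pi_{\mu}$ is a partial type over $\cU$ concentrating on $G$ (by Lemma \ref{lem: props of pi mu}(3), since $\mu \in \mathfrak{M}_{G}(\cU)$), closed under implication by definition, and type-definable over $M$ by Remark \ref{rem: pi mu type-def for mu def}. Viewing $\pi_\mu$ as a partial type in $\cU^{\heq}$ via Remark \ref{rem: real partial type in heq}, it remains $M$-type-definable. Left-$G(\cU)$-invariance of $\mu$ gives left-$G(\cU)$-invariance of $\pi_\mu$ (Lemma \ref{lem: props of pi mu}(3) again), so Remark \ref{rem: group chunk from a group} applies and produces the $M$-type-definable group chunk $(\pi_\mu, F, H, K)$ on $G$, where $F, H, K$ are obtained from (suitable coordinate permutations of) the $M$-type-definable graph of multiplication in $G$.

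Next, I would apply Theorem \ref{thm: hyperdef group chunk gives group} to this group chunk: it yields an $M$-hyperdefinable group $G^{*}$ in $\cU^{\heq}$ together with an $M$-type-definable partial function $f: \pi_{\mu} \to G^{*}$, defined and injective on $\pi_\mu|_M$, such that $f(F(a,b)) = f(a) \cdot_{G^{*}} f(b)$ for all $(a,b) \models \pi_{\mu}^{\otimes 2}|_{M}$, and such that $f_{\ast}\pi_\mu$ concentrates on $G^{*}$ and is left-$G^{*}$-invariant.

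Finally, to identify $G^{*}$ with $G$, I would apply Corollary \ref{cor: unique group from group chunk} with $G_1 := G$, $G_2 := G^{*}$, $f_1$ the inclusion $\pi_\mu|_M(\cU) \hookrightarrow G$ (which is $M$-type-definable, injective, a partial homomorphism by the very definition of $F$, and for which $(f_1)_\ast \pi_\mu = \pi_\mu$ is left-$G$-invariant by Lemma \ref{lem: props of pi mu}(3)) and $f_2 := f$ as produced above. The conclusion is an $M$-type-definable isomorphism $G \cong G^{*}$. There is no real obstacle here beyond checking that the hypotheses of the previously established results line up; the substantive content has already been done in Section \ref{sec: Hyperdefinable group chunk proof}, and the only thing to verify for this corollary is that passing from an $M$-definable left-invariant measure $\mu$ to the partial type $\pi_\mu$ genuinely produces data satisfying Definition \ref{defn: group chunk} over $M$, which is immediate from the properties of $\pi_\mu$ collected in Lemma \ref{lem: props of pi mu} and Remark \ref{rem: pi mu type-def for mu def}.
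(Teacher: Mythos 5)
Your proposal is correct and follows essentially the same route as the paper: the paper's proof likewise just checks that $\pi_{\mu}$ concentrates on $G$, is type-definable over $M$ (Remark \ref{rem: pi mu type-def for mu def}) and left-$G$-invariant (Lemma \ref{lem: props of pi mu}), and then invokes Theorem \ref{thm: hyperdef group chunk gives group} together with Corollary \ref{cor: unique group from group chunk}. Your extra step of spelling out that $G$ itself (with the inclusion of $\pi_{\mu}$ as $f_1$) satisfies the hypotheses of Corollary \ref{cor: unique group from group chunk} is exactly what the paper leaves implicit, so there is nothing to add.
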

\begin{proof}
	The partial type $\pi_{\mu}$ concentrates on $G$, is type-definable over $M$ (Remark \ref{rem: pi mu type-def for mu def}) and is left $G$-invariant (Lemma \ref{lem: props of pi mu}). Then Theorems \ref{thm: hyperdef group chunk gives group} and Corollary \ref{cor: unique group from group chunk} apply.
\end{proof}

\begin{proposition}\label{prop: fsg iff invariant filter str fin sat}
	($T$ NIP) Assume $G(x)$ is a type-definable group over $M$ and $\pi(x)$ is a filter over $\cU$ so that $\pi(x) \vdash G(x)$, $\pi$ is left-$G(\cU)$-invariant and strongly finitely satisfiable in $G(M)$. Then $G$ is fsg and $\pi = \pi_\mu$ for the unique left-invariant measure $\mu \in \mathfrak{M}_G(\cU)$. 
\end{proposition}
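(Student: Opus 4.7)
The plan is to first verify that $\pi$ forces $G$ to be fsg, then use uniqueness of the $G(\cU)$-invariant generically stable Keisler measure together with the standard construction of such a measure from a generic type to pin down $\pi_\mu$ in both directions.

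First I will pick any complete $p \in S_G(\cU)$ extending $\pi$ (which exists as $\pi$ is a consistent filter on $G$). For every $g \in G(\cU)$ the translate $g \cdot p$ extends $g \cdot \pi = \pi$ by left-$G(\cU)$-invariance of $\pi$, so strong finite satisfiability of $\pi$ in $G(M)$ rules out any $\varphi \in g \cdot p$ with $\varphi(G(M)) = \emptyset$: otherwise $\neg \varphi \in \pi \subseteq g \cdot p$, contradicting consistency. Hence $g \cdot p$ is finitely satisfiable in $G(M)$, and a fortiori in $G(N)$ for every $|M|^+$-saturated $N \succ M$. This is exactly the hypothesis of Proposition \ref{prop: fsg iff inv gen stab meas}, so $G$ is fsg, and by Fact \ref{fac: fsg groups basic props}(1) there is a unique left-$G(\cU)$-invariant $\mu \in \mathfrak{M}_G(\cU)$, generically stable over some such $N$.

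For $\pi \subseteq \pi_\mu$, I will invoke the standard construction of a generically stable left-$G(\cU)$-invariant measure $\mu_p$ from the generic type $p$ referenced in the proof of Proposition \ref{prop: fsg iff inv gen stab meas} (originally from \cite[Proposition 6.2]{NIP1}, extended to type-definable $G$ as in \cite[Remark 4.4]{NIP3}): fix a sufficiently saturated $N$, take a Morley sequence $(a_i)_{i<\omega}$ of $p$ over $N$ in a bigger monster $\cU^* \succ \cU$, and realize $\mu_p$ as a suitable limit of averages of the Dirac measures $\delta_{a_i}$. Since each $a_i$ realizes $p \supseteq \pi$, for any $\varphi \in \pi$ every Dirac mass satisfies $\delta_{a_i}(\varphi) = 1$, so $\mu_p(\varphi) = 1$. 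Uniqueness then forces $\mu_p = \mu$, giving $\varphi \in \pi_\mu$. For the reverse inclusion, suppose $\varphi \in \pi_\mu \setminus \pi$; closure of $\pi$ under implication makes $\pi \cup \{\neg \varphi\}$ consistent, so I can choose $q \in S_G(\cU)$ extending it. Running the same construction on $q$ in place of $p$ produces a generically stable left-$G(\cU)$-invariant $\mu_q$ with $\mu_q(\neg \varphi) = 1$; uniqueness forces $\mu_q = \mu$, so $\mu(\varphi) = 0$, contradicting $\varphi \in \pi_\mu$.

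The main obstacle is a careful verification of the key intermediate claim $\mu_p(\varphi) = 1$ for every $\varphi \in p$. Morally this is immediate from $\mu_p$ being an ``empirical average'' along a Morley sequence all of whose members lie in $\varphi(\cU^*)$, but a rigorous check requires unpacking ``the standard construction''. This can be done via the fim / $\varepsilon$-approximation characterization of generic stability from Fact \ref{fac: NIP gen stab meas equivs} and Fact \ref{fac: unif e approx for gs measures in NIP}: $\mu_p$ must be approximated, uniformly on all formulas $\varphi(x,b)$, by averages of Dirac masses at elements of Morley sequences of $p$, and these averages are identically $1$ for every formula that lies in $p$, forcing $\mu_p(\varphi) = 1$ whenever $\varphi \in p$. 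Everything else is a formal manipulation of strong finite satisfiability, $G(\cU)$-invariance of $\pi$, and uniqueness of the $G(\cU)$-invariant measure in Fact \ref{fac: fsg groups basic props}(1).
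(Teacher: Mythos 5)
Your first step is fine and matches the paper: any complete $p\supseteq\pi$ has all translates $g\cdot p\supseteq g\cdot\pi=\pi$, and strong finite satisfiability of $\pi$ in $G(M)$ then forces each $g\cdot p$ to be finitely satisfiable in $G(M)$, so $G$ is fsg and Fact \ref{fac: fsg groups basic props}(1) gives the unique invariant $\mu$.

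The rest has a genuine gap. Your key claim is that the ``standard construction'' applied to a complete type $q\supseteq\pi$ yields a left-$G(\cU)$-invariant generically stable measure $\mu_q$ with $\mu_q(\psi)=1$ for the formulas $\psi\in q$ you care about; uniqueness then lets you transfer this to $\mu$. This cannot be right: if it were, then taking any formula $\psi$ with both $\psi$ and $\neg\psi$ consistent with $\pi$ (which exists whenever $\pi$ is not complete) and types $q_1\ni\psi$, $q_2\ni\neg\psi$ extending $\pi$ — both of which witness fsg by your own first step — you would get $\mu(\psi)=1=\mu(\neg\psi)$. Concretely, the invariant measure built from a generic type $p$ is \emph{not} supported on $p$ alone; e.g.\ for the circle group in $\RCF$ it is the lift of Haar measure, not a $\{0,1\}$-valued measure, so $\mu(\varphi)=1$ fails for many $\varphi\in p$. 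The Morley-sequence description you give does not rescue this: by NIP the truth values $\varphi(a_i,b)$ along a Morley sequence of $p$ stabilize, so any limit of the averages $\frac1n\sum_{i<n}\delta_{a_i}$ is a $\{0,1\}$-valued measure (the limit type), which is $G$-invariant only when $G$ is generically stable; and the fim/fam approximations of a generically stable $\mu$ in Fact \ref{fac: NIP gen stab meas equivs} use tuples sampled from $\mu^{\otimes n}$, not realizations of $p$, so they do not give $\mu_p(\varphi)=1$ for $\varphi\in p$ either. The inclusion $\pi\subseteq\pi_\mu$ is true and can be repaired (the paper uses $S(\mu)=\overline{G\cdot p}$ together with $g\cdot p\supseteq\pi$ for all $g$; alternatively, $\mu(\neg\varphi)>0$ makes $\neg\varphi$ generic, and finitely many translates of $\neg\varphi$ covering $G$ contradict $\pi\vdash G$ via left-invariance and the filter property). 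But the reverse inclusion needs a different idea, and this is where the paper's proof diverges from yours: given $\varphi\in\pi_\mu$, one uses \emph{right}-invariance of $\mu$ to see $\neg\varphi$ is not right generic, hence some left-translate $\neg\varphi(g\cdot x)$ misses $G(M)$; strong finite satisfiability then puts $\varphi(g\cdot x)$ into $\pi$, and left-invariance of $\pi$ gives $\varphi\in\pi$.
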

\begin{proof}
	Let $p \in S_{G}(\cU)$ be a complete type extending $\pi$, by $G(\cU)$-invariance of $\pi$ we have $g \cdot p \supseteq \pi$ for every $g \in G(\cU)$. By strong finite satisfiability of $\pi$ in $G(M)$ every type extending $\pi$ is finitely satisfiable in $G(M)$, hence $p$ witnesses that $G$ is fsg (over $M$). 
		In particular $p$ is generic. Let $\mu$ be the unique left and right-invariant measure on $G$ (Fact \ref{fac: fsg groups basic props}). As $S(\mu) = \overline{G \cdot p}$ (by \cite[Theorem 3.36, Remark 3.37, Proposition 3.31]{chernikov2018definably}), we get $p'(x) \vdash \pi(x)$ for every $p' \in S(\mu)$. Hence $\pi \subseteq \pi'_{\mu}$. 
		Conversely, assume $\varphi(x) \in \pi_{\mu}$. Then $\mu(\neg \varphi(x)) = 0$, so $\neg \varphi(x)$ is not right generic (by right-invariance of $\mu$). Hence $\neg \varphi(g \cdot x) \cap G(M) = \emptyset$ for some $g \in G(\cU)$. (Indeed, assume $X$ is (relatively) definable and $g \cdot X \cap G(M) \neq \emptyset$ for all $g \in G(\cU)$. It follows by compactness that there are $n \in \omega$ and $h_i \in G(M), i \in [n]$ so that for every $g \in G(\cU)$, $h_i \in g \cdot X$ for some $i \in [n]$. Then, given any $g \in G(\cU)$, $h_i \in g^{-1} \cdot X$ for some $i \in [n]$, hence $g \in  X \cdot h_i^{-1}$. This shows that $X$ is right generic.)
		Hence $\varphi(g \cdot x) \in \pi$ by strong finite satisfiability of $\pi$, hence $\varphi(x) \in \pi$ by left-$G(\cU)$-invariance of $\pi$.
	
	Conversely, if $G$ is fsg over $M$ witnessed by $\mu$, then $\pi_{\mu}$ is strongly finitely satisfiable ($\varphi(M) = \emptyset$ implies $\mu(\varphi(x)) = 0$, so $\mu(\neg \varphi(x)) = 1$) and $G(\cU)$-invariant.
\end{proof}

\subsection{An eliminable hyperdefinable group in a reduct}\label{sec: eliminable hyper-def group}

We show that if a hyperdefinable group is type-definably isomorphic to a definable group in some expansion of the theory, it is already definable in $\cU^{\eq}$:

\begin{proposition}\label{prop: elim hyperdef group}
	Let $T \subseteq T'$ be complete theories and $\cU \models T'$ a monster model. Let $H = (X,\gamma(x,y,z))$ be a definable group in $T'$, $G = (Y/F, \Gamma(x,y,z))$ a hyperdefinable group in $T$ (with $Y$ a partial type in finitely many variables), and assume $\iota(x,y)$ is a type-definable in $T'$ isomorphism between $H(\cU)$ and $G(\cU)$, all over a small set of real parameters $C \subseteq \cU$. Then there is a group $G_0$ definable over $C$ in $T^{\eq}$ and a $C$-definable in $T'$ isomorphism $\iota'$ between $H$ and $G_0$.  
\end{proposition}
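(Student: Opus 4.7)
The plan is to use compactness in $T'$ to convert the $T'$-type-definable bijection $\iota$ into a $T'$-definable partial function $f_0$, to match the kernel of $f_0$ with a $T$-definable equivalence relation on a $T$-definable thickening of $Y$, and to then verify that the quotient inherits the group structure $T^{\eq}$-definably.

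Regard $\iota(x,y)$ as a partial $L'(C)$-type $F$-invariant in $y$, and let $\pi : Y \to X$ be the associated $T'$-type-definable surjection $y \mapsto \iota^{-1}([y]_F)$. Injectivity of $\iota$ gives $\iota(x_1,y) \land \iota(x_2,y) \land Y(y) \vdash_{T'} x_1 = x_2$, so by compactness there is a single $L'(C)$-formula $\iota_0(x,y) \in \iota$ and a $T$-definable set $Y_1 \supseteq Y$ (a finite conjunction of formulas defining $Y$) such that $\iota_0$ is the graph of a partial function $f_0 : Y_1 \to X$ extending $\pi$. Set
\[
F_0(y_1,y_2) := \exists x\bigl(\iota_0(x,y_1) \land \iota_0(x,y_2)\bigr),
\]
a $T'$-definable relation on $\cU^2$ whose restriction to $Y \times Y$ equals $F$. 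Write $F = \bigcap_i E_i$ as a downward directed intersection (closed under finite meets) of $T$-definable equivalence relations $E_i$ on $\cU$ over $C$. Applying compactness in $T'$ to the inclusions $F_0 \restriction Y^2 \subseteq E_i$ (for each $i$) and $\bigcap_i E_i \restriction Y^2 \subseteq F_0$ produces a $T$-definable $Y_2 \supseteq Y$ (refining $Y_1$) and an index $i^*$ such that $F_0 = E_{i^*}$ on $Y_2^2$. Reflexivity of $E_{i^*}$ on $\cU$ then forces $f_0$ to be total on $Y_2$.

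Define $G_0 := Y_2/E_{i^*}$, which is $T^{\eq}$-definable over $C$. The descent $\bar{f_0} : G_0 \to X$ is injective (kernel of $f_0$ on $Y_2$ equals $E_{i^*}$) and surjective (since $f_0 \restriction Y = \pi$ is surjective), so $\iota' := \bar{f_0}^{-1} : H \to G_0$ is a $T'$-definable bijection. Moreover $G_0$ coincides with $G = Y/F$ as a subset of $\cU/E_{i^*}$: for each $y \in Y_2$ there is $y' \in Y$ with $\pi(y') = f_0(y)$, hence $F_0(y,y')$ and so $E_{i^*}(y,y')$, so $[y]_{E_{i^*}} = [y']_{E_{i^*}} \in Y/E_{i^*}$; together with the fact that $E_{i^*}$ matches $F$ on $Y^2$, this identifies $G_0$ with $G$. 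Under this identification $\iota'$ agrees with $\iota$, so the hyperdefinable group operation $\Gamma$ on $G$ equals the $T'$-definable operation on $G_0$ pulled back from $H$ via $\iota'$. To see that this common operation is $T^{\eq}$-definable, apply the same compactness argument as in the previous paragraph to the $T$-type-definable graph $\Gamma^* \subseteq Y^3$ (which coincides on $Y^3$ with the $T'$-definable graph defined by $\gamma \circ (f_0, f_0, f_0)$): one finds a refinement of $Y_2$ and of $E_{i^*}$ on which the $T$-type-definable graph coincides with a $T$-definable formula, giving a $T^{\eq}$-definable multiplication on $G_0$. Alternatively, one can invoke a Beth-style argument --- the restriction map $S^{T'^{\eq}}(C) \twoheadrightarrow S^{T^{\eq}}(C)$ between Stone spaces is a continuous closed surjection between compact Hausdorff spaces, hence a quotient map, so the clopen subset of $S^{T'^{\eq}}(C)$ cut out by the multiplication graph, being pulled back from a $T$-invariant subset of $S^{T^{\eq}}(C)$, descends to a clopen there, i.e., to a $T^{\eq}$-formula.

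The main technical obstacle will be the bookkeeping to ensure that a single pair $(Y_2, E_{i^*})$ (possibly after shrinking $Y_2$ and refining $E_{i^*}$) simultaneously captures the equivalence relation $F$, the total definition of $f_0$, and the hyperdefinable graph of $\Gamma$; this is essentially a coherent iteration of the compactness argument. The key conceptual step is the collapse of $T'$-definability into $T^{\eq}$-definability when a $T'$-definable set is also $T$-type-definable (equivalently, $T$-invariant on a saturated model), which is a simple compactness fact but is what ultimately allows the hyperdefinable group to be eliminated.
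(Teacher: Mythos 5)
There is a genuine gap, and it sits at the heart of your argument: the step ``Write $F = \bigcap_i E_i$ as a downward directed intersection of $T$-definable equivalence relations $E_i$'' is not available. A type-definable equivalence relation is in general \emph{not} an intersection of definable equivalence relations --- this is exactly the reason hyperimaginaries are a proper extension of (pro-)imaginaries, and the paper itself flags this phenomenon elsewhere (the relation $E$ coding generically stable measures is ``$\emptyset$-type-definable \dots but not an intersection of definable equivalence relations''). Everything downstream depends on this decomposition: the choice of $i^*$ with $F_0 = E_{i^*}$ on $Y_2^2$, the definition $G_0 := Y_2/E_{i^*}$ as a $T^{\eq}$-definable quotient, and the subsequent identification with $Y/F$. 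Without it the argument does not get off the ground. (Two smaller points: ``reflexivity of $E_{i^*}$ forces $f_0$ to be total on $Y_2$'' is a non sequitur --- totality on a definable thickening of $Y$ needs its own compactness step from $Y(y)\vdash\exists x\,\iota_0(x,y)$; and the closing Beth/Stone-space remark needs the multiplication graph to be both relatively type-definable and co-type-definable in $T$ over $C$ on an already-definable domain, which is essentially the thing being proved.)

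The repair is to avoid asking $F$ to be built from definable equivalence relations at all, and this is what the paper does. Take a \emph{finite conjunction} $F_0$ of formulas from the partial type $F$ itself (so $F_0$ is a $T$-formula over $C$, a priori not an equivalence relation anywhere). Using the finite part $\iota_0$ of $\iota$ one first shows $\iota(x,y) \leftrightarrow (\iota_0(x,y)\land Y(y))$; then on $Y^2$ the relation $F$ is equivalent to the single $T'$-formula $\theta(y_1,y_2) := \exists x(\iota_0(x,y_1)\land\iota_0(x,y_2))$ (your $F_0$), and compactness applied to $Y(y_1)\land Y(y_2)\land F \vdash \theta$ produces a finite $F_0 \subseteq F$ with $Y(y_1)\land Y(y_2) \vdash F \leftrightarrow F_0$: indeed $F_0 \vdash \theta$ on $Y^2$ and $\theta \vdash F$ on $Y^2$, so $F_0$ is squeezed between them. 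The same squeeze, with $\gamma$ transported through $\iota_0$, yields a finite $\Gamma_0 \subseteq \Gamma$ equivalent to $\Gamma$ on $Y^3$. A final compactness argument then replaces $Y$ by a finite part $Y_0 \subseteq Y$ (a $T$-definable set over $C$) on which $F_0$ \emph{is} an equivalence relation, $\iota_0$ defines a bijection $X \to Y_0/F_0$, and $\Gamma_0$ transports $\gamma$; so $G_0 := (Y_0/F_0, \Gamma_0)$ is definable over $C$ in $T^{\eq}$ and $\iota_0$ is the required $C$-definable isomorphism in $T'$. So your overall strategy (compactness to replace the type-definable data by finite parts, using the definable group $H$ on the other side of $\iota$) is the right one, but the finite approximations must be taken inside the partial types $F$, $\Gamma$, $Y$ themselves rather than routed through a nonexistent family of definable equivalence relations.
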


\begin{proof}
	Without loss of generality $F(x,y)$ is a $C$-type-definable in $T$ equivalence relation on the whole universe $\cU^x$, $\Gamma(x,y,z) \vdash Y(x) \land Y(y) \land Y(z)$ is $C$-type-definable in $T$ and $F$-invariant in every coordinate, i.e.
	$$\Gamma(x,y,z) \land F(x,x') \land F(y,y') \land F(z,z') \vdash \Gamma(x',y',z'),$$
	and $\iota(x,y) \vdash X(x) \land Y (y)$ is $C$-type-definable in $T'$ and $F$-invariant on $y$, i.e.~$\iota(x,y) \land F(y,y') \vdash \iota(x,y')$.
	
	As $\iota$ in particular defines an injection from $X$ to $Y/F$, $\iota(x,y) \land \iota(x',y) \vdash x = x'$, then by compactness there is finite $\iota_0 \subseteq \iota$ so that  
	\begin{gather}
		\iota_0(x,y) \vdash X(x) \textrm{ and } \iota_0(x,y) \land \iota_0(x',y) \vdash x = x'.\label{eq: elim hyperdef 1}
	\end{gather}
	As $\iota$ in particular defines a surjection from $X$ onto $Y/F$, we have 
 	\begin{gather}
		Y(y') \vdash \exists x \  \iota (x,y'). \label{eq: elim hyperdef 2}
	\end{gather}
	
	We claim that the equivalence
	\begin{gather}
		\left( \iota_0(x,y) \land Y(y) \right) \leftrightarrow \iota(x,y)\label{eq: elim hyperdef 3}
	\end{gather}
	of partial types holds. 
	Right to left is by the choice of $\iota$ and $\iota_0$. Conversely, assume  that $\iota_0(x,y) \land Y(y) $ holds. By \eqref{eq: elim hyperdef 2}, $Y(y) \vdash \exists x' \iota(x',y)$. And by  \eqref{eq: elim hyperdef 1}, $\iota_0(x,y) \land \iota(x',y) \vdash x = x'$, so combining we get that $\iota(x,y)$ holds.
	
	Again, as $\iota$ is $F$-invariant on $y$ and defines a bijection between $X$ and $Y/F$, we have (using \eqref{eq: elim hyperdef 3} for the second part):
	\begin{gather*}
		Y(y_1) \land Y(y_2)  \land F(y_1,y_2) \vdash \exists x \left( \iota(x,y_1) \land \iota(x,y_2) \right) \vdash \exists x( \iota_0(x,y_1) \land \iota_0(x,y_2)),\\
	\textrm{ and }	Y(y_1) \land Y(y_2)  \land \exists x( \iota_0(x,y_1) \land \iota_0(x,y_2)) \vdash \\ \vdash 	Y(y_1) \land Y(y_2)  \land (\exists x \left( \iota(x,y_1) \land \iota(x,y_2) \right) \vdash F(y_1,y_2).
	\end{gather*}
	By compactness this implies that there is some finite $F_0 \subseteq F$ so that 
	\begin{gather}
		Y(y_1) \land Y(y_2) \vdash F(y_1, y_2) \leftrightarrow F_0(y_1,y_2). \label{eq: elim hyperdef 4}
	\end{gather}

	Similarly, using that $\iota$ defines a group isomorphism and assumption on $\Gamma$, the following partial types are equivalent:
	\begin{gather*}
		 \Gamma(y_1,y_2, y_3) \leftrightarrow \Big( Y(y_1) \land Y(y_2) \land Y(y_3) \land \\
		 \exists x_1, x_2, x_3 \  \left( \iota(x_1, y_1) \land \iota(x_2, y_2) \land \iota (x_3, y_3) \land \gamma(x_1,x_2,x_3) \right) \Big).
		\end{gather*}
	By \eqref{eq: elim hyperdef 3} we then have 
	\begin{gather*}
		 \Gamma(y_1,y_2, y_3) \leftrightarrow \Big( Y(y_1) \land Y(y_2) \land Y(y_3) \land \\
		 \exists x_1, x_2, x_3 \  \left( \iota_0(x_1, y_1) \land \iota_0(x_2, y_2) \land \iota_0 (x_3, y_3) \land \gamma(x_1,x_2,x_3) \right) \Big).
		\end{gather*}
		
	By compactness this implies that there is some finite $\Gamma_0 \subseteq \Gamma$ so that 
	\begin{gather}
		Y(y_1) \land Y(y_2) \land Y(y_3) \vdash \Gamma(y_1, y_2, y_3) \leftrightarrow \Gamma_0(y_1,y_2, y_3). \label{eq: elim hyperdef 5}
	\end{gather}

	By assumption on $F$ and $\Gamma$, \eqref{eq: elim hyperdef 4}
 and \eqref{eq: elim hyperdef 5} we have
	\begin{gather*}
		Y(y_1) \land Y(y_2) \land Y(y_3) \vdash \Big( F_0(y_1,y_1) \land \left( F_0(y_1, y_2) \rightarrow F_0(y_2,y_1) \right) \land  \\
		\left( F_0(y_1, y_2) \land F_0(y_2, y_3) \rightarrow F_0(y_1, y_3) \right) \land \left( \exists x \  \iota_0(x,y_1) \right) \land \\
		\left( \forall x  \ (\iota_0(x,y_1) \land \iota_0(x,y_2) \rightarrow F_0(y_1, y_2)) \right) \land\\
		 \big( \Gamma_0(y_1, y_2, y_3) \leftrightarrow \\
		 \exists x_1, x_2, x_3 \left( \iota_0(x_1, y_1) \land \iota_0(x_2,y_2) \land \iota_0(x_3, y_3) \land \gamma(x_1,x_2,x_3) \right) \big)\Big).
	\end{gather*}
	
	By compactness there is some finite $Y_0 \subseteq Y$ so that $F_0$ defines an equivalence relation on $Y_0(\cU)$, $\iota_0(x,y)$ (also using \eqref{eq: elim hyperdef 1}) defines a bijection between $X$ and $Y_0/F_0$, and for any $x_i \in X$ and $y_i \in Y_0$ with $\models \iota_0(x_i,y_i)$  we have $\gamma(x_1,x_2,x_3) \leftrightarrow \Gamma_0(y_1, y_2, y_3)$. Hence $G_0 := (Y_0/F_0, \Gamma_0)$ is a $C$-definable group in $T^{\eq}$ and $\iota_0$ is a $C$-definable in $T'$ isomorphism between the groups $H$ and $G_0$.
\end{proof}

\subsection{Externally definable fsg groups are isomorphic to definable ones}
\label{sec: Externally definable fsg groups are isomorphic to definable ones}
First we give an example of an externally definable fsg (and even generically stable group) in an NIP theory that is not definable.

\begin{example}\label{ex: ext def fsg not def}
	Let $\cU = (K,\Gamma,k)$ be a monster model of the theory of algebraically closed fields.

	Let $G := (K,+)$ be the additive group of the field. For $a \in K$, $\alpha \in \Gamma$ and $\square \in \{\geq, >\}$, let $B_{\square \alpha}(a) := \{ x \in K : v(x-a) \square \alpha \}$ be the closed (respectively, open) valuational ball of radius $\beta$ with center $b$. Given a ball $B_{\square \alpha}(a)$, by quantifier elimination in $\ACVF$, density of $\Gamma$ and the fact that for any two balls, either one is contained in the other or they are disjoint, the set of formulas 
	\begin{gather*}
		\{ x \in B_{\square \alpha}(a)\} \cup \{ x \notin B_{\square \beta}(b) : b \in K, \beta \in \Gamma, B_{\square \beta}(b) \subsetneq B_{\square \alpha}(a) \}
	\end{gather*}
	determines a complete type $p_{\square \alpha, a}(x) \in S_G(\cU)$, called the generic type of $B_{\square \alpha}(a)$. 
	
	Fix some $\alpha \in \Gamma$ and let $B := B_{\geq \alpha}(0)$ and $p := p_{\geq \alpha, 0}$. For any $a,b \in K$ we have $a + B_{\square \beta}(b) = B_{\square \beta}(a+b)$, hence for any $b \in B$ we have $b + B = B$, and if $B' \subsetneq B$ then $b + B' \subsetneq B$ for any ball $B'$. It follows that $B$ is a subgroup of $G(\cU)$ and $p \in S_B(\cU)$ is left-$B(\cU)$-invariant. Moreover, $p$ is generically stable over any $a \in K$ with $v(a) = \alpha$ (as it is internal to the residue field; or can see directly that $p$ commutes with itself), hence $B$ is a generically stable group.

	 Choose a small model $M \prec \cU$ with $\Gamma^M \cong (\mathbb{Q}, +, <)$ and $\alpha := 1 \in \Gamma^M$. Then $B(M)$ is an infinite (by density of $\Gamma^M$) group definable in $M$ and $p$ is generically stable over $M$, hence $B(M)$ is also a generically stable group in $\Th(M^{\Sh})$ (by Fact \ref{fac: group props preserved in Sh exp}). Then $X := B_{> \sqrt 2}(0) \cap M$ is an externally definable subset of $M$, $X \subsetneq B(M)$, but it is not definable in $M$ (as by quantifier elimination, every definable subset of $M$ is a Boolean combination of balls with parameters in $M$).
	Let $Y \subseteq (K^M)^2$ be  the $M^{\Sh}$-definable set
	 \begin{gather*}
	 	Y := \{ (0,x) :  x \in X \} \cup \{ (1,x) :  x \in B(M) \setminus X \}.
	 \end{gather*}
	 We equip it with an $M^{\Sh}$-definable group operation by copying it from $B$: for $(t_1,a_1), (t_2,a_2), (t_3,a_3) \in Y$, we define $(t_1,a_1) +_Y (t_2,a_2) = (t_3,a_3)$ if 
	 \begin{gather*}
	 	\left(\bigwedge_{i=1}^3 (t_i = 0 \land a_i \in X) \lor (t_i =1 \land a_i \in B(M) \setminus X) \right) \land (a_1 + a_2 = a_3).
	 \end{gather*}
	
	Then $(Y, +_Y)$ is $M^{\Sh}$-definably isomorphic to $(B(M), +)$ via an isomorphism $f$ defined by
	\begin{gather*}
		f((t,a)) = b :\iff a = b \land \left( ( a \in X \land t = 0) \lor (a \in B(M) \setminus X \land t = 1) \right).
	\end{gather*}
	
	But $Y$ is not definable in $M$ as then $X$ would be definable in $M$ via $X = \{ a \in K^M : (0,a) \in Y\}$.
	 \end{example}
	
\begin{remark}
	Similarly, we can find an externally definable fsg group in an $o$-minimal structure which is not definable --- take the circle group, and split it into two externally definable, non-definable intervals.
\end{remark}

We show that this is the only obstruction:

\begin{theorem}\label{thm: ext def fsg groups is to definable}
		Let $T$ be NIP and $M \models T$. Then a group definable in $M^{\Sh}$ is fsg (in the sense of $\Th(M^{\Sh})$) if and only if it is definably (in $M^{\Sh}$) isomorphic to a group definable in $M^{\eq}$ and fsg in $T$.
\end{theorem}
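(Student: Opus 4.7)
The easy direction is immediate: if $G_0$ is definable in $M^{\eq}$ and fsg in $T$, then $G_0$ is fsg in $T' := \Th(M^{\Sh})$ by Fact \ref{fac: group props preserved in Sh exp}(1), and being fsg is preserved under $L'(M)$-definable isomorphism by Remark \ref{rem: def iso preserves fsg}. So I focus on the reverse direction: given an $L'(M)$-definable group $G'$ which is fsg in $T'$, produce a group $G_0$ definable in $M^{\eq}$, fsg in $T$, and $L'(M)$-definably isomorphic to $G'$.

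The plan is to manufacture a ``measure-theoretic'' group chunk for $G'$ in the reduct $T$, pass from the chunk to a hyperdefinable group in $T$, then eliminate the hyperimaginaries using the external isomorphism to $G'$. By Fact \ref{fac: fsg groups basic props}(2) applied in $T'$, there is a (unique) global left- and right-$G'(M')$-invariant measure $\mu' \in \mathfrak{M}^{L'}_{G'}(M')$, generically stable over $M$ in $T'$. Set $\mu := \mu'\restriction_L$. By Theorem \ref{thm: corresp for gs measures}, $\mu$ is generically stable over $M$ in $T$, and $\mu'$ is its unique $L'$-extension. Consider the partial types $\pi_\mu$ (in $T$) and $\pi_{\mu'}$ (in $T'$) from Definition \ref{def: pi mu}. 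By Lemma \ref{lem: props of pi mu}(3) the type $\pi_{\mu'}$, and hence also its $L$-reduct $\pi_\mu$, is left-$G'(M')$-invariant; and by Remark \ref{rem: pi mu type-def for mu def} and Remark \ref{rem: real partial type in heq}, $\pi_\mu$ is $M$-type-definable in $T^{\heq}$. In $T'$, the triple $(\pi_{\mu'}, \gamma', \ldots)$ with $\gamma'$ the $L'(M)$-graph of multiplication of $G'$ is an $M$-type-definable group chunk in the sense of Definition \ref{defn: group chunk} (via Remark \ref{rem: group chunk from a group}).

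The key point is now to realize the \emph{same} chunk using only $L$-definable data on $\pi_\mu$, working over a larger coherent model $M_\omega \succ^{L'} M$ (obtained by iterating the pair construction of Definition \ref{def: Shelah exp context}). For this I apply honest definitions (Fact \ref{fac: honest defs}) to the $L'(M)$-formula $\gamma'(x,y,z)$: there is an $L(M_\omega)$-formula $F^\ast(x,y,z)$ with $F^\ast(M_\omega) \subseteq \gamma'(M_\omega)$ such that no $L$-type over $M_\omega$ invariant over $M_\omega$ is consistent with $\gamma'(x,y,z)\setminus F^\ast(x,y,z)$. Since every realization of $\pi_\mu^{\otimes 2}|_{M_\omega}$ in $\cU$ gives rise to an $M_\omega$-invariant $L$-type (by Fact \ref{fac: types in sup of inv meas are inv NIP}), this forces $F^\ast$ to agree with $\gamma'$ on $\pi_\mu^{\otimes 2}|_{M_\omega}$, hence to define the same type-definable partial function on $\pi_\mu^{\otimes 2}$ in $T$ (up to $\pi_\mu$-germs) as $\gamma'$ does in $T'$. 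Applying the same honest-definition procedure to the graphs of the left- and right-division operations yields $L(M_\omega)$-formulas $H^\ast, K^\ast$ giving $T$-type-definable partial functions $F, H, K$ satisfying the axioms of Definition \ref{defn: group chunk} — associativity and left/right cancellation only need to hold generically on $\pi_\mu$, and generically they coincide with the corresponding identities in $G'$. This yields an $M_\omega$-type-definable group chunk $(\pi_\mu, F, H, K)$ in $T$.

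By Corollary \ref{cor: measure theoretic group chunk} (that is, Theorem \ref{thm: hyperdef group chunk gives group} and Corollary \ref{cor: unique group from group chunk}) this chunk is realized by an $M_\omega$-hyperdefinable group $G^\ast$ in $T^{\heq}$, and by uniqueness (applied now in $T'$, where the same chunk also produces $G'$) there is an $M_\omega$-type-definable in $T'$ group isomorphism $\iota : G^\ast \to G'$. Finally, I feed this data into Proposition \ref{prop: elim hyperdef group} with $C = M_\omega$: it produces a group $G_0$ definable in $T^{\eq}$ over $M_\omega$ together with an $M_\omega$-definable in $T'$ isomorphism $G_0 \cong G'$. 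Then $G_0$ is fsg in $T$ because being fsg transfers across the $L'(M_\omega)$-definable isomorphism with $G'$ together with a reduct argument as in the proof of Theorem \ref{thm: main for fsg subgroups} (equivalently, push $\mu$ forward along the isomorphism and observe that the pushforward is a left-invariant generically stable measure on $G_0$ in $T$ via Proposition \ref{prop: properties of reducts of measures} and Remark \ref{rem: def iso preserves fsg}). A standard descent through $\Aut(\widetilde{M}'/M)$-conjugates, or alternatively a compactness argument exploiting the uniqueness of $\mu'$, lets us take the parameters of $G_0$ in $M^{\eq}$ rather than $M_\omega^{\eq}$.

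The main obstacle is Step 4: using honest definitions to manufacture an \emph{$L$-definable} group chunk out of the $L'$-definable multiplication of $G'$, and checking that the associativity/cancellation axioms of Definition \ref{defn: group chunk} genuinely hold on $\pi_\mu$ in $T$. This rests on three ingredients working in concert — that generically stable measures in $T'$ restrict to generically stable measures in $T$ (Theorem \ref{thm: corresp for gs measures}), that types in the support of a generically stable measure are invariant (Fact \ref{fac: types in sup of inv meas are inv NIP}), and that honest definitions over coherent models $M_\omega$ are consistent with every $L(M_\omega)$-invariant type — so that honest-definition approximations of $\gamma'$ on $\pi_\mu$ are \emph{exact} on $\pi_\mu^{\otimes 2}|_{M_\omega}$ rather than merely one-sided.
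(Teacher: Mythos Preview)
Your overall architecture matches the paper's proof closely: pass from the $L'$-fsg group $G'$ to the invariant generically stable measure $\mu'$, restrict to $\mu$ in $T$, build a group chunk on $\pi_\mu$ over a coherent model $M_\omega$ using honest definitions, reconstruct a hyperdefinable group $G^\ast$ in $T$ via Theorem~\ref{thm: hyperdef group chunk gives group}, identify $G^\ast$ with $G'$ in $T'$ via Corollary~\ref{cor: unique group from group chunk}, and then collapse hyperdefinability using Proposition~\ref{prop: elim hyperdef group}. That is exactly the paper's route.

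There is, however, a genuine imprecision in your honest-definition step. You apply an honest definition to the \emph{ternary} graph $\gamma'(x,y,z)$ and then justify its exactness by saying that realizations of $\pi_\mu^{\otimes 2}|_{M_\omega}$ have $M_\omega$-invariant $L$-type. But the honest-definition guarantee is against invariant $L$-types in the \emph{triple} $(x,y,z)$; you need $\tp^L(a,b,a\cdot_{G'} b / M')$ to be $M$-invariant, and knowing only that $\tp^L(a,b/M_\omega)$ lies in the support of $\mu^{\otimes 2}|_{M_\omega}$ does not give this, since $a\cdot_{G'} b$ is computed via an $L'$-operation. The paper closes this gap by introducing the auxiliary pushforward measure $\bar\nu' := h_\ast\big((\bar\mu')^{\otimes 2}\big)$ with $h(x,y)=(x,y,x\cdot_{G'} y)$: then $\bar\nu := \bar\nu'\restriction_L$ is generically stable over $M$ in $T$, so every global type in $S(\bar\nu')$ restricts to an $M$-invariant $L$-type, and the honest-definition argument (Claim~\ref{cla: pi mu implies pi mu'}) runs at the level of $\pi_{\bar\nu}$ to produce an $L(M_\omega)$-formula $\Gamma(x,y,z)\in\pi_{\bar\nu}|_{M_\omega}$ with $\Gamma\vdash\Gamma_{F'}$. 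This is the missing ingredient in your sketch.

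For the descent from $M_\omega$ to $M$: neither automorphism-conjugates nor uniqueness of $\mu'$ is what is used. The paper simply records that the first-order $L'$-statement ``$\psi(y,b)$, $\xi(y_1,y_2,b)$, $\chi(y_1,y_2,y_3,b)$ define a group in $T^{\eq}$ and $\rho(x,y,b)$ defines an isomorphism from $G'$ onto it'' holds of some $b\in M_\omega$, and then uses $(N,M)\prec^{L_P}(N',M')$ (equivalently $M^{\Sh}\prec^{L'} M_\omega$, which you already noted by calling $M_\omega$ coherent) to find $b'\in M$ satisfying the same sentence.
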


\begin{proof}
One direction is by Fact \ref{fac: group props preserved in Sh exp},  Remark \ref{rem: def iso preserves fsg} and the fact that every subset of the main sort definable in $(M')^{\eq}$ is already definable in $M'$. We prove the other direction.

We are following the notation in Definition \ref{def: Shelah exp context}, in particular we have $(N,M) \prec^{L_P} (N',M') \prec^{L_P} (N'', M'')$ with each pair saturated over the previous one, and $\widetilde{M}'$ is a monster model for $T' = \Th_{L'}(M^{\Sh})$.

Let $G' = G(M')$ be an $M$-definable fsg group, both in the sense of the $L'$-theory $T' := \Th \left(M^{\Sh} \right)$. Let $\cdot_{G'}$ be given by the partial $L'(M)$-definable function $F'(x,y)$ restricted to $G'(M')$, let $\Gamma_{F'}(x,y,z) \in L'(M)$ denote the graph of $F'$. We may assume that $\Gamma_{F'}(x,y,z)$ defines the graph of a partial function from any two of its coordinates to the third one.
	
By Proposition \ref{prop: fsg iff inv gen stab meas} and Fact \ref{fac: fsg groups basic props} (as $T'$ is NIP by Fact \ref{fac: Sh exp qe}), there is $\bar{\mu}' \in \mathfrak{M}^{L'}_{G'}(M')$ generically stable over $M$ (in $T'$) and such that $g \cdot \bar{\mu}' = \bar{\mu}'$ for all $g \in G'(\cU')$. 
Let $\bar{\nu}'(x,y,z) := h_{\ast} \left( (\bar{\mu}')^{\otimes 2} \right) \in \mathfrak{M}^{L'}_{(G')^{3}}(M')$ for the $L'(M)$-definable (partial) map $h: (x,y) \mapsto (x,y,x \cdot_{G'} y)$. Note that $\bar{\nu}'$ is generically stable over $M$ in $T'$ (by Fact \ref{fac: pushforward gen stab}).

Let $\bar{\mu} := \bar{\mu}'|_{L} \in \mathfrak{M}_{x}^L(M')$. By Proposition \ref{prop: properties of reducts of measures}, $\bar{\mu}$ is generically stable over $M$ (in $T$); and by Lemma \ref{lem: unique extension of def meas to Sh}, $\bar{\mu}'$ is the unique extension of $\bar{\mu}$ to a measure in $\mathfrak{M}_{x}^{L'}(M')$. Let $\bar{\nu} := \bar{\nu}'|_{L} \in \mathfrak{M}_{x,y,z}^L(M')$. Similarly, $\bar{\nu}$ is generically stable over $M$ in $T$, and $\bar{\nu}'$ is its unique extension to a measure in $\mathfrak{M}_{x,y,z}^{L'}(M')$.

Let $M_0 := M$, $N_0 := N$. Applying Fact \ref{fac: honest defs} repeatedly, we can choose $(N_i,M_i)$ by induction on $i \in \omega$ so that: 
\begin{itemize}
	\item $(N_i,M_i) \preceq^{L_P} (N_{i+1},M_{i+1}) \preceq^{L_P} (N',M')$,
	\item  $|M_{i+1}| + |N_{i+1}| \leq |N_i|+|L|$,
	\item for every $\varphi(x,y) \in L$ and $c \in N^y_{i}$ there is $\theta(x,z) \in L$ and $d \in M^{z}_{i+1}$ so that 
\begin{itemize}
\item $\theta(x,d) \land P(x) \vdash \varphi(x,c)$, and
\item no $p \in S^L_x(M')$ invariant over $M_i$ (in $T$) is consistent with $(\varphi(x,c) \setminus \theta(x,d)) \land P(x)$ (in $(N',M')$).
\end{itemize}
\end{itemize}

We let $M_{\omega} := \bigcup_{i \in \omega} M_i$, $N_{\omega} := \bigcup_{i \in \omega} N_i$. By construction it follows:
\begin{itemize}
	\item $(N,M) \preceq^{L_P} (N_{\omega}, M_{\omega}) \preceq^{L_P} (N',M')$ (in particular $M \prec^{L'}  M_{\omega} \prec^{L'} M'$),
	\item $|N_{\omega}|+|M_{\omega}| \leq |N|$ (in particular  $\widetilde{M}'$ is still $|M_{\omega}|^{+}$-saturated),
	\item for every $\varphi(x,y) \in L$ and $c \in N^y_{\omega}$ there is $\theta(x,z) \in L$ and $d \in M^{z}_{\omega}$ so that 
\begin{itemize}
\item $\theta(x,d) \land P(x) \vdash \varphi(x,c)$, and
\item no $p \in S^L_x(M')$ invariant over $M$ (in $T$) is consistent with $(\varphi(x,c) \setminus \theta(x,d)) \land P(x)$ (in $(N',M')$).
\end{itemize}

\end{itemize}

We let $\pi_{\bar{\mu}'}$ and $\pi_{\bar{\nu}'}$ be the partial types over $M'$ in $T'$ (as in Definition \ref{def: pi mu}), they are type-definable over $M$ (in $T'$; Definition \ref{def: props of partial types}), using definability of $\bar{\mu}', \bar{\nu}'$ (Remark \ref{rem: pi mu type-def for mu def}); and $\pi_{\bar{\mu}'}|_{M}(x) \vdash G'(x)$. Similarly, $\pi_{\bar{\mu}}$ and $\pi_{\bar{\nu}}$ are partial types over $M'$, type-definable over $M$ in $T$, and $\pi_{\bar{\mu}} = \pi_{\bar{\mu}'} \restriction_{L}$ and $\pi_{\bar{\nu}} = \pi_{\bar{\nu}'}\restriction_{L}$. Conversely, we have:

\begin{claim}\label{cla: pi mu implies pi mu'}
 $\pi_{\bar{\mu}}|_{M_{\omega}}(x) \vdash \pi_{\bar{\mu}'}|_{M_{\omega}} (x) $, $\pi_{\bar{\nu}}|_{M_{\omega}}(x,y,z) \vdash \pi_{\bar{\nu}'}|_{M_{\omega}} (x,y,z) $ and $\pi_{\bar{\mu}^{\otimes n}}|_{M_{\omega}}(x) \vdash \pi_{(\bar{\mu}')^{\otimes n}}|_{M_{\omega}}(x)$ for all $n \in \omega$ (in $T'$).
\end{claim}
\begin{proof}
Assume $\varphi(x) \in L'(M_{\omega})$ is such that $\varphi(x) \in \pi_{\bar{\mu}'}|_{M_{\omega}}$. We have $\varphi(x) \leftrightarrow^{T'} R_{\psi(x,y_1;d_2)}(x,d_1)$ for some $\psi(x,y_1,y_2) \in L, d_2 \in N, d_1 \in M_{\omega}$ (Remark \ref{rem: Shelah QE formulas}). And, by definition of $\widetilde{M}'$, $(N',M') \models R_{\psi(x,y_1;d_2)}(x,y_1) \leftrightarrow (P(x) \land P(y_1) \land \psi(x,y_1;d_2))$. 
By the choice of $M_{\omega}$ there is some $\theta(x; z) \in L$ and $e \in M_{\omega}^{z}$ so that $(N',M') \models (\theta(x,e) \land P(x)) \rightarrow \psi(x, d_1, d_2)$ and no $p \in S^L_x(M')$ invariant over $M$ (in $T$) is consistent with $(\psi(x,d_1,d_2) \setminus \theta(x,e)) \land P(x)$. So $\theta(x,e) \vdash \varphi(x)$  and no $p \in S^L_x(M')$ invariant over $M$ (in $T$) is consistent with $\varphi(x) \setminus \theta(x,e)$, both in $T'$.

Let $\bar{p}' \in S(\bar{\mu}')$ be arbitrary. Then $\bar{p}:= \bar{p}'\restriction_{L} \in S(\bar{\mu})$. The measure $\bar{\mu}$ is $M$-invariant in $T$ (by generic stability over $M$), hence $\bar{p} \in S(\bar{\mu})$ is also $M$-invariant in $T$ (by Fact \ref{fac: types in sup of inv meas are inv NIP}). It follows that  $(\varphi(x) \setminus \theta(x,e)) \notin p'$. Hence $\bar{\mu}' \left( \varphi(x) \setminus \theta(x,e) \right) = 0$, so $\bar{\mu}'(\theta(x,e)) = \bar{\mu}(\theta(x,e)) = 1$, so $\theta(x,e) \in \pi_{\bar{\mu}}|_{M_{\omega}}$.

Note that the argument applies to any measure in $T'$ generically stable over $M$, in particular to $(\bar{\mu}')^{\otimes n}, n \in \omega$ and $\bar{\nu}'$.
\end{proof}

Note that $\Gamma_{F'}(x,y,z) \in \pi_{\bar{\nu}'}|_{M_{\omega}}$. Indeed, by definition of pushforward  measure $\bar{\nu}'(\Gamma_{F'}(x,y,z)) =  (\bar{\mu}')^{\otimes 2}  (\Gamma_{F'}(h(x,y))) = (\bar{\mu}')^{\otimes 2}  (\Gamma_{F'}(x,y, x \cdot_{G'} y)) \geq (\bar{\mu}')^{\otimes 2}(H(x) \land H(y)) = 1$.
Then, by Claim \ref{cla: pi mu implies pi mu'}, there is some $L(M_{\omega})$-formula $\Gamma(x,y,z) \in \pi_{\bar{\nu}}|_{M_{\omega}}$ so that $\Gamma(x,y,z) \vdash \Gamma_{F'}(x,y,z)$ (in $T'$). From this and assumption on $\Gamma_{F'}$, we still have that $\Gamma(x,y,z)$ defines (in $T$) the graph of a partial function from any two of its coordinates to the third one. We consider the partial $L(M)$-definable (in $T$) functions $F$ with graph defined by $\Gamma_{F}(x,y,z) := \Gamma(x,y,z)$, $H$ with $\Gamma_{H}(x,y,z) := \Gamma(x,z,y)$ and $K$ with $\Gamma_{K}(x,y,z) := \Gamma(z,x,y)$.

%
%

\begin{claim}\label{cla: proof of group chunk in the reduct}
We claim that $\left( \pi_{\bar{\mu}}(x), F, H,K \right)$ 	is a group chunk over $M_{\omega}$ in $T$, in the sense of Definition \ref{defn: group chunk} (on the home sort).
\end{claim}
\begin{proof}
We already know that $\pi_{\bar{\mu}}(x)$ is a partial type over $M'$ closed under implication and type-definable over $M_{\omega}$, all in $T$.

By Lemma \ref{lem: props of pi mu} we have $(\pi_{\bar{\mu}})^{\otimes 2} = \pi_{({\bar{\mu}}^{\otimes 2})}$ and $(\pi_{\bar{\mu}'})^{\otimes 2} = \pi_{((\bar{\mu}')^{\otimes 2})}$. By Claim \ref{cla: restriction commutes with tensor} we have $(\bar{\mu}')^{\otimes 2} \restriction_{L} = (\bar{\mu}' \restriction_{L})^{\otimes 2}  = \bar{\mu}^{\otimes 2}$.
So, using Claim  \ref{cla: pi mu implies pi mu'},
$$\pi_{\bar{\mu}}^{\otimes 2}|_{M_{\omega}} = \pi_{({\bar{\mu}}^{\otimes 2})} |_{M_{\omega}}  \vdash \pi_{((\bar{\mu}')^{\otimes 2})}|_{M_{\omega}} = \pi_{\bar{\mu}'}^{\otimes 2}|_{M_{\omega}}.$$

Now if $(a,b) \models \pi_{\bar{\mu}}^{\otimes 2}|_{M_{\omega}}$, then $(a,b) \models \pi_{((\bar{\mu}')^{\otimes 2})} |_{M_{\omega}} = \pi_{\bar{\mu}'}^{\otimes 2}|_{M_{\omega}}$, in particular $a, b \in G'(M')$. By Lemma \ref{lem: props of pi mu},  $h_{\ast}\left( \pi_{(\bar{\mu}')^{\otimes 2}} \right) =  \pi_{h_{\ast}((\bar{\mu}')^{\otimes 2})} = \pi_{\bar{\nu}'}$, it follows (Remark \ref{rem: pushforward sets of realiz}) that $h(a,b) = (a,b, a \cdot_{G'} b) \models \pi_{\bar{\nu}'}|_{M_{\omega}}$. As $\Gamma(x,y,z) \in \pi_{\bar{\nu}}|_{M_{\omega}} \subseteq \pi_{\bar{\nu}'}|_{M_{\omega}}$, in particular $\models \Gamma(a,b, a \cdot_{G'} b)$. This shows that the partial function
\begin{gather}
	F \textrm{ is defined on } \pi_{\bar{\mu}}^{\otimes 2}|_{M_{\omega}} \textrm{ and agrees with  } \cdot_{G'} \textrm{ on it.}\label{eq: main thm proof 1}
\end{gather}

As $\models \Gamma(a,b, a \cdot_{G'} b)$, by definition of $H$ we have $\models \Gamma_H(a, a \cdot_{G'} b, b)$ and $F(a,b) = a \cdot_{G'} b$, so $H$ is defined on $(a, F(a,b))$ and $H(a,F(a,b)) = b$.
Similarly, by definition of $K$ we have $\models \Gamma_K(b, a \cdot_{G'} b, a)$, so $K$ is defined on  $(b, F(a,b))$ and $K(b, F(a,b)) = a$.

Given any $a \models \pi_{\bar{\mu}}|_{M_{\omega}}$, by the above  the partial map $F_a: b \mapsto F(a,b)$ is defined on $\pi_{\bar{\mu}}|_{M_{\omega},a}$, and for any $b \models \pi_{\bar{\mu}}|_{M_{\omega},a}$, $F_a(b) = a \cdot_{G'} b$. We have $(F_{a})_{\ast} \pi_{\bar{\mu}'} = \pi_{\bar{\mu}'}$. Indeed, take any $D \supseteq M \cup \{a\}$, let $b \models \pi_{\bar{\mu}'}|_{D}$. Then $(F_{a})(b) = F(a,b) = a \cdot_{G'}b \models \pi_{\bar{\mu}'}|_{D}$ by $G'$-invariance of $\pi_{\bar{\mu}'}$.
%
%
Hence we have $(F_a)_{\ast} \pi_{\bar{\mu}} = (F_a)_{\ast} (\pi_{\bar{\mu}'}\restriction_{L}) =  ((F_a)_{\ast} \pi_{\bar{\mu}'})\restriction_{L} = (\pi_{\bar{\mu}'})\restriction_{L} = \pi_{\bar{\mu}}$.

Finally, assume $(a,b,c) \models \pi_{\bar{\mu}}^{\otimes 3}|_{M_{\omega}} \vdash \pi_{\bar{\mu}'}^{\otimes 3}|_{M_{\omega}}$ (by Claim \ref{cla: pi mu implies pi mu'}). In particular, $a,b,c \in G'(M')$. First, $(b,c) \models \pi_{\bar{\mu}'}^{\otimes 2}|_{M_{\omega}, a}$, so $F(b,c) = b \cdot_{G'} c \models \pi_{\bar{\mu}'}|_{M_{\omega},a,b}$ by $G'$-invariance of $\pi_{\bar{\mu}'}$. Hence $F(a,F(b,c)) = F(a, b \cdot_{G'} c) = a \cdot_{G'} b \cdot_{G'} c$. Similarly, as $(a,b) \models \pi_{\bar{\mu}'}^{\otimes 2}|_{M_{\omega}}$, $F(a,b) = a \cdot_{G'} b$. And as $c \models \pi_{\bar{\mu}'}|_{M_{\omega}, a, b}$, we have $c \models \pi_{\bar{\mu}'}|_{M_{\omega}, a \cdot_{G'} b}$ (Remark \ref{rem: realize over dcl heq}), so $F(F(a,b),c) = F(a \cdot_{G'} b, c) = a \cdot_{G'} b \cdot_{G'} c$.
\end{proof}

Using Claim \ref{cla: proof of group chunk in the reduct} and applying Theorem \ref{thm: hyperdef group chunk gives group} in $T$, we find an $M_{\omega}$-hyper-definable in $T$ group $G^{\ast} = (X/E, \cdot_{G^{\ast}})$ and an $L(M_{\omega})$-type-definable partial function $\iota: \pi_{\bar{\mu}} \to G^{\ast}$ defined and injective on $\pi_{\bar{\mu}}$, so that for any $(a,b) \models \pi_{\bar{\mu}}^{\otimes 2}|_{M_{\omega}}$ we have $\iota(F(a,b)) = \iota(a) \cdot_{G^{\ast}} \iota(b)$ and the global partial type $f_{\ast} \pi$ concentrates on $G^{\ast}$ and is left-$G^{\ast}(M')$-invariant.

Now we work in $T'$. We know that $\pi_{\bar{\mu}'}$ is type-definable over $M_{\omega}$, concentrates on $G'$ and is $L'(M_{\omega})$-type-definable.
 As $\pi_{\bar{\mu}}|_{M_{\omega}}(x) \subseteq \pi_{\bar{\mu}'}|_{M_{\omega}}(x)$ (Claim \ref{cla: pi mu implies pi mu'}),  $\iota$ is of course still defined on $\pi_{\bar{\mu}'}|_{M_{\omega}}$ and is injective on it. And by  \eqref{eq: main thm proof 1} we have that for any $(a,b) \models \pi_{\bar{\mu}'}^{\otimes 2}|_{M_{\omega}}$, $\iota( a \cdot_{G'} b) = \iota(F(a,b))$. 
Applying Corollary \ref{cor: unique group from group chunk} in $T'$ (for $\pi_{\bar{\mu}'}$, $F$ and $f_1 = \iota: \pi_{\bar{\mu}'}|_{M_{\omega}} \to G^{\ast}$ and $f_2 = \id: \pi_{\bar{\mu}'}|_{M_{\omega}} \to G'$, we find an $L'(M_{\omega})$-type-definable group isomorphism $\iota': G' \to G^{\ast}$.

Applying Proposition \ref{prop: elim hyperdef group},  we find a group $G_0$ which is $L(M_{\omega})$-definable  in $T^{\eq}$ and an $L'(M_{\omega})$-definable isomorphism $\iota_0: G'(M') \to G_0(M')$.

Finally, say $G'(M') = \varphi(M')$ for some formula $\varphi(x) \in L'(M)$, $\Gamma_{F'}(x,y,z) \in L'(M)$ is the formula defining its graph of multiplication.  Say $G_0 = (X_0/E_0, \cdot_{G_0})$ for $X_0$ defined by $\psi(y, b)$,  the equivalence relation $E_0$ defined by $\xi(y_1,y_2,b)$, $\Gamma_{\cdot_{G_0}}$ defined by $\chi(y_1,y_2,y_3; b)$, and $\Gamma_{\iota_0}$ defined by $\rho(x,y; b)$ for some  $L(\emptyset)$-formulas $\psi(y,z), \xi(y_1,y_2,z), \chi(y_1,y_2,y_3; b), \rho(x,y; z)$ and a tuple $b$ in $M_{\omega}$.

As $(N',M') \succ^{L_P} (N,M)$, there exist some tuple $b'$ in $M$ so that $ \xi(y_1,y_2,b')$ defines an equivalence relation $E'$ in $M$,
 $G := \left( \psi(M, b')/E_0, \chi(y_1,y_2,y_3; b')  \right)$ is a group definable in $M^{\eq}$ and $\rho(x,y;b')$ defines a group isomorphism $G'(M) \to G(M)$, as wanted.
\end{proof}

\begin{question}
	Is $M^{\eq}$ needed in the conclusion of Theorem \ref{thm: ext def fsg groups is to definable}?
\end{question}

\begin{remark}\label{rem: ext def fsg iso def applies to reducts}
	Theorem \ref{thm: ext def fsg groups is to definable} also applies to arbitrary reducts of $M^{\Sh}$ expanding $M$ (with the new point being the definability of the isomorphism $\iota$ in such a reduct). 
	\end{remark}
	\begin{proof}
		This follows by inspection of the proof of Theorem \ref{thm: ext def fsg groups is to definable}. Indeed, assume $M'' = M^{\Sh} \restriction_{L''}$ for some $L \subseteq L'' \subseteq L' = L^{\Sh}$ and $G'$ is group definable  in $M''$  and fsg in the sense of $\Th_{L''}(M'')$. Then $G'$ is definable in $M^{\Sh}$ and fsg in $T' = \Th_{L'}(M^{\Sh})$, by Fact \ref{fac: group props preserved in Sh exp}. 
			
		Following the proof of Theorem \ref{thm: ext def fsg groups is to definable}, we let $\bar{\mu}' \in \mathfrak{M}_{G'}^{L'}(M')$ be an invariant generically stable over $M$ measure in $T'$, and let $\bar{\mu}'' := \bar{\mu} \restriction_{L''}$. Then $\bar{\mu}'' \in \mathfrak{M}^{L''}_x(M')$ is generically stable over $M$ in $T''$ (Proposition \ref{prop: properties of reducts of measures}), so   $\pi_{\bar{\mu}''}$ is a partial type over $M'$ type-definable over $M$ in $T''$. And $ \pi_{\bar{\mu}''} = \pi_{\bar{\mu}'} \restriction_{L''}, \pi_{\bar{\mu}} = \pi_{\bar{\mu}''} \restriction_{L}$.

		Following the proof of Theorem \ref{thm: ext def fsg groups is to definable}, we find a hyper-definable over $M_{\omega}$ group $G^{\ast}$ in $T$ (given by $L(M_{\omega})$-type-definable $X$,$E$, $\Gamma_{\cdot_{G^{\ast}}}$) and an  $L(M_{\omega})$-type-definable partial map $\iota : \pi_{\bar{\mu}'} \to G^{\ast}$.
			As $G', \iota, G^{\ast}$ are all $L''(M_{\omega})$-type-definable, we get that then the group isomorphism $\iota': G' \to G^{\ast}$ is also $L''(M_{\omega})$-type-definable (indeed, from the proof of Theorem \ref{thm: equiv of cats groups vs chunks} we have that $\Gamma_{\iota'}(y,z)$ is type-defined by $(d_{\pi_{\bar{\mu}'}}x) \rho(x,y,z)$ for $\rho$ a small partial type defined in \eqref{eq: explicit formula for group iso}, which in our case only contains $L''(M_{\omega})$-formulas, hence $(d_{\pi_{\bar{\mu}'}}x) \rho(x,y,z) \equiv^{T'} (d_{\pi_{\bar{\mu}''}}x) \rho(x,y,z)$, and the latter is defined by a partial $L''(M_{\omega})$-type using type-definability of $\pi_{\bar{\mu}''}$ in $T''$).

			  Then Proposition \ref{prop: elim hyperdef group}, applied for $T''$ and $T$, gives a group $G_0$ definable over $M_{\omega}$ in $T^{\eq}$ and an isomorphism $\iota_0:  G'(M') \to G_0(M')$  definable over $M_{\omega}$ in $(T'')^{\eq}$, and we reduce to definability over $M$ instead of $M_{\omega}$ in the same way.
	\end{proof}

\begin{corollary}\label{cor: ext def fsg groups in RCVF}
\begin{enumerate}
	\item If $T$ is $o$-minimal and $M \models T$, then every group definable and fsg in $M^{\Sh}$ is definably  isomorphic to a group definable in $M$.
	\item  Let $M'' \models \RCVF$ be arbitrary. If $G$ is definable in $M''$ fsg group, then $G$ is definably isomorphic to a group definable in the reduct $M \models \RCF$.
\end{enumerate}

\end{corollary}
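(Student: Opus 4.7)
The plan is to deduce both parts from Theorem \ref{thm: ext def fsg groups is to definable} (or rather its extension to reducts of $M^{\Sh}$, Remark \ref{rem: ext def fsg iso def applies to reducts}) combined with an appropriate $M^{\eq}$-to-$M$ reduction carried out separately in each of the two settings.

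For part (2), I would take $L''$ to be the language of ordered rings together with a unary predicate $\mathcal{O}(x)$ for the valuation ring, so that $M''$ is an $L''$-expansion of $M \models \RCF$. Since the valuation on a real closed valued field is order-compatible, its valuation ring $\mathcal{O}(M)$ is a convex subset of $M$, hence externally definable (via a cut in some elementary extension). Thus $L \subseteq L'' \subseteq L^{\Sh}$ and $M''$ is a reduct of $M^{\Sh}$ expanding $M$. Applying Remark \ref{rem: ext def fsg iso def applies to reducts} to the fsg group $G$ definable in $M''$, one obtains an $L''(M)$-definable isomorphism between $G$ and a group $G_0$ definable in $M^{\eq}$. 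Since $\RCF$ admits elimination of imaginaries (finite sets in ordered fields are coded by their sorted tuples), $G_0$ is in fact definably isomorphic to a group definable in $M$ itself, which chains with the first isomorphism to give the desired definable (in $M''$) isomorphism.

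For part (1), Theorem \ref{thm: ext def fsg groups is to definable} applied directly produces, for any fsg group definable in $M^{\Sh}$, a definable (in $M^{\Sh}$) isomorphism with a group $G_0$ definable in $M^{\eq}$. I would then invoke the classical result, going back to Pillay's work on groups definable in $o$-minimal structures (and further developed by Edmundo, Peterzil, Eleftheriou and others), that every group interpretable in an $o$-minimal structure is definably isomorphic to a group definable in the structure.

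The main conceptual content is already packaged in Theorem \ref{thm: ext def fsg groups is to definable} and Remark \ref{rem: ext def fsg iso def applies to reducts}; the only remaining work is the $M^{\eq}$-to-$M$ descent. The mild obstacle to watch is that the descent step has a different justification in each setting --- elimination of imaginaries in $\RCF$ for (2), versus the $o$-minimal definability of interpretable groups for (1) --- so one cannot give a single uniform argument for both parts, but each descent is itself a standard result.
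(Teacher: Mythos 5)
Your proposal is correct and takes essentially the same route as the paper: part (1) is exactly the paper's argument (Theorem \ref{thm: ext def fsg groups is to definable} plus the $o$-minimal result that interpretable groups are definably isomorphic to definable ones), and for part (2) the paper likewise views $M''$ as a reduct of $M^{\Sh}$ via convexity of the valuation ring and applies Remark \ref{rem: ext def fsg iso def applies to reducts}. The only difference is the $M^{\eq}$-to-$M$ descent in (2), which the paper handles by citing part (1) (since the reduct $\RCF$ is $o$-minimal) whereas you invoke elimination of imaginaries for $\RCF$ — both are standard and valid, though note that EI for $\RCF$ is properly justified by definable choice rather than merely by coding finite sets.
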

\begin{proof}
(1) By Theorem \ref{thm: ext def fsg groups is to definable}, as every group definable in $M^{\eq}$ is definably isomorphic to a group definable in $M$ by \cite{eleftheriou2014interpretable}.

(2) By Remark \ref{rem: ext def fsg iso def applies to reducts} and (1), as $M''$ is a reduct of $M^{\Sh}$ (see the proof of Corollary \ref{cor: fsg subgroups in RCVF}).
\end{proof}

\begin{remark}
	In Theorem \ref{thm: ext def fsg groups is to definable}, if $G'$ is  not just fsg but moreover generically stable (i.e.~admits a left-invariant generically stable type $\bar{p}' \in S_{G'}^{L'}(M')$), we can avoid increasing the parameter set from $M$ to $M_{\omega}$ (as we already have $(\bar{p}' \restriction_{L})|_{M} \vdash \bar{p}' |_{M}$, by Lemma \ref{lem: unique extension of def meas to Sh} for types) and the passage through hyperdefinable groups in the proof, and apply a special case of the group chunk from \cite[Section 3.4]{hrushovski2019valued} for complete definable types to directly construct a group type-definable over $M$ in $T^{\eq}$ definably isomorphic to $G'$, which is then immediately definable over $M$ in $T^{\eq}$ by compactness.
\end{remark}

\begin{remark}\label{rem: type-def fsg in MSh recognize as hyper-definable}
The first part of the proof of Theorem \ref{thm: ext def fsg groups is to definable} goes through unchanged only assuming that $G'$ was a group type-definable over $M^{\Sh}$ and fsg in $T' = \Th(M^{\Sh})$, demonstrating that there is a small model $M_{\omega} \succ M$ and an  $M_{\omega}$-hyper-definable in $T$ group $G^{\ast} = (X/E, \cdot_{G^{\ast}})$ and an $L'(M_{\omega})$-type-definable isomorphism $\iota': G' \to G^{\ast}$.
\end{remark}

\begin{remark}\label{rem: ext interpretable groups}
	The situation is more complicated for fsg groups \emph{interpretable} in $M^{\Sh}$. Walsberg \cite{walsberg2019nip} (see also \cite[Section 5.1]{arXiv:2504.05566}) gives an example (related to Fact \ref{fac: G^00 ext def sometimes}) of a weakly $o$-minimal $\aleph_1$-saturated structure $\mathcal{H}$  so that $\mathcal{H}$ does not interpret an infinite groups, but $\mathcal{H}^{\Sh}$ interprets $(\mathbb{R},+,\times)$ (hence, e.g.~considering the associated circle group in it, interprets an infinite fsg group). 
	
	We note that this copy of $\mathbb{R}$ is given by a $\bigvee$-definable set in $\mathcal{H}$ quotiented by a $\bigwedge$-definable in $\mathcal{H}$ equivalence relation. The related fsg groups interpretable in  $\mathcal{H}^{\Sh}$, are  in fact hyper-definable in $\mathcal{H}$ (as we can replace the  $\bigvee$-definable set by a bounded definable set; for definably amenable, e.g. $(\mathbb{R},+)$ --- seem to get only $\bigvee/\bigwedge$-definability in $\mathcal{H}$). This motivates the following question that will be considered in future work:
\end{remark}

\begin{question}\label{que: ext interp fsg}
	Assume $M$ is a sufficiently saturated NIP structure, and $G(M)$ is definable in $(M^{\Sh})^{\eq}$ (which is not a saturated structure) and fsg (in $\Th((M^{\Sh})^{\eq})$). Then $G(M)$ is isomorphic to a group hyper-definable in $M$ (over a small set of parameters; globally in $\widetilde{M}'$ we can only expect a type-definable embedding). 
\end{question}

\section{Some further results}\label{sec: discussion}

\subsection{On the algebraic closure in Shelah expansion}\label{sec: acl in MSh}

The following basic question remains open:
\begin{question}
	Assume that $M$ is NIP and there is an infinite definable group in $\Th(M^{\Sh})$. Does it imply that there is an infinite definable (or at least type-definable) group in $\Th(M^{\eq})$?
\end{question}
\begin{remark}
	Note that this is not true if we only have an interpretable group in $\Th(M^{\Sh})$, by the example in Remark \ref{rem: ext interpretable groups} we would need to at least consider hyper-definable groups.
\end{remark}

\begin{prop}\label{prop: disint acl implies no groups}
	In any theory $T$, if the algebraic closure is \emph{disintegrated}, i.e. $\acl(a,b) = \acl(a) \cup \acl(b)$ for all finite sets $a,b$, then there are no infinite type-definable groups.
\end{prop}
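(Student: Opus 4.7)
Suppose for contradiction that $G$ is an infinite type-definable group over a small set of parameters $A$ in a sufficiently saturated monster model $\cU$. The plan is (i) to produce $a, b \in G(\cU)$ that are mutually non-algebraic over $A$, and (ii) to derive a contradiction from their product using disintegration.

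For step (ii), assume $a, b \in G$ with $a \notin \acl(Ab)$ and $b \notin \acl(Aa)$, and let $c := a \cdot_G b \in G$. Since multiplication in $G$ is a partial $A$-type-definable function (Remark \ref{rem: type-def group formula}), $c \in \dcl(Aab) \subseteq \acl(A \cup \{a,b\})$. Using $\acl(X) = \bigcup_{X_0 \subseteq_{\mathrm{fin}} X} \acl(X_0)$ together with the disintegration hypothesis applied to each pair of finite subsets $A_0 \cup \{a\}, A_0 \cup \{b\}$ of $A \cup \{a,b\}$ (ranging over finite $A_0 \subseteq A$), I get $\acl(A \cup \{a,b\}) = \acl(Aa) \cup \acl(Ab)$; hence $c \in \acl(Aa) \cup \acl(Ab)$. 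If $c \in \acl(Aa)$, then $b = a^{-1} \cdot_G c \in \dcl(Aac) \subseteq \acl(Aac) = \acl(Aa)$ by idempotence of $\acl$, contradicting the choice of $b$; the case $c \in \acl(Ab)$ is symmetric.

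The substance is step (i), which I plan to establish by compactness: it suffices to show that for every formula $\varphi(x, y) \in L(A)$ and every $n \in \N$ there exist distinct $x, y \in G$ satisfying both $\neg(\varphi(x,y) \land \exists^{\leq n} x'\, \varphi(x',y))$ and $\neg(\varphi(y,x) \land \exists^{\leq n} y'\, \varphi(y',x))$. Suppose not; then every pair of distinct elements of $G$ is related by $R(u, v) := \varphi(u,v) \land |\varphi(\cU, v)| \leq n$ in at least one orientation. Choose $2n+2$ distinct elements of $G$ (possible since $G$ is infinite) and form the directed graph on them with an edge $v \to u$ whenever $R(u,v)$ holds. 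Each vertex has out-degree at most $n$ (as $R(u,v)$ forces $u \in \varphi(\cU, v)$, a set of size $\leq n$), so the total number of directed edges is at most $n(2n+2) = 2n^2 + 2n$; but this is strictly less than the total number of unordered pairs $\binom{2n+2}{2} = 2n^2 + 3n + 1$, each of which would contribute at least one directed edge under our assumption, a contradiction.

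The main obstacle is step (i); once the mutual non-algebraicity is secured, the disintegration hypothesis together with the group law finishes the argument immediately. The combinatorial pigeonhole above is essentially the only content beyond the definitions, and it uses only that $G$ is genuinely infinite (larger than every $2n+1$); no stability or NIP assumption is required.
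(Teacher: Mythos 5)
Your proof is correct, and its second half coincides with the paper's: both arguments take $a,b \in G$ mutually non-algebraic over $A$, set $c := a\cdot b \in \dcl(Aab)$, use disintegration (via finite character of $\acl$, as you spell out) to place $c$ in $\acl(Aa)\cup\acl(Ab)$, and then recover $b = a^{-1}\cdot c$ or $a = c\cdot b^{-1}$ to contradict the choice of the pair. Where you genuinely diverge is in producing such a pair. The paper gets it by Ramsey and compactness: it extracts two mutually $A$-indiscernible sequences of pairwise distinct elements of $G$ and observes that $b_0 \in \acl(Aa_0)$ (or $a_0 \in \acl(Ab_0)$) would force infinitely many realizations of an algebraic formula, since an infinite indiscernible sequence over a set remains indiscernible over its algebraic closure. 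You instead run a direct compactness argument whose finitary content is the counting bound: if every pair among $2n+2$ distinct elements of $G$ were related in some orientation by $\varphi(x,y)\land\exists^{\leq n}x'\varphi(x',y)$, the out-degree bound $n$ would give at most $2n^2+2n$ directed edges against the required $2n^2+3n+1$ pairs. This is more elementary (no Ramsey, no indiscernibles) and uses only that $G$ is infinite; the paper's route is shorter to state but imports the standard indiscernible machinery. One small point you should make explicit: your compactness reduction as written only treats one formula $\varphi$ and one $n$ at a time, whereas a finite fragment of the relevant partial type involves several pairs $(\varphi_i,n_i)$ simultaneously; this is fixed by the routine coding $\psi(x,y) := \bigvee_i \left(\varphi_i(x,y)\land\exists^{\leq n_i}x'\,\varphi_i(x',y)\right)$ with $n := \sum_i n_i$, for which $\neg\left(\psi(x,y)\land\exists^{\leq n}x'\,\psi(x',y)\right)$ implies each $\neg\left(\varphi_i(x,y)\land\exists^{\leq n_i}x'\,\varphi_i(x',y)\right)$, so a witness for the combined pair witnesses all of them.
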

\begin{proof}
	Let $\mathbb{M} \models T$ be a monster model and $(G, \cdot)$ an infinite group type-definable over a small set $A \subset \mathbb{M}$, i.e.~we have  $A$-definable functions $\cdot : (\mathbb{M}^x)^2 \to \mathbb{M}^x, ^{-1}: \mathbb{M}^x \to \mathbb{M}^x$ so that their restrictions to the partial type $G(x)$ defining $G$ are the group operations on $G$. As $G$ is infinite, by Ramsey and compactness we can find sequences $(a_i : i \in \mathbb{Q}), (b_i : i \in \mathbb{Q})$ of pairwise distinct elements of $G$ that are mutually indiscernible over $A$ (e.g. they can be two parts of the same $A$-indiscernible sequence of order type $\omega + \omega$).
	
	Let $c := a_0 \cdot b_0$, then $c \in \dcl(a_0, b_0, A)$.
	If  $c \in \acl(a_0, A)$, then $b_0 = a^{-1}_0 \cdot (a_0 \cdot b_0) = a^{-1}_0 \cdot c \in \acl(a_0, A)$ --- contradicting that the sequence $(b_i : i \in \mathbb{Q})$ is infinite and indiscernible over $a_0, A$ (in general, if a sequence is indiscernible over a set, then it is indiscernible over its algebraic closure). If  $c \in \acl(b_0, A)$, then $a_0 = (a_0 \cdot b_0) \cdot b^{-1}_0 = c \cdot b^{-1}_0 \in \acl(b_0,A)$ --- contradicting that the sequence $(a_i : i \in \mathbb{Q})$ is infinite and indiscernible over $b_0, A$.
\end{proof}

The following is standard:
\begin{remark}\label{rem: choosing alg formula}
		In any $L$-theory, if $a \in \acl(b)$, then there is some $k \in \omega$ and formula $\varphi(x,y) \in L$ so that $\models \varphi(a,b)$ and for every $b' \in \mathbb{M}^y$, $|\varphi(\mathbb{M},b')| \leq k$.

Indeed, given any $\psi(x,y) \in L$ so that $\psi(\mathbb{M},b')$ is finite, let $k:= |\psi(\mathbb{M},b')|$ and let $\varphi(x,y)$ be the formula 
\begin{gather*}
	\psi(x,y) \land \left( \exists x_1 \ldots \exists x_k  \left( \bigwedge_{i=1}^k \psi(x_i,y) \land  \forall x' \left(  \psi(x',y) \rightarrow \bigvee_{i=1}^k x' = x_i \right) \right) \right).
\end{gather*}
\end{remark}

We are following the notation from Definition \ref{def: Shelah exp context}.
Let us say that a binary relation $R \subseteq X \times Y$ is \emph{$k$-fiber-algebraic}, $k \in \omega$, if for every $x \in X$ there are at most $k$ elements $y\in Y$ so that $(x,y) \in R$. The following is a slight generalization of \cite[Corollary 1.5]{chernikov2013externally}:

\begin{cor}
Let $R \subseteq M^x \times M^y$ be an externally definable binary relation that is $k$-fiber-algebraic. Then there exists an $\mathbb{M}$-definable $k$-fiber-algebraic relation $R' \subseteq \mathbb{M}^x \times \mathbb{M}^y$ so that $R = R' \cap \left( M^x \times M^y \right)$. 

In particular, if $R$ is the graph of a function, then $R'$ is the graph of a partial function.
\end{cor}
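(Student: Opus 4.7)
Write $R = \varphi(M^{x+y}, c)$ for some $\varphi(x, y, z) \in L$ and $c \in N^z$. The plan is to produce an $L$-formula $\psi$ and a parameter in $\mathbb{M}$ whose associated definable set is universally $k$-fiber-algebraic and whose trace on $M^{x+y}$ equals $R$; then $R' := \psi(\mathbb{M}^{x+y}, \ldots)$ will be the required relation.

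First, I would apply Fact \ref{fac: honest defs} to $\varphi(x, y, c)$ to obtain $\theta(x, y, w) \in L$ and a parameter $d \in (M')^w$ with $\theta(M^{x+y}, d) = R$ and $\theta(x, y, d) \land P(x) \land P(y) \vdash \varphi(x, y, c)$ in $(N', M')$. Then replace $\theta$ by the $L$-formula
\[
\theta^*(x, y, w) := \theta(x, y, w) \land \exists^{\leq k} y' \, \theta(x, y', w),
\]
which is universally $k$-fiber-algebraic, and set $R' := \theta^*(\mathbb{M}^{x+y}, d)$. This is already $\mathbb{M}$-definable over $d \in M' \subseteq \mathbb{M}$ and $k$-fiber-algebraic, and the inclusion $R' \cap M^{x+y} \subseteq R$ is immediate from the honest-definition property.

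The main step is the reverse inclusion $R \subseteq R'$: for $(a,b) \in R$ one needs $|\theta(a, \mathbb{M}^y, d)| \leq k$. This is not automatic, since $\theta$ may acquire extra solutions $b' \in \mathbb{M}^y \setminus M^y$ to $\theta(a, y, d)$ --- the implication $\theta \land P \vdash \varphi$ imposes no constraint once $P(b')$ fails. To suppress these extras, I would apply Fact \ref{fac: honest defs} a second time to the $L$-formula $\neg \chi^{[k+1]}(x, \bar y, c)$, where
\[
\chi^{[k+1]}(x, y_1, \ldots, y_{k+1}, z) := \bigwedge_{i \in [k+1]} \varphi(x, y_i, z) \land \bigwedge_{i < j} y_i \neq y_j.
\]
Since $R$ is $k$-fiber-algebraic on $M$, $\chi^{[k+1]}(M, c) = \emptyset$, so $\neg \chi^{[k+1]}(M, c) = M^{x + (k+1)y}$; this yields $\eta(x, \bar y, w_\eta) \in L$ and $d_\eta \in (M'')^{w_\eta}$ with $\eta(M^{x + (k+1)y}, d_\eta) = M^{x + (k+1)y}$ and $\eta \land P \vdash \neg \chi^{[k+1]}$. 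Conjoining the $\eta$-controlled bound to $\theta^*$ via
\[
\psi(x, y, w_1, w_2) := \theta(x, y, w_1) \land \forall y_1 \cdots y_{k+1} \Bigl( \eta(x, \bar y, w_2) \land \bigwedge_i \theta(x, y_i, w_1) \to \bigvee_{i<j} y_i = y_j \Bigr),
\]
and taking $R' := \psi(\mathbb{M}^{x+y}, d, d_\eta)$ produces the required set: the $\forall$-clause forces $k$-fiber-algebraicity of $\theta(a, \cdot, d)$ inside the $\eta$-tubes, while the moreover parts of both applications of Fact \ref{fac: honest defs} ensure that any surviving extra solution would give rise to an $M$-invariant type in $P$ consistent with $\varphi \land \neg \theta$ or with $\chi^{[k+1]} \land \neg \eta$ --- both forbidden. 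The ``in particular'' statement is the special case $k = 1$, where a $1$-fiber-algebraic relation is by definition the graph of a partial function.

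The main obstacle is this last combination step: verifying $R' \cap M^{x+y} = R$ requires tracing how the two honest definitions $\theta, \eta$ mesh compatibly through their moreover clauses, which is where the NIP hypothesis on $T$ is used in an essential way. A clean alternative would be a direct compactness/saturation argument inside $\mathbb{M}$ applied to the partial type expressing ``$\theta_0(a, b, w) \leftrightarrow (a,b) \in R$ for $(a,b) \in M^{x+y}$ together with $\forall x \exists^{\leq k} y \theta_0(x, y, w)$'', showing realizability in $\mathbb{M}$ via iterated honest definitions; the two approaches are essentially dual.
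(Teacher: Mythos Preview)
Your first step is correct and, in fact, already finishes the proof: the honest definition $\theta(x,y,d)$ with $d \in M'$ is itself $k$-fiber-algebraic in $\mathbb{M}$, so $R' := \theta(\mathbb{M}^{x+y}, d)$ works directly and your $\theta^*$ coincides with $\theta$. You miss this because you do not exploit the full force of the containment $\theta(x,y,d) \land P(x) \land P(y) \vdash \varphi(x,y,c)$: it says $\theta((M')^{x+y}, d) \subseteq \varphi((M')^{x+y}, c)$, not merely the corresponding inclusion on $M$. Now $\varphi((M')^{x+y}, c)$ is still $k$-fiber-algebraic, because the $L_P$-sentence $\forall x \in P\, \exists^{\leq k} y \in P\, \varphi(x,y,c)$ holds in $(N,M)$ by hypothesis and transfers to $(N',M')$ by elementarity of the pair extension. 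Hence $\theta((M')^{x+y}, d)$ is $k$-fiber-algebraic, i.e.\ $M' \models \forall x\, \exists^{\leq k} y\, \theta(x,y,d)$ --- a first-order $L$-statement about $d$ in the $L$-model $M'$, which therefore holds in any $\mathbb{M} \succeq M'$. This is exactly the paper's (three-line) argument; your worry that ``the implication $\theta \land P \vdash \varphi$ imposes no constraint once $P(b')$ fails'' is the wrong framing, since one only needs the constraint on the $L$-model $M'$ and then exports by elementarity in $L$.

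Your second honest definition and the formula $\psi$ are therefore unnecessary. They are also not obviously correct as written: the second conjunct of $\psi$ depends only on $x$, so for fixed $a$ the $\psi$-fiber is either empty or equals the full $\theta$-fiber at $a$, and the latter is bounded by $k$ only when every distinct $(k{+}1)$-tuple of $\theta$-solutions above $a$ satisfies $\eta$ --- which you have not established (the honest-definition property for $\eta$ gives containment $\eta \subseteq \neg\chi^{[k+1]}$ on $M'$, not that $\eta$ is full there).
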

\begin{proof}
	Say $R$ is the trace on $M$ of some $L(c)$-formula $\varphi(x,y; c)$ with $c \in N \succeq M$. By Fact \ref{fac: honest defs} we find $(N',M') \succ (N,M)$ and $\theta(x,y) \in L(M')$ so that $\theta(M) = \varphi(M,c)$ and $\theta(M') \subseteq \varphi(M';c)$. As the extension of pairs is elementary, we still have that $\varphi(M';c)$ is $k$-fiber-algebraic, then $\theta(M') \subseteq \varphi(M';c)$ implies that $\theta(M')$ is $k$-fiber-algebraic.
\end{proof}

 We let $\acl$ denote the algebraic closure in $T$, and $\acl^{\Sh}$ denote the algebraic closure in $\Th(M^{\Sh})$.

\begin{proposition}\label{prop: disint acl in Shelah exp}
If $T$ is NIP and has disintegrated $\acl$, then $\acl^{\Sh}(A) = \acl(A) \cup M$ for any $A \subseteq M'$ (in particular, $\acl^{\Sh}$ is also disintegrated).
\end{proposition}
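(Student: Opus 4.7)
My plan is to prove the two inclusions separately. The inclusion $\acl(A) \cup M \subseteq \acl^{\Sh}(A)$ is immediate: $\acl(A) \subseteq \acl^{\Sh}(A)$ because $L \subseteq L'$, and $M \subseteq \acl^{\Sh}(\emptyset)$ since by Definition \ref{def: Shelah exp context} every element of $M$ is named by a constant symbol in $L'$.

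For the reverse inclusion, fix $a \in \acl^{\Sh}(A)$ with $a \notin M$; I will show $a \in \acl(A)$. First I reduce to $a \in M'$: if $\psi(x) \in L'(A)$ is a formula witnessing $a \in \acl^{\Sh}(A)$, with $n := |\psi(\widetilde{M}'')| < \omega$, then $L'$-elementarity of $\widetilde{M}' \preceq^{L'} \widetilde{M}''$ gives $|\psi(\widetilde{M}')| = n$, so $\psi(\widetilde{M}'') = \psi(\widetilde{M}') \subseteq \widetilde{M}' = M'$, and in particular $a \in M'$. Next, by the quantifier elimination of $T'$ (Remark \ref{rem: Shelah QE formulas}, applied modulo the parameters from $A$) I may write $\psi(x) \equiv^{T'} R_{\chi(x,\bar{y},d)}(x,\bar{a})$ for some $\chi(x,\bar{y},z) \in L$, a tuple $\bar{a}$ in $A$, and a tuple $d$ in $N$. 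Unfolding the interpretation of $R_{\chi(x,\bar{y},d)}$ in $\widetilde{M}'$ from Definition \ref{def: Shelah exp context}, this means $N' \models \chi(a,\bar{a},d)$; and by $L_P$-elementarity $(N',M') \preceq^{L_P} (N'',M'')$ the bound $|\{x \in M' : N' \models \chi(x,\bar{a},d)\}| \leq k$ holds for some fixed $k < \omega$, which I abbreviate as $|\chi(M',\bar{a},d)| \leq k$.

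Now I split on whether $a \in \acl_L(A \cup d)$ in an $L$-monster $\mathbb{M}$ containing $N'$. If yes, disintegratedness of $\acl$ gives $a \in \acl(A) \cup \acl(d)$: if $a \in \acl(A)$ we are done, and if $a \in \acl(d)$ then since $N \models T$ is $L$-elementary in $\mathbb{M}$ we get $\acl(d) \subseteq N$ (any algebraic $L(d)$-definable set has a fixed finite size and hence the same realizations in $N$ as in $\mathbb{M}$), so $a \in N \cap M' = M$, contradicting the assumption $a \notin M$. If no, then $\tp_L(a/A \cup d)$ is non-algebraic; using that $M'$ is $|N|^+$-saturated as an $L$-structure (inherited from the $|N|^+$-$L_P$-saturation of $(N',M')$) and $|A \cup d| \leq |N|$, this type is realized by infinitely many distinct $a_i \in M'$, each of which satisfies $\chi(x,\bar{a},d)$ in $N'$, contradicting $|\chi(M',\bar{a},d)| \leq k$. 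Either horn yields $a \in \acl(A)$, as required.

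The ``in particular'' clause is then a direct corollary: for $A, B \subseteq M'$ we have $\acl^{\Sh}(A \cup B) = \acl(A \cup B) \cup M = \acl(A) \cup \acl(B) \cup M = \acl^{\Sh}(A) \cup \acl^{\Sh}(B)$. No step in the argument is genuinely hard; the main bookkeeping point is full use of the identity $M' \cap N = M$ from Definition \ref{def: Shelah exp context}, which is exactly what kills the residual case $a \in \acl(d)$ and thereby pins the non-$\acl(A)$ part of $\acl^{\Sh}(A)$ to $M$.
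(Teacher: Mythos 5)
Your first inclusion, the reduction to a formula $R_{\chi(x,\bar y,d)}(x,\bar a)$ with at most $k$ solutions in $M'$, and the case $a \in \acl_L(A \cup d)$ (using disintegration, $\acl(d) \subseteq N$, and $N \cap M' = M$) are all fine. The gap is in the complementary case: from non-algebraicity of $\tp_L(a/A \cup d)$ in the sense of $T$ you conclude that this type has infinitely many realizations \emph{in $M'$}, citing $|N|^+$-saturation of $M'$ as an $L$-structure. This does not follow. The parameter $d$ lies in $N$, not in $M'$, so saturation of $M'$ over its own small subsets is not applicable; what you would actually need is to realize $\tp_L(a/A\cup d) \cup \{P(x)\} \cup \{x \neq a_j : j<i\}$ inside the pair $(N',M')$, and for that you need this set to be consistent with the theory of the pair --- which can simply fail. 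A non-algebraic $L$-type over parameters from $N$ may have only finitely many realizations in the predicate: already in DLO, if $d_1 < 0 < d_2$ realize the cuts infinitesimally below and above $0$ over $M = \mathbb{Q}$, then $\tp_L(0/d_1 d_2)$ is non-algebraic but has exactly one realization in $M'$. This is precisely the subtlety the paper flags (``$\varphi'(x,a;d)$ might have infinitely many realizations in $N$''): the bound $k$ only counts solutions inside $P$, and controlling how an $N$-definable set traces on $M'$ is where the real work lies.

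The gap is not patchable by a softer argument of this kind, because your proof never uses NIP and the statement is false without it. In the random graph (disintegrated $\acl$, not NIP): pick in $M$ distinct vertices $a_i, c_i, c_i'$ whose only edges among themselves are $a_i c_i$ and $a_i c_i'$, let $S := \{c_i, c_i' : i \in \omega\}$ and choose $d \in N$ with $E(M,d) = S$; then the $L_P$-type saying $P(x_c) \wedge P(x_a) \wedge E(x_c,d) \wedge E(x_c,x_a) \wedge$ ``exactly two elements of $P$ satisfy $E(x,d)\wedge E(x,x_a)$'' together with $x_c \neq m$ for all $m \in M$ is finitely satisfiable in $(N,M)$ (witnessed by $(c_i,a_i)$), so it is realized in $(N',M')$ by some $c,a \in M'$ with $c \notin M$, giving $c \in \acl^{\Sh}(a) \setminus (M \cup \acl(a))$. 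The paper's proof avoids your dichotomy altogether: it uses an honest definition $\theta(x,y;e)$ with \emph{internal} parameter $e \in M'$ satisfying $\theta(M;e) = \varphi'(M;d)$ and $\theta(M';e) \subseteq \varphi'(M';d)$, and then a saturation argument in the pair (with the auxiliary partial types $A$ and $\Gamma$ forcing non-algebraicity over $a^*$ alone and over $e$ alone) to produce $c^* \in \acl(a^*,e) \setminus (\acl(a^*) \cup \acl(e))$ entirely inside $M'$, contradicting disintegration of $\acl$ in $T$. That replacement of the external parameter $d$ by an internal one is exactly where NIP enters, and it is the step your argument is missing.
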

\begin{proof}

	Clearly $\acl(A,M) \subseteq \acl^{\Sh}(A)$ for every $A \subseteq M'$ (note that $M \subseteq \dcl^{\Sh}(\emptyset)$ since every singleton subset of $M$  gets named by a relation symbol in $M^{\Sh}$). Assume we have tuples $a, c\in M'$ with $c \in \acl^{\Sh}(a) \setminus M$.

	As $c \in \acl^{\Sh}(a)$, let  $k \in \omega$ and $\varphi(x;y)$ be an $L^{\Sh}$-formula given by Remark \ref{rem: choosing alg formula} applied in $\Th(M^{\Sh})$. By quantifier elimination in $\Th(M^{\Sh})$, $\varphi(M') = \varphi'(M',d)$ for some $\varphi'(x,y;u) \in L$ and some $d \in N$. That is, there exist some $d \in N$, $k \in \omega$ and $\varphi'(x, y; u) \in L$ so that 
	 \begin{gather}
	 	(N', M') \models \varphi'(c, a; d) \land  \forall y \in P\,  \exists^{\leq k} x \in P \, \varphi'(x, y; d) \label{eq: disint acl in Sh 1}
	 \end{gather}
	 (but $\varphi'(x, a; d)$ might have infinitely many realizations in $N$).

	By Fact \ref{fac: honest defs}, let $\theta(x,y;v) \in L$ and $e \in M'$ be such that
	\begin{gather}
		\theta(M;e) = \varphi'(M; d) \textrm{ and }\theta(M';e) \subseteq  \varphi'(M'; d).\label{eq: disint acl in Sh 3}
	\end{gather}
	
 Consider the partial $L$-types 
	\begin{gather*}
		A(x,y) := \left\{ \neg \alpha(x;y) \in L : \exists k' \in \omega \, \forall a' \in (M')^y \,  |\alpha(M', a')| \leq k' \right\},\\
			\Gamma(x; v) := \left\{ \neg \gamma(x;v) \in L : \exists k' \in \omega \, \forall e' \in (M')^v \,  |\gamma(M', e')| \leq k' \right\}.
	\end{gather*}
	Let $A_0 \subseteq A, \Gamma_0 \subseteq \Gamma$ be arbitrary finite subsets. By definition of $\Gamma$, the formula $\neg \bigwedge\Gamma_0(x, e)$ has only finitely many realizations in $M'$, let $c_1, \ldots, c_n$, $n \in \omega$, enumerate all of its realizations in $M$.
	
As $c \notin M$ by assumption, we have 
\begin{gather*}
	(N', M') \models \varphi'(c, a; d) \land A_0(c,a) \land \bigwedge_{i=1}^{n} c \neq c_i.
\end{gather*}
As $ (N, M) \prec (N', M')$, there exist some $c', a'$ in $M$ so that 
\begin{gather*}
	(N, M) \models \varphi'(c', a'; d) \land A_0(c',a') \land \bigwedge_{i=1}^{n} c'\neq c_i.
\end{gather*}
By the choice of $c_i$ and \eqref{eq: disint acl in Sh 3} we thus have 
\begin{gather*}
	(N', M') \models \theta(c', a'; e) \land A_0(c',a') \land \Gamma_0(c',e).
\end{gather*}

	 By saturation of the pair $(N', M')$ we can thus find $a^\ast, c^\ast$  in $M'$ so that 
\begin{gather*}
	(N', M') \models \theta(c^\ast, a^\ast; e) \land A(c^\ast,a^\ast) \land \Gamma(c^\ast,e).
\end{gather*}
Since all of these are $L$-formulas and all parameters are in $M'$ (and $(N', M')$ is an elementary pair), we have 
\begin{gather*}
	M' \models \theta(c^\ast, a^\ast; e) \land A(c^\ast,a^\ast) \land \Gamma(c^\ast,e).
\end{gather*}

As $\theta(M',e) \subseteq \varphi'(M',d)$ by \eqref{eq: disint acl in Sh 3}, using  \eqref{eq: disint acl in Sh 1} we get that $\theta(x, a^\ast; e)$ has at most $k$ realizations in $M'$. It follows by the choice of $A,\Gamma$ and Remark \ref{rem: choosing alg formula} (applied in $M' \models T$) that
\begin{gather*}
	c^\ast \in \acl(a^\ast,e) \setminus \left( \acl(a^\ast)  \cup \acl(e) \right).
\end{gather*}
	 
	This contradicts the disintegration of the  algebraic closure in $T$.
\end{proof}

\begin{corollary}\label{cor: disint acl no groups in MSh}
	If $T$ is NIP, has disintegrated $\acl$ and $M \models T$, then there are no infinite type-definable groups in $\Th(M^{\Sh})$.
\end{corollary}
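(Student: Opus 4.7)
The plan is to deduce this immediately from the two preceding results. By Proposition~\ref{prop: disint acl implies no groups} applied in any complete theory, the only thing I need to verify is that $T' := \Th(M^{\Sh})$ itself has disintegrated algebraic closure (viewed with respect to $\acl^{\Sh}$, in a monster model of $T'$). Given that, Proposition~\ref{prop: disint acl implies no groups} produces no infinite type-definable groups in $T'$, which is exactly the conclusion.

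So the first step is the disintegration of $\acl^{\Sh}$. Here I would invoke Proposition~\ref{prop: disint acl in Shelah exp}, which gives $\acl^{\Sh}(A) = \acl(A) \cup M$ for any $A \subseteq M'$. For finite tuples $a, b$ this yields
\begin{align*}
\acl^{\Sh}(a,b) &= \acl(a,b) \cup M = \bigl(\acl(a) \cup \acl(b)\bigr) \cup M \\
&= \bigl(\acl(a) \cup M\bigr) \cup \bigl(\acl(b) \cup M\bigr) = \acl^{\Sh}(a) \cup \acl^{\Sh}(b),
\end{align*}
where the second equality uses the assumed disintegration of $\acl$ in $T$. So $\acl^{\Sh}$ is disintegrated over tuples from $M'$.

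The one bookkeeping point I would address is that Proposition~\ref{prop: disint acl implies no groups} wants disintegration in the monster model of $T'$, not just in $M'$. This is harmless: disintegration of algebraic closure is preserved by elementary equivalence (for each $n$, the statement that any $c$ with $\varphi(c,a,b)$ defining a finite set over $a,b$ lies in $\acl(a) \cup \acl(b)$ is type-definable in the variables $a,b$), so it transfers from $M'$ to any model of $T'$; alternatively one can simply re-run the honest-definition argument of Proposition~\ref{prop: disint acl in Shelah exp} inside the pair $(N'', M'')$ from Definition~\ref{def: Shelah exp context}, which directly gives disintegration of $\acl^{\Sh}$ over tuples in $\widetilde{M}'$. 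I do not expect any real obstacle here — both steps have already been done.
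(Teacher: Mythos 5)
Your proposal is correct and is essentially the paper's own proof: the paper deduces the corollary directly from Proposition~\ref{prop: disint acl in Shelah exp} (whose ``in particular'' clause is exactly your displayed computation) together with Proposition~\ref{prop: disint acl implies no groups} applied to $T'$. Your final bookkeeping step is unnecessary, since in the notation of Definition~\ref{def: Shelah exp context} the set $M'$ underlies $\widetilde{M}'$, which is already the monster model of $T'$ (and if one insists on a larger monster, your transfer/re-running remark handles it), so there is no gap.
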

\begin{proof}
By Propositions \ref{prop: disint acl in Shelah exp} and \ref{prop: disint acl implies no groups}.
\end{proof}

\begin{remark}
	It follows from Corollary \ref{cor: disint acl no groups in MSh} that there are no infinite type-definable groups in $\mathcal{H}^{\Sh}$ from Example \ref{rem: ext interpretable groups}. 
\end{remark}

\subsection{Uniform type-definability for some sets related to generically stable measures}\label{sec: hyperdef sets from measures}

We are interested when the type-definable equivalence relations arising in the proof of Theorem \ref{thm: ext def fsg groups is to definable} (i.e.~the equality of germs in a definable family of partial functions on a generically stable measure) are uniformly type-definable or externally definable. We have:
\begin{lemma}
	For every $\varphi(x,y)$ and $r \in [0,1]$ there is an $\emptyset$-type definable set $D_r \subseteq \cU^y \times X_x$ so that: for any generically stable measure $\mu \in \mathfrak{M}_x(\cU)$ and any $b \in \cU^y$, $\mu(\varphi(x,b)) = r$ if and only if $(b, \bar{a}^{\mu}) \in D_r$ (where $\bar{a}^{\mu}$ is as in Definition \ref{def: a^mu coding measure}).
\end{lemma}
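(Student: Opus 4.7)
The plan is to explicitly write down $D_r$ as an intersection of definable conditions on the tuple $\bar{a}$ of $1/k$-approximations that encodes $\bar{a}^{\mu}$, and then verify each direction of the equivalence using the approximation property. Concretely, I would set
\begin{gather*}
D_r := \Big\{(b, \bar{a}) \in \cU^y \times X_x \, : \, \bigwedge_{k \in \mathbb{N}} \left\lvert \Av \left(\bar{a}_{\varphi, 1/k}; \varphi(x,b) \right) - r \right\rvert \leq 1/k \Big\}.
\end{gather*}
Since $X_x$ is $\emptyset$-type-definable and each condition $|\Av(\bar{a}_{\varphi, 1/k}; \varphi(x,b)) - r| \leq 1/k$ is a finite Boolean combination of instances of $\varphi$ (because the tuple $\bar{a}_{\varphi, 1/k}$ has the fixed length $n_{\varphi, 1/k}$ from Remark \ref{rem: eps approx precise size}), $D_r$ is $\emptyset$-type-definable.

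For the forward direction, assume $\mu \in \mathfrak{M}_x(\cU)$ is generically stable with $\mu(\varphi(x,b)) = r$. By construction (Definition \ref{def: a^mu coding measure}), $\bar{a}^{\mu}_{\varphi, 1/k}$ is a $1/k$-approximation for $\mu$ on $\varphi(x,y)$, so
\begin{gather*}
\left\lvert \Av(\bar{a}^{\mu}_{\varphi, 1/k}; \varphi(x,b)) - r \right\rvert = \left\lvert \Av(\bar{a}^{\mu}_{\varphi, 1/k}; \varphi(x,b)) - \mu(\varphi(x,b)) \right\rvert \leq 1/k
\end{gather*}
for every $k \in \mathbb{N}$; combined with $\bar{a}^{\mu} \in X_x$, this gives $(b, \bar{a}^{\mu}) \in D_r$. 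For the converse, suppose $(b, \bar{a}^{\mu}) \in D_r$. Since $\bar{a}^{\mu}_{\varphi, 1/k}$ is again a $1/k$-approximation for $\mu$, the triangle inequality yields
\begin{gather*}
\lvert \mu(\varphi(x,b)) - r \rvert \leq \lvert \mu(\varphi(x,b)) - \Av(\bar{a}^{\mu}_{\varphi, 1/k}; \varphi(x,b)) \rvert + \lvert \Av(\bar{a}^{\mu}_{\varphi, 1/k}; \varphi(x,b)) - r \rvert \leq 2/k
\end{gather*}
for every $k$, hence $\mu(\varphi(x,b)) = r$.

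There is no real obstacle here: the definition of $X_x$, the fact that representatives $\bar{a}^{\mu}$ come with exact $1/k$-approximations on every formula, and the existence of uniform bounds on the sizes of these approximations (Fact \ref{fac: unif e approx for gs measures in NIP} together with Remark \ref{rem: eps approx precise size}) are precisely what is needed to make the conditions defining $D_r$ first-order partial types of fixed shape. The only subtlety worth flagging is that one must use the $1/k$-approximation property of the distinguished representatives $\bar{a}^{\mu}$, rather than the weaker $2/k$-approximation satisfied by arbitrary $\bar{a} \in X_x$ (Remark \ref{rem: 2/k approx}); this is why the statement is about $\bar{a}^{\mu}$ and not about arbitrary representatives of $[\mu] \in \widetilde{\mathcal{M}}_x$.
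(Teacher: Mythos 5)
Your proof is correct and is essentially the paper's own argument: both define $D_r$ by the conditions $\lvert \Av(\bar{a}_{\varphi,1/k};\varphi(x,b)) - r\rvert \leq 1/k$ for all $k$ and verify the equivalence using the fact that the distinguished representatives $\bar{a}^{\mu}_{\varphi,1/k}$ are genuine $1/k$-approximations. Your extra remarks on type-definability and the $1/k$ versus $2/k$ distinction for arbitrary elements of $X_x$ are accurate and just spell out what the paper leaves implicit.
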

\begin{proof}
	We let  $D_r$ be the set of pairs $(b, \bar{a}) \in \cU^y \times X_x$ satisfying
	\begin{gather*}
\bigwedge_{k \in \mathbb{N} } \left( \left\lvert\Av(\bar{a}_{\varphi, 1/k}; \varphi(x,b))  - r)  \right \rvert \leq 1/k \right).
	\end{gather*}
As $\bar{a}^{\mu}_{\varphi, 1/k}$ is a $1/k$ approximation for $\mu$ on $\varphi(x,y)$ for any generically stable measure $\mu$, we have $(b, \bar{a}^{\mu}) \in D_r$ if and only if $\mu(\varphi(x,b))=r$.
\end{proof}

Note that $D_r$ is defined by a conjunction of instances of infinitely many distinct formulas (the condition ``$\left\lvert\Av(\bar{a}_{\varphi, 1/k}; \varphi(x,b))  - r)  \right \rvert \leq 1/k $'' is expressed by a disjunction of size growing with $k$ of conjunctions of instances of the form $\varphi(a,y)$). We consider a stronger notion:

\begin{definition}\label{def: unif type-def}
A set $X \subseteq \cU^x$ is \emph{uniformly} type-definable over $A \prec \cU$ if it is type-definable over $A$ by a partial type of the form $\pi(x) = \{ \varphi(x,a) : a \in A\}$ consisting of instances of a single formula $\varphi(x,y) \in L$.
\end{definition}

 \begin{proposition}\label{prop: unif def meas}
 	\begin{enumerate}
 		\item Let $T$ be arbitrary. Then for every stable formula $\varphi(x,y) \in L$ there is some $\theta(y,z) \in L$ so that: for every measure $\mu \in \mathfrak{M}_x(\cU)$, the set $\{b \in \cU^y : \mu(\varphi(x,b)) = 1 \}$ is uniformly type-definable over $M$ by instances of $\theta$.
 		Moreover, if $\mu$ is invariant over $M \prec \cU$, then we only need instances of $\theta$ with parameters in $M$.
 		\item Let $T$ be distal. Then for every $\varphi(x,y) \in L$ there is some $\theta(y,z) \in L$ so that: for every $r \in [0,1]$ and every measure $\mu \in \mathfrak{M}_x(\cU)$ generically stable over $M$ (hence smooth over $M$), the set  $\{b \in \cU^y : \mu(\varphi(x,b)) = r \}$ is uniformly type-definable over $M$ by instances of $\theta$.
 		\item If $T$ is NIP, $G$ is $M$-definable and $\mu \in \mathfrak{M}_{G}(\cU)$ witnesses that $G$ is fsg over $M$, then  for every $\varphi(x,y) \in L$, the set $\{b \in \cU^y : \mu(\varphi(x,b)) = 1 \}$ is externally definable.
 	\end{enumerate}
 \end{proposition}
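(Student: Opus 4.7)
The common strategy for all three parts is to isolate, from structural hypotheses on $\varphi$ or on $T$, a single formula $\theta(y,z)$ of uniformly bounded complexity such that the value $\mu(\varphi(x,b))$ is determined by which instances $\theta(y,\bar{a})$ the point $b$ satisfies; the required level set then appears as $\bigcap\{\neg\theta(y,\bar{a}) : r(\bar{a}) \neq r\}$ for an appropriate function $\bar{a}\mapsto r(\bar{a})$, which is exactly a uniform type-definition by instances of one formula.

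For (1), I would take $\theta(y,\bar{z})$ to be the canonical $\varphi$-definition schema for stable $\varphi$: a fixed Boolean combination of instances $\varphi(z_i,y)$ such that every complete $\varphi$-type $p$ over $\cU$ is $\varphi$-defined by $\theta(y,\bar{a}_p)$ for some $\bar{a}_p$. The key observation is that $\mu(\varphi(x,b))=1$ is equivalent to $\varphi(x,b)\in p$ for every $p\in S(\mu)$ (otherwise some $p\in S(\mu)$ contains $\neg\varphi(x,b)$, forcing $\mu(\neg\varphi(x,b))>0$). Hence $\{b:\mu(\varphi(x,b))=1\} = \bigcap_{p\in S(\mu)}\theta(\cU,\bar{a}_p)$. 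For the moreover clause, when $\mu$ is $M$-invariant one first shows (via the stable-formula analogue of Fact~\ref{fac: types in sup of inv meas are inv NIP}, i.e.~using stability of $\varphi$ to upgrade set-invariance of $S(\mu)$ to pointwise invariance of each $p\in S(\mu)$) that each such $p$ is $M$-invariant, and stability then gives an $M$-definable $\varphi$-definition, so the parameters $\bar{a}_p$ may be taken in $M$ after a mild adjustment of $\theta$.

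For (2), I would take $\theta(y,\bar{z})$ to be a distal strong honest definition for $\varphi$ in the sense of Chernikov--Simon: for every small $M$ and every $b\in\cU^y$ there is $\bar{a}\in M^{\bar{z}}$ with $\models\theta(b,\bar{a})$ and $\theta(y,\bar{a})\vdash\tp_\varphi(b/M)$. Since a generically stable measure in a distal theory is smooth, hence fam over $M$, the $\varepsilon$-approximations $\bar{a}^\mu_{\varphi,\varepsilon}$ lie in $M$; the quantity $\Av(\bar{a}^\mu_{\varphi,\varepsilon};\varphi(x,b))$ therefore depends only on $\tp_\varphi(b/M)$, and passing to the limit shows that $\mu(\varphi(x,b))$ itself depends only on $\tp_\varphi(b/M)$. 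Consequently for each $\bar{a}\in M^{\bar{z}}$ the value $r(\bar{a}):=\mu(\varphi(x,b))$ (for any $b$ with $\models\theta(b,\bar{a})$) is well-defined, and one obtains $\{b:\mu(\varphi(x,b))=r\}=\bigcap\{\neg\theta(y,\bar{a}):\bar{a}\in M^{\bar{z}},\,r(\bar{a})\neq r\}$ as required.

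For (3), the plan is to combine fsg with the characterization ``$\mu$-null $=$ non-generic''. By fsg and uniqueness of the invariant measure, $\mu(\varphi(x,b))=1$ iff $\neg\varphi(x,b)$ is non-generic in $G$, i.e.~it cannot be covered by finitely many $G(\cU)$-translates. By NIP together with compactness applied to the partial type ``generic but not coverable by $n$ translates'', there is a uniform bound $n_\varphi$ such that every generic instance of $\varphi(x,y)$ is coverable by at most $n_\varphi$ translates. Then ``$\neg\varphi(x,b)$ is non-generic'' becomes the single first-order statement $\forall g_1,\ldots,g_{n_\varphi}\in G\,\exists x\in G\,\bigwedge_i\varphi(g_i^{-1}x,b)$, so the set is in fact internally definable in $T$, a fortiori externally definable. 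The main obstacle is establishing the uniform cover bound rigorously; if it proves delicate, the fallback is to pass to $T'=\Th(M^{\Sh})$ via Theorem~\ref{thm: corresp for gs measures}, where $\mu$ extends uniquely to an $M$-definable measure $\mu'$ giving type-definability of the set in $T'$, and then use honest definitions together with $G(\cU')$-invariance of $\mu'$ to collapse the type-definition to a single externally definable formula.
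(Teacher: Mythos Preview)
Your treatments of (1) and (2) are essentially correct and close to the paper's. For (1) your route via $\bigcap_{p\in S(\mu)}\theta(\cU,\bar a_p)$ is in fact slightly cleaner than the paper's, which invokes Keisler's representation $\mu\restriction_\varphi=\sum r_i p_i$ to reduce to countably many types; since you only need the level set at $1$, the full support suffices and Keisler is unnecessary. For the moreover clause, ``pointwise invariance of each $p\in S(\mu)$'' is not quite what you get or need: what is required is that $p\restriction_\varphi$ does not $\varphi$-fork over $M$ (which follows from $M$-invariance of $\mu$ by the standard counting argument for dividing $\varphi$-instances), hence is $M$-definable by local stability --- exactly what the paper uses. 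For (2) one sloppiness: $r(\bar a)$ is not well-defined for \emph{every} $\bar a\in M^{\bar z}$, only for those with $\theta(y,\bar a)\vdash\tp_\varphi(b/M)$. The fix is to take $\Omega=\{\bar a:\theta(\cU,\bar a)\cap\{b':\mu(\varphi(x,b'))=r\}=\emptyset\}$; the strong honest definition then supplies, for each $b$ with $\mu(\varphi(x,b))\ne r$, a witness $\bar a\in\Omega$ with $\models\theta(b,\bar a)$. The paper's version works $\varepsilon$-by-$\varepsilon$ with distal cell decompositions over the finite approximating sets $A^\varepsilon\subseteq M$, which is more explicit but equivalent.

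Your primary approach to (3) has a genuine gap: the uniform bound $n_\varphi$ does not exist. Take $G$ the circle group $[0,1)$ in $\RCF$ and $\varphi(x;y_1,y_2):=(y_1<x<y_2)$; every interval of positive length is generic, but an interval of length $\varepsilon$ needs $\sim 1/\varepsilon$ translates to cover $G$, so no bound depending only on $\varphi$ is possible, and the set you write down is only a type, not a formula. The paper proceeds quite differently: since $S(\mu)=\overline{G\cdot p}$ for any global generic $p$, one has $\mu(\varphi(x,b))=1$ iff $\varphi(g\cdot x,b)\in p$ for all $g\in G(\cU)$. Now $p$ is a global type, so for $\psi(x;y,z):=\varphi(y\cdot x,z)$ the set $(d_p x)\psi(x;y,z)=\{(g,b):\psi(x;g,b)\in p\}$ is externally definable in $\cU$ (trace of $\psi(a;y,z)$ for any $a\models p$ in a larger model); quantifying out $y$ over $G$ stays externally definable. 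Your fallback does not obviously close this gap either: passing to $M^{\Sh}$ and using definability of $\mu'$ gives a priori only type-definability, and honest definitions approximate a single formula from inside, they do not collapse an infinite intersection to one formula.
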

 \begin{proof}
 	(1) Let $\varphi(x,y) \in L$ be a stable formula. By uniform definability of types with respect to a stable formula, there is some $\theta(y,z) \in L$ (given by a Boolean combination of instances of $\varphi^*(y,x)$) so that: for every type $p \in S_x(\cU)$ there is some $c_p \in \cU^z$ so that for every $b \in \cU^y$ we have $\varphi(x,b) \in p \iff \models \theta(b,c_p)$.
 	By Keisler's theorem, for every stable formula $\varphi(x,y) \in L$ and every Keisler measure $\mu \in \mathfrak{M}_x(\cU)$, there are some types $p_i \in S(\mu), i \in \omega$ and $r_i \in (0,1]$ with $\sum_{i \in \omega} r_i = 1$ so that for every $b \in \cU^y$ we have $\mu( \varphi(x,b)) = \left( \sum_{i \in \omega} r_i \cdot p_i \right)(\varphi(x,b))$. By the above, let $c_{p_i} \in \cU^z$ be so that  $\theta(y, c_{p_i})$ is a definition for $p_i$. Let $\Theta_{\mu}(y) := \{ \theta(y, c_{p_i}) : i \in \omega \}$. Now given $b \in \cU^y$, if $\mu(\varphi(x,b)) = 1$ then $\varphi(x,b) \in p_i$ for all $i \in \omega$ as $p_i \in S(\mu)$, so $\models \Theta(b)$. And conversely, if $\models \Theta(b)$ then $\mu(\varphi(x,b)) = \sum_{i \in \omega} r_i = 1$.
 	
 	And if $\mu$ was $M$-invariant, every $p \in S(\mu)$ is non-forking over $M$, hence $p\restriction_{\varphi} \in S_{\varphi}(M)$ is definable over $M$ by basic local stability, so $c_{p_i}$ above can be chosen in $M$. 
 	
 	(2) By uniform strong honest definitions in distal theories (\cite[Section 1.3]{chernikov2015externally}), given $\varphi(x,y)$ there is some $\theta(x_1, \ldots, x_k;y) \in L$ so that: for every finite $A \subseteq \cU^x$, $|A| \geq 2$ and $b \in \cU^y$, there are some $a_1, \ldots, a_k \in A$ so that $\models \theta(a_1, \ldots, a_k; b)$ and $\theta(a_1, \ldots, a_k; y) \vdash \tp_{\varphi}(b/A)$. In particular, we have distal cell decompositions: for every finite $A \subseteq \cU^x$ there are some $\bar{a}_1, \ldots, \bar{a}_N \in A^n$ so that $\cU^y \subseteq \bigcup_{1 \leq i \leq N} \theta(\bar{a}_i, \cU)$ (not necessarily a disjoint union) and for any $i \in [N]$ and $b,b' \models \theta(\bar{a}_i, y)$, $\tp_{\varphi}(b/A) = \tp_{\varphi}(b'/A)$.
 	
 	Now let $\mu \in \mathfrak{M}_x(\cU)$ be an arbitrary measure generically stable over $M$ and $r \in [0,1]$. Fix $\varepsilon > 0$, by generic stability there are some $n_{\varepsilon} \in \omega$ and $(a^{\varepsilon}_1, \ldots, a^{\varepsilon}_{n_{\varepsilon}}) \in (M^x)^{n_{\varepsilon}}$ so that for every $b \in \cU^y$, $\mu(\varphi(x,b)) \approx^{\varepsilon} \Av(a^{\varepsilon}_1, \ldots, a^{\varepsilon}_{n_{\varepsilon}}; \varphi(x,b))$. Let $A^{\varepsilon} := \{a^{\varepsilon}_i : i \in [n_{\varepsilon}]  \}$. Let $B^\varepsilon := \{ \bar{a}^{\varepsilon}_i \in (A^{\varepsilon})^k: i \in [N_{\varepsilon}]\}$ be as given by the distal cell decomposition. That is, $\cU^y = \bigcup_{i \in [N_{\varepsilon}]} \theta(\bar{a}^{\varepsilon}_i; \cU)$ and for any $i \in [N_{\varepsilon}]$ and $b,b' \models \theta(\bar{a}^{\varepsilon}_i, y)$, $\tp_{\varphi}(b/A^{\varepsilon}) = \tp_{\varphi}(b'/A^{\varepsilon})$; in particular, $\Av(a^{\varepsilon}_1, \ldots, a^{\varepsilon}_{n_{\varepsilon}}; \varphi(x,b)) = \Av(a^{\varepsilon}_1, \ldots, a^{\varepsilon}_{n_{\varepsilon}}; \varphi(x,b'))$. Let $C^{\varepsilon}$ be the set of $ i \in [N_{\varepsilon}]$ so that for some (equivalently, any) $b \models \theta(\bar{a}^{\varepsilon}_i, y)$, $|\Av(a^{\varepsilon}_1, \ldots, a^{\varepsilon}_{n_{\varepsilon}}; \varphi(x,b)) - r| > \varepsilon$.
Finally, consider the partial type 
$$\Theta (y) := \left\{ \neg \theta \left(\bar{a}^{1/m}_{i}, y \right) : m \in \omega, i \in C^{1/m}\right\}$$
of size $|T|$ over $M$. 	Let $b \in \cU^y$ be arbitrary.

  Assume $\mu(\varphi(x,b)) = r$. For any  $m \in \omega$ and $i \in C^{1/m}$, we cannot have $\models \theta(\bar{a}^{1/m}_{i};b)$ because by the choice of $C^{1/m}$ we would then have 
  $$|\Av(a^{\varepsilon}_1, \ldots, a^{\varepsilon}_{n_{\varepsilon}}; \varphi(x,b)) - r| > 1/m,$$
   but by the choice of $A^{\varepsilon}$ we have 
  $$|\Av(a^{\varepsilon}_1, \ldots, a^{\varepsilon}_{n_{\varepsilon}}; \varphi(x,b)) - r| = | \Av(a^{\varepsilon}_1, \ldots, a^{\varepsilon}_{n_{\varepsilon}}; \varphi(x,b)) - \mu(\varphi(x,b))| \leq 1/m,$$
  so $\models \Theta(b)$.
 	
 	Conversely, assume $\models \Theta(b)$. For each $m \in \omega$, by the choice of $B^{1/m}$ we must have $\models \theta(\bar{a}^{\varepsilon}_i; b)$ for at least one $i \in [N_{\varepsilon}]$, and necessarily $i \notin C^{\varepsilon}$ since $\models \Theta(b)$, so $\mu(\varphi(x,b)) \approx^{1/m} \Av(a^{\varepsilon}_1, \ldots, a^{\varepsilon}_{n_{\varepsilon}}; \varphi(x,b)) \approx^{1/m} r$, hence $\mu(\varphi(x,b)) \approx^{2/m} r$. As this holds for all $m \in \omega$, we must have $\mu(\varphi(x,b)) = r$.
 	
 	(3) Let $\mu \in \mathfrak{M}^L_G(\cU)$ be the unique left-$G(\cU)$-invariant measure and let $p \in S_{G}^L(\cU)$ be a generic type, then $S(\mu) = \overline{G \cdot p}$ (see the proof of Proposition \ref{prop: fsg iff invariant filter str fin sat}). Then $\mu(\varphi(x,b)) =1 \iff \varphi(x,b) \in q$ for all $q \in S(\mu)$, $\iff \varphi(x,b) \in g \cdot p$ for all $g \in G(\cU)$, $\iff \varphi(g \cdot x,b) \in p$ for all  $g \in G(\cU)$. Let $\psi(x;y,z) := \varphi(y \cdot x,z)$, then $(d_p x) \psi(x;y,z)$ is definable in $\cU^{\Sh}$, hence $\forall y \in G ((d_p x) \psi(x;y,z))$ is also definable in $\cU^{\Sh}$.
 \end{proof}
 
 \begin{question}
 	\begin{enumerate}
 		\item In Proposition \ref{prop: unif def meas}(1), can we replace $1$ by an arbitrary $r \in [0,1]$?
 		\item Does Proposition \ref{prop: unif def meas}(1) hold for arbitrary generically stable measures in NIP theories?
 	\end{enumerate}
 \end{question}
 
 \begin{remark}
	Let $M \models T$ be the theory of equality and a tuple of variables $x$ be arbitrary. Let  $A \subseteq M^x$ be an arbitrary countable set, say $A = \{a_i : i \in \omega \}$.  Choose arbitrary $r_i \in (0,1]$ so that $\sum_{i \in \omega} r_i = 1$. Let $\mu := \sum_{i \in \omega} r_i \cdot \tp(a_i/\cU)$, then $\mu$ is smooth over $M$. Then for any $b \in \cU^y$, $\mu( x \neq b)=1 \iff b \notin A$. So in general the set $\{b \in M^y: \mu(\varphi(x,b))=1\}$ is not externally definable even for smooth measures.
\end{remark}

\subsection{Externally type-definable and $\bigvee$-definable sets}\label{sec: ext type-def sets}
In this section we give an internal description of externally type-definable and $\bigvee$-definable sets in NIP theories, and a soft description of externally $\bigvee$-definable subgroups of definable groups.

\begin{definition}
	Let $M \models T$ and $\kappa$ a cardinal. A set $X \subseteq M^x$ is \emph{externally $\kappa$-type-definable} (\emph{externally $\kappa$-$\bigvee$-definable}) if there is a partial type $\Gamma(x;\bar{y})$ over $\emptyset$ of size $\leq \kappa$ (respectively, a disjunction $\Sigma(x;\bar{y})$  of $\leq \kappa$ formulas over $\emptyset$), where $\bar{y}$ possibly infinite of length at most $\kappa$, and a  tuple $\bar{b} \in \cU^{\bar{y}}$ so that $X = \Gamma(M; \bar{b})$, where $\Gamma(M; \bar{b})$ is the set of realizations of $\Gamma(x,\bar{b})$ in $M$ (respectively, $X = \Sigma(M, \bar{a})$).
\end{definition}

We consider several notions of directedness for type-definable and $\bigvee$-definable families working in a possibly non-saturated model:
\begin{definition}\label{def: unif dir}
	Let $\Gamma(x;\bar{y})$ be a conjunction (or a disjunction) of a tuple of formulas  $(\varphi_t(x, y_t) :  t < \kappa )$ where $\varphi_t(x,y_t) \in L(\emptyset)$, $y_t$ is a finite tuple of variables and $\bar{y} = (y_t : t < \kappa)$ is a tuple of variables of length at most $\kappa$,  and $\Omega \subseteq M^{\bar{y}}$ an arbitrary set.
	We say that the family of subsets $\left\{ \Gamma(M;\bar{b}) : \bar{b} \in \Omega \right\}$ of $M^x$  is
	\begin{enumerate}
	\item \emph{$\lambda$-directed}, for a cardinal $\lambda$, if for every $\Omega_0  \subseteq \Omega$ with $|\Omega_0| < \lambda$ there is some $\bar{b} \in \Omega$  so that $  \bigcup_{i \in \Omega_0} \Gamma(M, \bar{b}_{i}) \subseteq  \Gamma(M, \bar{b})$. We say \emph{directed} for $\aleph_0$-directed.
	
		\item \emph{downwards $\lambda$-directed} if for every $\Omega_0  \subseteq \Omega$ with $|\Omega_0| < \lambda$ there is some $\bar{b} \in \Omega$  so that $\Gamma(M, \bar{b}) \subseteq \bigcap_{i \in \Omega_0} \Gamma(M, \bar{b}_{i})$; 
		
		\item \emph{uniformly directed} if there is some $f: \kappa \to \kappa$ so that for every $n \in \omega$, finite $I \subseteq \kappa$ and $\bar{b}_1, \ldots, \bar{b}_n \in \Omega$ there is some $\bar{b} \in \Omega$  so that: 
		\begin{itemize}
			\item if $\Gamma(x,\bar{y})$ is a disjunction, $\bigvee_{i=1}^{n}  \varphi_t(x, \bar{b}_i) \vdash \varphi_{f(t)}(x, \bar{b})$ for all $t \in I$;
			\item if $\Gamma(x,\bar{y})$ is a conjunction, $\bigvee_{i=1}^{n}  \varphi_{f(t)}(x, \bar{b}_i) \vdash \varphi_{t}(x, \bar{b})$ for all $t \in I$.
			\end{itemize}
	\item \emph{uniformly downwards directed} if $\Gamma(x,\bar{y})$ is a conjunction, and there is $f: \kappa \to \kappa$ so that for all $n \in \omega$, $t < \kappa$ and $\bar{b}_1, \ldots, \bar{b}_n \in \Omega$ there is $\bar{b} \in \Omega$  so that $ \varphi_{f(t)}(x, \bar{b}) \vdash \bigwedge_{i=1}^{n}  \varphi_{t}(x, \bar{b}_i)$ for all $t \in I$.
	\end{enumerate}
	(If $\left\{ \Gamma(M;\bar{b}) : \bar{b} \in \Omega \right\}$ is uniformly directed, then it is also directed; and if $\Gamma(x,\bar{y})$ consists of a single formula, the two notions are equivalent)
\end{definition}

We recall from \cite{hrushovski2016non}, here $M$ is not necessarily saturated:
\begin{definition}
	A set of tuples $\Omega \subseteq M^{\bar{y}}$, with $\bar{y} = (y_t: t<\kappa)$ and each $y_t$ a finite tuple of variables, is \emph{strict pro-definable} if $\Omega = \Psi(M)$ for a partial type $\Psi(\bar{y})$ over $M$ and,  letting $\pi_{I}: M^{\bar{y}} \to \prod_{t \in I}  M^{y_{t}}$ be the projection map, $\pi_I(\Omega) = \psi_I(M)$ for some $\psi_{I}((y_t)_{t \in I}) \in L(M)$ (hence without loss of generality in $\psi_I \in \Psi$).
\end{definition}
\begin{remark}\label{rem: strict pro lifts}
If $N \succ M$ is $(|M|+\kappa)^{+}$-saturated, then $\Psi(N)$ is still strict pro-definable. Indeed, for every finite $I, J \subseteq \kappa$	 and $\Psi_0 ((y_t)_{t \in I \cup J}) \subseteq \Psi$, $M \models \forall (y_t)_{t \in I} ( \psi_I((y_t)_{t \in I}) \rightarrow \exists (y_t)_{t \in J} \Psi_0 ((y_t)_{t \in I \cup J}))$. Hence the same holds in $N$ by elementarity, so by saturation  $\pi_{I}(\Psi(N)) = \psi_{I}(N)$.
\end{remark}

\begin{remark}\label{rem: unif direct and strict pro-def}

(1) Directed does not imply uniformly directed, even if $\Omega$ is type-definable in a saturated $M$ (consider $\Sigma(x,\bar{y}) = \bigvee_{t \in \aleph_0}(x = y_t)$ and $\Omega = M^{\aleph_0}$ for $M \models T$ theory of an infinite set; then $\left\{\Sigma(M,\bar{b})\right\}_{\bar{b} \in \Omega}$ is $\aleph_1$-directed, but not uniformly directed).

(2) If $\left\{\Gamma(M,\bar{b})\right\}_{\bar{b} \in \Omega}$ is uniformly directed, $\Omega \subseteq M^{\bar{y}}$ is type-definable and $M$ is saturated, then $\left\{\Gamma(M,\bar{b})\right\}_{\bar{b} \in \Omega}$ is $\lambda$-directed for every small $\lambda$. Indeed, say $ \Omega = \pi(M)$ for a small partial type $\pi(\bar{y})$, and let $\bar{b}_i \in \pi(M)$ for $i < \lambda$ be given. Let $\Theta(\bar{y}) := \pi(\bar{y}) \cup \bigcup_{t < \kappa}\{\forall x (\varphi_{f(t)}(x, \bar{y}) \rightarrow \varphi_t(x, \bar{b}_{i})) : i < \lambda \}$. Every finite subset of $\Theta(\bar{y})$ is consistent by uniform directedness, hence there is $\bar{b} \in \Theta(M)$ by saturation, and it satisfies the requirement. Similarly for $\Sigma$.

(3) Assume that $M \prec N$ and $N$ is saturated, $\Omega \subseteq M^{\bar{y}}$ is strict pro-definable in $M$ via $\Psi(\bar{y})$ and $\{\Gamma(M,\bar{b}) : \bar{b} \in \Omega\}$ (respectively, $\{\Sigma(M,\bar{b}) : \bar{b} \in \Omega\}$) is uniformly directed. Then $\{\Gamma(N,\bar{b}) : \bar{b} \in \Psi(N) \}$ (respectively, $\{\Sigma(N,\bar{b}) : \bar{b} \in \Psi(N) \}$) is also uniformly directed. 

Indeed, by uniform directedness in $M$ and strict pro-definability, given $t < \kappa$, for any fixed $n \in \omega$ and $b_{1,t}, \ldots, b_{n,t} \in \psi_{\{t\}}(M^{y_t})$ there is some $b_{f(t)} \in \psi_{\{f(t)\}}(M^{y_{f(t)}})$ so that 
$M \models \forall x \left( \varphi_{\{f(t)\}}(x, b_{f(t)}) \rightarrow \bigwedge_{i \in [n]} \varphi_{\{t\}}(x, b_{i,t}) \right)$. 
By elementarity the same holds in $N$, so we conclude by Remark \ref{rem: strict pro lifts}. Similarly for $\Sigma$.
	
\end{remark}

\begin{proposition}\label{prop: unif directed implies ext type def}
	Let $M$ be a small model, $\Omega \subseteq M^{\bar{y}}$ and $\left\{ \Gamma(x;\bar{b}) : \bar{b} \in \Omega \right\}$ (respectively, $\left\{ \Sigma(x;\bar{b}) : \bar{b} \in \Omega \right\}$)  a uniformly directed family. Then there is $\bar{b}^* \in \cU^{\bar{y}}$ with
	$$\bigcap_{\bar{b} \in \Omega}  \Gamma(M, \bar{b}) = \Gamma(M, \bar{b}^*) \textrm{ (respectively, } \bigcup_{\bar{b} \in \Omega}  \Sigma(M, \bar{b}) = \Sigma(M, \bar{b}^*)\textrm{),}$$
	in particular this intersection (respectively, union) is externally $|\Gamma|$-type-definable (respectively, $|\Gamma|$-$\bigvee$-definable).
\end{proposition}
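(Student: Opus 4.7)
My plan is to handle both cases by a direct compactness argument in the monster model $\cU \succ M$, constructing an appropriate partial $L(M)$-type $p(\bar{y})$ in the variable tuple $\bar{y} = (y_t : t < \kappa)$ whose realizations $\bar{b}^*$ are precisely the tuples witnessing the claim. I would write out the $\Sigma$ (disjunction) case in detail and note that the $\Gamma$ (conjunction) case is dual. Let $U := \bigcup_{\bar{b} \in \Omega} \Sigma(M, \bar{b})$. Then $p(\bar{y})$ should consist of (i) the universally quantified formulas $\forall x \left( \varphi_t(x, \bar{b}) \rightarrow \varphi_{f(t)}(x, y_{f(t)}) \right)$ for all $\bar{b} \in \Omega$ and $t < \kappa$, which guarantee $\Sigma(M, \bar{b}) \subseteq \Sigma(M, \bar{b}^*)$ for each $\bar{b} \in \Omega$ and hence $U \subseteq \Sigma(M, \bar{b}^*) \cap M$, together with (ii) the negations $\neg \varphi_t(a, y_t)$ for every $t < \kappa$ and every $a \in M \setminus U$, which force $\Sigma(M, \bar{b}^*) \cap M \subseteq U$.

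To verify consistency of a finite subset of $p$, I would isolate the finitely many $\bar{b}_1, \ldots, \bar{b}_n \in \Omega$ and $t_1, \ldots, t_m < \kappa$ appearing in (i), and apply uniform directedness to produce a single $\bar{b} \in \Omega$ satisfying $\bigvee_{i \in [n]} \varphi_{t_j}(x, \bar{b}_i) \vdash \varphi_{f(t_j)}(x, \bar{b})$ for every $j \in [m]$; this is an $L(M)$-statement that holds in $M$ and hence in $\cU$ by elementarity. Setting $y_{f(t_j)}$ equal to the $f(t_j)$-component of $\bar{b}$ validates all implications from (i). For the finitely many negations $\neg \varphi_{s_k}(a_k, y_{s_k})$ from (ii), I would set $y_{s_k}$ equal to the $s_k$-component of $\bar{b}$ itself when $s_k \in \{f(t_1), \dots, f(t_m)\}$, and to the $s_k$-component of an arbitrary $\bar{b}' \in \Omega$ otherwise; since $a_k \notin U$ we automatically get $\neg \varphi_{s_k}(a_k, \bar{b}_{s_k})$ (respectively $\neg \varphi_{s_k}(a_k, \bar{b}'_{s_k})$) because $a_k \notin \Sigma(M, \bar{b})$. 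By saturation of $\cU$, any $\bar{b}^* \in \cU^{\bar{y}}$ realizing $p$ then satisfies $\Sigma(M, \bar{b}^*) = U$ as required.

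The $\Gamma$ case is handled by the dual partial type, replacing (i) with $\forall x \left( \varphi_{f(t)}(x, y_{f(t)}) \rightarrow \varphi_t(x, \bar{b}) \right)$ (which forces $\Gamma(M, \bar{b}^*) \subseteq \Gamma(M, \bar{b})$ for every $\bar{b} \in \Omega$, hence $\Gamma(M, \bar{b}^*) \cap M \subseteq \bigcap_{\bar{b}} \Gamma(M, \bar{b})$) and (ii) with the positive atoms $\varphi_t(a, y_t)$ for every $t < \kappa$ and $a \in \bigcap_{\bar{b} \in \Omega} \Gamma(M, \bar{b})$; consistency is checked by the corresponding directedness property together with the observation that each $a$ in the intersection satisfies $\varphi_s(a, \bar{b}_s)$ for every $\bar{b} \in \Omega$ and every $s$. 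The main technical point to get right is the coordinate-collision issue in the finite-consistency check --- where the same index $t$ appears both among the $f(t_j)$'s in (i) and among the $s_k$'s in (ii) --- and it is precisely the uniform directedness that lets one find a single $\bar{b} \in \Omega$ validating both constraints simultaneously, since $a_k \notin U$ excludes $a_k$ from every $\Sigma(M, \bar{b})$ with $\bar{b} \in \Omega$ and in particular from the one produced by directedness.
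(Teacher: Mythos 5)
Your proof is correct and is essentially the paper's argument: you build exactly the same partial type over $M$ (the implications $\forall x(\varphi_t(x,\bar{b})\rightarrow\varphi_{f(t)}(x,\bar{y}))$, resp.\ $\forall x(\varphi_{f(t)}(x,\bar{y})\rightarrow\varphi_t(x,\bar{b}))$, together with the negative, resp.\ positive, conditions at the points of $M$ outside the union, resp.\ inside the intersection) and verify finite satisfiability via uniform directedness before realizing it in $\cU$. The only difference is cosmetic: the coordinate-collision bookkeeping in your finite-consistency check is unnecessary, since the single tuple $\bar{b}\in\Omega$ produced by uniform directedness already witnesses all the conditions at once (being in $\Omega$, it automatically satisfies the clauses coming from points outside $\bigcup_{\bar{b}\in\Omega}\Sigma(M,\bar{b})$, resp.\ inside $\bigcap_{\bar{b}\in\Omega}\Gamma(M,\bar{b})$), which is how the paper concludes.
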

\begin{proof}
	Let $\Gamma(x;\bar{y}) = \{\varphi_t(x, \bar{y}) : t < \kappa \}$ and $f: \kappa \to \kappa$ be given by uniform directedness.
	Let $\pi(\bar{y})$ be the following set of formulas over $M$:
	\begin{gather*}
	 \left\{ \forall x \left( \varphi_{f(t)}(x,\bar{y}) \rightarrow \varphi_t(x, \bar{b}) \right)  : t < \kappa, \bar{b} \in \Omega \right\} \cup \bigcup \left\{ \Gamma(a, \bar{y}) : a \in \bigcap_{\bar{b} \in \Omega} \Gamma(M,\bar{b})  \right\}.
	\end{gather*}
	
	Then $\pi(\bar{y})$ is consistent. Indeed, given any finitely many $\bar{b}_1, \ldots, \bar{b}_n \in \Omega$, by uniform directedness there is some $\bar{b}' \in \Omega$ so that $\varphi_{f(t)}(x, \bar{b}') \vdash \bigwedge_{i=1}^{n}  \varphi_t(x, \bar{b}_i)$ for all $t < \kappa$. And we still have $a \in \Gamma(M,\bar{b}')$ for every $a$ appearing in the second set of conditions.
	This shows that every finite subtype of $\pi$ is consistent, hence $\pi(\bar{y})$ is consistent. Let $\bar{b}^{\ast} \models \pi(\bar{y})$ in $\cU$, we claim that it satisfies the requirement. Indeed, from the first set of conditions, $\Gamma(x,\bar{b}^{\ast})  \vdash \Gamma(x,\bar{b})$ for every $\bar{b} \in \Omega$, so $\Gamma(M,\bar{b}^{\ast}) \subseteq \bigcap_{\bar{b} \in \Omega}  \Gamma(M, \bar{b})$. Conversely, if $a \in \bigcap_{\bar{b} \in \Omega}  \Gamma(M, \bar{b})$, then also $a \in \Gamma(M,\bar{b}^{\ast})$ by the second set of conditions.
	 Similarly for $\Sigma$, letting $\bar{b}^{\ast} \in \cU^y$ realize the following partial type over $M$:
\begin{gather*}
	\left\{ \forall x \left(  \varphi_t(x, \bar{b})  \rightarrow \varphi_{f(t)}(x,\bar{y})\right)  : t < \kappa, \bar{b} \in \Omega \right\} \cup \\ \bigcup \left\{ \neg \Sigma(a, \bar{y}) : a \in M^x \setminus \bigcup_{\bar{b} \in \Omega} \Sigma(M,\bar{b})  \right\}. \qedhere
\end{gather*}

%
%
\end{proof}

We will need a slightly stronger version of honest definitions that work against an additional uniformly definable family $\zeta(x,u)$ of sets from \cite[Proposition 1.11]{chernikov2013externally} (generalizing the usual case $\zeta(x,u) = (x=u)$), which we state here with additional uniformity of the resulting honest definition:
\begin{fact}\label{fac: better uniform honest defs}
	Let $T$ be NIP, $M \models T$. Let $X \subseteq M^{x}$ be externally definable, say $X = \varphi(M,b)$ for some $\varphi(x,y) \in L$ and $b \in \cU^y$. Let $\zeta(x,u) \in L$. Let $\Omega := \{e \in M^u  : \zeta(M,e) \subseteq X\}$, and assume that $X = \bigcup_{e \in \Omega} \zeta(M,e)$. Then there is a formula $\theta(x,z) \in L$ depending only on $\varphi(x,y)$ and $\zeta(x,u)$, and there is $c \in \cU^z$ such that:
	\begin{enumerate}
		\item $\theta(x,c)$ is an honest definition of $X$,
		\item $\cU \models \zeta(x,e) \rightarrow \theta(x,c)$ for every $e \in \Omega$,
		\item for any $n \in \omega$ and $e_1, \ldots, e_n \in \Omega$, there is $c' \in M^z$ so that $\theta(M, c') \subseteq X$ and $\cU \models \zeta(x,e_i) \rightarrow \theta(x,c')$ for all $i = 1, \ldots, n$.
	\end{enumerate}
\end{fact}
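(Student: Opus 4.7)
The statement strengthens the uniform honest-definition theorem of \cite[Proposition~1.11]{chernikov2013externally} by adding the finite-approximation clause~(3). Following the approach of that proposition, my plan is to (i) apply Fact~\ref{fac: honest defs} to a compound $L$-formula built from $\varphi$ and $\zeta$ to extract the universal template $\theta(x,z)$; (ii) verify (1) and (2) via a compactness argument in a sufficiently saturated extension of the pair $(N',M')$; and (iii) obtain~(3) by descending the compactness witness from $(M')^z$ to $M^z$ using the ``moreover'' clause of Fact~\ref{fac: honest defs}.

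Working in the pair $(N',M')\succ(N,M)$ from Definition~\ref{def: Shelah exp context} with $b\in N^y$ satisfying $X=\varphi(M,b)$, I would first apply Fact~\ref{fac: honest defs} to the NIP $L$-formula $\Phi(x;y,u):=\varphi(x,y)\vee\zeta(x,u)$, obtaining an $L$-formula $\theta(x,z)$ whose template depends only on $\Phi$, hence only on $\varphi$ and $\zeta$. For any $e_0\in\Omega$ one has $\Phi(M;b,e_0)=\varphi(M,b)\cup\zeta(M,e_0)=X$, so the standard honest definition supplies a base witness $d_{e_0}\in(M')^z$ with $\theta(M,d_{e_0})=X$ and $\theta(x,d_{e_0})\wedge P(x)\vdash\varphi(x,b)$ in the pair (using $\zeta(M,e_0)\subseteq X$ and elementarity). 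To upgrade to a single $c$ absorbing all of $\Omega$ as in~(2), I would consider the partial $L_P$-type $\Lambda(z)$ over $M\cup\{b\}$ consisting of the two honest-definition conditions $\forall x(P(x)\to(\theta(x,z)\leftrightarrow\varphi(x,b)))$ and $\forall x(\theta(x,z)\to\varphi(x,b))$, together with the absorption schema $\forall x(\zeta(x,e)\to\theta(x,z))$ for every $e\in\Omega$; a realization $c\in\cU^z$ of $\Lambda$ in a saturated extension of $(N',M')$ then delivers (1) and (2).

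Finite consistency of $\Lambda$ is the substantive point. For fixed $e_1,\dots,e_n\in\Omega$, I would apply Fact~\ref{fac: honest defs} to the NIP $L$-formula $\Phi_n(x;y,u_1,\dots,u_n):=\varphi(x,y)\vee\bigvee_{i\le n}\zeta(x,u_i)$; its trace on $M$ at parameters $(b,e_1,\dots,e_n)$ still equals $X$ since each $\zeta(M,e_i)\subseteq X$, and the resulting honest-definition instance covers all $\zeta(M,e_i)$ on $M$ while remaining inside $\varphi(M',b)$ externally. One then re-encodes this $n$-dependent honest definition as an instance of the fixed-arity $\theta$ using the uniform VC-type bound implicit in Fact~\ref{fac: honest defs}. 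The same finite construction also produces $c'\in(M')^z$ witnessing the finite version of~(3), and descent of $c'$ to $M^z$ follows from the ``moreover'' clause of Fact~\ref{fac: honest defs}: the obstruction slice $P(x)\wedge\varphi(x,b)\wedge\neg\theta(x,c')$ is inconsistent with every $L$-type over $M'$ invariant over $M$, so by elementarity of $(N,M)\preceq(N',M')$ a realisation of the finite conditions can be found already in $M^z$.

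The main obstacle is the fixed-arity requirement in Step~2: the formula $\theta$ must have arity in $z$ depending only on $\varphi$ and $\zeta$, not on the size $n$ of the finite subfamily $\{e_1,\dots,e_n\}$ being absorbed. This independence is the key content of \cite[Proposition~1.11]{chernikov2013externally} and relies on NIP via the uniform VC-dimension bound on the definable families generated by $\varphi$ and $\zeta$, which controls the complexity of the absorbing honest-definition instance uniformly in $n$.
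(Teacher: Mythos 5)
Your route of applying honest definitions in the object variable $x$ --- first to $\Phi(x;y,u)=\varphi(x,y)\vee\zeta(x,u)$ and then to $\Phi_n(x;y,u_1,\dots,u_n)$ --- does not deliver the absorption clauses (2) and (3), and this is where the argument breaks. An honest definition of $X=\Phi_n(M;b,\bar e)$ controls only the trace on $M$ (so at best $\zeta(M,e_i)\subseteq\theta_n(M,d_n)=X$) together with the one-sided implication $\theta_n(x,d_n)\wedge P(x)\vdash\Phi_n(x;b,\bar e)$; it gives no reason why the \emph{global} implication $\cU\models\zeta(x,e_i)\rightarrow\theta_n(x,d_n)$ should hold, since $\zeta(\cU,e_i)$ has points outside $M$ that the honest definition does not see (note also that $\zeta(M',e_i)\subseteq\varphi(M',b)$ may fail even though $\zeta(M,e_i)\subseteq\varphi(M,b)$). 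Hence the finite consistency of your type $\Lambda(z)$ is not established. In addition, your step of ``re-encoding the $n$-dependent honest definition as an instance of the fixed-arity $\theta$'' has no support in Fact \ref{fac: honest defs}: the template of an honest definition depends on the formula it is applied to, and $\Phi_n$ changes with $n$; no uniform VC-type bound in that fact performs such a compression.

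The paper's proof avoids both problems by applying honest definitions in the \emph{parameter} variable $u$ rather than in $x$. The set $\Omega=\{e\in M^u:\zeta(M,e)\subseteq X\}$ is itself externally definable, uniformly in $\varphi,\zeta$ (by quantifier elimination in $M^{\Sh}$ and uniform honest definitions, \cite[Theorem 11]{chernikov2015externally}), say $\Omega=\psi(M,d)$ with $\psi$ depending only on $\varphi,\zeta$; one then takes an honest definition $\alpha(u,f)$ of $\Omega$ and sets $\theta(x,z):=\exists u(\alpha(u,z)\wedge\zeta(x,u))$, $c:=f$. Clause (2) is now immediate because every $e\in\Omega=\alpha(M,c)$ satisfies $\alpha(e,c)$; clause (1) is as in \cite[Proposition 1.11]{chernikov2013externally}; and clause (3) follows by transferring through $(N,M)\preceq^{L_P}(N',M')$ the single first-order statement, with parameters $e_1,\dots,e_n\in M$ and $d\in N$, that some $z$ in $P$ satisfies $\bigwedge_{i}\alpha(e_i,z)\wedge\forall u\,(P(u)\wedge\alpha(u,z)\rightarrow\psi(u,d))$; this yields $c'\in M^z$ with $\alpha(M,c')\subseteq\Omega$, hence $\theta(M,c')\subseteq X$ and $\zeta(x,e_i)\rightarrow\theta(x,c')$. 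The missing idea in your proposal is precisely this passage to the parameter space, which makes the absorption global and keeps the arity of $\theta$ independent of $n$.
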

\begin{proof}
If $X = \varphi(M,b)$, then by quantifier elimination in Shelah's expansion the set 
$$U := \{ e \in M^u: \forall a \in M^{x}, \zeta(a,e) \rightarrow  \varphi(a,b) \}$$
is externally definable. Moreover, by the existence of uniform honest definitions (\cite[Theorem 11]{chernikov2015externally}), there is a formula $\psi(u, v) \in L$ depending only on $\varphi(x,y), \zeta(x,u)$ so that $U = \psi(M, d)$ for some $d \in \cU^v$ (see the proof of \cite[Proposition 1.9.]{chernikov2013externally}). Let $\alpha(u, z)$ depending only on $\psi(u,v)$ be so that $\alpha(M,f)$, $f \in \cU^z$, is an honest definition for  $\psi(M, d) $.
The proof of \cite[Proposition 1.11]{chernikov2013externally} shows that we can take $\theta(x,z) := \exists u (\alpha(u,z) \land \zeta(x,u))$ (which depends only on $\varphi(x,y)$ and $\zeta(x,u)$).
\end{proof}

\begin{theorem}\label{thm: ext type def presentation}
	Let $M \models T$ be NIP and $\kappa \geq \aleph_0$ a cardinal. A set $X \subseteq M^x$ is externally $\kappa$-type-definable (respectively, externally $\kappa$-$\bigvee$-definable) if and only if  $X = \bigcap_{\bar{b} \in \Omega} \Theta(M;\bar{b})$ (respectively, $X = \bigcup_{\bar{b} \in \Omega} \Theta(M;\bar{b})$) for $\Theta(x;\bar{z})$ a partial type (respectively, a disjunction of formulas) over $\emptyset$ of size $\leq \kappa$, $\Omega \subseteq M^{\bar{z}}$ and $\{\Theta(x;\bar{c}) : \bar{c} \in \Omega \}$ a uniformly directed family. Moreover, if $X = \Gamma(M, \bar{b})$ for some $\bar{b} \in \cU^{\bar{y}}$, then $\Theta(x;\bar{z})$ can be chosen depending only on $\Gamma(x, \bar{y})$; and $\Omega$ can be chosen to be strict pro-definable in $M^{\Sh}$.
\end{theorem}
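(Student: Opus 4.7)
The easier direction is immediate from Proposition \ref{prop: unif directed implies ext type def}: a uniformly directed family of the stated shape gives $X = \Theta(M, \bar{b}^*)$ for some external $\bar{b}^*$, exhibiting $X$ as externally $|\Theta|$-type-definable (resp.\ $|\Theta|$-$\bigvee$-definable) with $|\Theta| \leq \kappa$. For the non-trivial direction I focus on the $\kappa$-$\bigvee$-definable case; the $\kappa$-type-definable case runs in parallel, applying the same construction to the externally $\bigvee$-definable complement $\neg X = \bigvee_t \neg \varphi_t(M,b_t)$ arising from the defining partial type $\Gamma = \bigwedge_t \varphi_t$, and then negating. The engine of the proof is the strengthened Fact \ref{fac: better uniform honest defs}(3): given $\zeta \in L$, one obtains $\theta \in L$ depending only on the input formulas, whose internally $\theta$-definable family uniformly absorbs arbitrary finite unions of $\zeta$-instances lying inside a given externally definable target.

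Write $X = \Sigma(M,\bar b)$ with $\Sigma(x;\bar y) = \bigvee_{t<\kappa}\varphi_t(x,y_t)$ and $X_t := \varphi_t(M,b_t)$. For each $t$ I build an $\omega$-chain of formulas $\theta_t^{(k)}(x;z_t^{(k)}) \in L(\emptyset)$, $k < \omega$, by induction: take $\theta_t^{(0)}$ to be a uniform honest definition of $\varphi_t$ (Fact~\ref{fac: honest defs}), and given $\theta_t^{(k)}$ let $\theta_t^{(k+1)}$ be produced by Fact~\ref{fac: better uniform honest defs} applied with $\varphi := \varphi_t$ and $\zeta := \theta_t^{(k)}$. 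Each $\theta_t^{(k+1)}$ depends only on $\varphi_t$ and $k$. The hypothesis of Fact~\ref{fac: better uniform honest defs} at step $k$, namely
\begin{gather*}
X_t \;=\; \bigcup\bigl\{\theta_t^{(k)}(M,c) : c \in \Omega_t^{(k)}\bigr\}, \qquad \Omega_t^{(k)} := \{c : \theta_t^{(k)}(M,c) \subseteq X_t\},
\end{gather*}
is verified inductively via the elementary extension $M^{\Sh} \prec \widetilde{M}'$: for any $a \in X_t$, the $L^{\Sh}$-sentence
\begin{gather*}
\exists z\,\bigl(\theta_t^{(k)}(a,z) \land \forall x\,(\theta_t^{(k)}(x,z) \to R_{\varphi_t(x,b_t)}(x))\bigr)
\end{gather*}
holds in $\widetilde{M}'$, witnessed by the global honest-definition parameter at level $k$ (supplied by property (1)--(2) of the fact at that step), hence descends to $M^{\Sh}$ by elementarity and yields an internal $c \in \Omega_t^{(k)}$ with $a \in \theta_t^{(k)}(M,c)$.

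Set $\Theta(x;\bar z) := \bigvee_{(t,k) \in \kappa \times \omega}\theta_t^{(k)}(x;z_t^{(k)})$ (a disjunction of $\leq \kappa$ formulas) and $\Omega := \{\bar c \in M^{\bar z} : \forall (t,k),\,\theta_t^{(k)}(M,c_t^{(k)}) \subseteq X_t\}$. Then $X = \bigcup_{\bar c \in \Omega}\Theta(M,\bar c)$ by the previous step, and $\Omega$ is strict $L'(\emptyset)$-pro-definable in $M^{\Sh}$: each finite projection is cut out by finitely many $L^{\Sh}$-formulas of the form $\forall x(\theta_t^{(k)}(x,z) \to R_{\varphi_t(x,b_t)}(x))$. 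Consistency across all coordinates is guaranteed by a harmless padding device that forces each $\Omega_t^{(k)}$ to contain an ``empty-realization'' tuple (replace $\theta_t^{(k)}(x;z)$ by $\theta_t^{(k)}(x;z) \land u = v$ with two fresh variables; using $|M| \geq 2$, assigning $u \neq v$ yields an instance with empty realization set, which is trivially $\subseteq X_t$). Uniform directedness holds with $f(t,k) := (t,k+1)$: given $\bar c_1,\dots,\bar c_n \in \Omega$ and finite $I \subseteq \kappa \times \omega$, Fact~\ref{fac: better uniform honest defs}(3) applied at the step $k \to k+1$ supplies, for each $(t,k) \in I$, a single tuple $c^* \in M^{z_t^{(k+1)}}$ with $\theta_t^{(k+1)}(M,c^*) \subseteq X_t$ and $\bigvee_{i=1}^n \theta_t^{(k)}(x,(c_i)_t^{(k)}) \vdash \theta_t^{(k+1)}(x,c^*)$; these $c^*$'s patch together (using strict pro-definability of $\Omega$ to extend freely at coordinates outside $I$) into a single $\bar c \in \Omega$ with the required property.

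The main obstacle is precisely the production of a \emph{fixed} index map $f$ witnessing absorption of \emph{arbitrary-size} finite unions: a one-shot application of honest definitions only delivers pairwise directedness, not uniform directedness in the sense of Definition \ref{def: unif dir}(3). The iterated construction resolves this by engineering level $k+1$ to absorb any finite union of level-$k$ instances via a single new formula depending only on $\varphi_t$ and $k$; the fixed increment $(t,k) \mapsto (t,k+1)$ then works for all finite tuples simultaneously. This uniform dependence of $\theta_t^{(k)}$ only on $\varphi_t$ and $k$ is also what keeps $\Theta$ of size $\leq \kappa$, and preserves strict pro-definability of $\Omega$ in $M^{\Sh}$, completing the reduction of the externally $\kappa$-$\bigvee$-definable set $X$ to an internally controlled directed presentation. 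For the $\kappa$-type-definable case, one runs the same machinery on each externally definable complement $\neg \varphi_t(M,b_t)$ to obtain iterated honest definitions $\tilde\theta_t^{(k)}$, and sets $\Theta := \bigwedge_{(t,k)}\neg\tilde\theta_t^{(k)}$ (a partial type); the absorption property of the $\tilde\theta_t^{(k)}$'s for unions of subsets of $\neg X$ translates, upon negation, into the conjunction-version of uniform directedness for $\Theta$.
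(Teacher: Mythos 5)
Your proposal is correct and follows essentially the same route as the paper: iterate the strengthened uniform honest definitions (Fact \ref{fac: better uniform honest defs}) along $\omega$ for each $\varphi_t$, take $\Omega$ to be the (coordinatewise, hence strict pro-definable in $M^{\Sh}$) set of internal parameters whose instances sit inside (resp.\ contain) the corresponding $\varphi_t(M,b_t)$, and obtain uniform directedness from clause (3) of that fact via the index shift $(t,k)\mapsto(t,k+1)$. The differences — treating the $\bigvee$-case as primary and dualizing for the type-definable case (the paper does the reverse, applying the fact to $\neg\varphi_t$, $\neg\theta_{t,j}$), using an honest definition rather than (the dual of) singletons at the base, verifying the union representation at each level by descending the external honest-definition parameter through $M^{\Sh}\prec\widetilde{M}'$ instead of the paper's internal derivation, and the explicit padding device — are cosmetic.
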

\begin{proof}
	The right to left implication holds in any theory by Proposition \ref{prop: unif directed implies ext type def}.
	
	We prove the converse in the type-definable case ($\bigvee$-definable case is similar, applying honest definitions directly to the formula defining $X$ rather than its complement). Assume $X = \Gamma(M, \bar{b})$ with $\Gamma(x,\bar{y}) = \{ \varphi_t(x, y_t) : t < \kappa \}$, $\bar{y}_t$ a finite subtuple of $\bar{y}$, and $b_t$ a subtuple of $\bar{b}$ corresponding to it.
	
	Fix $t < \kappa$. By induction on $j < \omega$, we choose formulas $\theta_{t,j}(x,z_{t,j})$ in $L$ so that the following hold for every $j < \omega$:
	\begin{enumerate}
	
		\item  $\varphi_t(M, b_t) = \bigcap \left\{ \theta_{t,j}(M,c) : c \in M^{z_{t,j}}, \varphi_t(M, b_t) \subseteq \theta_{t,j}(M,c)   \right\}$;
		\item for every $n \in \omega$ and $c_1, \ldots, c_n \in M^{z_{t,j}}$ with $ \varphi_t(M, b_t)  \subseteq \theta_{t,j}(M,c)$ there is some $c \in M^{z_{t,j+1}}$ so that $\varphi_t(M, b_t)  \subseteq \theta_{t,j+1}(M,c)$ and
 $\models \theta_{t,j+1}(x,c) \rightarrow \bigwedge_{i=1}^{n} \theta_{t,j}(x,c_i)$.
	\end{enumerate}

	Let $z_{t,0} := x$ and $\theta_{t,0}(x, z_{t,0}) := (\neg x = z_{t,0})$, clearly (1) holds.
	
	Assume we are given $\theta_{t,j}(x,z_{t,j})$ satisfying (1). Let $y := y_t$ and $\varphi(x,y) := \neg \varphi_t(x,y)$, $u := z_{t,j}$ and $\zeta(x, u) := \neg \theta_{t,j}(x,u)$. As $\theta_{t,j}$ satisfies (1), we have $\varphi(M,b_t) = \bigcup \left\{ \zeta(M, e) : e \in M^{u}, \zeta(M, e) \subseteq \varphi(M,b_t) \right\}$. Let $\theta(x, z)$ be as given for $\varphi(x,y), \zeta(x, u)$ by Fact \ref{fac: better uniform honest defs}. Let $z_{t,j} := z$ and $\theta_{t,j+1}(x,z_{t,j}) := \neg \theta(x, z_{t,j})$. Then $\theta_{t,j+1}(x,z_{t,j})$ satisfies (2), as $\theta(x, z)$ satisfies Fact \ref{fac: better uniform honest defs}(3). Which, combined with $\theta_{t,j}(x,z_{t,j})$ satisfying (1), implies that $\theta_{t,j+1}(x,z_{t,j})$ also satisfies (1).
	
	Let $\bar{z}$ be the concatenation of $(z_{t,j} : t < \kappa, j < \omega)$ and $\Theta(x,\bar{z}) := \left\{\theta_{t,j}(x,z_{t,j}) : t < \kappa, j < \omega \right\}$, a set of formulas of size $\leq \kappa$. Let 
	$$\Omega := \left\{ \bar{c} = (c_{t,j})_{t < \kappa, j < \omega} \in M^{\bar{z}} :  \bigwedge_{t<\kappa, j < \omega} \ \left( \varphi_t(M,b_t) \subseteq \theta_{t,j}(M,c_{t,j}) \right) \right\},$$
	it is clearly strict pro-definable in $M^{\Sh}$. We then have $\Gamma(M, \bar{b}) = X = \bigcap_{\bar{c} \in \Omega} \Theta(M, \bar{c})$, by definition of $\Omega$ and $\Theta$, using that $\theta_{t,j}$ satisfy (1) for all $t,j$.

	And the family $\{\Theta(M,\bar{c}) : \bar{c} \in \Omega\}$ is uniformly directed. Indeed, let $\bar{c}^{s} = (c^s_{t,j} : t< \kappa, j < \omega) \in \Omega$ for $1 \leq s \leq n $ be given. For any $t < \kappa, j < \omega$, by definition of $\Omega$ we have  $ \varphi_t(M, b_t)  \subseteq \theta_{t,j}(M,c^s_{t,j})$ for every $1 \leq s \leq n $; hence, by (2), there is some $c_{t,j}$ so that $\varphi_t(M, b_t)  \subseteq \theta_{t,j+1}(M,c_{t,j})$ and
 $\models \theta_{t,j+1}(x,c_{t,j}) \rightarrow \bigwedge_{s=1}^{n} \theta_{t,j}(x,c^s_{t,j})$. Let $\bar{c} := (c_{t,j})_{t < \kappa, j < \omega}$, then $\bar{c} \in \Omega$. Hence we can take the function $f:  (t,j) \in \kappa \times \omega \mapsto (t,j+1) \in \kappa \times \omega$ to witness uniform directedness of the family.
\end{proof}

\begin{remark}\label{rem: directed approximation in the monster}
	It follows  that, for $\widetilde{M}' \succ^{L'} M^{\Sh}$ a saturated extension, $X(\widetilde{M}') = \bigcap_{\bar{b} \in \Omega(\widetilde{M}')} \Theta(\widetilde{M}';\bar{b})$ (by (1), definition of $\Theta$ and $\Omega$ in the Proof of Theorem \ref{thm: ext type def presentation} and elementarity), and the pro-definable family of type-definable sets $\left\{\Theta(\widetilde{M}';\bar{b}) : \bar{b} \in \Omega(\widetilde{M}')\right\}$ is $\lambda$-directed for any small $\lambda$ (by Remark \ref{rem: unif direct and strict pro-def} ((3) and (1)). And similarly in the $\bigvee$-definable case.
\end{remark}

\begin{question}
	If $M$ is saturated, can we also assume that $\{\Theta(M,\bar{b}) : \bar{b} \in \Omega\}$ is $\lambda$-directed for any small $\lambda$? (Note that $M^{\Sh}$ is not saturated, so Remark \ref{rem: unif direct and strict pro-def}(2) does not apply).
\end{question}

\begin{question}\label{q: honest defs directed}
	It is open since \cite{chernikov2013externally} if an analogous presentation holds for externally definable sets, i.e.~if Theorem \ref{thm: ext type def presentation} holds for $\kappa=1$ (or finite).
	Also, can one have directedness for small sets of members of the family rather than finite sets?
	\end{question}	

As an application, we have a (soft) description of externally definable subgroups of definable groups, as unions of (uniformly) directed families of their $\bigvee$-definable  subgroups:
\begin{proposition}\label{prop: ext def subgroups approx V-def}
	Let $M \models T$ be NIP, $G$ a definable group in $M$, $H \leq G(M)$ an externally $\kappa$-$\bigvee$-definable subgroup, and $\widetilde{M}' \succ^{L'} M^{\Sh}$ is saturated. Then there is a disjunction $\Sigma(x;\bar{y})$ of $\leq \kappa$ formulas over $\emptyset$ and a strict pro-definable over $M^{\Sh}$ set $\Omega \subseteq (\widetilde{M}')^{\bar{y}}$ so that $\Sigma(\widetilde{M}' ;\bar{b}) \leq H(\widetilde{M}')$ is a $\bigvee$-definable subgroup for each $\bar{b} \in \Omega$, $H(\widetilde{M}') = \bigcup_{\bar{b} \in \Omega} \Sigma(\widetilde{M}' , \bar{b})$ and the family $\left\{\Sigma(\widetilde{M}', \bar{b}) :  \bar{b} \in \Omega\right\}$ is uniformly directed, hence $\lambda$-directed for any small $\lambda$ (with respect to saturation of $\widetilde{M}'$).
\end{proposition}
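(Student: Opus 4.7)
The plan is to invoke the externally $\bigvee$-definable case of Theorem \ref{thm: ext type def presentation} to get an internal uniformly directed $\bigvee$-definable presentation of $H$, and then ``close off'' the sets in that presentation under the group operations of $G$ while preserving the uniformity.

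First I would apply Theorem \ref{thm: ext type def presentation} (in the $\bigvee$-definable case) to the externally $\kappa$-$\bigvee$-definable subset $H \subseteq G(M)$. This yields a disjunction $\Theta(x;\bar{z}) = \bigvee_{t<\kappa}\varphi_t(x,z_t)$ of $\leq \kappa$ formulas over $\emptyset$, a set $\Omega_0 \subseteq M^{\bar{z}}$ which is strict pro-definable over $M^{\Sh}$, and a function $f : \kappa \to \kappa$ witnessing uniform directedness of $\{\Theta(M,\bar{c}) : \bar{c}\in\Omega_0\}$, with $H(M) = \bigcup_{\bar{c}\in\Omega_0}\Theta(M,\bar{c})$. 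Setting $\Omega := \Omega_0(\widetilde{M}')$, Remark \ref{rem: strict pro lifts} keeps $\Omega$ strict pro-definable in $\widetilde{M}'$, and Remark \ref{rem: directed approximation in the monster} guarantees that $\{\Theta(\widetilde{M}',\bar{c}) : \bar{c}\in\Omega\}$ remains uniformly directed (with the same $f$), hence $\lambda$-directed for any small $\lambda$.

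Second, I would define $\Sigma(x;\bar{z})$ as the $\bigvee$-definable formula that says ``$x$ lies in the subgroup of $G$ generated by $\Theta(\cdot;\bar{z})$''. Concretely, for each triple $(n,\bar{s},\bar{t})$ with $n\in\omega$, $\bar{s}\in\{\pm 1\}^n$ and $\bar{t}\in\kappa^n$, take the formula
\begin{gather*}
\psi_{n,\bar{s},\bar{t}}(x;\bar{z}) := \exists x_1\ldots\exists x_n\Big(x = x_1^{s_1}\cdot_G \cdots \cdot_G x_n^{s_n} \ \land\ \bigwedge_{i=1}^{n}\varphi_{t_i}(x_i,z_{t_i})\Big),
\end{gather*}
and let $\Sigma(x;\bar{z})$ be the disjunction of these. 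Since $\kappa \geq \aleph_0$ (the finite case reduces to internally definable subgroups), $\Sigma$ still has at most $\kappa$ disjuncts. For any $\bar{c}\in\Omega$ the set $\Sigma(\widetilde{M}';\bar{c})$ is, by construction, the subgroup of $G(\widetilde{M}')$ generated by the $\bigvee$-definable set $\Theta(\widetilde{M}';\bar{c})$, so it is a $\bigvee$-definable subgroup. Defining $H(\widetilde{M}') := \bigcup_{\bar{c}\in\Omega}\Sigma(\widetilde{M}',\bar{c})$ (the natural $\bigvee$-definable extension of $H$ to $\widetilde{M}'$), each $\Sigma(\widetilde{M}';\bar{c})$ lies in $H(\widetilde{M}')$ by definition.

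Third, I would verify uniform directedness of $\{\Sigma(\widetilde{M}';\bar{c}):\bar{c}\in\Omega\}$. Given finitely many $\bar{c}_1,\dots,\bar{c}_n\in\Omega$, uniform directedness of $\Theta$ (with the function $f$) provides some $\bar{c}\in\Omega$ with $\varphi_t(x,(\bar{c}_i)_t)\vdash\varphi_{f(t)}(x,(\bar{c})_{f(t)})$ for all $i\in[n]$ and $t<\kappa$. Then each disjunct $\psi_{m,\bar{s},(t_1,\dots,t_m)}(x,\bar{c}_i)$ of $\Sigma(x,\bar{c}_i)$ implies $\psi_{m,\bar{s},(f(t_1),\dots,f(t_m))}(x,\bar{c})$, so the reindexing $(m,\bar{s},\bar{t})\mapsto(m,\bar{s},f\circ\bar{t})$ on the disjuncts of $\Sigma$ witnesses uniform directedness; $\lambda$-directedness for any small $\lambda$ then follows from Remark \ref{rem: unif direct and strict pro-def}(2), using that $\Omega$ is type-definable and $\widetilde{M}'$ is saturated. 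Finally, the equality $H(M) = \bigcup_{\bar{c}\in\Omega_0}\Sigma(M,\bar{c})$ is immediate: the right-hand side contains $\bigcup\Theta(M,\bar{c}) = H(M)$, and each $\Sigma(M,\bar{c})$, being the subgroup generated by $\Theta(M,\bar{c})\subseteq H(M)$, is contained in $H(M)$. The only mildly delicate point is preserving uniformity when passing from $\Theta$ to $\Sigma$, but the explicit reindexing above handles it cleanly.
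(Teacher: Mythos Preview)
The proposal is correct and follows essentially the same approach as the paper: apply Theorem \ref{thm: ext type def presentation} to get a uniformly directed $\bigvee$-definable family $\Theta$ covering $H$, then replace each $\Theta(\cdot,\bar{c})$ by the subgroup $\Sigma(\cdot,\bar{c})$ it generates and observe that uniform directedness is preserved under the obvious reindexing of disjuncts. Your write-up is in fact more explicit than the paper's (which just asserts ``$\{\Sigma(M,\bar{b}):\bar{b}\in\Omega\}$ is still uniformly directed''), in particular spelling out the witnessing function $(m,\bar{s},\bar{t})\mapsto(m,\bar{s},f\circ\bar{t})$; the only cosmetic point is that $H(\widetilde{M}')$ should be read as the $L'(M)$-$\bigvee$-definable set in $\widetilde{M}'$ extending $H$ (via Remark \ref{rem: directed approximation in the monster}) rather than \emph{defined} as the union of the $\Sigma$'s, so that the containment $\Sigma(\widetilde{M}';\bar{c})\leq H(\widetilde{M}')$ has content.
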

\begin{proof}
	Let $\Theta(x,\bar{y}) $ be a disjunction of formulas $\{\varphi_t(x,\bar{y}) : t < \kappa\}$ of size $\leq \kappa$ and $\Omega$ be given by Theorem \ref{thm: ext type def presentation} with respect to $H(M)$, and let $\Sigma(x,\bar{y})$ be so that $\Sigma(x,\bar{b})$ defines the subgroup generated by $\Theta(x,\bar{b})$, i.e.~$\Sigma(x,\bar{y})$ is 
	$$\bigvee_{n \in \omega} \bigvee_{\bar{t} = (t_1, \ldots,t_n) \in  \kappa^n} \bigvee_{f: [n] \to \{1,-1\}} \exists (x_i : i \in [n]) \left( \bigvee_{i \in [n]} \varphi_{t_i}(x_i, \bar{y}) \land x = \prod_{i \in [n]} x_i^{f(i)} \right).$$
	Then $\{\Sigma(M, \bar{b}) : \bar{b} \in \Omega\}$ is still uniformly directed, and satisfies the requirement (using Remark \ref{rem: directed approximation in the monster}).
\end{proof}

\begin{remark}
	If $E(x_1,x_2)$ is an externally $\bigvee$-definable equivalence relation, we can similarly approximate it from below by a uniformly directed family of  $\bigvee$-definable equivalence relations. Namely, if $\Theta(x_1,x_2;\bar{y}) $ is a disjunction of formulas $\{\varphi_t(x_1,x_2,\bar{y}) : t < \kappa\}$ of size $\leq \kappa$ and $\Omega$ be given by Theorem \ref{thm: ext type def presentation} with respect to $E(M)$, and let $\Sigma(x,\bar{y})$ be the disjunction defining  the transitive closure of $\Theta'(x_1,x_2; \bar{y}) := \Theta(x_1,x_2;\bar{y}) \lor \Theta(x_2,x_1; \bar{y}) \lor x_1 = x_2$.
\end{remark}

We conclude with an observation about local coheirs.

\begin{definition}
\begin{enumerate}
	\item Let $\Delta(x,y)$ be a finite set of partitioned formulas $\varphi(x,y)$. By a complete $\Delta(x,y)$-type $q(x)$ over a set of parameters $A$  we mean a maximal consistent set of formulas of the form $\varphi(x,a)$ or $\neg \varphi(x,a)$ with $\varphi(x,y) \in \Delta$ and $a$ a tuple of corresponding length from $A$. We let $S_{\Delta(x,y)}(A)$ denote the set of complete $\Delta$-types over $A$.
	\item  If $\Delta$ is a finite set of (non-partitioned) formulas, by a complete $\Delta$-type over $A$ in the free variables $x$ we mean a complete $\Delta^\ast(x,y)$-type over $A$, where $\Delta^\ast(x,y)$ is a finite set of  all partitioned formulas obtained by taking an arbitrary partition of the variables of appropriate lengths  of a formula from $\Delta$. We let $S_{\Delta,x}(A)$ denote the set of complete $\Delta$-types over $A$ in the free variables $x$.
\end{enumerate}
	\end{definition}

\begin{fact}\cite[Claim 4.11]{kaplan2014examples}\label{fac: local coheirs}
	For every $A \subseteq C \subseteq \cU$ and type $q \in S_{\Delta,x}(\cU)$ which is finitely satisfiable in $A$, and any choice of a coheir $q' \in S_x(\cU)$ over $A$ with $q \subseteq q'$ (there is always one) and $n \in \omega$, we have:
	$(a_0, \ldots, a_{n-1}) \models \left( (q')^{(n)}|_{C}  \right) \restriction_{\Delta}$ if and only if $a_i \models q|_{Ca_{<i}}$ for all $i < n$.
	
	In particular, if $a_i \models q|_{Ca_{<i}}$ for all $i < n$, then $(a_i : i <n)$ is a $\Delta(C)$-indiscernible sequence of length $n$.
\end{fact}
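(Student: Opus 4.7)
The plan is to prove $(\Leftarrow)$ by induction on $n$ via a chain of equivalences, and then extract $(\Rightarrow)$ and the $\Delta(C)$-indiscernibility non-inductively from the same chain. The one preliminary input is an upgrade of the finite satisfiability of $q$ in $A$: I claim $q$ is \emph{$\Delta^*$-invariant over $A$}, i.e.\ $\bar{e}_1 \equiv_A^{\Delta^*} \bar{e}_2$ implies $\chi(x,\bar{e}_1) \in q \iff \chi(x,\bar{e}_2) \in q$ for every $\chi(x,\bar{y}) \in \Delta^*$. Otherwise the finite subset $\{\chi(x,\bar{e}_1), \neg\chi(x,\bar{e}_2)\} \subseteq q$ would be realized by some $a \in A$ (by finite satisfiability), but then the repartitioned $\Delta^*$-formula $\chi(a,\bar{y})$ would separate $\bar{e}_1, \bar{e}_2$ over $A$ -- contradiction.

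Fix any $(b_0,\ldots,b_{n-1}) \models (q')^{(n)}|_C$ in $\cU$ (exists by saturation); then $b_i \models q'|_{Cb_{<i}}$ and hence $b_i \models q|_{Cb_{<i}}$ for every $i$. For any $i < n$, $\varphi(x,\bar{y}) \in \Delta^*$, $c \in C$, and parameter $d := h(c,\bar{a}_{<i}) \in Ca_{<i}$ (together with the matching $d' := h(c,\bar{b}_{<i})$), form $\psi(\bar{x},c) := \varphi(x_i, h(c,\bar{x}_{<i})) \in \Delta^*$ by repartition. The central chain of equivalences is
\[
\models \psi(\bar{a}, c) \;\underset{(i)}{\Longleftrightarrow}\; \varphi(x, d) \in q \;\underset{(ii)}{\Longleftrightarrow}\; \varphi(x, d') \in q \;\underset{(iii)}{\Longleftrightarrow}\; \models \psi(\bar{b}, c),
\]
where $(i)$ uses $a_i \models q|_{Ca_{<i}}$, $(iii)$ uses $b_i \models q|_{Cb_{<i}}$, and $(ii)$ uses $\Delta^*$-invariance of $q$ over $A$ applied to $d \equiv_A^{\Delta^*} d'$, which in turn follows from $\bar{a}_{<i} \equiv_C^{\Delta^*} \bar{b}_{<i}$ since $c \in C \supseteq A$.

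For direction $(\Rightarrow)$, the hypothesis gives $\bar{a} \equiv_C^{\Delta^*} \bar{b}$ and hence $\bar{a}_{<i} \equiv_C^{\Delta^*} \bar{b}_{<i}$ for all $i$; equivalences $(ii)$ and $(iii)$ then hold unconditionally, so combining with $\models \psi(\bar{a},c) \iff \models \psi(\bar{b},c)$ (the hypothesis applied to $\psi$) yields $(i)$, i.e.\ $\models \varphi(a_i, d) \iff \varphi(x, d) \in q$ for every $\varphi \in \Delta^*$, $d \in Ca_{<i}$, which is $a_i \models q|_{Ca_{<i}}$. Direction $(\Leftarrow)$ goes by induction on $n$, with base $n=1$ trivial since $q|_C = q'|_C\restriction_\Delta$: at step $n$ the IH supplies $\bar{a}_{<n-1} \equiv_C^{\Delta^*} \bar{b}_{<n-1}$, and running the chain at $i = n-1$ yields $\models \psi(\bar{a},c) \iff \models \psi(\bar{b},c)$ for every $\psi \in \Delta^*$ (every such $\psi$ can be written in the form above), hence $\bar{a} \equiv_C^{\Delta^*} \bar{b}$. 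The ``in particular'' $\Delta(C)$-indiscernibility is then immediate from $(\Leftarrow)$: any increasing subsequence $(a_{i_0},\ldots,a_{i_{k-1}})$ again satisfies the Morley-sequence condition $a_{i_j} \models q|_{Ca_{i_0},\ldots,a_{i_{j-1}}}$, so its $\Delta(C)$-type equals $(q')^{(k)}|_C\restriction_\Delta$ and depends only on $k$.

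The one non-formal step is the upgrade from finite satisfiability in $A$ to $\Delta^*$-invariance over $A$ proved in the first paragraph; everything else is bookkeeping to keep the repartitions of variables inside the finite fragment $\Delta^*$, which is precisely why the hypothesis is formulated through $S_{\Delta,x}$ and its induced $\Delta^*$ rather than a fixed pre-partitioned $\Delta(x,y)$.
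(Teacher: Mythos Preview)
The paper does not supply its own proof of this statement: it is quoted as a Fact with a citation to \cite[Claim 4.11]{kaplan2014examples} and used as a black box in the proof of Lemma~\ref{lem: local honest def}. So there is nothing to compare against, and the relevant question is simply whether your argument is correct.

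Your argument is essentially correct. The key observation---that finite satisfiability of $q$ in $A$ upgrades to $\Delta^*$-invariance over $A$, because a witness $a\in A$ to $\chi(x,\bar e_1)\wedge\neg\chi(x,\bar e_2)$ produces a $\Delta^*$-formula $\chi(a,\bar y)$ separating $\bar e_1,\bar e_2$ over $A$---is exactly the right engine, and the comparison to a fixed reference Morley sequence $(b_0,\dots,b_{n-1})$ in $q'$ cleanly organizes both directions. One small imprecision in the $(\Leftarrow)$ induction step: you write that ``every such $\psi$ can be written in the form above,'' i.e.\ as $\varphi(x_{n-1},h(c,\bar x_{<n-1}))$, but this is only true for $\Delta^*$-formulas $\psi(\bar x,c)$ that actually involve $x_{n-1}$. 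Formulas $\psi$ mentioning only $x_0,\dots,x_{n-2}$ are not of this shape; however, for those $\models\psi(\bar a,c)\leftrightarrow\psi(\bar b,c)$ is immediate from the induction hypothesis $\bar a_{<n-1}\equiv_C^{\Delta^*}\bar b_{<n-1}$. With that one-line addition the inductive step is complete, and the deduction of $\Delta(C)$-indiscernibility from $(\Leftarrow)$ via subsequences is correct as written.
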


The following is a local refinement of \cite[Proposition 3.9]{chernikov2014external} in the case of coheirs:
\begin{lemma}\label{lem: local honest def}
Assume we are in the context of Definition \ref{def: Shelah exp context}. 	Working in $T'$, let $p(x) \in S^L(M')$ be finitely satisfiable in $M$. For every NIP formula $\varphi(x,y) \in L$ there is some finite set of formulas $\Delta \subseteq L$ so that: for every $b \in N^y$, if $p\restriction_{\Delta}(x) \cup \{ \varphi(x,b) \} \cup \{ P(x) \}$ is consistent, then $p \restriction_{\Delta}(x) \cup \{ P(x) \} \vdash \varphi(x,b)$.
\end{lemma}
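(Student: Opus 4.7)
The plan is to apply uniform honest definitions for NIP formulas twice, once to $\varphi(x,y)$ and once to $\neg\varphi(x,y)$, and to take $\Delta$ to consist of the two uniform defining formulas obtained this way; the statement then reduces to a three-case analysis on the behavior of $p$ with respect to these defining formulas.

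More precisely, I first invoke the uniform version of honest definitions, i.e.\ \cite[Theorem 11]{chernikov2015externally} (compare Fact \ref{fac: better uniform honest defs} applied with $\zeta(x,u) := (x=u)$), for each of the NIP formulas $\varphi(x,y)$ and $\neg\varphi(x,y)$. This produces $L$-formulas $\theta(x,z)$ and $\psi(x,w)$, depending only on $\varphi$, with the property that for every $b \in N^y$ there exist $d \in (M')^z$, $e \in (M')^w$ such that $\theta(M',d) \subseteq \varphi(M',b)$ with equality on $M$, and $\psi(M',e) \subseteq \neg\varphi(M',b)$ with equality on $M$. I then set $\Delta := \{\theta(x,z),\psi(x,w)\}$.

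With this choice, fix any $b \in N^y$ and let $d,e$ be as above. The complete type $p$ decides each of the $L(M')$-formulas $\theta(x,d)$ and $\psi(x,e)$. If $\theta(x,d) \in p$, then $\theta(x,d) \in p\restriction_\Delta$, and the honest inclusion $\theta(M',d) \subseteq \varphi(M',b)$ forces every $a \in M'$ realizing $p\restriction_\Delta$ to satisfy $\varphi(a,b)$, yielding $p\restriction_\Delta \cup \{P(x)\} \vdash \varphi(x,b)$. If $\psi(x,e) \in p$, then symmetrically $p\restriction_\Delta \cup \{P(x)\} \vdash \neg\varphi(x,b)$, so the consistency hypothesis fails and the implication is vacuous.

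The only remaining case --- that $\neg\theta(x,d)$ and $\neg\psi(x,e)$ both lie in $p$ --- is the crux of the argument and is ruled out as follows: the conjunction $\neg\theta(x,d) \land \neg\psi(x,e)$ then belongs to $p$ and, by finite satisfiability of $p$ in $M$, is realized by some $a \in M$; but the honest equalities on $M$ give $\theta(M,d) \cup \psi(M,e) = \varphi(M,b) \cup \neg\varphi(M,b) = M$, so no element of $M$ can satisfy $\neg\theta(x,d) \land \neg\psi(x,e)$, a contradiction. The only real obstacle is thus to invoke the correct, uniform version of honest definitions, so that $\Delta$ depends only on $\varphi$ and not on the parameter $b$ (Fact \ref{fac: honest defs} as stated gives $\theta$ depending on $c$, which is insufficient); once uniform honest defs are in hand, the rest is a clean case analysis.
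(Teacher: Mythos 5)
Your argument is correct, but it takes a genuinely different route from the paper. The paper's proof is purely local and self-contained: by compactness, NIP of $\varphi$ gives a finite $\Delta$ and a bound $n$ on alternations of $\varphi(x,b)$ along $\Delta$-indiscernible sequences; assuming the conclusion fails, one builds (as in the proof of existence of honest definitions) a sequence $(a_i)_{i<n}$ in $M'$ with $a_i \models (p\restriction_\Delta)|_{Ma_{<i}}$ alternating on $\varphi(x,b)$, which is $\Delta$-indiscernible by Fact \ref{fac: local coheirs}, a contradiction. Your proof instead black-boxes \emph{uniform} honest definitions for $\varphi$ and $\neg\varphi$, takes $\Delta$ to be the two defining formulas, and finishes with a trace/inclusion case analysis, where the impossible case (both instances negated in $p$) is killed by finite satisfiability of $p$ in $M$ together with the trace equalities $\theta(M,d)=\varphi(M,b)$, $\psi(M,e)=\neg\varphi(M,b)$. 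This is valid: the case analysis is exhaustive since $p$ is complete and $d,e$ can be taken in $(M')^z$ (the pair $(N',M')$ is sufficiently saturated over $(N,M)$, so honest-definition parameters land in $P(N')=M'$, and the inclusions $\theta(x,d)\land P(x)\vdash\varphi(x,b)$, $\psi(x,e)\land P(x)\vdash\neg\varphi(x,b)$ persist to elementary extensions of the pair, which is what the entailment in the lemma requires). What the two approaches buy: yours is shorter given the black box and yields a sharper structural conclusion ($|\Delta|=2$, with a single decided instance of $p\restriction_\Delta$ witnessing the entailment or its vacuity); the paper's argument avoids the heavier uniformity theorem (whose known proofs go through the $(p,q)$-theorem and are stated for NIP theories) and uses only NIP of the single formula $\varphi$ plus Fact \ref{fac: local coheirs}, so it would survive even without the standing global NIP hypothesis. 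Within the context of Definition \ref{def: Shelah exp context}, where $T$ is NIP, both proofs are sound.
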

\begin{proof}
	As $\varphi(x,y)$ is NIP in $T$, by compactness there exist some finite set of $L$-formulas $\Delta$ and some $n$ so that: if $(a_i)_{i <n}$ is $\Delta$-indiscernible and $b$ is arbitrary (in $\cU \models T$), then we cannot have $\models \varphi(a_i,b) \iff i$ is even.
	
	Now if for some $b$ in $N$ we had $p\restriction_{\Delta} \cup \{ \varphi(x,b) \} \cup \{ P(x) \}$ is consistent but $p \restriction_{\Delta}(x) \cup \{ P(x) \} \not \vdash \varphi(x,b)$, following the proof of the existence of honest definitions \cite[Proposition 1.1]{chernikov2013externally} we can find 
	$(a_i: i < n)$ in $M' = P(N')$  so that $a_i \models \left( p\restriction_{\Delta} \right)_{Ma_{<i}}$ and $\models \varphi(a_i,b) \iff i$ is even.  By Fact \ref{fac: local coheirs} the sequence $(a_i : i <n)$ is $\Delta$-indiscernible, contradicting the choice of $\Delta$ and $n$.
\end{proof}
\noindent It is not known if analogous statements hold for invariant $\Delta$-types.

\section{Acknowledgements}
I am grateful to Pantelis Eleftheriou and Pietro Freni for multiple discussions during my visit to the University of Leeds, and to Udi Hrushovski during my visit to Oxford in September 2024. I am grateful to Matthias Aschenbrenner, Martin Bays, Kyle Gannon, Yatir Halevi, Martin Hils, Chris Laskowski, David Marker, Anand Pillay, Sergei Starchenko, Atticus Stonestrom and Marcus Tressl for helpful conversations. Partially supported by the NSF Research Grant DMS-2246598.

\bibliographystyle{plain}
\bibliography{ref}

\begin{thebibliography}{10}

\bibitem{baisalov1998paires}
Yerzhan Baisalov and Bruno Poizat.
\newblock Paires de structures o-minimales.
\newblock {\em The Journal of Symbolic Logic}, 63(2):570--578, 1998.

\bibitem{arXiv:2208.00954}
El{\'{\i}}as Baro, Jos{\'e}~F. Fernando, and Daniel Palac{\'{\i}}n.
\newblock Spectral {Spaces} in o-minimal and other {NIP} theories.
\newblock Preprint, {arXiv}:2208.00954 [math.{LO}] (2022), 2022.

\bibitem{baro2021open}
El{\'\i}as Baro and Amador Martin-Pizarro.
\newblock Open core and small groups in dense pairs of topological structures.
\newblock {\em Annals of Pure and Applied Logic}, 172(1):102858, 2021.

\bibitem{baro2024ellis}
El{\'\i}as Baro and Daniel Palac{\'\i}n.
\newblock Ellis enveloping semigroups in real closed fields.
\newblock {\em Revista de la Real Academia de Ciencias Exactas, F{\'\i}sicas y
  Naturales. Serie A. Matem{\'a}ticas}, 118(2):69, 2024.

\bibitem{casanovas2011simple}
Enrique Casanovas.
\newblock {\em Simple theories and hyperimaginaries}.
\newblock Number~39. Cambridge University Press, 2011.

\bibitem{chernikov2018model}
Artem Chernikov.
\newblock Model theory, {K}eisler measures, and groups.
\newblock {\em Bulletin of Symbolic Logic}, 24(3):336--339, 2018.

\bibitem{chernikov2022definable}
Artem Chernikov and Kyle Gannon.
\newblock Definable convolution and idempotent {K}eisler measures.
\newblock {\em Israel Journal of Mathematics}, 248(1):271--314, 2022.

\bibitem{chernikov2023definable2}
Artem Chernikov and Kyle Gannon.
\newblock Definable convolution and idempotent {K}eisler measures, {II}.
\newblock {\em Model Theory}, 2(2):185--232, 2023.

\bibitem{chernikov2024definable}
Artem Chernikov, Kyle Gannon, and Krzysztof Krupi{\'n}ski.
\newblock Definable convolution and idempotent {K}eisler measures {III}.
  {G}eneric stability, generic transitivity, and revised {N}ewelski's
  conjecture.
\newblock {\em Preprint, arXiv:2406.00912}, 2024.

\bibitem{chernikov2012forking}
Artem Chernikov and Itay Kaplan.
\newblock Forking and dividing in {NTP$_2$} theories.
\newblock {\em The Journal of Symbolic Logic}, 77(1):1--20, 2012.

\bibitem{arXiv:2204.13790}
Artem Chernikov and Alex Mennen.
\newblock Semi-equational theories.
\newblock Preprint, {arXiv}:2204.13790v1 [math.{LO}] (2022), 2022.

\bibitem{CHERNIKOV_MENNEN_2025}
Artem Chernikov and Alex Mennen.
\newblock Semi-equational theories.
\newblock {\em The Journal of Symbolic Logic}, 90(1):391--422, 2025.

\bibitem{chernikov2014external}
Artem Chernikov, Anand Pillay, and Pierre Simon.
\newblock External definability and groups in {NIP} theories.
\newblock {\em Journal of the London Mathematical Society}, 90(1):213--240,
  2014.

\bibitem{chernikov2013externally}
Artem Chernikov and Pierre Simon.
\newblock Externally definable sets and dependent pairs.
\newblock {\em Israel Journal of Mathematics}, 194(1):409--425, 2013.

\bibitem{chernikov2015externally}
Artem Chernikov and Pierre Simon.
\newblock Externally definable sets and dependent pairs {II}.
\newblock {\em Transactions of the American Mathematical Society},
  367(7):5217--5235, 2015.

\bibitem{chernikov2018definably}
Artem Chernikov and Pierre Simon.
\newblock Definably amenable {NIP} groups.
\newblock {\em Journal of the American Mathematical Society}, 31(3):609--641,
  2018.

\bibitem{zbMATH06966829}
Artem Chernikov and Sergei Starchenko.
\newblock Regularity lemma for distal structures.
\newblock {\em J. Eur. Math. Soc. (JEMS)}, 20(10):2437--2466, 2018.

\bibitem{zbMATH07456275}
Artem Chernikov and Sergei Starchenko.
\newblock Definable regularity lemmas for {NIP} hypergraphs.
\newblock {\em Q. J. Math.}, 72(4):1401--1433, 2021.

\bibitem{conant2020remarks}
Gabriel Conant and Kyle Gannon.
\newblock Remarks on generic stability in independent theories.
\newblock {\em Annals of Pure and Applied Logic}, 171(2):102736, 2020.

\bibitem{arXiv:2308.01801}
Gabriel Conant, Kyle Gannon, and James~E. Hanson.
\newblock Generic stability, randomizations, and {NIP} formulas.
\newblock Preprint, {arXiv}:2308.01801 [math.{LO}] (2023), 2023.

\bibitem{conant2021pseudofinite}
Gabriel Conant and Anand Pillay.
\newblock Pseudofinite groups and {VC}-dimension.
\newblock {\em Journal of Mathematical Logic}, 21(02):2150009, 2021.

\bibitem{zbMATH06177016}
Ricardo de~Aldama.
\newblock Definable nilpotent and soluble envelopes in groups without the
  independence property.
\newblock {\em Math. Log. Q.}, 59(3):201--205, 2013.

\bibitem{eleftheriou2021pillay}
Pantelis~E Eleftheriou.
\newblock Pillay's conjecture for groups definable in weakly o-minimal
  non-valuational structures.
\newblock {\em Bulletin of the London Mathematical Society}, 53(4):1205--1219,
  2021.

\bibitem{eleftheriou2014interpretable}
Pantelis~E Eleftheriou, Ya'acov Peterzil, and Janak Ramakrishnan.
\newblock Interpretable groups are definable.
\newblock {\em Journal of Mathematical Logic}, 14(01):1450002, 2014.

\bibitem{fanlo2023piecewise}
A~Rodriguez Fanlo.
\newblock On piecewise hyperdefinable groups.
\newblock {\em Journal of Mathematical Logic}, 23(03):2250027, 2023.

\bibitem{glasner2007structure}
Eli Glasner.
\newblock The structure of tame minimal dynamical systems.
\newblock {\em Ergodic Theory and Dynamical Systems}, 27(6):1819--1837, 2007.

\bibitem{Gla18}
Eli Glasner.
\newblock The structure of tame minimal dynamical systems for general groups.
\newblock {\em Invent. Math.}, 211:213--244, 2018.

\bibitem{hils2021definable}
Martin Hils, Ehud Hrushovski, and Pierre Simon.
\newblock Definable equivariant retractions in non-archimedean geometry.
\newblock {\em Preprint, arXiv:2101.02619}, 2021.

\bibitem{hrushovski2024approximate}
Ehud Hrushovski.
\newblock Approximate equivalence relations.
\newblock {\em Model Theory}, 3(2):317--416, 2024.

\bibitem{hrushovski2016non}
Ehud Hrushovski and Fran{\c{c}}ois Loeser.
\newblock {\em Non-archimedean tame topology and stably dominated types}.
\newblock Princeton University Press, 2016.

\bibitem{NIP1}
Ehud Hrushovski, Ya'acov Peterzil, and Anand Pillay.
\newblock Groups, measures, and the {NIP}.
\newblock {\em Journal of the American Mathematical Society}, 21(2):563--596,
  2008.

\bibitem{NIP2}
Ehud Hrushovski and Anand Pillay.
\newblock On {NIP} and invariant measures.
\newblock {\em J. Eur. Math. Soc. (JEMS)}, 13(4):1005--1061, 2011.

\bibitem{zbMATH06111354}
Ehud Hrushovski, Anand Pillay, and Pierre Simon.
\newblock A note on generically stable measures and fsg groups.
\newblock {\em Notre Dame J. Formal Logic}, 53(4):599--605, 2012.

\bibitem{NIP3}
Ehud Hrushovski, Anand Pillay, and Pierre Simon.
\newblock Generically stable and smooth measures in {NIP} theories.
\newblock {\em Transactions of the American Mathematical Society},
  365(5):2341--2366, 2013.

\bibitem{hrushovski2019valued}
Ehud Hrushovski and Silvain Rideau-Kikuchi.
\newblock Valued fields, metastable groups.
\newblock {\em Selecta Mathematica}, 25(3):47, 2019.

\bibitem{ibarlucia2016dynamical}
Tom{\'a}s Ibarluc{\'\i}a.
\newblock The dynamical hierarchy for {R}oelcke precompact {P}olish groups.
\newblock {\em Israel Journal of Mathematics}, 215(2):965--1009, 2016.

\bibitem{zbMATH07845700}
Franziska Jahnke.
\newblock Henselian expansions of {NIP} fields.
\newblock {\em J. Math. Log.}, 24(2):13, 2024.
\newblock Id/No 2350006.

\bibitem{johnson2023note}
Will Johnson.
\newblock A note on fsg groups in $p$-adically closed fields.
\newblock {\em Mathematical Logic Quarterly}, 69(1):50--57, 2023.

\bibitem{kaplan2014examples}
Itay Kaplan and Saharon Shelah.
\newblock Examples in dependent theories.
\newblock {\em The Journal of Symbolic Logic}, 79(2):585--619, 2014.

\bibitem{Los1949}
J.~{\L}o{\'s} and Edward Marczewski.
\newblock Extensions of measure.
\newblock {\em Fundamenta Mathematicae}, 36(1):267--276, 1949.

\bibitem{zbMATH06578125}
C{\'e}dric Milliet.
\newblock Variations on a theme of de {Aldama} and {Shelah}.
\newblock {\em J. Symb. Log.}, 81(1):96--126, 2016.

\bibitem{N1}
Ludomir Newelski.
\newblock Topological dynamics of definable group actions.
\newblock {\em The Journal of Symbolic Logic}, 74(1):50--72, 2009.

\bibitem{wreo1695}
Davide Penazzi.
\newblock Hyperdefinable groups and modularity.
\newblock Ph.D thesis, January 2011.

\bibitem{peterzil2022definable}
Ya'acov Peterzil, Anand Pillay, and Fran{\c{c}}oise Point.
\newblock On definable groups in real closed fields with a generic derivation,
  and related structures.
\newblock {\em Preprint, arXiv:2208.08293}, 2022.

\bibitem{pillay1996geometric}
Anand Pillay.
\newblock {\em Geometric stability theory}.
\newblock Oxford University Press, 1996.

\bibitem{PiTa}
Anand Pillay and Predrag Tanovic.
\newblock Generic stability, regularity, and quasiminimality.
\newblock {\em Models, logics, and higher-dimensional categories}, 53:189--211,
  2011.

\bibitem{poizat2001stable}
Bruno Poizat.
\newblock {\em Stable groups}, volume~87.
\newblock American Mathematical Soc., 2001.

\bibitem{rideau2021short}
Silvain Rideau-Kikuchi.
\newblock A short note on groups in separably closed valued fields.
\newblock {\em Annals of Pure and Applied Logic}, 172(4):102943, 2021.

\bibitem{zbMATH05662715}
Saharon Shelah.
\newblock Dependent first order theories, continued.
\newblock {\em Isr. J. Math.}, 173:1--60, 2009.

\bibitem{shelah2014strongly}
Saharon Shelah.
\newblock Strongly dependent theories.
\newblock {\em Israel Journal of Mathematics}, 204(1):1--83, 2014.

\bibitem{zbMATH06025969}
Pierre Simon.
\newblock Finding generically stable measures.
\newblock {\em J. Symb. Log.}, 77(1):263--278, 2012.

\bibitem{simon2015guide}
Pierre Simon.
\newblock {\em A guide to NIP theories}.
\newblock Cambridge University Press, 2015.

\bibitem{simon2017vc}
Pierre Simon.
\newblock {VC}-sets and generic compact domination.
\newblock {\em Israel Journal of Mathematics}, 218(1):27--41, 2017.

\bibitem{stonestrom2023f}
Atticus Stonestrom.
\newblock On $f$-generic types in {NIP} groups.
\newblock {\em Preprint, arXiv:2303.13470}, 2023.

\bibitem{TaoBlog}
Terence Tao.
\newblock Qualitative probability theory, types, and the group chunk and group
  configuration theorems.
\newblock Blog post,
  \url{https://terrytao.wordpress.com/2013/11/16/qualitative-probability-theory-types-and-the-group-chunk-and-group-configuration-theorems/},
  2013.

\bibitem{van1990weil}
LPD Van~den Dries.
\newblock Weil's group chunk theorem: A topological setting.
\newblock {\em Illinois Journal of Mathematics}, 34(1):127--139, 1990.

\bibitem{wagner1997stable}
Frank~Olaf Wagner.
\newblock {\em Stable groups}, volume 240.
\newblock Cambridge University Press, 1997.

\bibitem{wagner2000simple}
Frank~Olaf Wagner.
\newblock {\em Simple theories}, volume 260.
\newblock Springer, 2000.

\bibitem{walsberg2019nip}
Erik Walsberg.
\newblock An {NIP} structure which does not interpret an infinite group but
  whose {S}helah expansion interprets an infinite field.
\newblock {\em Preprint, arXiv:1910.13504}, 2019.

\bibitem{zbMATH07479576}
Erik Walsberg.
\newblock Externally definable quotients and {NIP} expansions of the real
  ordered additive group.
\newblock {\em Trans. Am. Math. Soc.}, 375(3):1551--1578, 2022.

\bibitem{arXiv:2504.05566}
Erik Walsberg.
\newblock Trace {Definability}.
\newblock Preprint, {arXiv}:2504.05566 [math.{LO}] (2025), 2025.

\end{thebibliography}
\end{document}